\numberwithin{equation}{section} \makeatletter
\theoremstyle{plain}
\newtheorem{thm}{Theorem}[section]
\newtheorem{cor}[thm]{Corollary}
\newtheorem{lem}[thm]{Lemma}
\newtheorem{prop}[thm]{Proposition}
\theoremstyle{definition} 
\newtheorem{defn}[thm]{Definition}
\newtheorem*{ass}{Assumption}
\theoremstyle{remark}
\newtheorem{rem}[thm]{Remark}
\newtheorem{ex}[thm]{Example}
\newcommand{\EE}{\mathcal E}
\newcommand{\FF}{\mathcal F}
\newcommand{\OO}{\mathcal O}
\newcommand{\GG}{\mathcal G}
\newcommand{\XX}{\mathcal X}
\newcommand{\YY}{\mathcal Y}
\DeclareMathOperator{\QCoh}{\text{QCoh}}
\def\cf{\textit{cf.}\kern.3em}
\def\resp{\textit{resp.}\kern.3em}
\renewcommand{\k}{\kern2pt}
\DeclareMathOperator{\supp}{supp}%
\DeclareMathOperator{\Hom}{Hom}%
\DeclareMathOperator{\Ext}{Ext}%
\DeclareMathOperator{\Id}{id}%
\DeclareMathOperator{\Spec}{Spec}%
\DeclareMathOperator{\spec}{Spec}
\DeclareMathAlphabet{\mathpzc}{OT1}{pzc}{m}{it}
\DeclareMathOperator{\HOM}{\mathcal{H}\!\mathpzc{om}}
\DeclareMathOperator{\EXT}{\mathcal{E}\!\mathpzc{xt}}
\DeclareMathOperator{\tr}{tr}
\DeclareMathOperator{\cotr}{cotr}
\renewcommand{\k}{\kern2pt}
\begin{document}

  \title{Grothendieck Duality for  \\  Deligne-Mumford Stacks}
  \author{Fabio Nironi}
  \address{Columbia University, 2990 Broadway, New York, NY 10027}
  \email{fabio.nironi@gmail.com}
 \begin{abstract}
We prove the existence of the dualizing functor for a separated morphism of algebraic stacks with affine diagonal; then we  explicitly develop duality for compact  Deligne-Mumford stacks focusing in particular on the morphism from a stack to its coarse moduli space and on representable morphisms. We explicitly compute the dualizing complex for a smooth stack over an algebraically closed field and prove that Serre duality holds for smooth compact Deligne-Mumford  stacks in its usual form. We prove also that a proper Cohen-Macaulay stack  has a dualizing sheaf and  it is an invertible sheaf when  it is Gorenstein. As an application of this general machinery we compute the dualizing sheaf of a tame nodal curve.     
\end{abstract}

\maketitle 

\tableofcontents

\section*{Overview}

The first part of the work is foundational, it deals with the existence of the dualizing complex through the abstract machinery developed by Deligne in \cite[Appendix]{MR0222093} and refined by Neeman in \cite{MR1308405}. Let $\pi\colon\XX\to X$ be a projective stack (tame separated global quotient with projective moduli scheme), it is possible to apply Neeman's approach to duality along the morphism $\pi$. We can provide a very fast proof of duality in $D(\XX)=D(\text{QCoh}(\XX))$ using that $\pi_\ast$ is exact on quasicoherent sheaves and exhibiting a generating set of $D(\XX)$ using a $\pi$-very ample vector bundle on $\XX$ (a generating sheaf according to \cite{MR2007396}).
It is also possible to prove duality, in a not so fast way, following the original strategy of Deligne. We first reduce the problem, of finding an adjoint of the derived push-forward, from the triangulated category $D^\ast(\XX)$ to the category of complexes; then we apply a representability result in the category of complexes and go back to derived category. With this approach we can remove the unnecessary hypothesis that a $\pi$-very ample vector bundle exists and we can also study more general morphisms. In general the result we obtain holds in $D^+(\XX)$, but we can prove that when the stack $\XX$ has a good moduli space (see \cite[Def 4.1]{alper-2008} for the definition) the derived push-forward is defined in $D(\XX)$ and so is its adjoint. In this part of the work we consider algebraic stacks that are quasi-compact and with affine diagonal, however all the explicit results that follow the first section are proved only for noetherian separated Deligne-Mumford stacks.

In the second section we study the compatibility of the dualizing functor with flat base change, we first prove that duality for stacks is \'etale local, and then we reconstruct Verdier's result in \cite{MR0274464} for Deligne-Mumford stacks. Using the compatibility with flat base change we are able to prove Serre Duality for  Deligne-Mumford stacks smooth and proper over an algebraically closed field and duality for finite morphisms. We obtain all the expected results: the dualizing sheaf for a smooth proper stack is the canonical bundle shifted  by the dimension of the stack, for a closed embedding $i\colon \XX \to \YY$ in a smooth proper stack $\YY$ the dualizing complex of $\XX$ is $\EXT_\YY^\bullet(\OO_\XX,\omega_\YY)$ where $\omega_\YY$ is the canonical bundle. This is a coherent sheaf if $\XX$ is Cohen-Macaulay, an invertible sheaf if it is Gorenstein.  

In the last part of the work  we use this abstract machinery to compute the dualizing sheaf of a tame nodal curve. We prove that the dualizing sheaf of a curve without smooth orbifold points is just the pullback of the dualizing sheaf of its moduli space. Smooth orbifold points give a non trivial contribution that can be computed using the root construction (Cadman \cite{Cstc-2007}, Abramovich-Graber-Vistoli \cite{AGVgwdms}).  We compute also the dualizing sheaf of a local complete intersection proving that it is the determinant of the cotangent complex shifted by the dimension of the stack, as it is in the scheme-theoretic setup.  

This paper has been partially motivated by our study on semistable sheaves on projective stacks in \cite{stack-stab}. In particular we have used Grothendieck duality to handle the definition of dual sheaf in the case of sheaves of non maximal dimension. Given a $d$-dimensional sheaf $\FF$  on a projective Cohen-Macaulay stack $p\colon\XX\to\Spec{k}$ over $k$ an algebraically closed field, the dual $\FF^D$ is defined to be $R\HOM_\XX(\FF,p^!k)$ (as usual). If the sheaf $\FF$ is torsion free on a smooth stack this is just $\FF^\vee\otimes\omega_\XX$ where $\omega_\XX$ is the canonical bundle. Using Grothendieck duality we can prove that there is a natural morphism $\FF\to\FF^{DD}$ which is injective if and only if the sheaf is pure. We use this basic result in the GIT study of the moduli scheme of semistable pure sheaves \cite[Lem 6.10]{stack-stab}. 

\section*{Acknowledgements}

I am mainly indebted to Andrew Kresch who assisted me during this study, pointing out tons of mistakes and guiding me with his sharp criticism when I tended to be sloppy. I am thankful to the Z\"urich  Institut f\"ur Mathematik for hospitality during  February 2008 and for providing me with an office provided with a couch. I wish to thank Daniel Hern\'andez Ruip\'erez who encouraged me to write a better proof of existence following Deligne's version, and who actually assisted and helped me during the realization of the work, and of course I wish to thank the university of Salamanca for hospitality.
I also want to thank Ulrich Bunke for pointing out an interesting example where duality is possibly not working in unbounded derived category; this example shaded a new light on the problem showing that the sufficient hypothesis to prove duality in unbounded derived category was the existence of a good moduli space of the source stack or the morphism to be cohomologically affine.
I am grateful to Joseph Lipman for the interest he showed into my work and for providing numerous helpful suggestions and critics. 
I also say thanks to Elena Andreini, Barbara Fantechi, Etienne Mann and Angelo Vistoli for many helpful discussions.   
  
\section{Foundation of duality for stacks}

\subsection{History}
Serre duality for stacks can be easily proven with some ad hoc argument in specific examples, such as orbifold curves, gerbes, toric stacks and others; however a general enough proof requires some abstract machineries. Hartshorne's approach in Residues and Duality \cite{MR0222093} is not suitable to be generalized to algebraic stacks (not in an easy way at least). In the appendix of the same book Deligne proves (in a very concise and elegant way) the following statement:
\begin{lem}
  Let $X$ be a quasi-compact scheme (non necessarily noetherian) and $\text{QCoh}_X$ the category of quasi coherent sheaves on $X$. Let $F\colon\text{QCoh}_X^\circ\to \mathfrak{Set}$ be a left exact contravariant functor sending filtered colimits to filtered limits, then the functor $F$ is representable.  
\end{lem}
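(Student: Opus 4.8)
The plan is to reduce the two hypotheses to the single statement that $F$ sends \emph{every} small colimit to the corresponding limit, and then to construct the representing object directly as a colimit indexed by the category of elements of $F$ restricted to small sheaves. First I would upgrade the hypotheses: every small colimit in $\text{QCoh}_X$ is a filtered colimit of finite colimits, so that for a diagram $D\colon I\to\text{QCoh}_X$ one has $\varinjlim_I D\cong\varinjlim_{J}\varinjlim(D|_{J})$ with $J$ ranging over the filtered poset of finite subcategories of $I$. Since $F$ is left exact (it carries finite colimits to finite limits) and carries filtered colimits to filtered limits, it follows formally that $F$ carries $\varinjlim_I D$ to $\varprojlim_I(F\circ D)$. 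Thus $F\colon\text{QCoh}_X^{\circ}\to\mathfrak{Set}$ is continuous; in particular it turns coproducts into products.

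The second ingredient is structural, and it is where quasi-compactness of $X$ enters. Because $X$ is quasi-compact, $\text{QCoh}_X$ is a Grothendieck abelian category: it is cocomplete, filtered colimits are exact, and it admits a set of generators. Consequently it is locally presentable, so I would fix a regular cardinal $\kappa$ for which the full subcategory $\text{QCoh}_X^{\kappa}$ of $\kappa$-presentable objects is essentially small, is closed under $\kappa$-small colimits, and generates $\text{QCoh}_X$ under $\kappa$-filtered colimits (so that every quasi-coherent sheaf is canonically a $\kappa$-filtered colimit of $\kappa$-presentable ones).

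Now I would build the representing object. Let $\mathcal{E}$ be the category of elements of the restriction $F|_{\text{QCoh}_X^{\kappa}}$: its objects are pairs $(N,\xi)$ with $N$ $\kappa$-presentable and $\xi\in F(N)$, and a morphism $(N,\xi)\to(N',\xi')$ is a map $u\colon N\to N'$ with $F(u)(\xi')=\xi$. By the previous paragraph $\mathcal{E}$ is essentially small, and since $\text{QCoh}_X^{\kappa}$ is closed under $\kappa$-small colimits and $F$ is continuous, $\mathcal{E}$ inherits $\kappa$-small colimits and is in particular $\kappa$-filtered. Set $M\coloneqq\varinjlim_{\mathcal{E}}N$, the colimit in $\text{QCoh}_X$ of the projection $(N,\xi)\mapsto N$. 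Continuity of $F$ gives $F(M)\cong\varprojlim_{(N,\xi)\in\mathcal{E}}F(N)$, and the tuple $(\xi)_{(N,\xi)}$ is compatible for this inverse limit precisely because compatibility along a morphism $u$ is the condition $F(u)(\xi')=\xi$; it therefore defines a universal element $\xi_M\in F(M)$.

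Finally I would check that $f\mapsto F(f)(\xi_M)$ is a natural isomorphism $\Hom_{\text{QCoh}_X}(N,M)\to F(N)$. For $\kappa$-presentable $N$ surjectivity is immediate (send $\xi$ to the coprojection of $(N,\xi)$), while injectivity follows from $\kappa$-presentability of $N$ together with the $\kappa$-filteredness of $\mathcal{E}$, which lets one equalize two maps by passing far enough into the colimit. For a general sheaf $N=\varinjlim_{\beta}N_{\beta}$ written as a $\kappa$-filtered colimit of $\kappa$-presentable objects, both sides convert this into the inverse limit over $\beta$ (the source tautologically, the target by continuity of $F$), so the isomorphism on the $N_{\beta}$ propagates to $N$. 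The delicate point throughout, and the step I expect to be the true obstacle, is the size control of the third paragraph: without local presentability the indexing category $\mathcal{E}$ need not be essentially small and the colimit defining $M$ need not exist, and without its $\kappa$-filteredness the injectivity check fails. Everything else is a formal consequence of continuity.
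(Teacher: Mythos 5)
Your proof is correct, but be aware that the paper itself never proves this lemma: it is quoted without proof as Deligne's result from the appendix of \emph{Residues and Duality}, and the closest the paper comes to an argument is the stack-theoretic analogue (Theorem~\ref{thm:deligne-rep}), which proceeds differently --- it verifies the hypotheses of the Special Adjoint Functor Theorem (small hom-sets, co-well-poweredness, and the small generating set of locally finitely presented sheaves supplied by Lemma~\ref{lem:small-gen}) and then cites MacLane with arrows reversed. Your opening reduction (left exactness plus preservation of filtered colimits implies $F$ carries \emph{all} small colimits to limits) is needed on either route; after that the proofs diverge. The SAFT route is a one-line citation once the categorical infrastructure is in place, but it leaves the representing object inexplicit; your route builds $M$ concretely as the $\kappa$-filtered colimit over the category of elements $\mathcal{E}$ of $F|_{\text{QCoh}_X^{\kappa}}$, which is in fact very close in spirit to Deligne's original argument (pro-representability of a left exact functor on finitely presented sheaves, with the filtered-colimit hypothesis making the pro-object an honest object), recast in the language of locally presentable categories. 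Both proofs rest on the same non-formal input: that $\text{QCoh}_X$ has a small generating set, equivalently is Grothendieck and hence locally presentable. You correctly identify this as the true obstacle; note that for a quasi-compact scheme which is not quasi-separated this is a genuinely nontrivial theorem (Gabber; Enochs--Estrada), whereas Deligne's setting provides finitely presented sheaves as $\aleph_0$-presentable generators --- this is exactly the role Lemma~\ref{lem:small-gen} plays for stacks in the paper. One micro-step of yours deserves to be made explicit: in the injectivity check, filteredness of $\mathcal{E}$ coequalizes parallel arrows \emph{of $\mathcal{E}$}, not arbitrary parallel arrows of $\text{QCoh}_X^{\kappa}$; so given $f',g'\colon N\to N'$ with $F(f')(\xi')=F(g')(\xi')=:\eta$, you must first observe that they are parallel arrows $(N,\eta)\rightrightarrows(N',\xi')$ in $\mathcal{E}$ (or, alternatively, run a coequalizer argument inside $\mathcal{E}$, which is available since you proved $\mathcal{E}$ has $\kappa$-small colimits). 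With that sentence added, the argument is complete.
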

Using this statement it's easy to prove the following:
\begin{thm}
  Let $p\colon X\to Y$ be a morphism of  separated noetherian schemes, $F$ a sheaf on $X$ and $C^\bullet(F)$ a \textit{functorial} resolution of $F$ acyclic with respect to $p_\ast$. Moreover let $G$ be a quasi coherent sheaf on $Y$ and $I^\bullet$ an injective resolution of $G$:
  \begin{enumerate}
  \item the functor $\Hom_Y(p_\ast C^q(F),I^p)$ is representable for every $q,p$ and represented by a quasi coherent injective $p_q^{!}I^p$.
  \item the injective quasicoherent double-complex $p_q^{!}I^p$ defines a functor $p^!\colon D^+(Y)\to D^+(X)$ which is right adjoint of $Rp_\ast$:
    \begin{displaymath}
      R\Hom_Y(Rp_\ast F,G)\cong R\Hom_X(F,p^!G)
    \end{displaymath}
  \end{enumerate}
\end{thm}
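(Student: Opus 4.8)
The plan is to verify, for each fixed pair $(q,p)$, the hypotheses of Deligne's representability lemma for the contravariant functor
\begin{displaymath}
\Phi\colon \text{QCoh}_X^\circ\to\mathfrak{Set},\qquad F\longmapsto \Hom_Y(p_\ast C^q(F),I^p),
\end{displaymath}
to read off injectivity of the representing object for free, and then to assemble the $p_q^!I^p$ into a double complex whose total complex furnishes the adjoint by a formal chase. For the left exactness required in (1), I factor $\Phi=\Hom_Y(-,I^p)\circ p_\ast\circ C^q$. Since $C^\bullet$ is a functorial and exact resolution and its terms are $p_\ast$-acyclic, the functor $p_\ast C^q(-)$ carries a short exact sequence of quasi-coherent sheaves to a short exact sequence (left exactness of $p_\ast$ together with the vanishing $R^1p_\ast C^q=0$ supplied by acyclicity); postcomposing with the exact contravariant functor $\Hom_Y(-,I^p)$, which is exact because $I^p$ is injective, shows $\Phi$ is in fact exact, hence a fortiori left exact. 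For the colimit condition I use that $\Hom_Y(-,I^p)$ always converts colimits into limits, so it suffices that $p_\ast C^q(-)$ commute with filtered colimits; this holds because the terms of the resolution are quasi-coherent and $p_\ast$ commutes with filtered colimits of quasi-coherent sheaves on noetherian schemes. Deligne's lemma then produces $p_q^!I^p\in\text{QCoh}_X$ with a natural isomorphism $\Hom_X(F,p_q^!I^p)\cong\Hom_Y(p_\ast C^q(F),I^p)$, and since the left-hand functor $\Hom_X(-,p_q^!I^p)\cong\Phi$ is exact, $p_q^!I^p$ is automatically an injective object of $\text{QCoh}_X$.

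For (2) I would promote the objects $p_q^!I^p$ to a double complex. The differentials $C^q\to C^{q+1}$ and $I^p\to I^{p+1}$ are natural transformations, so through the representability isomorphism and Yoneda they induce maps $p_q^!I^p\to p_{q+1}^!I^p$ and $p_q^!I^p\to p_q^!I^{p+1}$; checking that these (anti)commute, which is routine, makes $(p_q^!I^p)_{q,p}$ a double complex of injective quasi-coherent sheaves, and I set $p^!G\coloneqq\operatorname{Tot}(p_q^!I^p)$. The adjunction is then obtained by unwinding definitions: because $p^!G$ is a bounded-below complex of injectives, $R\Hom_X(F,p^!G)=\Hom_X(F,p^!G)$, and additivity of $\Hom_X(F,-)$ lets me pass it through the total complex to get, degreewise, $\Hom_X(F,p_q^!I^p)\cong\Hom_Y(p_\ast C^q(F),I^p)$ from part (1). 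Reassembling recovers the total complex $\Hom_Y(p_\ast C^\bullet(F),I^\bullet)$, which is precisely $R\Hom_Y(Rp_\ast F,G)$, since $p_\ast C^\bullet(F)$ computes $Rp_\ast F$ by acyclicity of the resolution and $I^\bullet$ computes the derived Hom into $G$.

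It then remains to check that $p^!$ descends to a well-defined functor $D^+(Y)\to D^+(X)$: independence of the chosen injective resolution $I^\bullet$ (up to homotopy) and of the functorial resolution $C^\bullet$ follows from uniqueness of injective resolutions up to homotopy together with the functoriality built into the representing objects $p_q^!$, and the resulting adjunction isomorphism is natural in both arguments. The step I expect to be the main obstacle is the colimit hypothesis in part (1): one genuinely needs a \emph{functorial} $p_\ast$-acyclic resolution $C^\bullet$ by \emph{quasi-coherent} sheaves that \emph{commutes with filtered colimits}, and the naive Godement resolution, while flasque and functorial, is neither quasi-coherent nor colimit-preserving. Producing such a resolution is exactly where the noetherian (and separated) hypotheses must be spent; once representability is secured, the remainder is formal homological algebra.
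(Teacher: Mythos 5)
Your proposal is correct and follows essentially the same route as the paper (which is Deligne's original argument, invoked via the representability lemma quoted just before this theorem): reduce to the category of complexes, apply the representability lemma to the functor $F\mapsto\Hom_Y(p_\ast C^q(F),I^p)$ after checking exactness and preservation of filtered colimits, deduce injectivity of the representing object $p_q^!I^p$ from exactness of that functor, and take the total complex of the resulting double complex of injectives to obtain the adjoint. Your closing remark that the genuine work lies in producing an exact, filtered-colimit-preserving, $p_\ast$-acyclic functorial resolution is precisely the point the paper itself flags, and resolves via Lipman's modification of the Godement resolution in the scheme case and via the affine-atlas resolution ${p_0}_\ast p_0^\ast$ in the stack case.
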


This same strategy has been applied with success by Hern\'andez Ruip\'erez  in \cite{RUIP81} to prove duality for algebraic spaces.  

In \cite{MR1308405} Neeman proved Grothendieck duality for schemes using Brown representability theorem and Bousfield localization. Instead of using a representability result in the category of complexes he uses a representability result directly in the derived category.
\begin{thm}
  Let $\mathcal{T}$ be a triangulated category which is compactly generated and $H\colon\mathcal{T}^\circ\to\mathfrak{Ab}$ be a homological functor. If the natural map:
  \begin{displaymath}
    H\Bigl(\coprod_{\lambda\in\Lambda} x_\lambda\Bigr)\to \prod_{\lambda\in\Lambda} H(x_\lambda) 
  \end{displaymath}
is an isomorphism for every small coproduct in $\mathcal{T}$, then $H$ is representable.
\end{thm}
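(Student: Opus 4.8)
The statement is Brown representability in the triangulated setting, and the plan is to follow the classical Brown--Adams argument as adapted by Neeman: build a representing object as a homotopy colimit of a sequence manufactured from the compact generators, then bootstrap from generators to all of $\mathcal{T}$. Fix a set $\{S_\alpha\}$ of compact generators, so that each $\Hom(S_\alpha[n],-)$ commutes with coproducts and the family $\{S_\alpha[n]\}$ jointly detects whether an object is zero. First I would construct, by induction, a sequence $X_0\to X_1\to\cdots$ together with classes $\eta_i\in H(X_i)$ compatible under the structure maps. For $X_0$ take the coproduct of one copy of $S_\alpha[n]$ for each element of $H(S_\alpha[n])$, over all $\alpha$ and $n$; the hypothesis that $H$ turns coproducts into products identifies $H(X_0)$ with $\prod H(S_\alpha[n])$, and the tautological element furnishes a class $\eta_0$ for which $f\mapsto H(f)(\eta_0)$ is surjective on every generator. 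Given $(X_i,\eta_i)$, I would let $K_i$ be the coproduct of the generators $S_\alpha[n]$ indexed by those maps $f\colon S_\alpha[n]\to X_i$ with $H(f)(\eta_i)=0$, complete the evaluation map $K_i\to X_i$ to a triangle $K_i\to X_i\to X_{i+1}\to K_i[1]$, and use the long exact sequence of the homological functor $H$ together with the coproduct hypothesis (which forces $\eta_i$ to restrict to $0$ on $K_i$) to lift $\eta_i$ to a class $\eta_{i+1}\in H(X_{i+1})$. The construction is arranged so that on each generator the induced map stays surjective while every element of its kernel is killed one step later.

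Next I would set $X=\mathrm{hocolim}\,X_i$, defined by the triangle $\coprod X_i \xrightarrow{1-\mathrm{shift}} \coprod X_i \to X$. Because $H$ sends this coproduct to the product $\prod H(X_i)$, the compatible system $(\eta_i)$ lies in the kernel of $1-\mathrm{shift}$ and therefore lifts to some $\eta\in H(X)$ restricting to each $\eta_i$; this yields a natural transformation $\phi_Y\colon\Hom(Y,X)\to H(Y)$, $f\mapsto H(f)(\eta)$. Compactness of a generator $T=S_\alpha[n]$ gives $\Hom(T,X)=\varinjlim\Hom(T,X_i)$, and combining this identification with the surjectivity and kernel-killing built into the sequence shows that $\phi_T$ is an isomorphism for every generator and every shift.

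Finally I would bootstrap from generators to arbitrary objects. Let $\mathcal{S}\subseteq\mathcal{T}$ be the full subcategory of objects $Y$ for which $\phi_Y$ is an isomorphism. Both $\Hom(-,X)$ and $H$ are cohomological and send coproducts to products, and $\phi$ is a morphism between them, so $\mathcal{S}$ is closed under shifts, under arbitrary coproducts, and---by the five lemma applied to the two long exact sequences of a triangle---under triangles; that is, $\mathcal{S}$ is a localizing subcategory. Since it contains the compact generators and $\mathcal{T}$ is the smallest localizing subcategory containing them, $\mathcal{S}=\mathcal{T}$, and hence $H\cong\Hom(-,X)$ is representable.

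I expect the main obstacle to be the homotopy-colimit step. The functor $H$ is not assumed to commute with homotopy colimits, so assembling the $\eta_i$ into a single class $\eta$ rests on the $\lim^1$/Milnor exact sequence coming from the defining triangle, and it produces a class whose good behavior is controlled only on compact objects. The real content of the argument is that this partial control on generators, upgraded through the localizing-subcategory step, already suffices; the delicate points to verify carefully are that the chosen lift $\eta$ genuinely restricts to each $\eta_i$ and that compactness truly identifies $\Hom(T,X)$ with $\varinjlim\Hom(T,X_i)$.
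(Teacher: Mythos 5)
The paper itself gives no proof of this statement: it is quoted as background from Neeman's paper \cite{MR1308405}, so your attempt can only be measured against the argument given there. Your first two stages coincide with that argument and are correct: the inductive tower $X_0\to X_1\to\cdots$ with compatible classes $\eta_i\in H(X_i)$, the passage to $X=\mathrm{hocolim}\,X_i$ with a class $\eta$ restricting to every $\eta_i$ (existence, not uniqueness, of the lift is all you need, exactly as you say), and the verification that $\phi_T\colon\Hom(T,X)\to H(T)$ is bijective for every shifted generator $T$ --- surjectivity coming from the stage-zero coproduct, injectivity from the kernel-killing step combined with $\Hom(T,X)\cong\varinjlim\Hom(T,X_i)$, which is the standard consequence of compactness applied to the homotopy-colimit triangle.

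The gap is the final bootstrap. The paper explicitly takes ``generating set'' in the sense of Neeman's Definition 1.8: a set of compact objects such that $\Hom(t[n],x)=0$ for all $t,n$ forces $x=0$. With that definition, your assertion that ``$\mathcal{T}$ is the smallest localizing subcategory containing the compact generators'' is not the hypothesis; it is a genuinely nontrivial theorem, and in the standard references it is deduced either from Brown representability itself (applied to the localizing subcategory generated by the compacts) or by repeating the very tower construction you have just carried out. Note the structural symptom: nowhere in your proof is the orthogonality property of the generators actually used, yet it must enter somewhere --- for a non-generating set of compacts (say $\{0\}$) the same tower produces $X=0$, which represents nothing. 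There are two honest repairs. First, Neeman's own route: given $Y$ and $y\in H(Y)$, rerun the construction starting from $X_0\oplus Y$ with starting class $(\eta_0,y)$; the resulting object $X'$ again satisfies the isomorphism on generators, the comparison map $X\to X'$ induces isomorphisms on all $\Hom(t[n],-)$, so its cone is annihilated by every generator and is therefore zero by orthogonality; hence $X\cong X'$ and $\phi_Y$ is onto, with injectivity proved by a similar cone-and-enlarge argument. Second, keep your bootstrap but supply the missing implication: what you have actually proved is representability for the localizing subcategory $\mathcal{L}$ generated by the compacts; apply it to $\Hom_{\mathcal{T}}(-,x)\vert_{\mathcal{L}}$ for an arbitrary $x\in\mathcal{T}$, obtain $x_{\mathcal{L}}\to x$ inducing isomorphisms on $\Hom(z,-)$ for all $z\in\mathcal{L}$, and conclude from orthogonality that its cone vanishes, so $\mathcal{L}=\mathcal{T}$. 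With either repair (and with $\mathcal{S}$ defined by requiring $\phi_{Y[n]}$ to be an isomorphism for all $n$, so that it really is closed under shifts and the five lemma applies), your argument becomes complete.
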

If a scheme has an ample line bundle then it's easy to prove that $D(X)$ is compactly generated. Every scheme admits locally an ample line bundle (take an affine cover then the structure sheaf is ample); to verify that local implies global Bousfield localization is used.

In the case of stacks we can  prove that if $\XX$ has a generating sheaf and $X$ an ample invertible sheaf then $D(\XX)$ is compactly generated, so that we can use Brown representability. It's true again that every tame stack $\XX$  has \textit{\'etale} locally a generating sheaf \cite[Prop 5.2]{MR2007396}, however the argument used by Neeman to prove that local implies global heavily relies on Zariski topology and cannot be generalized to stacks in an evident way.

This kind of approach is very very fast but probably difficult to be improved, moreover the existence of the generating sheaf, even if not so restrictive, is a very unnatural hypothesis. In order to remove this hypothesis we go back to Deligne's approach and adapt it to the case of algebraic stacks. Eventually we  prove existence of a dualizing functor for a separated quasi-compact morphism of quasi-compact  algebraic stacks with affine diagonal. While the previous result holds in unbounded derived category, this one produces a dualizing functor defined on derived category bounded from below. 

Once existence and uniqueness are proved, we will be able to determine the actual shape of the dualizing functor in many examples modifying Verdier's approach \cite{MR0274464} to the computation of duality. 


\subsection{Existence using Neeman's technique}
\begin{ass}
  In this section every stack and every scheme is quasi-compact. Unfortunately the word \textit{generating} appears in this paper with three different meanings: we have generating sheaves that are locally free sheaves very ample with respect to the morphism from a stack to the moduli space, and we have two different notions of generating set. In this section a generating set is a generating set of a triangulated category in the sense of \cite[Def 1.8]{MR1308405}.
\end{ass}
We start with the fast approach to duality using Neeman's results.
We will denote with $D(\XX)=D(\text{QCoh}(\XX))$ the derived category of quasicoherent sheaves on $\XX$. 

\begin{lem}
  Let $\pi\colon\XX\to X$ be a tame separated stack with moduli space $X$. The functor $\pi_\ast\colon D(\XX)\to D(X)$ respects small coproducts, that is the natural morphism:
  \begin{equation}\label{eq:8}
    \coprod_{\lambda\in\Lambda}\pi_\ast x_\lambda \to \pi_\ast \coprod_{\lambda\in\Lambda} x_\lambda
  \end{equation}
is an isomorphism for every small $\Lambda$.
\end{lem}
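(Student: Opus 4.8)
The plan is to use tameness to collapse the derived statement to a statement in the abelian category, and then to recognise the surviving claim as the standard fact that pushforward along a quasi-compact, quasi-separated morphism commutes with filtered colimits.

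First I would invoke the characteristic feature of a tame stack (Abramovich--Olsson--Vistoli): the functor $\pi_\ast\colon\QCoh(\XX)\to\QCoh(X)$ is \emph{exact}. Being exact, $\pi_\ast$ sends quasi-isomorphisms to quasi-isomorphisms, so the derived functor of the lemma is computed termwise and $H^n(\pi_\ast C^\bullet)\cong\pi_\ast H^n(C^\bullet)$ for any complex; note this needs no resolutions and so is unproblematic in the unbounded derived category. On the other side, $\QCoh(\XX)$ and $\QCoh(X)$ are Grothendieck abelian, so coproducts are exact, the coproduct in $D(\XX)$ is the termwise coproduct of representatives, and $H^n(\coprod_\lambda x_\lambda)\cong\coprod_\lambda H^n(x_\lambda)$. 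Combining these, the two sides of the comparison map (\ref{eq:8}) have $n$-th cohomology $\coprod_\lambda\pi_\ast H^n(x_\lambda)$ and $\pi_\ast\coprod_\lambda H^n(x_\lambda)$ respectively, and (\ref{eq:8}) induces the canonical map between them. Hence the whole lemma reduces to the claim that the underived $\pi_\ast$ on $\QCoh(\XX)$ commutes with small coproducts.

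Next I would reduce coproducts to filtered colimits. Any small coproduct $\coprod_{\lambda\in\Lambda}\FF_\lambda$ is the filtered colimit, over the finite subsets $S\subseteq\Lambda$, of the finite partial sums $\bigoplus_{\lambda\in S}\FF_\lambda$. Since $\pi_\ast$ is additive it commutes with finite direct sums, so it suffices to prove that $\pi_\ast$ commutes with filtered colimits. This is where the geometric hypotheses enter: $\XX$ is quasi-compact with affine (hence separated) diagonal and $X$ is its moduli space, so $\pi$ is quasi-compact and quasi-separated, and for such morphisms the underived pushforward commutes with filtered colimits. I would check this \'etale-locally on $X$, which is legitimate since the formation of $\pi_\ast$ and of colimits commutes with flat base change on the target; by the local structure theorem for tame stacks one may then assume $X=\Spec A^G$ and $\XX\cong[\Spec A/G]$ with $G$ finite and linearly reductive, so that $\pi_\ast$ becomes the invariants functor $(-)^G$ on $G$-equivariant $A$-modules. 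The Reynolds operator exhibits $(-)^G$ as a direct summand of the identity functor, whence it commutes with all colimits, filtered ones in particular.

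The main obstacle is this last step: making the descent from a local presentation to $\XX$ itself precise, and checking that the comparison maps assembled \'etale-locally on $X$ glue to the global natural transformation (\ref{eq:8}). Everything above it is formal once exactness of $\pi_\ast$ is granted; the only genuine content is the interaction of pushforward with filtered colimits over a quasi-compact quasi-separated base, which is the sole place where more than abstract nonsense is used.
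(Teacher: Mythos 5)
Your proof is correct, but it takes a genuinely different route from the paper's. The two arguments agree on the first move: tameness (exactness of $\pi_\ast$ on quasicoherent sheaves) together with AB4 collapses the derived statement to the corresponding statement for a single quasicoherent sheaf --- the paper does this silently, you do it explicitly, and it is the one place tameness is indispensable (the paper's later remark on Bunke's non-tame gerbe shows the lemma fails without it). You diverge on how to prove that abelian-level claim. The paper chooses a presentation $X_0\to\XX$ with simplicial nerve $X_\bullet$ and writes $\pi_\ast\FF$ as a kernel, namely the exact sequence \eqref{eq:the-exact-sequence}, $0\to\pi_\ast\FF\to f^0_\ast\FF_0\to f^1_\ast\FF_1$, of a map between pushforwards along the scheme morphisms $f^i\colon X_i\to X$; since coproducts are exact they commute with this kernel, and each $f^i_\ast$ respects coproducts by the scheme case \cite[Lem 1.4]{MR1308405}. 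You instead write coproducts as filtered colimits of finite sums and reduce to the assertion that $\pi_\ast$ commutes with filtered colimits; that assertion is precisely the paper's Lemma \ref{lem:commute-with-filt} (proved there by reduction to schemes via \cite[Lem 12.6.2]{LMBca}), but you re-prove it in the case at hand by passing to the \'etale-local model $[\Spec{A}/G]\to\Spec{A^G}$ given by the Abramovich--Olsson--Vistoli structure theorem, where $\pi_\ast$ becomes $(-)^G$. What each buys: the paper's descent argument is shorter, needs nothing beyond a presentation and the known scheme result, and reuses the same exact sequence later in the paper; your argument isolates the exact abelian-category content and makes the role of tameness transparent, but it leans on the AOV structure theorem, which is heavier machinery than the lemma requires. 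Two repairs to your final step: the Reynolds operator exhibits $(-)^G$ as a natural direct summand of the \emph{forgetful} (underlying $A^G$-module) functor, not of the identity functor; and you can bypass the Reynolds operator --- and any appeal to linear reductivity at that point --- since $M^G$ is the kernel of the map $M\to M\otimes\OO_G$ sending $m$ to $\rho(m)-m\otimes 1$, and kernels commute with filtered colimits of modules, which gives the commutation of $(-)^G$ with filtered colimits directly. (Your retract argument is sound, since a natural retract of a colimit-preserving functor preserves colimits, a retract of an isomorphism being an isomorphism; it is just unnecessary.)
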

\begin{proof}
  We recall that the category of sheaves of modules on a site satisfies  $AB4$; the reason is that the category of modules satisfies $AB4$ which implies that presheaves satisfy $AB4$ and we can conclude using \cite[Thm II.2.15.a-d]{Met}. Arbitrary  coproducts are left exact, and as a matter of fact exact.
We choose a smooth presentation  $X_0\to\XX$ and we associate to it  the simplicial nerve $X_\bullet$. Let $f^i\colon X_i\to X$ be the obvious composition. For every quasicoherent sheaf $\mathcal{F}$ on $\XX$ represented by $\mathcal{F}_\bullet$ on $X_\bullet$ we have an exact sequence:
\begin{equation}\label{eq:the-exact-sequence}
  0\to\pi_\ast\mathcal{F}\to f^0_\ast\mathcal{F}_0\to f^1_\ast\mathcal{F}_1
\end{equation}
 The result follows from left exactness of the coproduct, the analogous result for schemes \cite[Lem 1.4]{MR1308405} and the existence of the natural arrow (\ref{eq:8}).
\end{proof}
\begin{cor}\label{cor:commwithcoprod2}
  Let $\pi\colon\XX\to X$ be as in the previous statement and $f\colon\XX\to Y$ be a quasi-compact separated morphism to a scheme $Y$. The functor $R f_\ast\colon D(\XX)\to D(Y)$ respects small coproducts.
\end{cor}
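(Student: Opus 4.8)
The plan is to reduce the statement to the coarse moduli space, where both the preceding lemma and Neeman's result for schemes become available. First I would invoke the universal property of the coarse moduli space: since $Y$ is a scheme and $\pi\colon\XX\to X$ exhibits $X$ as the moduli space of $\XX$, the morphism $f$ factors uniquely as $f=g\circ\pi$ for a morphism $g\colon X\to Y$. This factorization isolates a purely stacky part, governed by $\pi_\ast$, from a purely scheme-theoretic part, governed by $Rg_\ast$.

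Next I would pass to derived functors. By the compatibility of derived pushforward with composition (pseudofunctoriality of $R(-)_\ast$, using $K$-injective resolutions on $\XX$, which exist since $\QCoh(\XX)$ is a Grothendieck category), there is a canonical isomorphism $Rf_\ast\cong Rg_\ast\circ R\pi_\ast$. Because $\XX$ is tame, $\pi_\ast$ is exact on quasicoherent sheaves, so $R\pi_\ast=\pi_\ast$ and hence $Rf_\ast\cong Rg_\ast\circ\pi_\ast$. It therefore suffices to show that each of the two factors preserves small coproducts, since a composite of coproduct-preserving functors preserves coproducts.

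For the factor $\pi_\ast$ this is exactly the content of the preceding lemma. For the factor $Rg_\ast$ I would appeal to the scheme case \cite[Lem 1.4]{MR1308405}, which asserts that $Rg_\ast$ preserves small coproducts whenever $g$ is a quasi-compact separated morphism of schemes. This reduces the problem to checking that $g$ inherits these two properties from $f$, which I expect to be the one point requiring care.

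The main obstacle is precisely this verification, which I would handle by descent along $\pi$, using that the coarse moduli morphism is proper and (universally) surjective. For quasi-compactness: given a quasi-compact open $U\subseteq Y$, the substack $f^{-1}(U)=\pi^{-1}(g^{-1}(U))$ is quasi-compact since $f$ is quasi-compact, and its image under the surjection $\pi$ is $g^{-1}(U)$, which is therefore quasi-compact, so $g$ is quasi-compact. For separatedness I would use a cancellation argument: from $f=g\circ\pi$ separated together with $\pi$ proper and surjective one deduces that $\Delta_g$ is a closed immersion, i.e.\ $g$ is separated. One mild bookkeeping point is that $X$ is a priori an algebraic space rather than a scheme, so strictly one invokes the algebraic-space analogue of \cite[Lem 1.4]{MR1308405} (which follows from the scheme case by \'etale descent); in the cases of primary interest $X$ is a scheme and no such extension is needed.
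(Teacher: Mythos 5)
Your proposal is correct and takes essentially the same approach as the paper: its one-line proof invokes precisely the factorization $f=g\circ\pi$ coming from the universal property of the moduli space, the preceding lemma for $\pi_\ast$, and Neeman's result \cite[Lem 1.4]{MR1308405} for $Rg_\ast$. Your extra verifications (that $g$ inherits quasi-compactness and separatedness from $f$, and the caveat that $X$ is a priori only an algebraic space) merely fill in details the paper leaves implicit.
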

\begin{proof}
  This is an immediate consequence of the universal property of the moduli space $X$ and the previous lemma.
\end{proof}

 Let  $\EE$ be a generating sheaf of $\XX$ and $L$ an ample invertible sheaf of $X$. We call the couple ($\EE$,$L$) a polarization.
\begin{lem}\label{lem:compactlygen}
  Let the stack $\pi\colon\XX\to X$ be tame and  the sheaves $\EE$ and $L$ are a polarization. The derived category $D(\XX)$ is compactly generated and the set $T=\{\EE\otimes\pi^\ast L^n [m]\; \vert\; m,n\in\mathbb{Z}\}$ is a generating set.  
\end{lem}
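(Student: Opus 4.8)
The plan is to deduce both compactness and generation from the corresponding facts on the moduli space $X$, transporting them along the functor $F_\EE\coloneqq\pi_\ast\HOM(\EE,-)=\pi_\ast(\EE^\vee\otimes-)$, which makes sense because a generating sheaf is locally free. The two inputs I take from the scheme case are Neeman's results that, for the quasi-compact separated scheme $X$ carrying the ample invertible sheaf $L$, each $L^n$ is a compact object of $D(X)$ and the set $\{L^n[m]\}_{n,m\in\Z}$ is a generating set of $D(X)$. The engine of the whole argument is a single adjunction identity. For any $M\in D(\XX)$, since $\EE$ is locally free the functor $\EE\otimes(-)$ has right adjoint $\EE^\vee\otimes(-)$, and since $L^n$ is invertible (hence flat) its pullback needs no derived correction; combining this with the adjunction between $\pi^\ast$ and $R\pi_\ast$ and the tameness hypothesis $R\pi_\ast=\pi_\ast$ yields a natural isomorphism
\[
\Hom_{D(\XX)}(\EE\otimes\pi^\ast L^n, M)\cong\Hom_{D(X)}(L^n, F_\EE(M)).
\]

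For compactness I would first observe that $F_\EE$ respects small coproducts: the functor $\EE^\vee\otimes(-)$ is exact and commutes with colimits, while $\pi_\ast$ respects coproducts by the preceding lemma. Then, applying the identity above to a coproduct $\coprod_\lambda x_\lambda$, I pull $F_\EE$ inside the coproduct and use compactness of $L^n$ in $D(X)$ to pull the coproduct out of $\Hom_{D(X)}(L^n,-)$; reassembling via the identity shows that $\Hom_{D(\XX)}(\EE\otimes\pi^\ast L^n,-)$ commutes with small coproducts. Since shifts do not affect compactness, every object of $T$ is compact.

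For generation, suppose $M\in D(\XX)$ satisfies $\Hom_{D(\XX)}(t,M)=0$ for all $t\in T$; as $T$ already contains all shifts, the identity rewrites this as $\Hom_{D(X)}(L^n[m], F_\EE(M))=0$ for every $n,m$, and since $\{L^n[m]\}$ generates $D(X)$ we conclude $F_\EE(M)=0$. It remains to deduce $M=0$, and this is where the generating-sheaf hypothesis is essential. Because $\pi_\ast$ and $\EE^\vee\otimes(-)$ are exact, the vanishing of the complex $F_\EE(M)$ forces $F_\EE(\mathcal{H}^iM)=\pi_\ast(\EE^\vee\otimes\mathcal{H}^iM)=0$ for each cohomology sheaf $\mathcal{H}^iM$. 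For a single quasicoherent sheaf $N$ with $F_\EE(N)=0$, the defining property of a generating sheaf, namely surjectivity of the counit $\pi^\ast F_\EE(N)\otimes\EE\to N$, immediately gives $N=0$. Hence all $\mathcal{H}^iM$ vanish and $M=0$. Together with the compactness established above and Neeman's criterion that a set of compact objects is a generating set precisely when right-orthogonality to it is trivial, this shows $T$ is a generating set of compact objects, so $D(\XX)$ is compactly generated.

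I expect the only delicate point to be the passage in the last step from the vanishing of $F_\EE(M)$ in the derived category to the vanishing of $M$ itself: one must commute $F_\EE$ past cohomology, which is legitimate only because tameness makes $\pi_\ast$ exact and $\EE$ is locally free, before the generating property of $\EE$ can be applied sheaf by sheaf. Everything else is formal manipulation of the adjunction identity together with the two cited scheme-level results.
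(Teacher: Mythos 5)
Your argument is correct, but it follows a genuinely different route from the paper's. The paper's one-line proof transplants Neeman's argument from \cite[Ex 1.10]{MR1308405} to the stack itself: one reruns his compactness and generation arguments directly on $\XX$, the only new ingredient being that every quasicoherent sheaf on $\XX$ is a quotient of some $\EE^{\oplus t}\otimes\pi^\ast L^n$ (the stacky substitute for ampleness). You instead invoke Neeman's scheme-level theorem on the moduli space as a black box --- $L^n$ compact in $D(X)$, $\{L^n[m]\}$ a generating set --- and transport both properties through the natural isomorphism $\Hom_{D(\XX)}(\EE\otimes\pi^\ast L^n,M)\cong\Hom_{D(X)}(L^n,F_\EE(M))$, where $F_\EE=\pi_\ast(\EE^\vee\otimes(-))$ is Olsson--Starr's functor: compactness upstairs follows from coproduct-preservation of $F_\EE$ (the preceding lemma) plus compactness of $L^n$ downstairs, and generation follows because $F_\EE$ is exact (tameness) and kills no nonzero quasicoherent sheaf (the counit surjectivity $\pi^\ast F_\EE(N)\otimes\EE\twoheadrightarrow N$, which is precisely the definition of a generating sheaf). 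What your route buys: it is modular, it never repeats a cohomological or obstruction-theoretic argument on the stack, and it isolates exactly where each hypothesis enters; indeed the quotient property the paper invokes is itself a consequence of your two inputs (counit surjectivity together with ampleness of $L$ on $X$). What it costs: you need the derived adjunction $L\pi^\ast\dashv R\pi_\ast=\pi_\ast$ --- innocuous here, since you only evaluate it at the flat object $L^n$, and one can bypass it entirely via the projection formula plus exactness of $\pi_\ast$ --- and you implicitly use the equivalence $D(\QCoh(X))\simeq D_{qc}(X)$ for the quasi-compact separated scheme $X$ in order to quote Neeman's result for $D(X)$ as the paper defines it, a point the paper glosses over as well. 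Both caveats are standard, so your proof is complete as it stands.
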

\begin{proof}
  Same proof as in \cite[Ex 1.10]{MR1308405}, but using that every quasi coherent sheaf $\FF$ on $\XX$ can be written as a quotient of $\EE^{\oplus t}\otimes\pi^\ast L^n$ for large enough  integers  $t,n$.
\end{proof}
\begin{rem}
  The most important class of algebraic stacks $\XX$ satisfying conditions in the previous lemma is composed by projective stacks and more generally families of projective stacks; the second class we have in mind is given by stacks of the kind $[\Spec{B}/G]\to\Spec{A}$ where $G$ is a linearly reductive group scheme on $\Spec{A}$, which is the structure of a tame stack \'etale locally on its moduli space.
\end{rem}
\begin{prop}
  Let $\pi\colon\XX\to X$ be a tame stack with a polarization $\EE$,$L$. Let $f\colon \XX\to Y$ be a separated quasi-compact morphism to a scheme.  The functor $R f_\ast\colon D(\XX)\to D(X)$ has a right adjoint  $f^!\colon D(\YY)\to D(\XX)$. 
\end{prop}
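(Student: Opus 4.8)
The plan is to apply Neeman's version of Brown representability (\cite{MR1308405}, recalled above) directly in the derived category, exactly as in the scheme case. Here $Y$ is the scheme target, so the asserted adjoint goes $f^!\colon D(Y)\to D(\XX)$. The two inputs that make the argument run have already been assembled: $D(\XX)$ is compactly generated by Lemma \ref{lem:compactlygen}, and $Rf_\ast$ commutes with small coproducts by Corollary \ref{cor:commwithcoprod2}. Given these, the construction of $f^!$ is formal, so the real content of the statement lives in those two preceding results.

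Fix an object $G\in D(Y)$ and consider the contravariant functor
\[
H_G\colon D(\XX)^\circ\to\mathfrak{Ab},\qquad F\mapsto \Hom_{D(Y)}(Rf_\ast F,\, G).
\]
First I would check that $H_G$ is homological: $Rf_\ast$ is an exact functor of triangulated categories, so it carries distinguished triangles in $D(\XX)$ to distinguished triangles in $D(Y)$, and $\Hom_{D(Y)}(-,G)$ turns these into long exact sequences of abelian groups; hence $H_G$ is homological. Next I would verify the coproduct hypothesis of Neeman's theorem. For a small family $\{F_\lambda\}_{\lambda\in\Lambda}$, Corollary \ref{cor:commwithcoprod2} gives $Rf_\ast\coprod_\lambda F_\lambda\cong\coprod_\lambda Rf_\ast F_\lambda$, whence by the universal property of the coproduct in $D(Y)$
\begin{align*}
H_G\Bigl(\coprod_{\lambda\in\Lambda} F_\lambda\Bigr)
&=\Hom_{D(Y)}\Bigl(\coprod_{\lambda\in\Lambda} Rf_\ast F_\lambda,\, G\Bigr)\\
&\cong\prod_{\lambda\in\Lambda}\Hom_{D(Y)}(Rf_\ast F_\lambda,\, G)
=\prod_{\lambda\in\Lambda} H_G(F_\lambda),
\end{align*}
so the natural map is an isomorphism.

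Since $D(\XX)$ is compactly generated, Neeman's representability theorem now applies to $H_G$ and produces an object, which I will name $f^!G\in D(\XX)$, together with an isomorphism $\Hom_{D(\XX)}(F, f^!G)\cong\Hom_{D(Y)}(Rf_\ast F,\, G)$ natural in $F$. This is precisely the desired adjunction for the fixed $G$. Finally I would upgrade this objectwise statement to a functor: the representing objects $f^!G$ are unique up to canonical isomorphism, so by the standard parametrized-representability (Yoneda) argument the assignment $G\mapsto f^!G$ extends uniquely to a functor $f^!\colon D(Y)\to D(\XX)$ right adjoint to $Rf_\ast$, and being right adjoint to a triangulated functor it is automatically triangulated.

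At this stage there is no deep obstacle remaining: everything substantive, namely compact generation of $D(\XX)$ and the commutation of $Rf_\ast$ with coproducts, has already been established. The only points demanding care are the purely formal verification of Neeman's two hypotheses for $H_G$ and the passage from pointwise representability to an honest functor; the genuine difficulty of the theorem was discharged upstream in Lemma \ref{lem:compactlygen} and Corollary \ref{cor:commwithcoprod2}.
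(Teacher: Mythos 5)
Your proposal is correct and follows exactly the paper's route: the paper's own proof simply declares the result ``a formal consequence of Brown representability \cite[Thm 4.1]{MR1308405}, Lemma \ref{lem:compactlygen} and Corollary \ref{cor:commwithcoprod2},'' and your write-up just spells out the formal verification (that $H_G$ is homological, sends coproducts to products, and that pointwise representability upgrades to an adjoint functor). You also correctly resolved the typos in the statement, reading the target category as $D(Y)$ rather than $D(X)$ or $D(\YY)$.
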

\begin{proof}
  This is a formal consequence of Brown representability \cite[Thm 4.1]{MR1308405}, Lemma \ref{lem:compactlygen} and Corollary \ref{cor:commwithcoprod2}.
\end{proof}

\subsection{Existence for algebraic stacks with affine diagonal}

\begin{ass}
 We fix a base scheme $S$ which is separated. Every stack is algebraic quasi-compact  and with affine diagonal. The reason why we ask the diagonal to be affine is that we want a morphism from an affine scheme to a stack to be an affine morphism.  In the case of Deligne-Mumford stacks separated implies finite diagonal and in particular affine,  and affine diagonal is a weaker notion then separated as in the case of schemes. In the case of Artin stacks separated doesn't imply  affine diagonal so that the two concepts are quite unrelated. The class of Artin stacks with affine diagonal includes stacks that are not global quotients (\cite[Ex 3.12]{MR1844577}). A good class of Artin stacks satisfying these assumptions is the class of separated tame stacks.

We fix a universe $\mathcal{U}$ such that  lisse-\'etale sites of stacks we are using are $\mathcal{U}$-sites. The word \textit{small} means a set in this universe.

We will denote with $D(\XX)$ the derived category of quasicoherent complexes on an algebraic stack $\XX$. Every argument we use requires only quasicoherent acyclic resolutions, in this way we can avoid the use of $D_{qc}(\XX)$ (derived category of complexes with quasicoherent cohomology) which is definitely more slippery in the case of Artin stacks. 

In this section and from now on a generating set is a generating set of an abelian category as defined for instance in \cite[V.7 pg. 123]{MR0354798}.
\end{ass}

\begin{lem}
Let $\XX$ be an algebraic $S$-stack quasi-compact with affine diagonal, every morphism from an affine scheme to $\XX$ is affine.
\end{lem}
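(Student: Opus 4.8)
The plan is to realize $a\colon U\to\XX$, with $U=\Spec A$ affine, as a composite of two morphisms each of which is visibly affine, and then invoke stability of affine morphisms under composition. Concretely, I would factor $a$ through its graph as
$$U \xrightarrow{\ \Gamma_a\ } U\times_S\XX \xrightarrow{\ \pr_\XX\ } \XX,$$
where $\Gamma_a=(\Id_U,a)$ and $\pr_\XX$ is the second projection. Since affineness of a morphism is stable under base change and composition, it suffices to treat the two factors separately, and neither requires any finiteness beyond the standing hypotheses.

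For the graph $\Gamma_a$, the key observation is that it is a base change of the diagonal. Indeed, one checks directly that the square with top edge $\Gamma_a$, left leg $a$, bottom edge the diagonal $\Delta_{\XX/S}\colon\XX\to\XX\times_S\XX$, and right leg $a\times_S\Id_\XX\colon U\times_S\XX\to\XX\times_S\XX$ is Cartesian, that is $U\cong\XX\times_{\XX\times_S\XX}(U\times_S\XX)$. Because $\XX$ has affine diagonal by hypothesis, $\Delta_{\XX/S}$ is affine, and hence so is its base change $\Gamma_a$.

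For the projection $\pr_\XX$, note it is the base change along the structure morphism $\XX\to S$ of the morphism $U\to S$, so I am reduced to showing that $U\to S$ is affine. This is where separatedness of $S$ enters, and I expect it to be the only genuinely nonformal step: affineness of $U$ alone does not suffice. The argument is that for any affine open $W=\Spec B\subseteq S$, the fiber product $U\times_S W$ sits inside $U\times_{\Spec\Z}W=\Spec(A\otimes_{\Z}B)$ as a closed subscheme, this immersion being the pullback of $\Delta_{S}\colon S\to S\times_{\Z}S$, which is a closed immersion precisely because $S$ is separated; a closed subscheme of an affine scheme is affine, so $U\times_S W$ is affine and therefore $U\to S$ is affine.

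Putting the two factors together, $a=\pr_\XX\circ\Gamma_a$ is a composite of affine morphisms and is therefore affine; in particular it is representable, so no separate representability check is needed. The whole argument is formal once the diagonal is assumed affine, with the single substantive input being the reduction ``$S$ separated and $U$ affine $\Rightarrow$ $U\to S$ affine'' carried out above.
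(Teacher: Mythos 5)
Your proof is correct and takes essentially the same route as the paper's: both factor the morphism through its graph, observing that the graph $U\to U\times_S\XX$ is a base change of the affine diagonal and that the projection $U\times_S\XX\to\XX$ is a base change of the affine morphism $U\to S$ (where separatedness of $S$ enters). The only difference is bookkeeping: the paper verifies affineness of the composite by pulling it back to an affine atlas $X_0\to\XX$ and invoking faithfully flat descent, whereas you cite stability of affine morphisms under composition and base change directly, and you additionally spell out the step ``$S$ separated and $U$ affine $\Rightarrow$ $U\to S$ affine,'' which the paper merely asserts.
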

\begin{proof}
  Let $\spec{A}\to S$ be an affine $S$-scheme. Since $S$ is separated this morphism is affine. Choose $p_0\colon X_0\to \XX$ an affine atlas. Let $f\colon \spec{A}\to \XX$ be an $S$-morphism and we want to prove it is an affine morphism. It's an easy check that the two squares in the following diagram are $2$-cartesian:
  \begin{displaymath}
    \xymatrix{
      \spec{A}\times_{\XX}X_0 \ar[r]^-{\delta}\ar[d] &  \spec{A}\times_S X_0\ar[d] \ar[r]^-{q_1} &  X_0\ar@{->>}[d] \\
      \spec{A} \ar[r]^-{\Delta} &  \spec{A}\times_S\XX \ar[r]^-{p_1} &  \XX \\
}
  \end{displaymath}
The composition $p_1\circ\Delta$ in the diagram is $f$. The morphism $\delta$ is affine because the diagonal is affine and $q_1$ is affine because $\spec{A}\to S$ is affine. We can conclude that $f$ is affine using faithfully-flat descent.
\end{proof}
\begin{lem}\label{lem:commute-with-filt}
  Let $f\colon\XX\to\YY$ be a separated quasi-compact morphism of algebraic stacks. The functor $f_\ast\colon\text{QCoh}(\XX)\to\text{QCoh}(\YY)$ commutes with filtered colimits.
\end{lem}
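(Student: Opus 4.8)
The plan is to reduce the assertion to a statement about quasi-compact separated morphisms of \emph{schemes}, where it is classical, by two successive reductions: first base-change to an affine target, then descent along a smooth atlas of the source. First I would use that a quasicoherent sheaf on $\YY$, and the pushforward $f_\ast\FF$, are determined by their restrictions to a smooth atlas $Y_0\to\YY$ with $Y_0$ affine. Since smooth morphisms are flat, flat base change supplies a canonical isomorphism $(f_\ast\FF)\vert_{Y_0}\cong (f_{Y_0})_\ast(\FF\vert_{\XX_{Y_0}})$, where $\XX_{Y_0}=\XX\times_\YY Y_0$ and $f_{Y_0}\colon\XX_{Y_0}\to Y_0$ is again separated and quasi-compact. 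As restriction to $Y_0$ is exact and commutes with filtered colimits, it suffices to treat $f_{Y_0}$; in other words I may assume $\YY=Y$ is an affine scheme.

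Next I would choose an affine smooth atlas $p_0\colon X_0\to\XX$ and form its simplicial nerve $X_\bullet$, writing $f^i\colon X_i\to Y$ for the structural composition. Here the standing hypotheses do the work: because $S$ is separated, the affine scheme $X_0$ is affine over $S$, and because $\XX$ has affine diagonal each $X_i=X_0\times_\XX\cdots\times_\XX X_0$ is again an affine scheme; consequently each $f^i$ is a quasi-compact separated morphism of schemes. Exactly as in the left-exact sequence \eqref{eq:the-exact-sequence}, descent along $p_0$ yields, for every quasicoherent $\FF$ on $\XX$ with associated datum $\FF_\bullet$, a natural exact sequence
\[
0\to f_\ast\FF\to f^0_\ast\FF_0\to f^1_\ast\FF_1,
\]
exhibiting $f_\ast\FF$ as the kernel of $f^0_\ast\FF_0\to f^1_\ast\FF_1$. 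The restriction functors $\FF\mapsto\FF_i$ are pullbacks along the flat maps $X_i\to\XX$, hence exact and compatible with all colimits.

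I would then invoke the scheme-theoretic analogue: for a quasi-compact separated morphism of schemes, pushforward of quasicoherent sheaves commutes with filtered colimits; this is the filtered-colimit counterpart of the coproduct statement used in the lemma above. Given a filtered system $\{\FF_\alpha\}$ with colimit $\FF$, it follows that $\varinjlim_\alpha f^i_\ast(\FF_\alpha)_i\cong f^i_\ast\FF_i$, using that $(-)_i$ commutes with colimits and that $f^i_\ast$ does so by the scheme case. Since $\QCoh(\XX)$ satisfies AB5 — filtered colimits of sheaves of modules on the lisse-\'etale site are exact, by the same reasoning that gave AB4 in the lemma on coproducts above — applying $\varinjlim_\alpha$ to the exact sequences for the $\FF_\alpha$ preserves exactness and identifies $\varinjlim_\alpha f_\ast\FF_\alpha$ with $\coker$-free the kernel of the map $f^0_\ast\FF_0\to f^1_\ast\FF_1$. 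This is precisely the kernel computing $f_\ast\FF$, and the identification is realized by the natural comparison map, so $\varinjlim_\alpha f_\ast\FF_\alpha\to f_\ast\FF$ is an isomorphism.

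The assembly in the last step is formal once the reductions are in place; the part that demands care, and where the hypotheses are genuinely used, is the passage to schemes. I must check that $S$ separated together with affine diagonal forces the nerve terms $X_i$ to be affine schemes and the $f^i$ to be quasi-compact and separated, so that the scheme-level theorem truly applies, and that the flat base-change isomorphism of the first paragraph is compatible with the formation of filtered colimits. Granting these points, exactness of filtered colimits in $\QCoh(\XX)$ and uniqueness of kernels deliver the claim.
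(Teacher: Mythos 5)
Your argument is correct and is essentially the paper's own proof written out in full: the paper disposes of this lemma in one line (``it follows from the analogous statement for schemes using [LMB, Lem 12.6.2]''), and that citation compresses exactly the reduction you spell out — flat base change to a smooth affine atlas of $\YY$, then the descent kernel sequence $0\to f_\ast\FF\to f^0_\ast\FF_0\to f^1_\ast\FF_1$ over an affine nerve of the source (the same device used in the paper's earlier coproduct lemma), the scheme-level theorem, and AB5. One cosmetic remark: the stray phrase ``$\coker$-free'' near the end should simply read ``with the kernel of $f^0_\ast\FF_0\to f^1_\ast\FF_1$.''
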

\begin{proof}
  It follows from the analogous statement  for schemes using \cite[Lem 12.6.2]{LMBca}.
\end{proof}
\begin{lem}\label{lem:inject-qcoh}
  Let $\XX$ be a quasi-compact algebraic stack and $F$ a cartesian sheaf. Let $p_0\colon X_0\to \XX$ be an affine atlas. The natural morphism $F\to {p_0}_\ast p_0^\ast F$ is injective.
\end{lem}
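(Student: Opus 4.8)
The plan is to identify the map $F\to (p_0)_\ast p_0^\ast F$ with the unit of the adjunction $p_0^\ast\dashv (p_0)_\ast$ and then to run a purely formal split-monomorphism argument, using that $p_0$ is a faithfully flat affine morphism. First I would record the two properties of $p_0$ that make everything work. Being a smooth presentation, $p_0$ is flat and surjective, hence faithfully flat; and since $\XX$ has affine diagonal and $X_0$ is affine, the previous lemma shows that $p_0$ is affine. Affineness guarantees that $(p_0)_\ast$ preserves quasi-coherence and is an honest right adjoint of $p_0^\ast$ on $\QCoh(\XX)$ (no quasi-coherator correction is needed), while flatness makes $p_0^\ast$ exact and surjectivity makes it conservative in the sense that $p_0^\ast G=0$ forces $G=0$.

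Next I would reduce injectivity of the unit $\eta_F\colon F\to (p_0)_\ast p_0^\ast F$ to injectivity of its pullback. Writing $K=\ker\eta_F$, exactness of $p_0^\ast$ gives $p_0^\ast K=\ker(p_0^\ast\eta_F)$, and faithful flatness then yields $K=0$ as soon as $p_0^\ast\eta_F$ is a monomorphism. So it suffices to prove that $p_0^\ast\eta_F$ is injective.

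For this I would invoke the triangle identity of the adjunction. Denoting by $\epsilon$ the counit $p_0^\ast(p_0)_\ast\Rightarrow\mathrm{id}$, the identity $\epsilon_{p_0^\ast F}\circ p_0^\ast(\eta_F)=\mathrm{id}_{p_0^\ast F}$ exhibits $p_0^\ast\eta_F$ as a split monomorphism, with retraction the counit; in particular it is injective. Combined with the previous paragraph this proves the lemma. The same conclusion can be reached more concretely: by flat base change along the $2$-cartesian square defined by $p_0$, the sheaf $p_0^\ast(p_0)_\ast p_0^\ast F$ becomes $(pr_1)_\ast pr_2^\ast(p_0^\ast F)$ on the affine scheme $X_1=X_0\times_\XX X_0$, and injectivity of the resulting map reduces, via the cartesian structure of $F$, to the classical statement that $M\to M\otimes_R S$ is injective for a faithfully flat ring map $R\to S$.

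The only genuine point to be careful about — and the step I expect to be the main obstacle — is the very first one: verifying that, on quasi-coherent sheaves over the lisse-\'etale site of $\XX$, the pair $(p_0^\ast,(p_0)_\ast)$ really is an adjunction with a well-defined unit and counit. This is precisely where the affine-diagonal hypothesis is used, through the lemma identifying the atlas $p_0$ as an affine morphism; once that is in place the remaining arguments are formal and require no further input.
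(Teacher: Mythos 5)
Your reduction has the right shape, and its first two steps are indeed formal: $p_0^\ast$ is exact (for the smooth morphism $p_0$, pullback of lisse-\'etale sheaves is just restriction), and the triangle identity $\epsilon_{p_0^\ast F}\circ p_0^\ast(\eta_F)=\mathrm{id}_{p_0^\ast F}$ does exhibit $p_0^\ast\eta_F$ as a split monomorphism. The genuine gap is the remaining step, ``surjectivity makes $p_0^\ast$ conservative'', and it is not a side issue: it is exactly the content of the lemma. On the lisse-\'etale site the coverings are \emph{\'etale}, so the smooth surjection $p_0\colon X_0\to\XX$ is \emph{not} a covering for the topology, and faithful flatness of $p_0$ gives nothing formal about sheaves on this site. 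Concretely, conservativity of $p_0^\ast$ on lisse-\'etale sheaves amounts to the statement that every object $U\to\XX$ of the site admits an \'etale covering $V\to U$ whose composite to $\XX$ factors through $X_0$; that is a geometric fact, namely \cite[Cor 17.16.3]{MR0238860} (a smooth surjection admits sections \'etale-locally), and it is precisely the trick the paper's proof uses: pull $p_0$ back to $\overline{p}\colon U_0=U\times_\XX X_0\to U$, refine this smooth cover by an \'etale cover $V\to U$ factoring through $U_0$, and conclude by the sheaf axiom for the \'etale covering $V\to U$. Your proof never supplies this input, so as written it begs the question. Nor can you bypass it by descent theory for quasi-coherent sheaves: the kernel $K=\ker\eta_F$ is the kernel of a map into $(p_0)_\ast p_0^\ast F$, which is in general not cartesian (its transition maps along non-flat morphisms of the site are base-change maps, not isomorphisms), so $K$ is a priori just a lisse-\'etale sheaf, and the faithfulness of $p_0^\ast$ on $\QCoh(\XX)$ coming from descent along the presentation does not apply to it.

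Relatedly, you misplace the delicate point. The existence of the adjunction $(p_0^\ast,(p_0)_\ast)$ with its unit and counit is unproblematic, since composition with the smooth morphism $p_0$ gives a continuous and cocontinuous functor of lisse-\'etale sites; the affine-diagonal hypothesis is not what makes the unit exist. Affineness of $p_0$ is needed by the paper later, to make ${p_0}_\ast p_0^\ast$ exact so that the resolution (\ref{eq:cech-like-res}) is $f_\ast$-acyclic. Note finally that on a Deligne--Mumford stack, where the atlas may be taken \'etale, your argument does close up --- an \'etale surjection is a covering, so conservativity is the sheaf axiom --- which is exactly the easy case the paper's proof dismisses in its first sentence; the lisse-\'etale case is the one needing the extra idea, and that idea is missing from your proof.
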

\begin{proof}
  In the case of Deligne-Mumford stacks we can choose $p_0$ to be \'etale and the result follows immediately from the definition of sheaf, but if we work with the lisse-\'etale site there is actually something to prove. The map is injective if and only if it is injective on every object $U\to\XX$ of the lisse-\'etale site. Denote with $U_0$ the pullback of $X_0$ to $U$ with $\overline{p}\colon U_0\to U$ the pullback of $p_0$ and with $F_U$ the restriction of $F$ to $U$. We have to prove that $F_U\to \overline{p}_\ast\overline{p}^\ast F_U$ is injective. Unfortunately the map $\overline{p}$ is not a covering in lisse-\'etale so the result doesn't follow from the definition of sheaf. We can fix the problem using the trick in \cite[Ex 2.51]{MR2223406}. According to \cite[Cor 17.16.3]{MR0238860}  we can find an \'etale cover $V\to U$ factorizing through $U_0$. We can pullback the map $\overline{p}$ to an \'etale map $\overline{q}\colon V_0\to V$. In this way we have produced an \'etale cover of $U$ that refines the smooth cover $U_0$. The problem is now reduced to the \'etale situation using faithfully flatness of the cover $V\to U$. 
\end{proof}
\begin{lem}\label{lem:small-gen}
  Let $\XX$ be a quasi-compact algebraic stack. The set of locally finitely presented sheaves (up to isomorphisms) on $\XX$ is a small generating set for the category of quasicoherent sheaves.
\end{lem}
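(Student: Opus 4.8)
The plan is to reduce everything to an affine atlas and to descent, exactly as the preceding three lemmas are arranged to permit. First I would choose a smooth presentation $p_0\colon X_0\to\XX$ with $X_0$ affine; this is possible because $\XX$ is quasi-compact, so a quasi-compact atlas may be covered by finitely many affines and replaced by their disjoint union. By the lemma above that every morphism from an affine scheme to $\XX$ is affine, $p_0$ is an affine morphism; hence $X_1=X_0\times_\XX X_0$ is affine and the projections are affine as well. Consequently $p_{0\ast}$ is exact on quasicoherent sheaves and, by Lemma \ref{lem:commute-with-filt}, commutes with filtered colimits, while by Lemma \ref{lem:inject-qcoh} the unit $F\to p_{0\ast}p_0^\ast F$ is injective for every cartesian $F$. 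Writing $X_0=\Spec A$ and $X_1=\Spec B$, a quasicoherent sheaf on $\XX$ is the same datum as an $A$-module equipped with a descent isomorphism over $B$.

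For smallness I would argue directly from this description. A locally finitely presented sheaf $F$ pulls back along the faithfully flat $p_0$ to a finitely presented $A$-module $M=p_0^\ast F$, and $F$ is recovered from $M$ together with its descent isomorphism over $B$; since finite presentation is fppf-local, conversely any such pair yields a locally finitely presented sheaf. Now isomorphism classes of finitely presented $A$-modules form a set in the fixed universe $\mathcal{U}$, and for each such module the possible descent data form a set, so the isomorphism classes of locally finitely presented sheaves on $\XX$ form a small set.

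For the generating property I would realise an arbitrary quasicoherent $F$ as a filtered colimit of finitely presented sheaves. View $F\hookrightarrow p_{0\ast}M$ via the injective unit, with $M=p_0^\ast F$. On the affine $X_0$ the module $M$ is the directed union of its finitely generated submodules $N_\lambda$, and since $p_{0\ast}$ is exact and commutes with filtered colimits, $p_{0\ast}M=\varinjlim_\lambda p_{0\ast}N_\lambda$ as a directed union of subsheaves. Setting $F_\lambda:=F\cap p_{0\ast}N_\lambda$ inside $p_{0\ast}M$, and using that filtered colimits in $\text{QCoh}(\XX)$ are exact and commute with finite intersections, I get $\varinjlim_\lambda F_\lambda=F\cap p_{0\ast}M=F$. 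A short diagram chase with the counit shows that $p_0^\ast F_\lambda$ is a submodule of $N_\lambda$, so each $F_\lambda$ is a quasicoherent subsheaf of $F$ whose pullback is finitely generated. Once each $F_\lambda$ is recognised as (a filtered colimit of) finitely presented sheaves, this exhibits $F$ as a filtered colimit of finitely presented sheaves, so the canonical morphism from the coproduct of all finitely presented sheaves mapping to $F$ onto $F$ is an epimorphism; by the equivalent form of the generating condition in \cite[V.7]{MR0354798} the locally finitely presented sheaves then generate.

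The main obstacle is precisely this last recognition step: over the possibly non-noetherian ring $A$ a finitely generated submodule of a finitely presented module need not be finitely presented, and a finitely presented submodule of $M$ need not be stable under the descent isomorphism, so it need not descend to a sheaf on $\XX$. When $\XX$ is noetherian the difficulty disappears, since then $A$ is coherent, every $p_0^\ast F_\lambda$ is automatically finitely presented, finite presentation descends along $p_0$, and $F$ is literally the directed union of its finitely presented subsheaves. In the general quasi-compact case I would instead approximate the descent isomorphism over $B$: because an isomorphism of finitely presented $B$-modules is determined by finitely much data and becomes invertible at a finite stage, each $F_\lambda$ can be written as a filtered colimit of sheaves that genuinely descend and are finitely presented on $\XX$, which closes the argument. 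Together with the smallness established above, this proves that the locally finitely presented sheaves form a small generating set.
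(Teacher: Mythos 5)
Your smallness argument is correct, though it runs along a different track from the paper's: you encode a locally finitely presented sheaf as a pair consisting of a finitely presented $A$-module and a descent isomorphism over $B$, and observe that both ingredients range over small sets. The paper instead embeds any locally finitely presented $F$ into ${p_0}_\ast p_0^\ast F$ via Lemma \ref{lem:inject-qcoh}, notes that $p_0^\ast F$ is then a quotient of one of the countably many sheaves $\bigoplus_i \OO_{U_i}^{\oplus n_i}$, and concludes from the fact that sheaves in a $\mathcal{U}$-topos are well-powered and co-well-powered. Both routes are sound; yours trades the topos-theoretic input for the equivalence between quasicoherent sheaves on the lisse-\'etale site and descent data on an affine presentation.

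The generation half, however, has a genuine gap, located exactly where you flag it, and your proposed repair does not close it. The subsheaves $F_\lambda = F\cap {p_0}_\ast N_\lambda$ do form a directed union exhausting $F$, and indeed $p_0^\ast F_\lambda\subseteq N_\lambda$; but over a non-noetherian $A$ this does not even make $p_0^\ast F_\lambda$ finitely generated (a submodule of a finitely generated module need not be), so your interim claim that each $F_\lambda$ has finitely generated pullback is unjustified, and the $F_\lambda$ are not known to be locally finitely presented. The final step --- ``approximate the descent isomorphism\ldots becomes invertible at a finite stage\ldots which closes the argument'' --- is not a derivation but a restatement of precisely what must be proven: that every quasicoherent sheaf on $\XX$ (here $F_\lambda$) is a filtered colimit of locally finitely presented ones. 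If you could establish that for each $F_\lambda$, you could establish it for $F$ directly, so the intersection construction buys nothing; and the sketch cannot be run on $F_\lambda$ anyway, since it presupposes the finite presentation that $p_0^\ast F_\lambda$ lacks. What is actually required is to build a filtered system of pairs $(M_i,\sigma_i)$ with $M_i$ finitely presented and $\sigma_i$ an honest descent datum --- invertible and satisfying the cocycle identity at each finite stage --- whose colimit recovers $p_0^\ast F$ with its descent datum; arranging this is the technical core of the statement, and it is the step the paper delegates to the proof of \cite[15.4]{LMBca} ``with some modification''. As written, your last paragraph asserts this conclusion rather than proving it.
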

\begin{proof}
  First of all we prove that locally finitely presented sheaves are a small set.  The proof is a simple variation of a classical argument of Kleiman \cite[Thm 4]{MR578050} or dually Lipman \cite[4.1.3.1]{lipman1960}. Let $\XX$ be a quasi-compact algebraic stack. The set of locally finitely presented sheaves (up to isomorphisms) on $\XX$ is a small generating set for the category of quasicoherent sheaves on $\XX$. First of all we observe that sheaves of a $U$-topos are always well-powered (and co-well-powered) \cite[Cor 0.2.7.vii]{Gcna-1971}, it means that for every sheaf in the topos the set of every subobject (or quotient) up to isomorphisms is a small set. Let $p_0:\coprod_i U_i\to \XX$ be an affine atlas. According to Lemma \ref{lem:inject-qcoh} every quasicoherent sheaf $F$ is isomorphic to a subobject of ${p_0}_\ast p_0^\ast F$. Assume that $F$ is locally finitely presented, than $p_0^\ast F$ is also locally finitely presented. for this reason we can state that $p_0^\ast F$ is a quotient  of $\bigoplus_i \OO_{U_i}^{\oplus n_i}$ for integers $n_i$. Sheaves of this kind are numerable and using that sheaves on $\XX$ are well-powered we can conclude that locally finitely presented are a small set. To prove that locally finitely presented are a generating set for quasicoherent sheaves we have just to observe that every quasicoherent sheaf is a filtered colimit of its locally finitely presented subobjects (same proof as in \cite[15.4]{LMBca} with some modification). 
\end{proof}
\begin{prop}
  Let $\XX$ be an algebraic stack, the category of quasicoherent sheaves on $\XX$ has enough injectives and $K$-injectives.
\end{prop}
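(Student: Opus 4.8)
The plan is to reduce everything to the observation that $\text{QCoh}(\XX)$ is a Grothendieck abelian category --- an abelian category with exact filtered colimits and a generator --- and then to quote the classical existence theorems for injective and $K$-injective resolutions that hold in any such category.

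First I would dispose of the easy points. The category $\text{QCoh}(\XX)$ is abelian, its kernels and cokernels being computed inside the ambient category $\text{Mod}(\OO_\XX)$ of sheaves of $\OO_\XX$-modules on the lisse-\'etale site and remaining quasicoherent there. A generator is handed to us directly by Lemma~\ref{lem:small-gen}: the direct sum of the small set of isomorphism classes of locally finitely presented sheaves is a single generator for $\text{QCoh}(\XX)$.

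The substance of the argument, and what I expect to be the main obstacle, is the verification of the AB5 axiom, \ie exactness of filtered colimits in $\text{QCoh}(\XX)$. My approach would be to compare with the ambient module category. The category $\text{Mod}(\OO_\XX)$ is itself a Grothendieck category --- this strengthens to AB5 the AB4 property of sheaves of modules on a site that was recalled earlier, and follows from the corresponding facts for presheaves together with exactness of sheafification --- so its filtered colimits are exact. It therefore suffices to show that $\text{QCoh}(\XX)$ is closed under the filtered colimits formed in $\text{Mod}(\OO_\XX)$. For this I would pull back along an affine atlas $p_0\colon X_0\to\XX$: as a left adjoint $p_0^\ast$ preserves all colimits, so a filtered colimit of quasicoherent sheaves restricts on the scheme $X_0$ to a filtered colimit of quasicoherent sheaves and is quasicoherent there; moreover the descent isomorphism survives in the colimit, since the pullbacks along the projections of a groupoid presentation $X_1\rightrightarrows X_0$ likewise preserve colimits and a filtered colimit of isomorphisms is again an isomorphism. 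Hence the colimit is once more cartesian and quasicoherent, the inclusion $\text{QCoh}(\XX)\hookrightarrow\text{Mod}(\OO_\XX)$ preserves filtered colimits, and their exactness is inherited from the module category. The genuinely delicate point is exactly this behaviour of quasicoherence and cartesianness under colimits on the lisse-\'etale site; the rest is formal.

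Granting that $\text{QCoh}(\XX)$ is Grothendieck, the two assertions become citations. Enough injectives is Grothendieck's classical theorem that every Grothendieck abelian category has enough injectives, which one may also see directly from the generator by a transfinite small-object construction. The existence of a $K$-injective resolution for an arbitrary, possibly unbounded, complex is the theorem of Serp\'e and of Alonso Tarr\'io--Jerem\'ias L\'opez--Souto Salorio, valid in any Grothendieck category; it is the deeper of the two inputs and really uses the full AB5 structure rather than merely the generator. I would close by noting that all the resolutions produced this way live inside $\text{QCoh}(\XX)$ itself, which is precisely what the later arguments of the paper require.
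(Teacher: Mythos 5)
Your proof is correct and follows essentially the same route as the paper: establish that $\text{QCoh}(\XX)$ is a Grothendieck category (cocomplete, AB5, small generator via Lemma~\ref{lem:small-gen}) and then invoke the standard existence theorems for injectives and for $K$-injective resolutions (Alonso Tarr\'io--Jerem\'ias L\'opez--Souto Salorio, as the paper cites). The only difference is that you spell out the AB5 verification on the lisse-\'etale site, which the paper asserts with a citation to \cite[Lem 13.2.5]{LMBca}; your argument there is sound precisely because every object of the lisse-\'etale site is smooth, hence flat, over $\XX$.
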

\begin{proof}
  The category of quasicoherent sheaves on the lisse-\'etale site is cocomplete \cite[Lem 13.2.5]{LMBca}, satisfies $AB5$ and has a small generating set according to Lemma \ref{lem:small-gen} which implies it is a Grothendieck category and in particular it has enough injectives and $K$-injectives \cite[Thm 5.4]{alonso}. 
\end{proof}
For the definition of a $K$-injective we refer the reader to the paper where they have been introduced \cite[Def pg 124]{MR932640}.
At this point the only missing ingredient is a functorial quasicoherent resolution that is $f_\ast$-acyclic and commutes with filtered colimits. We achieve the first requirement using a variation of the \v{C}ech resolution. Let $\XX$ be an algebraic stack. We choose a smooth affine atlas $p_0\colon X_0\to\XX$, which is possible since $\XX$ is quasi-compact. Given a quasicoherent sheaf $\FF$ we associate to it ${p_0}_\ast p_0^\ast\FF$. Having assumed that $\XX$ has affine diagonal the morphism $p_0$ is affine and the functor ${p_0}_\ast p_0^\ast$ is exact. We define $\mathcal{A}$ to be the set of quasi coherent sheaves ${p_0}_\ast p_0^\ast\FF$ where $\FF$ is quasicoherent. It follows from Lemma \ref{lem:inject-qcoh} that every $\FF$ has a natural injection $\FF\to{p_0}_\ast p_0^\ast\FF $ so we can produce a resolution of $\FF$ in this way: define $K_0$ to be $\FF$ and the sheaf $K_i$ the cokernel $0\to K_{i-1}\to {p_0}_\ast p_0^\ast K_{i-1} \to K_i\to 0$. The resulting resolution is:
\begin{equation} \label{eq:cech-like-res}
0\to \FF\to {p_0}_\ast p_0^\ast\FF\to {p_0}_\ast p_0^\ast K_1 \to  {p_0}_\ast p_0^\ast K_2 \ldots
\end{equation}
We will denote this resolution of $\FF$ with $C^\bullet(F)$
\begin{lem}
  The objects in $\mathcal{A}\subset\QCoh(\XX)$ are $f_\ast$-acyclic, the set $\mathcal{A}$ is closed under finite direct sums and every object in $\QCoh(\XX)$ can be injected in an object of $\mathcal{A}$. Resolutions with objects in $\mathcal{A}$ can be used to compute $Rf_\ast: D^+(\XX)\to D^+(\YY)$. 
\end{lem}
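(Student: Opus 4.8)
The plan is to dispatch the three structural properties quickly and to concentrate on the $f_\ast$-acyclicity, which is the only substantial point. For the embedding property I would simply invoke Lemma~\ref{lem:inject-qcoh}: the natural map $\FF\to{p_0}_\ast p_0^\ast\FF$ is a monomorphism and its target lies in $\mathcal{A}$ by definition, so every quasicoherent sheaf embeds into an object of $\mathcal{A}$. Closure under finite direct sums is formal: $p_0^\ast$, being a left adjoint, commutes with all coproducts, and ${p_0}_\ast$, being a right adjoint, commutes with finite products, which coincide with finite direct sums in an abelian category; hence ${p_0}_\ast p_0^\ast\FF\oplus{p_0}_\ast p_0^\ast\GG\cong{p_0}_\ast p_0^\ast(\FF\oplus\GG)\in\mathcal{A}$.

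For the acyclicity I would factor $f_\ast$ through the atlas $p_0$. Since $\XX$ has affine diagonal, $p_0$ is affine, so ${p_0}_\ast$ is exact and $R^q{p_0}_\ast=0$ for $q>0$. Moreover $p_0$ is flat, so $p_0^\ast$ is exact, and therefore its right adjoint ${p_0}_\ast$ preserves injectives; this is exactly the hypothesis needed for the Grothendieck composite-functor spectral sequence associated to $f\circ p_0$, which then collapses to $R^p f_\ast({p_0}_\ast A)\cong R^p(f p_0)_\ast A$ for every quasicoherent $A$ on $X_0$. Taking $A=p_0^\ast\FF$ yields $R^p f_\ast({p_0}_\ast p_0^\ast\FF)\cong R^p(f p_0)_\ast(p_0^\ast\FF)$. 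Now $f p_0\colon X_0\to\YY$ is a morphism from an affine scheme to a stack with affine diagonal, hence is affine by the lemma asserting that every morphism from an affine scheme to such a stack is affine; therefore $(f p_0)_\ast$ is exact and the right-hand side vanishes for $p>0$, so every object of $\mathcal{A}$ is $f_\ast$-acyclic. I expect this factorization --- trading $f_\ast$ on the stack for the pushforward along an affine morphism out of an honest scheme --- to be the crux, and the only point needing care is the justification that ${p_0}_\ast$ preserves injectives so that the spectral sequence takes the stated form.

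Finally, to see that $\mathcal{A}$-resolutions compute $Rf_\ast$ on $D^+(\XX)$, I would invoke the standard acyclic-resolution principle. For a single sheaf $\FF$ the explicit resolution (\ref{eq:cech-like-res}) already has all of its terms in $\mathcal{A}$, hence in a class of $f_\ast$-acyclic objects, so $R^i f_\ast\FF\cong H^i(f_\ast C^\bullet(\FF))$. To extend this to a bounded-below complex one applies (\ref{eq:cech-like-res}) in each degree, forms the associated double complex, and passes to the total complex; closure of $\mathcal{A}$ under finite direct sums, together with the fact that finite direct sums of $f_\ast$-acyclic objects are again $f_\ast$-acyclic, guarantees that the totalization is assembled from $f_\ast$-acyclic objects and therefore computes $Rf_\ast$ of the complex. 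Thus the substantive content of the statement is precisely the acyclicity established above; the remaining assertions are formal once Lemma~\ref{lem:inject-qcoh} and the affineness of morphisms out of affine schemes are in hand.
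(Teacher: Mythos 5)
Your proof is correct and follows essentially the same route as the paper: the crux in both is the composite-functor argument $R^if_\ast({p_0}_\ast p_0^\ast\FF)\cong R^i(f\circ p_0)_\ast(p_0^\ast\FF)$, enabled by ${p_0}_\ast$ preserving injectives (exact left adjoint $p_0^\ast$) and being exact ($p_0$ affine by the affine-diagonal hypothesis), and then the vanishing because $f\circ p_0$ is a morphism from an affine scheme to a stack with affine diagonal, hence affine. The paper states this more tersely and leaves the embedding, direct-sum, and resolution assertions implicit, but the content is identical.
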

\begin{proof}
Since ${p_0}_\ast$ has an exact left adjoint it maps injectives to injectives. Given ${p_0}_\ast p_0^\ast F$ we can compute its higher derived images using the theorem on the composition of derived functors: 
\begin{displaymath}
R^if_\ast({p_0}_\ast p_0^\ast F)=R^i(f\circ{p_0})_\ast p_0^\ast F= 0\quad \text{for $i>0$}
\end{displaymath}
where we have used that ${p_0}_\ast$ is exact and that $(f\circ{p_0})_\ast$ is exact.
\end{proof}
\begin{rem}
\begin{enumerate}
\item It follows from Lemma \ref{lem:commute-with-filt} that the functor ${p_0}_\ast p_0^\ast$ commutes with filtered colimits.
\item Resolutions with objects in $\mathcal{A}$ are  $f_\ast$-acyclic and functorial but in general  not finite; for this reason we  have to work in bounded below derived category. The situation cannot be improved at this level of generality since we do not expect an Artin stack to have finite cohomological dimension so that the derived push-forward is defined in bounded below derived category. We will improve the result when the morphism $f$ is cohomologically affine (\cite[Def 3.1]{alper-2008} it is quasi-compact and the push-forward is exact on quasicoherent sheaves) or when the stack $\XX$ has a good moduli space (\cite[Def 4.1]{alper-2008} the morphism $\pi\colon \XX\to X$ is cohomologically affine and $\pi_\ast\OO_\XX\cong \OO_X $). 
\end{enumerate}
\end{rem}
\begin{thm}\label{thm:deligne-rep}
  Let $f\colon\XX\to\YY$ be a separated quasi-compact morphism of algebraic stacks. The functor $Rf_\ast: D^+(\XX)\to D^+(\YY)$ has a right adjoint $f^!\colon D^{+}(\YY)\to D^+(\XX)$. 
\end{thm}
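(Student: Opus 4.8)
The plan is to transport Deligne's argument \cite{MR0222093} to the present setting, with the role of the quasi-compact scheme played by the Grothendieck category $\QCoh(\XX)$. All three ingredients are already available: a representability criterion for contravariant functors on $\QCoh(\XX)$, the functorial $f_\ast$-acyclic resolution $C^\bullet(-)$ which commutes with filtered colimits, and enough injectives on $\YY$. First I would fix $G\in D^+(\YY)$ together with a bounded-below injective resolution $I^\bullet$ of $G$; the construction then produces a complex $f^!I^\bullet$ out of a double family $f_q^!I^p$, and the adjunction is read off from an identification of Hom-complexes.

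The representability criterion I would use is the Grothendieck-category analogue of the Lemma quoted at the start of the section. Since $\QCoh(\XX)$ is a Grothendieck category with the small generating set of Lemma \ref{lem:small-gen}, the (special) adjoint functor theorem \cite{MR0354798} applies, so any contravariant functor $\QCoh(\XX)^\circ\to\mathfrak{Ab}$ which is left exact and carries filtered colimits to filtered limits --- equivalently, carries all small colimits to limits --- is representable. This is exactly the stacky replacement for Deligne's lemma.

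Next I would verify representability bidegreewise. For fixed $p,q$ consider
\[
\FF\longmapsto \Hom_\YY\bigl(f_\ast C^q(\FF),I^p\bigr).
\]
The functor $\FF\mapsto f_\ast C^q(\FF)$ is exact, because $C^q$ is built from the exact functor ${p_0}_\ast p_0^\ast$ together with cokernels and takes values in $\mathcal{A}$, so its values are $f_\ast$-acyclic and $f_\ast$ stays exact on them; it commutes with filtered colimits by Lemma \ref{lem:commute-with-filt} together with the fact that $C^\bullet$ commutes with filtered colimits. Composing with the exact contravariant functor $\Hom_\YY(-,I^p)$ (exact since $I^p$ is injective), which sends colimits to limits, the displayed functor is left exact and carries filtered colimits to filtered limits; hence it is representable by a quasi-coherent sheaf $f_q^!I^p$, and as $\Hom_\XX(-,f_q^!I^p)\cong\Hom_\YY(f_\ast C^q(-),I^p)$ is exact, $f_q^!I^p$ is injective. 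The differentials of $I^\bullet$ and of $C^\bullet$ induce, through Yoneda, maps organizing the $f_q^!I^p$ into a double complex of injectives, whose total complex is $f^!I^\bullet$. By construction
\[
\Hom^\bullet_\XX(\FF,f^!I^\bullet)\;\cong\;\Hom^\bullet_\YY\bigl(f_\ast C^\bullet(\FF),I^\bullet\bigr),
\]
and the right-hand side computes $R\Hom_\YY(Rf_\ast\FF,G)$. Passing to $H^0$ yields the adjunction isomorphism for a single sheaf $\FF$; resolving an arbitrary object of $D^+(\XX)$ by the same $C^\bullet$ extends it to the derived category, and uniqueness of adjoints upgrades $G\mapsto f^!G$ to a functor.

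The hard part will be the passage back to $D^+(\XX)$. Unlike the \Cech resolution attached to a finite affine cover of a scheme, the resolution $C^\bullet$ is genuinely infinite, reflecting the possibly infinite cohomological dimension of $f_\ast$; consequently the double complex $f_q^!I^p$ is unbounded in the $q$-direction and its total complex is \emph{a priori} unbounded below. The two points I would need to secure are that this total complex of injectives still computes $R\Hom$ (a $K$-injectivity and convergence issue for the relevant spectral sequence) and, above all, that the object $f^!G$ has cohomology bounded below, so that it lands in $D^+(\XX)$ and represents the right adjoint of the restriction of $Rf_\ast$ to $D^+$. This is precisely the place where controlling the tail of the infinite resolution is essential, and where the restriction to bounded-below derived categories is forced at this level of generality.
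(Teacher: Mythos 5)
Your first three paragraphs reproduce, step for step, the paper's own proof: representability of $\Hom_\YY(f_\ast C^p(\cdot),I^q)$ via the special adjoint functor theorem (using Lemma \ref{lem:small-gen} and Lemma \ref{lem:commute-with-filt}), injectivity of the representing objects deduced from exactness, the double complex of injectives, and the identification of Hom-complexes. So up to that point you are exactly where the paper is. The problem is the step you flag in your last paragraph and leave open: the paper does not close it either --- it simply declares the double complex to be ``in the first quadrant'' and asserts that its total complex is quasi-isomorphic to $f^!G$. What the construction honestly yields is an isomorphism $\Hom_{K(\XX)}(\FF,T)\cong\Hom_{D(\YY)}(Rf_\ast\FF,G)$, where $T$ is the product-total complex with $f^{-p}I^q$ placed in degree $q-p$. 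To get an adjunction in the derived category one must further identify $\Hom_{K(\XX)}(\FF,T)$ with $\Hom_{D(\XX)}(\FF,T)$, which is automatic only when $T$ is $K$-injective, for instance when $T$ is bounded below. Since the resolution $C^\bullet$ is infinite, $T$ is unbounded below and this identification genuinely fails; in Deligne's scheme case the \v{C}ech resolution attached to a finite affine cover is finite, which is precisely why his argument never leaves $K^+$.

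Moreover, this gap cannot be repaired at the stated level of generality, because the statement itself is false. Take $\XX=B(\Z/p)$ over $\mathbb{F}_p$ --- the non-tame trivial gerbe of Bunke's example quoted later in the paper; it is a quasi-compact separated Deligne--Mumford stack with finite, hence affine, diagonal --- let $f\colon\XX\to\YY=\Spec{\mathbb{F}_p}$ be the structure morphism and take $G=\OO_\YY$. If $S=f^!\OO_\YY$ existed in $D^+(\XX)$, then for every $m\geq 0$ adjunction would give
\begin{displaymath}
\Hom_{D^+(\XX)}\bigl(\OO_\XX[m],S\bigr)\;\cong\;\Hom_{D^+(\YY)}\bigl((Rf_\ast\OO_\XX)[m],\OO_\YY\bigr)\;\cong\;\Hom_{\mathbb{F}_p}\bigl(H^m(\Z/p,\mathbb{F}_p),\mathbb{F}_p\bigr)\;\cong\;\mathbb{F}_p\;\neq\;0,
\end{displaymath}
since the group cohomology of $\Z/p$ in characteristic $p$ is nonzero in every degree. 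But $\OO_\XX[m]$ has cohomology only in degree $-m$, while $S$, lying in $D^+$, has cohomology in degrees $\geq -N$ for some $N$; orthogonality for the standard t-structure then forces $\Hom_{D(\XX)}(\OO_\XX[m],S)=0$ as soon as $m>N$, a contradiction. So the ``tail of the infinite resolution'' is not a technical nuisance to be controlled: it is exactly where the theorem breaks, and a hypothesis guaranteeing finite cohomological dimension of $f_\ast$ is unavoidable. Under such a hypothesis (e.g.\ $f$ cohomologically affine, or $\XX$ tame, or the good-moduli-space assumptions of Theorem \ref{thm:existence-adj-unbound}) your argument does close: truncate $C^\bullet$ at the cohomological dimension, so that the total complex becomes bounded below, hence $K$-injective, and the homotopy-level adjunction descends to $D^+$.
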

\begin{proof}
  The proof is a standard argument of Deligne \cite[Appendix pg 416]{MR0222093}. For the convenience of the reader we give an outline of the argument.
Let $F$ be an object in $D^+(\XX)$ and $G$ in $D^+(\YY)$. We consider the functor $R\Hom_\YY(Rf_\ast F,G)$ and we want to produce a right adjoint $f^!G$. We replace $G$ with $I^\bullet$ a complex of injectives and $F$ with a quasi-isomorphic complex produced using the functorial resolution $C^\bullet(\cdot)$. We will denote also this complex with $C^\bullet(F)$.  We have reduced the problem to the category of complexes. We want to prove that the functor $\Hom_\YY(f_\ast C^p(\cdot),I^q)$ from complexes of quasicoherent sheaves to the category of abelian groups is representable for every $p,q\in\mathbb{Z}$. The functor $f_\ast$ commutes with filtered colimits according to Lemma \ref{lem:commute-with-filt}, the resolution $C^\bullet$ commutes with filtered colimits for the same reason,  sheaves of an $U$-topos have small $\Hom$-sets (an $U$-topos is an $U$-category so it has small $\Hom$-sets\footnotemark \footnotetext{I am quoting Giraud and not SGA4 just because his book contains in the first pages a very short summary about $U$-topoi and it is very easy to consult} \cite[0.2.6.iv]{Gcna-1971}) and according to Lemma \ref{lem:small-gen} the category of quasicoherent sheaves has a small generating set so we can apply the special adjoint functor (\cite[Thm V.8.2]{MR0354798} with reverted arrows) and conclude that $\Hom_\YY(f_\ast C^p(\cdot),I^q)$ is represented by a quasicoherent sheaf $f^{-p}I^q$. Since the functor is also exact we can conclude that $f^{-p}I^q$ is injective and actually a double complex of injectives in the first quadrant. The total complex of $f^{-p}I^q$ is quasi-isomorphic to $f^!G$.
\end{proof}
\begin{thm}\label{thm:existence-adj-unbound}
  Let $f\colon\XX\to\YY$ a separated quasi-compact morphism of algebraic stacks and assume that $f$ is cohomologically affine or $\XX$ has a good moduli space which is a quasi-compact separated scheme.  The functor $Rf_\ast: D(\XX)\to D(\YY)$ has a right adjoint $f^!\colon D(\YY)\to D(\XX)$.
\end{thm}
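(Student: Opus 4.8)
The plan is to obtain the adjoint from Brown representability rather than from the special adjoint functor theorem used for Theorem~\ref{thm:deligne-rep}. First I would fix an object $G\in D(\YY)$ and consider the contravariant cohomological functor
\[
H_G(-)=\Hom_{D(\YY)}\bigl(Rf_\ast(-),G\bigr)\colon D(\XX)^\circ\to\mathfrak{Ab}.
\]
If $H_G$ carries small coproducts to products it is representable, and the representing object is $f^!G$; naturality in $G$ then promotes this pointwise representability to an adjoint functor $f^!\colon D(\YY)\to D(\XX)$ with $R\Hom_\YY(Rf_\ast F,G)\cong R\Hom_\XX(F,f^!G)$. To run Brown representability I need $D(\XX)$ to be well generated, and here I cannot invoke Lemma~\ref{lem:compactlygen} because no polarization is assumed; instead I would use Lemma~\ref{lem:small-gen}, which exhibits $\QCoh(\XX)$ as a Grothendieck category, together with the fact that the derived category of a Grothendieck category is well generated, so that the appropriate form of \cite[Thm~4.1]{MR1308405} applies.

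With the representability mechanism in place, everything reduces to two properties of $Rf_\ast$ on the \emph{unbounded} category: that it is defined there at all, and that it preserves small coproducts. This is exactly where the two alternative hypotheses enter. If $f$ is cohomologically affine then $f_\ast$ is exact on $\QCoh(\XX)$ by \cite[Def~3.1]{alper-2008}, so no resolution is needed: $Rf_\ast=f_\ast$ is computed termwise, preserves quasi-isomorphisms, and is therefore defined on all of $D(\XX)$; being exact and compatible with filtered colimits by Lemma~\ref{lem:commute-with-filt}, it commutes with arbitrary coproducts. If instead $\XX$ carries a good moduli space $\pi\colon\XX\to X$ with $X$ a quasi-compact separated scheme, then $\pi$ is cohomologically affine, so $R\pi_\ast=\pi_\ast$ is exact; composing with the finite cohomological dimension of the quasi-compact separated scheme $X$ shows that $\XX$ itself has finite cohomological dimension. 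I would then use this bound to truncate the functorial $f_\ast$-acyclic resolution $C^\bullet(-)$ built before Theorem~\ref{thm:deligne-rep} at a fixed finite stage, so that it computes $Rf_\ast$ on unbounded complexes as well, with coproduct-preservation following termwise exactly as in Corollary~\ref{cor:commwithcoprod2}.

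The hard part will be this passage from $D^+$ to $D$. As flagged in the remark after the acyclicity lemma, the resolution $C^\bullet$ is in general infinite, so for an unbounded complex the totalization need not converge and $Rf_\ast$ need not even be well defined; this is precisely the obstruction that finite cohomological dimension removes. In the cohomologically affine case the obstruction simply evaporates, the dimension being zero. In the good moduli space case the delicate step is transferring finite cohomological dimension from the absolute situation over $X$ to the relative functor $Rf_\ast$: I would have to bound the cohomological dimension of $f$ by that of $\XX$, which rests on the stability of the good moduli space $\pi$ under base change along a smooth affine atlas $V\to\YY$, so that each $\XX\times_\YY V$ again admits a scheme-theoretic good moduli space of bounded cohomological dimension. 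Once this uniform bound is secured, truncation reduces the unbounded claim to the bounded-below computation already settled, and Brown representability yields $f^!$ on the whole of $D(\YY)$.
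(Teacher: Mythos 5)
Your proposal is correct, but it runs on a genuinely different engine than the paper's proof. The paper proves this theorem as ``an easy variation of Theorem \ref{thm:deligne-rep}'': it stays inside Deligne's framework, replacing the bounded-below injective complex on $\YY$ by a $K$-injective one, truncating the functorial acyclic resolution $C^\bullet$ at the cohomological dimension $d$ (checking that the last term of the truncated resolution is again acyclic, functorial, exact and colimit-preserving), and then rerunning the special adjoint functor theorem in the category of complexes, so that $f^!G$ is exhibited concretely via representing sheaves $f^{-p}I^q$. You instead invoke Brown representability directly in the derived category. The two routes share their real mathematical content, namely the finite cohomological dimension lemma, which the paper isolates and proves exactly along the lines you sketch: restrict to the finitely many affine charts $U$ of a smooth cover of $\YY$, note that $\XX_U\to\XX$ is affine (affine diagonal), hence $\XX_U\to X$ is cohomologically affine by Alper's composition results, so $\XX_U$ acquires a good moduli space $X_U$ affine over $X$, and a quasi-compact separated scheme has finite cohomological dimension, with a uniform bound since the charts are finite in number. (Your phrase ``stability of the good moduli space under base change'' is a slight misnomer for this composition argument --- the base change is along $V\to\YY$, not along a morphism over $X$ --- but the statement you reduce to is the right one, and your recognition that the absolute bound on $\XX$ does not directly bound the relative functor $Rf_\ast$ is precisely the point of the paper's lemma.) The trade-off is this: your route is shorter once the representability theorem is granted, but the theorem you need is strictly stronger than the one you cite --- Theorem 4.1 of \cite{MR1308405} covers only \emph{compactly} generated categories, and $D(\XX)$ is not known to be compactly generated without a polarization; that is exactly why the paper abandons Neeman's technique in this section and returns to Deligne. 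So you must appeal instead to the well-generated form of Brown representability together with the theorem that the derived category of a Grothendieck abelian category is well generated (Neeman's later book, or the representability results of Franke and of Alonso--Jerem\'{\i}as--Souto); with those references substituted, your argument closes. What the paper's longer route buys is independence from that heavier machinery and an explicit complex computing $f^!G$; what yours buys is that, once $Rf_\ast$ is shown to preserve coproducts on all of $D(\XX)$, the existence of the adjoint is purely formal.
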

\begin{lem}
Under the assumptions of the previous theorem the functor $Rf_\ast$ has finite cohomological dimension. 
\end{lem}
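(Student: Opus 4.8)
The plan is to treat the two hypotheses separately and to reduce the second, more serious case to a bound on the absolute cohomological dimension of $\XX$, which is exactly what the good moduli space controls. If $f$ is cohomologically affine there is nothing to do: by definition $f_\ast$ is exact on quasicoherent sheaves, so $R^if_\ast=0$ for $i>0$ and $Rf_\ast$ has cohomological dimension $0$. So I would assume instead that $\XX$ has a good moduli space $\pi\colon\XX\to X$ with $X$ a quasi-compact separated scheme, and recall that $\pi$ is then cohomologically affine, so $R\pi_\ast$ is exact.

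First I would reduce to the case in which the target is an affine scheme. Choose a smooth surjective atlas $Y_0=\Spec B\to\YY$; since $\YY$ has affine diagonal and $Y_0$ is an affine scheme, the earlier lemma shows $Y_0\to\YY$ is affine. Form the $2$-cartesian square with $\XX_0=\XX\times_\YY Y_0$, projection $g\colon\XX_0\to\XX$, and base change $f_0\colon\XX_0\to Y_0$. Being the base change of an affine morphism, $g$ is affine. Because $Y_0\to\YY$ is faithfully flat, flat base change along it gives $(R^if_\ast F)|_{Y_0}\cong R^i(f_0)_\ast g^\ast F$ for every quasicoherent $F$, so $R^if_\ast F=0$ if and only if $R^i(f_0)_\ast g^\ast F=0$; as $Y_0$ is affine the latter is the sheaf attached to $H^i(\XX_0,g^\ast F)$. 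Hence it suffices to bound the cohomological dimension of $R\Gamma(\XX_0,-)$.

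The point is that, although $\XX_0$ need not itself carry a good moduli space, its cohomology is controlled by that of $\XX$ through the affine map $g$. Writing $R\Gamma(\XX_0,-)=R\Gamma(\XX,Rg_\ast-)$ and using that $g$ affine makes $Rg_\ast$ exact, one gets $\operatorname{cd}R\Gamma(\XX_0,-)\le\operatorname{cd}R\Gamma(\XX,-)$. Next, $R\Gamma(\XX,-)=R\Gamma(X,R\pi_\ast-)$, and since $R\pi_\ast$ is exact one has $\operatorname{cd}R\Gamma(\XX,-)\le\operatorname{cd}R\Gamma(X,-)$. Finally $X$ is a quasi-compact separated scheme: covering it by finitely many affines, whose mutual intersections are again affine by separatedness, \Cech\ cohomology computes quasicoherent cohomology and vanishes above the finite length of the cover, so $\operatorname{cd}R\Gamma(X,-)<\infty$. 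Concatenating these inequalities bounds the cohomological dimension of $Rf_\ast$.

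The main obstacle, and the step to get right, is the reduction to an affine base: one must verify that $g$ is affine (this is precisely where the affine-diagonal hypothesis on $\YY$, via the earlier lemma, enters) and that flat base change is available for the faithfully flat atlas $Y_0\to\YY$, so that the vanishing of $R^if_\ast F$ can genuinely be tested on $\XX_0$. Once that is in place the remainder is a formal composition of cohomological-dimension inequalities, the only quantitative input being the classical finiteness of the cohomological dimension of a quasi-compact separated scheme.
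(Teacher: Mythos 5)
Your proof is correct, and it takes a genuinely different route from the paper's. Both arguments begin identically: dispose of the cohomologically affine case by exactness, pass to a smooth affine atlas of $\YY$ (affine as a morphism, by the earlier lemma on affine diagonals), base change to $\XX_0=\XX\times_\YY Y_0$ with affine projection $g\colon\XX_0\to\XX$, and use flat base change to reduce everything to a uniform bound on $H^i(\XX_0,-)$. At that point the paper invokes Alper's theory a second time: since $\XX_0\to X$ is affine followed by cohomologically affine, it is cohomologically affine, so by \cite[Rem 4.3]{alper-2008} the stack $\XX_0$ has its own good moduli space, affine over $X$ and hence a quasi-compact separated scheme, on which its cohomology is computed; the bound is then assembled chart by chart, taking a maximum over the finitely many charts. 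You never construct that good moduli space (incidentally, your aside that $\XX_0$ ``need not'' carry one is the lone inaccuracy --- it does, and this is exactly what the paper exploits); instead you push cohomology back along the affine morphism $g$, whose pushforward is exact, and then down along $\pi$, landing on the fixed scheme $X$ with the single uniform bound $\operatorname{cd}R\Gamma(\XX_0,-)\le\operatorname{cd}R\Gamma(X,-)$. What your route buys is economy: the only inputs are ``affine implies cohomologically affine'' and the definition of a good moduli space, with no base-change theory for good moduli spaces and a bound independent of the chosen cover. What the paper's route buys is the reusable byproduct that the base-changed stack again admits a good moduli space affine over $X$. Both proofs rest on the same quietly assumed technical identity --- cohomology is computed after an exact pushforward, i.e.\ $H^i(\XX_0,G)\cong H^i(\XX,g_\ast G)$ and $H^i(\XX,F)\cong H^i(X,\pi_\ast F)$, the degeneration of the relevant Leray spectral sequences --- and both terminate in the classical finiteness of quasicoherent cohomological dimension for a quasi-compact separated scheme, so yours is at the same level of rigor as the paper's.
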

\begin{proof}
  If $f$ is cohomologically affine $f_\ast$ is exact and there is nothing to prove.  Assume $\XX$ has a good moduli space which is a scheme $\pi\colon \XX\to X$.  We want to prove that there is $n\in\mathbb{Z}$ such that for every quasicoherent sheaf $F$ on $\XX$ the sheaf $R^if_\ast F=0$ for every $i>n$. Since $R^if_\ast F$ is quasicoherent it is enough to prove that $R^if_\ast F\vert_{Y_0}=0$ for a smooth cover $Y_0\to\YY$ made of a finite number of affine schemes (again we are using that $\YY$ is quasi-compact). Denote with $U$ an affine scheme of the cover and $\XX_U$ the base change of $\XX$ to $U$. Since the morphism $\XX_U\to\XX$ is affine the morphism $\XX_U\to X$ is cohomologically affine (\cite[Prop 3.9.i-ii]{alper-2008}). According to \cite[Rem 4.3]{alper-2008} $\XX_U$ has a good moduli space $X_U$ which has a unique  morphism to $X$ making the diagram commute:
  \begin{displaymath}
    \xymatrix{
      \XX_U \ar[r]\ar[d]^{\pi_U} &  \XX\ar[d]^{\pi} \\
      X_U \ar[r]^f & X \\
}
  \end{displaymath}
and moreover the morphism $f$ is affine. From the universal property of a good moduli space we have also an arrow $g\colon X_U \to U$ making the following diagram commute:
\begin{displaymath}
  \xymatrix{
    \XX_U \ar[dd] \ar[rd]^{\pi_U} & \\
    & X_U\ar[ld]_g \\
    U & \\
}
\end{displaymath}
The sheaf $R^if_\ast F\vert_U$ is just the $\OO_U$-module $H^i(\XX_U,F\vert_{\XX_U})$ which is just $H^i(X_U,{\pi_U}_\ast (F\vert_{\XX_U}))$. The morphism $X_U\to X$ is affine and $X$ is separated and quasi-compact, so we can conclude that $X_U$ is separated and quasi compact. Since $X_U$ is separated $g$ is separated, moreover $X_U$ is quasi-compact and $U\to S$ is separated so we deduce that $g$ is quasi-compact. The morphism $g$ is separated and quasi-compact (and of course $U$ itself is quasi-compact) so we can conclude that there is $n$ depending on $U$ and not on $\FF$ such that cohomology groups vanish for $i>n$. Since affine schemes $U$ are in finite number we can choose a finite $n$ satisfying the requirements of the lemma.  
\end{proof}
\begin{proof}[proof of theorem \ref{thm:existence-adj-unbound}]
  It is an easy variation of Theorem \ref{thm:deligne-rep}. Instead of taking an injective complex $I^\bullet$ bounded from below we take a $K$-injective complex not necessarily bounded and we observe that if $I$ is $K$-injective then $f^{-p}I$ is $K$-injective. Instead of using the infinite functorial resolution $C^\bullet(F)$ we truncate it at stage $d$ where $d$ is given by the cohomological dimension, which is finite according to the previous lemma. It is a standard fact that the last term in the finite resolution is again acyclic (\cite[Prop 2.7.5.iii]{lipman1960} with inverted arrows), it is obviously functorial, and it is an easy check that it is exact and preserves filtered colimits, then we can conclude the proof as before. 
\end{proof}

\begin{rem}
\begin{enumerate}
\item We don't know if the existence of a good moduli space is actually a necessary condition to work in unbounded derived category,  at least it is quite relevant: if we remove this hypothesis it is possible to produce examples where the derived push forward in unbounded derived category doesn't commute with arbitrary coproducts. A very simple example has been shown us by Ulrich Bunke: it is enough to take $\XX$ a non tame trivial gerbe over a point and $\YY$ its moduli scheme (which is not good) and test the statement on an unbounded complex like $\coprod_{n\in\mathbb{Z}} F[n]$ where $F$ is a coherent sheaf on the gerbe which carries a trivial representation.
\item Since the dualizing functor is constructed as an adjoint (in $D$ or $D^+$) it is unique in a suitable sense. Moreover the dualizing functor is compatible with the composition of morphisms.
 Assume we have a composition $g\circ f\colon \XX\to\YY\to\mathcal{Z}$ such that both $Rg_\ast$  and $Rf_\ast$ have a right adjoint. The two functors $Rg_\ast Rf_\ast$ and $R(g\circ f)_\ast$ are canonically isomorphic and duality gives us a canonical isomorphism:
\begin{equation}
  \label{eq:iso-compos-upper-shriek}
\xymatrix{
  f^! g^! \ar[r]^-{\eta_{f,g}} & (g\circ f)^! \\
}
\end{equation}
\end{enumerate}
\end{rem}

In order to compute duality in real situations it is usefull to know the behaviour of the dualizing functor with respect to the tensor product. The following result gives use the opportunity to reduce the computation of $f^!F$ to the computation of $f^!\OO_\YY$.
\begin{prop}
  Let $f\colon\XX\to \YY$ be a separated finite-Tor dimensional morphism of noetherian algebraic stacks, assume that $\YY$ has the resolution property (see \cite{MR2108211} for a comprehensive study of the problem), and both $\XX$ and $\YY$ have affine diagonal. Let $F$ be an object in $D^b(\text{Coh}(\YY))$ (or $D^-(Coh{\YY})$ when $f$ is cohomologically affine or $\XX$ has a good moduli space). There is a natural isomorphism:
  \begin{displaymath}
    Lf^\ast F\overset{L}{\otimes} f^! \OO_\YY \to f^!F
  \end{displaymath}
\end{prop}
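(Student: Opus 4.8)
The plan is to construct the arrow in the statement as the adjoint of a composite built from the projection formula and the trace (counit) of the adjunction $Rf_\ast\dashv f^!$, and then to prove it is an isomorphism by a dévissage that reduces everything to the case where $F$ is a vector bundle, where Yoneda settles it at once.

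First I would produce the natural transformation. By the adjunction $Rf_\ast\dashv f^!$ of Theorem \ref{thm:deligne-rep} (and its unbounded refinement Theorem \ref{thm:existence-adj-unbound} in the cohomologically affine or good-moduli-space case), giving a map $Lf^\ast F\overset{L}{\otimes} f^!\OO_\YY\to f^!F$ is the same as giving a map $Rf_\ast\bigl(Lf^\ast F\overset{L}{\otimes} f^!\OO_\YY\bigr)\to F$. The projection formula supplies a natural isomorphism
\begin{displaymath}
Rf_\ast\bigl(Lf^\ast F\overset{L}{\otimes} f^!\OO_\YY\bigr)\;\cong\; F\overset{L}{\otimes} Rf_\ast f^!\OO_\YY,
\end{displaymath}
and composing with $F\overset{L}{\otimes}(\mathrm{tr})$, where the trace $\mathrm{tr}\colon Rf_\ast f^!\OO_\YY\to\OO_\YY$ is the counit evaluated at $\OO_\YY$, yields $F\overset{L}{\otimes} Rf_\ast f^!\OO_\YY\to F\overset{L}{\otimes}\OO_\YY=F$. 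Its adjoint is the natural transformation we want.

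Next I would check that this arrow is an isomorphism, and here the hypotheses enter. Both $F\mapsto Lf^\ast F\overset{L}{\otimes} f^!\OO_\YY$ and $F\mapsto f^!F$ are exact functors of triangulated categories, and the constructed arrow is a morphism between them; hence it suffices to verify it on a set of objects generating $D^b(\Coh(\YY))$ as a triangulated category. Since $\YY$ has the resolution property, every object of $D^b(\Coh(\YY))$ is represented by a complex of vector bundles, and because $f$ is of finite Tor-dimension this representative may be taken bounded, so vector bundles do generate and nothing escapes the categories on which $f^!$ is defined. For $F=E$ a single vector bundle the map is an isomorphism by Yoneda: for every $G\in D(\XX)$ one has, using $Lf^\ast E=f^\ast E$ and the projection formula applied to $E^\vee$,
\begin{displaymath}
\Hom_\XX(G,f^!E)\cong\Hom_\YY(Rf_\ast G,E)\cong\Hom_\YY\bigl(Rf_\ast(G\overset{L}{\otimes} Lf^\ast E^\vee),\OO_\YY\bigr)\cong\Hom_\XX\bigl(G,\,Lf^\ast E\overset{L}{\otimes} f^!\OO_\YY\bigr),
\end{displaymath}
and one checks this chain is induced by the constructed map. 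The general case then follows by a way-out induction: the stupid truncations of a bounded complex of vector bundles produce distinguished triangles, and the five lemma together with the single-bundle case propagates the isomorphism to all of $D^b(\Coh(\YY))$ (resp.\ $D^-(\Coh(\YY))$ when $f^!$ is available on the bounded-above category).

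The hard part will be the projection formula itself in the stacky setting, namely $Rf_\ast(Lf^\ast F\overset{L}{\otimes} G)\cong F\overset{L}{\otimes} Rf_\ast G$, since both constructions above rest on it. I would establish it by descending to a smooth affine atlas through the functorial $f_\ast$-acyclic resolution $C^\bullet$ of (\ref{eq:cech-like-res}): the affine diagonal makes the atlas morphism $p_0$ affine and $p_{0\,\ast}$ exact, so one reduces to the corresponding statement on schemes (or algebraic spaces), while the finite Tor-dimension of $f$ is exactly what controls the boundedness of $Lf^\ast F$ and guarantees that the resolutions stay within the derived categories where the adjoint $f^!$ is defined. The boundedness bookkeeping forced by this control — and the care needed to keep the resolution-property replacement bounded — is where most of the genuine work lies; once the projection formula and the trace are in hand, the argument is formal.
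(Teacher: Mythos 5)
You follow the same route as the paper: the paper's proof is precisely the observation that, thanks to the resolution property, one can replace $F$ by a complex of locally free sheaves and then run Neeman's argument for compact objects \cite[Thm 5.4]{MR1308405} --- which is exactly what you spell out (projection formula plus trace to build the map, then the adjunction/dualizability chain to verify it on bundles). Your identification of the stacky projection formula as the substantive missing input is also fair, since the paper uses it tacitly.

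However, there is a genuine gap in your reduction step. You assert that ``since $\YY$ has the resolution property, every object of $D^b(\Coh(\YY))$ is represented by a complex of vector bundles, and because $f$ is of finite Tor-dimension this representative may be taken bounded.'' The second clause is false: finite Tor-dimension is a hypothesis on the morphism $f$ and says nothing about $F$. An object of $D^b(\Coh(\YY))$ admits a bounded locally free representative if and only if it is perfect, and on a singular $\YY$ this fails --- already for $\YY=\spec{k[x]/(x^2)}$ and $F=k$, which has no bounded resolution by vector bundles. Consequently vector bundles do not generate $D^b(\Coh(\YY))$ as a triangulated category (the thick subcategory they generate consists exactly of the perfect complexes), your stupid-truncation induction never terminates for such $F$, and the dualizability step $\Hom_\YY(Rf_\ast G,E)\cong\Hom_\YY(Rf_\ast G\overset{L}{\otimes}E^\vee,\OO_\YY)$ is unavailable for an unbounded-below complex of bundles, which has no dual in the required sense. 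So as written your argument proves the proposition only for perfect $F$. Closing the gap for general $F\in D^b(\Coh(\YY))$, let alone $D^-(\Coh(\YY))$, requires controlling the tail $\sigma^{\leq -n}P^\bullet$ of an unbounded resolution, i.e.\ showing that both $Lf^\ast(\sigma^{\leq -n}P^\bullet)\overset{L}{\otimes}f^!\OO_\YY$ and $f^!(\sigma^{\leq -n}P^\bullet)$ die in any fixed degree as $n\to\infty$; the first needs $f^!\OO_\YY$ to be cohomologically bounded above, and the second needs a uniform upper bound for $f^!$ applied to coherent sheaves --- neither of which your argument supplies (nor, to be fair, does the paper's own two-line proof, which hedges with ``as far as we can work with complexes of locally free sheaves''). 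Note finally that you cannot rescue the general case by Neeman's own passage from compact objects to arbitrary ones via coproducts: the remark immediately following the proposition in the paper points out that exactly this device is unavailable for stacks.
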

\begin{proof}
As far as we can work with complexes of locally free sheaves the proof is the same argument used in  \cite[Thm 5.4]{MR1308405} for compact objects (perfect complexes in the case of schemes).  
\end{proof}
\begin{rem}
 Unfortunately we cannot conclude the proof as in \cite[Thm 5.4]{MR1308405} stating that the result for  compact objects implies the result for every $F\in D(\YY)$.

\end{rem}

\begin{defn}
 Let $f\colon\XX\to S$ be an $S$-stack, $f$ the structure morphism. Suppose $f^!$ exists, we will call $f^!\OO_S$ the dualizing complex of $\XX$.
\end{defn} 

To conclude we want to spend a few words about the Godement resolution for stacks and the possibility to use it to compute duality. In the case of schemes it is possible to use the Godement resolution to compute duality in $D_{qc}(\XX)$ (\cite{lipman1960} Chapter $4.1$). The Godement resolution is functorial exact acyclic but it doesn't commute with filtered colimits since it is defined through an infinite product. However it is possible to ``modify it'' in order to acquire this property too, and this has been done by Lipman in \cite{lipman1960} following a vague suggestion of Deligne in \cite[Appendix pg 417]{MR0222093}. In the case of stacks it is still possible to define the Godement resolution both on the \'etale site and the lisse-\'etale site. Unfortunately the Godement resolution on the lisse \'etale site is not made of cartesian sheaves! As far as we work with Deligne-Mumford stacks it is possible to reproduce Lipman's proof \cite[Thm 4.1]{lipman1960} under the same hypothesis. In the case of Artin stacks actually we don't know. Being non cartesian is not a big problem per se, since  in $D_{qc}(\XX)$ complexes are allowed to be non cartesian; however the standard theorem on the push-forward \cite[Lem 12.6.2]{LMBca} fails and we are not even able to prove that the Godement resolution is $f_\ast$-acyclic.

\subsection{Proper representable morphisms of algebraic stacks}

In order to explicitly compute Serre duality for a smooth proper stack  we need a more explicit approach to duality when the morphism is representable. A result analogous to the proposition we are going to prove is contained in \cite[Thm 25.2]{lipman1960}. A similar problem  for proper representable morphisms of locally compact topological stacks has been addressed and solved in \cite[Section 6]{bunke-2008}.
Let $f\colon\YY\to\XX$ be a representable proper morphism and choose a smooth presentation $\xymatrix{X_1 \ar@<1ex>[r]^-s \ar@<-1ex>[r]^-t & X_0\ar[r]^-{p_0} & \XX}$ and produce the pullback presentation  $\xymatrix{Y_1 \ar@<1ex>[r]^-u \ar@<-1ex>[r]^-v & Y_0\ar[r]^-{p_0} & \YY}$ . Let $I^\bullet$ be a bounded below complex of injectives on $\XX$ and $\alpha_p\colon s^\ast I_0^p\to t^\ast I_0^p$ the isomorphism defining the equivariant structure of the sheaf $I_0^p$. Using flat base change theorem for schemes \cite[Thm 2]{MR0274464} we can produce the following chain of isomorphisms:
\begin{displaymath}
\xymatrix{
 u^{\ast} f_0^{!} I_0^p \ar[r]^-{c_s} & f_1^{!} s^{\ast} I_0^p\ar[r]^-{f_1^{!}\alpha_p} & f_1^{!} t^{\ast} I_0^p & \ar[l]_-{c_t} v^{\ast} f_0^{!} I_0^p  \\
}
\end{displaymath}
Call this isomorphism $\beta_p$.  It satisfies the cocycle condition because $\alpha_p$ does and the isomorphisms $c_t,c_s$ defined in \cite[pg 401]{MR0274464} satisfy it according to the same reference (see next section for a recall of the construction of $c_s$). The data $\beta_p, f_0^! I_0^p$ define a complex of injectives  on $\YY$. Running through the values of $p$ we obtain a double complex of injectives  and we  denote with $f^{\natural} I^\bullet$ the total complex associated to it.
\begin{prop}\label{prop:representable-proper-duality}
   Let $f\colon\YY\to\XX$ be a morphism as above. Let $\FF^\bullet\in D^{+}(\XX)$ and $I^\bullet$ an injective complex quasi-isomorphic to it. The derived functor  $f^!\FF^\bullet$ is computed by $f^{\natural} I^\bullet$. 
\end{prop}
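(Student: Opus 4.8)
The plan is to identify the explicitly constructed complex $f^{\natural}I^\bullet$ with the abstract right adjoint $f^!\FF^\bullet$ by showing that $f^{\natural}I^\bullet$ represents the same functor on $D^{+}(\YY)$, and then to invoke uniqueness of adjoints. Since $f^!$ is characterized by the natural isomorphism $\Hom_{D(\YY)}(G,f^!\FF^\bullet)\cong\Hom_{D(\XX)}(Rf_\ast G,\FF^\bullet)$, it suffices to produce, naturally in $G\in D^{+}(\YY)$, an isomorphism $\Hom_{D(\YY)}(G,f^{\natural}I^\bullet)\cong\Hom_{D(\XX)}(Rf_\ast G,I^\bullet)$ compatible with the abstract counit. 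Here I use that $\FF^\bullet\cong I^\bullet$ in $D^{+}(\XX)$ and that, by the construction preceding the statement, $f^{\natural}I^\bullet$ is a bounded-below complex of injective $\OO_\YY$-modules, so the left-hand side is computed by the naive global $\Hom$-complex.

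First I would compute both sides through the simplicial presentations $X_\bullet$ and $Y_\bullet$ by cohomological descent, writing $f_n\colon Y_n\to X_n$ for the induced proper morphism of schemes and representing $G$ by sheaves $G_n$ on $Y_n$. On the source side the cartesian complex $f^{\natural}I^\bullet$ is given levelwise by $f_n^!I_n^\bullet$ on $Y_n$ together with the descent data induced by $\beta$, so $\Hom_{D(\YY)}(G,f^{\natural}I^\bullet)$ is the totalization of the cosimplicial complex $n\mapsto\Hom_{Y_n}(G_n,f_n^!I_n^\bullet)$. On the target side, flat base change for the ordinary derived push-forward along the smooth presentation identifies the restriction of $Rf_\ast G$ to each $X_n$ with $Rf_{n\ast}G_n$, so that $\Hom_{D(\XX)}(Rf_\ast G,I^\bullet)$ is the totalization of $n\mapsto\Hom_{X_n}(Rf_{n\ast}G_n,I_n^\bullet)$. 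At each fixed level $n$ the map $f_n$ is proper between schemes, so Grothendieck--Verdier duality supplies a natural isomorphism $\Hom_{Y_n}(G_n,f_n^!I_n^\bullet)\cong\Hom_{X_n}(Rf_{n\ast}G_n,I_n^\bullet)$; equivalently, $f_n^!I_n^\bullet$ computes the scheme-theoretic $f_n^!$.

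The decisive point is that these levelwise duality isomorphisms are compatible with the cosimplicial structure maps, so that they assemble into an isomorphism of the two totalizations. This is exactly where the definition of $\beta_p$ enters: the face maps on the $Y$-side are encoded by the flat base change isomorphisms $c_s,c_t$ of Verdier, and the required compatibility is the assertion that Verdier's duality isomorphism, equivalently the trace map $Rf_{n\ast}f_n^!\to\Id$, commutes with $c_s$ and $c_t$. I would verify this from the naturality and base change theorems of Verdier for schemes, which is precisely the input that forced $\beta_p$ to satisfy the cocycle condition. Once this is checked, the totalizations agree, the resulting isomorphism matches the abstract counit, and uniqueness of the right adjoint yields $f^{\natural}I^\bullet\cong f^!\FF^\bullet$.

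I expect the main obstacle to be this compatibility verification: tracking the interaction of Verdier's trace and flat base change isomorphisms across the simplicial presentation, and ensuring that the descent (totalization) computations of $\Hom$ in $D(\YY)$ and $D(\XX)$ converge in the bounded-below setting. I would \emph{avoid} appealing to any flat base change property of the stacky functor $f^!$ itself, since that is established only later; the argument must rest solely on the scheme-level duality and base change of Verdier together with the cohomological descent bookkeeping.
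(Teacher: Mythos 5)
Your strategy is sound and rests on the same essential inputs as the paper's proof --- levelwise Grothendieck--Verdier duality for the proper scheme morphisms $f_n\colon Y_n\to X_n$, Verdier's base-change isomorphisms $c_s,c_t$ together with their compatibility with the trace, and uniqueness of adjoints --- but the implementation is genuinely different, and the paper's is both lighter and stronger. The paper never totalizes over the full simplicial object and never invokes cohomological descent: for a single injective sheaf $J$ on $\YY$ and a single injective $I$ on $\XX$ it writes the left-exact sequence $0\to\HOM_\XX(f_\ast J,I)\to{p_0}_\ast\HOM_{X_0}(p_0^\ast f_\ast J,p_0^\ast I)\to{p_0}_\ast s_\ast\HOM_{X_1}(\cdots)$, which is nothing but the sheaf condition on the presentation (only simplicial levels $0$ and $1$ occur), rewrites the two right-hand terms by scheme duality and flat base change, recognizes the resulting map as the one induced by $\beta$, and reads off the sheaf-level isomorphism $\HOM_\XX(f_\ast J,I)\cong f_\ast\HOM_\YY(J,f^{\natural}I)$; the adjunction in the derived category, hence the proposition, then follows by passing to the double complex and taking global sections. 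This buys elementarity (left-exactness of an equalizer sequence replaces descent for derived Homs) and a strictly stronger output, namely the sheaf version of duality recorded in the Remark immediately after Proposition \ref{prop:representable-proper-duality}. Your route, by contrast, must justify that $\Hom_{D(\YY)}(G,f^{\natural}I^\bullet)$ is computed by a totalization of levelwise Homs, and here the bookkeeping you flag is real: the restriction of $f^{\natural}I^\bullet$ to $Y_n$ for $n\geq 1$ is a smooth (not \'etale) pullback of a complex of injectives, hence not obviously a complex of injectives, so the levelwise complexes $\Hom^\bullet_{Y_n}(G_n,f_n^{!}I_n^\bullet)$ do not automatically compute $R\Hom_{Y_n}$, and the cosimplicial object of derived Homs needs to be strictified (e.g.\ by compatible resolutions) before it can be totalized. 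None of this is fatal, but it is precisely the machinery the paper's kernel argument sidesteps; note also that since $\QCoh(\YY)$ has enough injectives, it suffices to test the universal property against bounded-below complexes of injectives $J^\bullet$ rather than arbitrary $G\in D^{+}(\YY)$, which would simplify your naturality check.
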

\begin{proof}
Let $J$ be an injective sheaf on $\YY$ and $I$ an injective sheaf on $\XX$. Keeping notations introduced above we can write the following exact sequence:
\begin{displaymath}
  \xymatrix{
    0 \ar[r] & \HOM_{\XX}(f_\ast J,I) \ar[r] & {p_0}_\ast\HOM_{X_0}(p_0^\ast f_\ast J,p_0^\ast I) \ar[r] & {p_0}_\ast s_\ast\HOM_{X_1}(s^\ast p_0^\ast f_\ast J, s^\ast p_0^\ast I)
}
\end{displaymath}
Using duality for proper morphisms of schemes and flat base change for the twisted inverse image we have the following commutative square:
\begin{displaymath}
  \xymatrix{
   {p_0}_\ast\HOM_{X_0}(p_0^\ast f_\ast J,p_0^\ast I) \ar[r]\ar[d]^-{\wr} &  {p_0}_\ast s_\ast \HOM_{X_1}(s^\ast p_0^\ast f_\ast J, s^\ast p_0^\ast I) \ar[d]^-{\wr}  \\
{p_0}_\ast\HOM_{X_0}({f_0}_\ast q_0^\ast J,p_0^\ast I) \ar[d]^-{\wr} &  {p_0}_\ast s_\ast \HOM_{X_1}({f_1}_\ast u^\ast q_0^\ast J, s^\ast p_0^\ast I) \ar[d]^-{\wr} \\
{p_0}_\ast {f_0}_\ast\HOM_{X_0}(q^\ast_0 J,f_0^! p_0^\ast I) \ar[d]^-{\wr} &  {p_0}_\ast s_\ast {f_1}_\ast\HOM_{X_1}(u^\ast q_0^\ast J, f_1^!s^\ast p_0^\ast I) \ar[d]^-{\wr}  \\
f_\ast {q_0}_\ast\HOM_{X_0}(q^\ast_0 J,f_0^! p_0^\ast I) \ar[r] &  f_\ast {q_0}_\ast u_\ast\HOM_{X_1}(u^\ast q_0^\ast J, u^\ast f_0^! p_0^\ast I) \\
}
\end{displaymath}
In the picture we have applied duality for $f_0,f_1$ but there are no higher derived push-forwards for the two morphisms because both $\HOM_{X_0}(q^\ast_0 J,f_0^! p_0^\ast I)$ and $\HOM_{X_1}(u^\ast q_0^\ast J, f_1^!s^\ast p_0^\ast I)$ are injective.
The morphism $f_\ast {q_0}_\ast\HOM_{X_0}(q^\ast_0 J,f_0^! p_0^\ast I) \to f_\ast {q_0}_\ast u_\ast\HOM_{X_1}(u^\ast q_0^\ast J, u^\ast f_0^! p_0^\ast I) $ in the picture is clearly induced by $\beta$ defined above so that its kernel is $f_\ast\HOM_{\YY}(J,f^{\natural}I)$. This gives us a duality isomorphism:
\begin{displaymath}
  \HOM_{\XX}(f_\ast J,I) \to f_\ast\HOM_{\YY}(J,f^{\natural}I)
\end{displaymath}
The result follows.
\end{proof}
\begin{rem}
  Actually we have proven something stronger than bare duality, we have a sheaf version of the result. We will obtain an analogous sheaf version of the duality for the morphism from a stack to a scheme in the next section.  
\end{rem}

\section{Duality for compact Deligne-Mumford stacks}
\begin{ass}
  From now on schemes and stacks are noetherian separated and of finite type over some algebraically closed field $k$.
\end{ass}
This whole section is devoted to a much more explicit study of duality when stacks involved are Deligne-Mumford. We will retrieve all the classical results like Serre duality and duality for finite morphisms.   

\subsection{Duality and flat base change}
In order to explicitly compute $f^!$ (\textit{twisted inverse image}) in some interesting case we will use existence and a result of flat base-change in the same spirit as \cite[Thm 2]{MR0274464}.

We start proving base change for open immersions. We anticipate a technical lemma which is a variation of \cite[Lem 2]{MR0274464}
\begin{lem}\label{lem:evil-deligne}  
  Let $\XX$ be an algebraic stack and $i\colon\mathcal{U}\to\XX$ an open substack. Let $\mathcal{I}$ be an ideal sheaf defining the complementary of $\mathcal{U}$. For any $\FF\in D(\XX)$ the canonical morphisms:
  \begin{align}
    & \lim_{n\to\infty} \Ext^p_{\XX}(\mathcal{I}^n,\FF) \to H^p(\mathcal{U},i^\ast\FF) \\
    & \lim_{n\to\infty}  \EXT^p_{\XX}(\mathcal{I}^n,\FF) \to R^p i_\ast i^\ast\FF \\ \notag
  \end{align}
are isomorphisms for every $p$.
Moreover if $\GG$ is a bounded above complex over $\XX$ with coherent cohomology we can generalize the first isomorphism to the following:
\begin{equation}
   \lim_{n\to\infty} \Ext^p_{\XX}(\GG \overset{L}{\otimes}\mathcal{I}^n,\FF) \to \Ext_{\mathcal{U}}^p(i^\ast\GG,i^\ast\FF)
\end{equation}
\end{lem}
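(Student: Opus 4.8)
The plan is to reduce both isomorphisms to Verdier's scheme-theoretic statement \cite[Lem 2]{MR0274464} via a flat atlas, then to bootstrap the global statement from the sheaf-level one and finally to deduce the tensor form by a formal adjunction argument. Throughout, write $\ZZ$ for the closed complement of $\mathcal{U}$ cut out by $\mathcal{I}$. On the noetherian stack $\XX$ each power $\mathcal{I}^n$ is coherent, the inclusions $\mathcal{I}^{n+1}\hookrightarrow\mathcal{I}^n$ organize the $\mathcal{I}^n$ into a filtered system, and $i^\ast\mathcal{I}^n=\OO_{\mathcal{U}}$; so functoriality of $\Ext$ together with the restriction functor $i^\ast$ produces the canonical arrows in the statement, and it remains only to see that they are isomorphisms.

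First I would treat the sheaf-level isomorphism (the second displayed arrow). Both $\lim_{n\to\infty}\EXT^p_\XX(\mathcal{I}^n,\FF)$ and $R^p i_\ast i^\ast\FF$ are quasicoherent on $\XX$, so it suffices to test the morphism after pulling back along an affine atlas $p_0\colon X_0\to\XX$, which is faithfully flat and exact. Since $p_0$ is flat we have $p_0^\ast(\mathcal{I}^n)=(p_0^\ast\mathcal{I})^n$, and as $\mathcal{I}^n$ is coherent, flat base change for sheaf-$\EXT$ gives $p_0^\ast\EXT^p_\XX(\mathcal{I}^n,\FF)\cong\EXT^p_{X_0}((p_0^\ast\mathcal{I})^n,p_0^\ast\FF)$; on the other side the open $U_0=p_0^{-1}(\mathcal{U})\hookrightarrow X_0$ sits in a cartesian square over $\mathcal{U}\hookrightarrow\XX$, so flat base change for cohomology gives $p_0^\ast R^p i_\ast i^\ast\FF\cong R^p i_{0\ast} i_0^\ast p_0^\ast\FF$. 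Because $p_0^\ast$ is a left adjoint it commutes with the filtered colimit, and by naturality the canonical arrow pulls back to the canonical arrow downstairs; thus after pullback the assertion is precisely \cite[Lem 2]{MR0274464} applied to $X_0$.

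Next, the global isomorphism (the first displayed arrow) follows from the sheaf-level one by a local-to-global argument. I would compare the two Ext spectral sequences $H^a(\XX,\EXT^b_\XX(\mathcal{I}^n,\FF))\Rightarrow\Ext^{a+b}_\XX(\mathcal{I}^n,\FF)$ and $H^a(\XX,R^b i_\ast i^\ast\FF)\Rightarrow H^{a+b}(\mathcal{U},i^\ast\FF)$, the latter being the Grothendieck spectral sequence of the composite $\Gamma\circ i_\ast$. On the noetherian, hence quasi-compact quasi-separated, stack $\XX$ cohomology commutes with the filtered colimit over $n$, and so does the formation of the spectral sequence; by the sheaf-level isomorphism just proved the colimit of the first $E_2$-page is the $E_2$-page of the second, so the induced map of spectral sequences is an isomorphism and hence so is the map on abutments. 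The tensor form is then formal: for $\GG\in D^-(\Coh(\XX))$, tensor-hom adjunction gives $R\HOM_\XX(\GG\overset{L}{\otimes}\mathcal{I}^n,\FF)\cong R\HOM_\XX(\mathcal{I}^n,R\HOM_\XX(\GG,\FF))$, so applying the first isomorphism with $\FF$ replaced by $R\HOM_\XX(\GG,\FF)$ identifies $\lim_{n\to\infty}\Ext^p_\XX(\GG\overset{L}{\otimes}\mathcal{I}^n,\FF)$ with $H^p(\mathcal{U},i^\ast R\HOM_\XX(\GG,\FF))$; since $i$ is an open immersion and $\GG$ has bounded-above coherent cohomology, $i^\ast R\HOM_\XX(\GG,\FF)\cong R\HOM_{\mathcal{U}}(i^\ast\GG,i^\ast\FF)$, whose $p$-th hypercohomology is $\Ext^p_{\mathcal{U}}(i^\ast\GG,i^\ast\FF)$.

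I expect the main obstacle to be the lisse-\'etale bookkeeping rather than any single deep step: one must verify that flat base change for $\EXT$ and for $R^p i_\ast$ is genuinely available along the atlas and compatible with the canonical arrows, and that the filtered colimit really commutes with $\EXT$, with cohomology, and with both spectral sequences. This is exactly where coherence of the $\mathcal{I}^n$ and the noetherian hypothesis are indispensable, and where the subtleties flagged in Lemma~\ref{lem:inject-qcoh} about covers in the lisse-\'etale site reappear. The compatibility $i^\ast R\HOM\cong R\HOM\circ i^\ast$ used in the tensor form is the other point requiring care, and I would again settle it by reduction to $X_0$ via the atlas, using that $\GG$ admits finite locally free resolutions locally and that restriction along an open immersion is exact.
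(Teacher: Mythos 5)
Your reduction of the sheaf-level isomorphism to the scheme case by faithfully flat descent along an atlas is reasonable in spirit, and it is genuinely different from what the paper does: the paper also reduces to schemes, but by writing $\Hom_\XX$ and $\HOM_\XX$ as the kernel of a map $\Hom_{X_0}\rightrightarrows\Hom_{X_1}$ attached to a presentation, so that Deligne's \emph{underived} statement \cite[App Prop 4]{MR0222093} transfers to stacks. The genuine gap in your proposal is the passage from the sheaf statement to the global one. The lemma is asserted for arbitrary $\FF\in D(\XX)$, and your argument runs through the local-to-global $\Ext$ spectral sequence and the Leray spectral sequence of $\Gamma\circ i_\ast$; for unbounded $\FF$ these are hypercohomology spectral sequences of unbounded complexes (namely of $R\HOM_\XX(\mathcal{I}^n,\FF)$ and of $Ri_\ast i^\ast\FF$), and such spectral sequences need not converge, so an isomorphism of $E_2$-pages does not give an isomorphism of abutments. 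Passing to the colimit over $n$ only makes this worse, since a filtered colimit of spectral sequences computes the colimit of the abutments only under uniform boundedness, and no bound is available: the stacks of this section may have infinite cohomological dimension (a non-tame Deligne--Mumford stack in positive characteristic), so one cannot truncate on either side. Your argument therefore proves the statement for $\FF\in D^+(\XX)$, where all $E_2$-terms vanish uniformly in $n$ below a fixed degree and both spectral sequences are regular, but not for $\FF\in D(\XX)$ as claimed. A related unbounded-case weakness sits already in your sheaf-level step: commuting $R\HOM_\XX(\mathcal{I}^n,-)$ with the atlas pullback is standard for $\FF$ bounded below (or for perfect first argument), but $\mathcal{I}^n$ is merely coherent, and for unbounded $\FF$ this base change is not automatic --- any proof via local finite free resolutions of $\mathcal{I}^n$ runs into infinite products, which flat pullback does not respect.

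The paper's proof avoids all of this, and that is precisely why it covers unbounded complexes: replace $\FF$ by a $K$-injective complex $I^\bullet$ of quasicoherent sheaves, apply the underived isomorphisms $\lim_{n\to\infty}\Hom_\XX(\mathcal{I}^n,I^q)\cong\Gamma(\UU,i^\ast I^q)$ and $\lim_{n\to\infty}\HOM_\XX(\mathcal{I}^n,I^q)\cong i_\ast i^\ast I^q$ in each degree $q$ separately (here no resolution of $\mathcal{I}^n$ and no product ever appears, since the source of the Hom is a single sheaf), and then use exactness of filtered colimits to commute the colimit with the cohomology of the Hom complex; since $i^\ast I^\bullet$ still computes $R\Gamma(\UU,-)$ and $Ri_\ast$, no convergence question arises. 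If you want to keep your architecture, the repair is to use your descent mechanism to establish the \emph{termwise, underived} statement on $\XX$ and then run exactly this $K$-injective argument, rather than comparing spectral sequences. Two smaller points would also need attention: (i) your comparison requires an actual morphism of spectral sequences inducing the canonical map on abutments, which is cleanest if both are realized as hypercohomology spectral sequences of $R\Gamma(\XX,-)$ applied to the canonical map $R\HOM_\XX(\mathcal{I}^n,\FF)\to Ri_\ast i^\ast\FF$; and (ii) in the tensor step you apply the first isomorphism to $R\HOM_\XX(\GG,\FF)$, which requires this object to be represented by a complex of quasicoherent sheaves, i.e.\ to lie in $D(\XX)=D(\QCoh(\XX))$ and not merely to have quasicoherent cohomology --- a distinction the paper is explicitly careful about elsewhere.
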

\begin{proof}
  The statement is the derived version of \cite[App Prop 4]{MR0222093} (we derive $\Hom$ and $\HOM$ using $K$-injectives). This last proposition holds also for stacks.  To see this we can use the usual trick of writing $\Hom_\XX$ as the kernel of an opportune morphism between $\Hom_{X_0}$ and $\Hom_{X_1}$ for some given presentation $X^\bullet$.
\end{proof}
 
Consider the following $2$-cartesian square:
  \begin{equation}\label{eq:unquadrato}
    \xymatrix@1{
      \mathcal{U} \ar[r]^-{i}\ar[d]^-{g} & \XX\ar[d]^-{f} \\
      \mathcal{Z} \ar[r]^-{j} & \YY \\
}
  \end{equation}
where $\XX,\YY\mathcal{Z},\mathcal{U}$ are Deligne-Mumford stacks,  morphisms $f,g$ are proper, $i,j$ are representable flat morphisms.
We can define a canonical morphism  $c_j\colon i^\ast f^!\to g^! j^\ast$. We can actually define it in two equivalent ways according to \cite[pg 401]{MR0274464}. We recall here the two constructions of this morphism for completeness, and to include a small modification that occurs in the case of stacks. 
\begin{enumerate}
\item Since $j$ is flat we have that $j^\ast$ is the left adjoint of $Rj_\ast$ so that we have a unit and a counit:
  \begin{displaymath}
    \phi_j\colon \Id \to Rj_\ast j^\ast \;\quad \psi_j\colon j^\ast Rj_\ast \to \Id
  \end{displaymath}
We can apply a theorem of flat base-change  for the push-forward and obtain an isomorphism: $\sigma\colon j^\ast Rf_\ast\to Rg_\ast i^\ast$ (Proposition $13.1.9$ in \cite{LMBca} states this result without assuming representability of the flat morphism, but I think they are actually using it in the proof). The right adjoint of this gives us $\widetilde{\sigma}\colon Ri_\ast g^!\to f^! Rj_\ast$. The canonical morphism we want is now the composition:
\begin{displaymath}
  i^\ast f^!\xrightarrow{i^\ast f^!\circ\phi_j} i^\ast f^!Rj_\ast j^\ast\xrightarrow{i^\ast\circ\widetilde{\sigma}\circ j^\ast} i^\ast Ri_\ast g^! j^\ast \xrightarrow{\psi_i\circ g^! j^\ast} g^! i^\ast
\end{displaymath}
\item Since $Rf_\ast$ has a right adjoint $f^!$ we have a unit and a counit:
  \begin{displaymath}
    \cotr_f\colon \Id \to f^! Rf_\ast  \;\quad \tr_f\colon Rf_\ast f^! \to \Id
  \end{displaymath}
We can now consider the following composition:
\begin{displaymath}
  i^\ast f^! \xrightarrow{\cotr_g\circ i^\ast f^!} g^!Rg_\ast i^\ast f^! \xrightarrow{g^!\circ\sigma\circ f^!} g^! j^\ast Rf_\ast f^! \xrightarrow{g^! j^\ast \tr_f} g^! j^\ast
\end{displaymath}
\end{enumerate}
According to \cite[pg 401]{MR0274464} both these two compositions define $c_j$; moreover $c_j$ satisfies a cocycle condition when composing two base changes.
\begin{prop}\label{prop:base-change-open}
 In the above setup, assume also that $j$ is an open immersion, the canonical morphism $c_j\colon i^\ast f^!\to g^! j^\ast$ is an isomorphism. 
\end{prop}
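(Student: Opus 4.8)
The plan is to check that $c_j(G)\colon i^\ast f^!G\to g^!j^\ast G$ is an isomorphism in $D^+(\UU)$ by testing it against a generating family of objects and computing both sides as colimits via Lemma \ref{lem:evil-deligne}. Since we are over a noetherian base of finite type, every coherent sheaf on the open substack $\UU$ extends to a coherent sheaf on $\XX$, so the objects $i^\ast F$ with $F\in D^-_{\mathrm{coh}}(\XX)$ form a family detecting isomorphisms in $D^+(\UU)$. Thus it suffices to show that for every such $F$, every $G\in D^+(\YY)$ and every $p$, the map $c_j$ induces an isomorphism
\begin{displaymath}
\Ext^p_{\UU}(i^\ast F, i^\ast f^!G)\longrightarrow \Ext^p_{\UU}(i^\ast F, g^!j^\ast G).
\end{displaymath}
Here I exploit that $i$ and $j$ are open immersions (the latter being a base change of the former), so Lemma \ref{lem:evil-deligne} applies to both $\UU\subset\XX$ and $\ZZ\subset\YY$.

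First I would compute the left-hand side. Writing $\II$ for an ideal defining the complement of $\UU$, Lemma \ref{lem:evil-deligne} gives $\Ext^p_{\UU}(i^\ast F, i^\ast f^!G)\cong \varinjlim_n \Ext^p_{\XX}(F\overset{L}{\otimes}\II^n, f^!G)$, and the duality adjunction for the proper morphism $f$ (Theorem \ref{thm:deligne-rep}) turns this into $\varinjlim_n \Ext^p_{\YY}(Rf_\ast(F\overset{L}{\otimes}\II^n), G)$. For the right-hand side, duality for the proper morphism $g$ gives $\Ext^p_{\UU}(i^\ast F, g^!j^\ast G)\cong \Ext^p_{\ZZ}(Rg_\ast i^\ast F, j^\ast G)$; applying the flat base-change isomorphism $\sigma\colon j^\ast Rf_\ast\xrightarrow{\sim} Rg_\ast i^\ast$ used in the construction of $c_j$, this becomes $\Ext^p_{\ZZ}(j^\ast Rf_\ast F, j^\ast G)$, and a second application of Lemma \ref{lem:evil-deligne} (now to the ideal $\mathcal{J}$ defining the complement of $\ZZ$ in $\YY$) yields $\varinjlim_n \Ext^p_{\YY}((Rf_\ast F)\overset{L}{\otimes}\mathcal{J}^n, G)$.

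The two expressions are then identified by comparing the pro-systems $\{F\overset{L}{\otimes}\II^n\}$ and $\{F\overset{L}{\otimes}Lf^\ast\mathcal{J}^n\}$. Since $\UU=f^{-1}(\ZZ)$, the ideal $f^\ast\mathcal{J}\cdot\OO_\XX$ also defines the complement of $\UU$, so the two systems are cofinal and, by the choice-independence built into Lemma \ref{lem:evil-deligne}, give the same colimit. Using $F\overset{L}{\otimes}Lf^\ast\mathcal{J}^n$ in the left-hand computation, the projection formula (which for Deligne-Mumford stacks reduces to the scheme case through a smooth presentation exactly as in Lemma \ref{lem:evil-deligne}) provides a natural isomorphism $Rf_\ast(F\overset{L}{\otimes}Lf^\ast\mathcal{J}^n)\cong (Rf_\ast F)\overset{L}{\otimes}\mathcal{J}^n$, matching the right-hand side term by term. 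Passing to the colimit identifies the two $\Ext$-groups, and since the objects $i^\ast F$ generate, $c_j$ is an isomorphism.

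The main obstacle I expect is not the construction of these identifications but checking that their composite is \emph{literally} the map induced by $c_j$, rather than merely an abstract isomorphism of functors. This is the bookkeeping carried out by Verdier in \cite[pg 401]{MR0274464}: one must trace how the units and counits ($\phi_j,\psi_i,\cotr_g,\tr_f$) entering the two equivalent definitions of $c_j$ interact with the adjunction isomorphisms, with $\sigma$, and with the projection-formula map. Having the two descriptions of $c_j$ available is precisely what makes this compatibility verifiable, and it is the point where the argument must be conducted with care; all the stack-specific inputs (existence of $f^!,g^!$, the base-change $\sigma$, and Lemma \ref{lem:evil-deligne}) have already been established, so the reasoning otherwise follows Verdier's scheme-theoretic proof verbatim.
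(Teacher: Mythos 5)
Your proposal is correct and is essentially the paper's own proof: the paper establishes this proposition by invoking Verdier's Theorem 2, case 1, with Lemma \ref{lem:evil-deligne} in place of Verdier's Lemma 2, and your argument (testing against restrictions $i^\ast F$ of coherent complexes, applying the colimit lemma on both $\UU\subset\XX$ and $\ZZ\subset\YY$, the duality adjunctions for $f$ and $g$, flat base change for $Rf_\ast$, the projection formula, and cofinality of the ideal systems) is precisely that argument spelled out. The compatibility bookkeeping you flag at the end — that the composite identification is literally the map $c_j$ — is in both cases deferred to Verdier's computation.
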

\begin{proof}
  Same proof as in \cite[Thm 2, case 1]{MR0274464} but using Lemma \ref{lem:evil-deligne}.
\end{proof}
\begin{cor}
  Duality for proper morphisms of Deligne-Mumford stacks is \'etale local in the sense that given a morphism of stacks $f\colon\XX\to \YY$ and a representable \'etale morphism $j\colon\mathcal{U}\to \YY$ like in picture (\ref{eq:unquadrato}) we have a canonical isomorphism $i^\ast f^!\cong g^!j^\ast$ in $D^+(\YY)$. 
\end{cor}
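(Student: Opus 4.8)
The plan is to bootstrap from the case of open immersions, already settled in Proposition \ref{prop:base-change-open}, using that an étale morphism becomes a disjoint union of open immersions after an étale cover, together with the fact that ``$c_j$ is an isomorphism'' can be tested after a faithfully flat base change. Throughout, $i^\ast f^!$ and $g^! j^\ast$ are to be read as the two functors $D^+(\YY)\to D^+(\mathcal{U})$ obtained from the square (\ref{eq:unquadrato}), and $c_j$ as the natural transformation between them; whether $c_j$ is an isomorphism is a condition on its cone, i.e.\ on the cohomology sheaves on $\mathcal{U}$, which may be checked on an étale atlas of $\mathcal{U}$.

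First I would record the behaviour of $c_j$ under a further base change. Given a representable étale surjection $h\colon\YY'\to\YY$, pulling back the whole square (\ref{eq:unquadrato}) along $h$ yields a new $2$-cartesian square with the same shape: proper vertical arrows $f',g'$ and representable flat horizontal arrows $i',j'$, where $j'=j\times_\YY\YY'$. By the functoriality of the two constructions of $c_j$ recalled after (\ref{eq:unquadrato}) — i.e.\ the compatibility of the flat base-change isomorphism $\sigma$ and of the derived adjunction units/counits with the extra flat base change $h$, which is the stacky form of the cocycle compatibility of \cite[pg 401]{MR0274464} — the pullback of $c_j$ along $\mathcal{U}'\to\mathcal{U}$ is identified with $c_{j'}$. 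Since $h$ is faithfully flat, vanishing of the cone of $c_j$ may be checked after pullback to $\mathcal{U}'$ by descent for quasicoherent cohomology, so $c_j$ is an isomorphism precisely when $c_{j'}$ is. Hence I am free to replace $\YY$ by any representable étale cover.

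The reduction step is then to choose the cover so that $j$ splits. By the local structure of representable étale morphisms of Deligne--Mumford stacks, after a suitable representable étale surjective base change $\YY'\to\YY$ the morphism $j'$ becomes a disjoint union $\coprod_\alpha j_\alpha$ of open immersions (étale-locally a finite étale morphism trivializes into copies of the base, each mapping isomorphically onto an open--closed piece, and a general representable étale morphism is of this form locally; quasi-compactness keeps the index set finite on each chart). Since $j^\ast$, $i^\ast$, $f^!$ and $g^!$ all respect the induced open--closed decomposition of the sources, $c_{j'}$ decomposes as the coproduct of the $c_{j_\alpha}$, and each $c_{j_\alpha}$ is an isomorphism by Proposition \ref{prop:base-change-open}. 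Assembling the pieces shows $c_{j'}$, hence $c_j$, is an isomorphism, and naturality in the argument is automatic from the functoriality of the construction; this gives the canonical isomorphism $i^\ast f^!\cong g^! j^\ast$.

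The main obstacle I anticipate is purely in the bookkeeping of compatibilities rather than in any new idea: one must verify that the formation of $\sigma$ and of the adjunctions defining $c_j$ genuinely commute with the auxiliary base change $h$, so that $c_{j'}$ really is the pullback of $c_j$, and that the structure theorem presenting an étale morphism as a disjoint union of open immersions after an étale cover holds in the required generality for representable morphisms of Deligne--Mumford stacks. Once these two compatibilities are established, the remainder of the argument is formal.
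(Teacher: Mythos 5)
Your proposal has two genuine gaps, and either one is fatal. The first is the splitting claim on which the whole reduction rests: it is simply false for the class of morphisms allowed in the corollary. A \emph{finite} \'etale morphism can indeed be trivialized by a surjective \'etale base change, but $j$ here is only assumed representable \'etale (hence quasi-finite and separated), and such a morphism need not become a disjoint union of open immersions after \emph{any} \'etale cover of the target. Concretely, take $\YY=\mathbb{A}^1=\Spec k[t]$ (char $k\neq 2$) and $j\colon \mathbb{G}_m\to\mathbb{A}^1$, $x\mapsto t=x^2$. For any \'etale surjection $h\colon\YY'\to\mathbb{A}^1$ and any point $y'$ over the origin, $t\circ h$ has a simple zero along a divisor through $y'$, so it cannot be a square of a regular function on the connected component of $y'$; hence the pullback of $j$ over the locus $t\circ h\neq 0$ of that component remains a nonsplit double cover, and $j'$ is never a disjoint union of open immersions. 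The correct local structure theorem only gives, \'etale-locally at a chosen point, a decomposition (split finite part) $\sqcup$ (a leftover \'etale piece with empty fiber over that point), and the leftover cannot be discarded; your parenthetical ``a general representable \'etale morphism is of this form locally'' is exactly where this is conflated with the finite \'etale case. The second gap is that your descent step is circular: to identify the pullback of $c_j$ along $m\colon\mathcal{U}'\to\mathcal{U}$ with $c_{j'}$ you need identifications $m^\ast i^\ast f^!\cong i'^\ast {f'}^! h^\ast$ and $m^\ast g^! j^\ast\cong g'^! {j'}^\ast h^\ast$, i.e.\ base-change isomorphisms for $f^!$ and $g^!$ along the \'etale cover $h$ and its pullback $w\colon\mathcal{Z}'\to\mathcal{Z}$ --- instances of the very corollary being proved. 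What the cocycle property of \cite[pg 401]{MR0274464} gives unconditionally is only the identity $c_w\circ m^\ast c_j=c_{j'}\circ i'^\ast c_h$, in which $c_h$ and $c_w$ are themselves unknown \'etale base-change morphisms; invertibility of $c_{j'}$ alone does not transfer to $c_j$. So this is not ``bookkeeping'': it is the mathematical content, and it cannot be supplied by the tools available at this point in the paper.

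The paper's own argument goes in the opposite direction precisely to avoid both problems: it applies Zariski's main theorem to the quasi-finite separated morphism $j$, factoring it as an open immersion followed by a \emph{finite} (hence proper, but in general non-flat) morphism. The open-immersion leg is Proposition \ref{prop:base-change-open}, while the finite leg requires no flat base change at all, being handled through the canonical compatibility of twisted inverse images with composition of \emph{proper} morphisms (the isomorphisms $\eta$ of equation (\ref{eq:iso-compos-upper-shriek}), together with Proposition \ref{prop:representable-proper-duality} for the representable proper piece). If you want to keep a descent-flavored argument, you would first have to prove base change along \'etale covers --- which is again the corollary itself --- so some dévissage of this ZMT type appears unavoidable.
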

\begin{proof}
  We apply Zariski main theorem for stacks to $j$ and reduce the problem to the already known open case.
\end{proof}
\begin{rem}
  Unfortunately Zariski main theorem requires representability, even if we have no reasons to suspect that the previous result doesn't hold in the non representable case we don't know a proof.
\end{rem}
\begin{defn}
  We recall that a morphism of schemes (or stacks) $f\colon X\to Y$ is \textit{compactifiable} if it can be written as an open immersion $i$ followed by a proper morphism $p$.
  \begin{displaymath}
    \xymatrix{
      X \ar[d]_f \ar@{^{(}->}[r]^i & \overline{X} \ar[dl]^p \\
      Y & \\
}
  \end{displaymath}
\end{defn}
It is a remarkable result of Nagata (\cite{MR0142549} and \cite{MR0158892}) that every separated finite type morphism of noetherian schemes is compactifiable in this sense. Because of our assumption at the beginning of the section  we can rely on this result of compactification for morphisms of schemes that will appear in the rest of this work.   
Deligne defined in \cite[Appendix]{MR0222093} a notion of ``duality'' for compactifiable morphisms (duality with compact support) of  separated noetherian schemes. First of all we observe that given an open immersion $i$ or more generally an object in a site, the functor $i^\ast$ has a ``left adjoint'' $i_!\colon \text{Coh}_X\to\text{pro-Coh}_{\overline{X}}$ which is an exact functor (see for instance \cite[II Rem 3.18]{Met} for a general enough construction). The category $\text{pro-Coh}$ is the category of pro-objects in the category of coherent sheaves. Given $f$ compactifiable we can define the derived functor $R f_!=R(p_\ast i_!)=(Rp_\ast)i_!$. It is clear that this last functor has a ``right adjoint'' in derived category  that is $i^\ast p^!$ and we will denote it as $f^!$. Deligne proved that this definition of $f^!$ is independent from the chosen compactification and well behaved with respect to composition of morphisms. 
In the previous paragraph we had to put the words \textit{duality} and \textit{adjoint} within inverted commas since there is actually a domain issue. Let $f: X\to Y$ be a separated finite type morphism,  the functor $f^!$ maps objects in $D^+(Y)$ to $D^+(X)$, the functor $f_!$ is defined from $\text{pro-}D^b_{c}(X)$
 to $\text{pro-}D^b_{c}(Y)$ and the duality result in \cite[Appendix Thm 2]{MR0222093} is just a duality result within inverted commas (notation $D_{c}$ means derived category of complexes with coherent cohomology).
We explicitly stress on this subtlety since it has a relevant consequence that is going to affect the rest of this work. When the morphism $f$ is proper we have a duality theory that provides us with a trace morphism $Rf_\ast f^!\to\Id$; given a composition of two proper morphisms $f,g$ and the natural isomorphism between $Rf_\ast Rg_\ast$ and $R (g\circ f)_\ast$, we can deduce, using the trace, a natural isomorphism $\eta_{f,g}$ between $f^!g^!$ and $(g\circ f)^!$; however if the two morphisms are not proper we have no trace morphism on $D^+(X)$ and we cannot deduce a natural isomorphism $\eta_{f,g}$ for a composition. As a side effect it is not clear anymore what kind of uniqueness we could have for the ``dualizing'' functor $f^!$ when  $f$ is not proper. In the proper case the adjoint $f^!$ with trace $\tr_f$ is unique in the sense that if we have another one $f^?$ with trace $\tr_{f}^?$ there is only one isomorphism between them making the two traces compatible. Equivalently we can use the transformation $\tr_{\cdot}$ to produce isomorphisms $\eta_{\cdot,\cdot}$ between compositions, and $\tr_{\cdot}^?$ to produce isomorphisms $\eta^?_{\cdot,\cdot}$, then we can restate the uniqueness result saying that there is only one isomorphism making $\eta_{\cdot,\cdot}$ and $\eta^?_{\cdot,\cdot}$ compatible. 
In the non necessarily proper setup Lipman proved a uniqueness result \cite[Thm 4.8.1]{lipman1960}  that we restate here for completeness:
\begin{thm}[Lipman]\label{thm:lipman-uniqueness}
  Let $f:X\to Y$ be a morphism between finite type separated schemes. There is a unique functor $f^!:D^+(Y)\to D^+(X)$, up to a unique isomorphism, satisfying the three following conditions:
  \begin{enumerate}
  \item when $f$ is proper it restricts to the adjoint of $Rf_\ast$
  \item when $f$ is \'etale (open) it is the pull back
  \item for every fiber square:
    \begin{displaymath}
      \xymatrix{
        \bullet \ar[r]^i \ar[d]^g & \bullet \ar[d]^f \\
        \bullet \ar[r]^j & \bullet \\
      }
    \end{displaymath}
    where $f,g$ are proper and $i,j$ are \'etale (open) the isomorphism:
    \begin{displaymath}    
      i^\ast f^! = i^!f^!\xrightarrow{\eta_{i,f}} (f\circ i)^! = (j\circ g)^! \xleftarrow{\eta_{g,j}} g^! j^\ast
    \end{displaymath}
    is the isomorphism $c_j$ (the one in Proposition \ref{prop:base-change-open})
  \end{enumerate}
\end{thm}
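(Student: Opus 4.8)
The plan is to reduce every morphism to the two cases already governed by conditions (1) and (2), namely proper morphisms and open (\'etale) immersions, using compactifications as the glue. By Nagata compactification (\cite{MR0142549,MR0158892}) I would factor $f$ as $f=p\circ i$ with $i\colon X\hookrightarrow\overline{X}$ an open immersion and $p\colon\overline{X}\to Y$ proper. Condition (1) determines $p^!$ as the right adjoint of $Rp_\ast$, unique up to a unique isomorphism compatible with the trace $\tr_p$, and condition (2) forces $i^!=i^\ast$. Hence any functor satisfying the hypotheses must be canonically isomorphic to $i^\ast p^!$ through the composition isomorphism $\eta_{i,p}\colon i^! p^!\to(p\circ i)^!$. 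This first step shows the three conditions pin down the shape of $f^!$ on each chosen compactification; what remains is to prove independence of that choice and uniqueness of the comparison isomorphism.

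The hard part will be showing the construction is independent of the compactification. Given two factorizations $p_1\circ i_1$ and $p_2\circ i_2$ of $f$, I would dominate both by a third: the scheme-theoretic closure $\overline{X}_3$ of $X$ (embedded via $(i_1,i_2)$) inside $\overline{X}_1\times_Y\overline{X}_2$ is proper over $Y$, contains $X$ as a dense open subscheme, and carries proper $Y$-morphisms $q_k\colon\overline{X}_3\to\overline{X}_k$ restricting to the identity on $X$; because each $\overline{X}_k$ is separated over $Y$ one checks $q_k^{-1}(X)=X$, so $q_k$ is an isomorphism over the open $X\subseteq\overline{X}_k$. It then suffices to compare $i_k^\ast p_k^!$ with $i_3^\ast p_3^!$ for $p_3=p_k\circ q_k$. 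Pseudofunctoriality of the proper twisted inverse image gives $p_3^!\cong q_k^! p_k^!$, and applying condition (3) to the cartesian square with vertical arrows $q_k$ and $\Id_X$ and horizontal arrows $i_3,i_k$ yields $i_3^\ast q_k^!\cong i_k^\ast$, an isomorphism by Proposition \ref{prop:base-change-open}. Composing these identifies $i_3^\ast p_3^!$ with $i_k^\ast p_k^!$. The cocycle property of $c$ recorded before Proposition \ref{prop:base-change-open} then guarantees that the isomorphisms coming from different dominating compactifications are mutually compatible, so that the cofiltered system of all compactifications yields a single well-defined $f^!$; checking this coherence is the real diagram-chasing core of the argument.

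Finally, for the uniqueness clause I would compare two functors $f^!$ and $f^?$ both satisfying (1)--(3). Factoring $f=p\circ i$ as before, condition (1) makes $p^!$ and $p^?$ right adjoints of $Rp_\ast$, so there is a unique isomorphism $p^!\to p^?$ compatible with the trace $\tr_p$, while condition (2) makes $i^!$ and $i^?$ both equal to $i^\ast$, so the comparison there is the identity. Transporting through the composition isomorphisms $\eta_{i,p}$ produces an isomorphism $f^!\to f^?$, and condition (3), combined with the independence of compactification just established, forces it to be the unique isomorphism compatible with all base-change data. The rigidity that upgrades existence to uniqueness comes entirely from the proper case: uniqueness of an adjoint compatible with a prescribed trace leaves no room for a nontrivial automorphism, so once the proper and open pieces are fixed the global comparison isomorphism is determined.
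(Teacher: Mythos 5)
First, a point of reference: the paper never actually proves this statement. It is quoted from Lipman \cite[Thm 4.8.1]{lipman1960} ``for completeness,'' so there is no internal proof to compare yours against; the honest benchmark is Lipman's own argument, which is the Deligne-style compactification construction. Your strategy coincides with it: factor $f$ through a Nagata compactification, set $f^!=i^\ast p^!$, compare two compactifications by dominating both with the scheme-theoretic closure $\overline{X}_3$ of $X$ inside $\overline{X}_1\times_Y\overline{X}_2$, and derive uniqueness from the rigidity of the proper case (trace-compatible adjoints admit no nontrivial automorphisms). The steps you do write out are sound: $q_k^{-1}(X)=X$ indeed follows from separatedness of $\overline{X}_k$ over $Y$ (closedness of the graph of $X\to\overline{X}_k$), the $q_k$ are proper, and the comparison $i_3^\ast p_3^!\cong i_3^\ast q_k^!p_k^!\cong i_k^\ast p_k^!$ via Proposition~\ref{prop:base-change-open} is the correct mechanism.

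That said, what you have written is a plan rather than a proof, and the part you explicitly defer --- ``checking this coherence is the real diagram-chasing core of the argument'' --- is not a detail; it essentially \emph{is} the theorem. Concretely, two gaps. (i) Existence is not finished once $f^!$ is well defined morphism-by-morphism: conditions (1)--(3) presuppose the composition isomorphisms $\eta_{f,g}$ as part of the data, so you must define $\eta_{f,g}$ for \emph{every} composable pair (the paper itself does this after stating the theorem, via the compactification diagram (\ref{eq:compat-comp}) and the formula (\ref{eq:comp-smooth-shriek})), prove independence of all choices, and then verify that condition (3) actually holds for the resulting structure; none of this appears in your text, and it is where the serious diagram chases live. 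There is also a circularity in your second paragraph: in the existence direction you cannot ``apply condition (3)'' to the square $(q_k,\Id,i_3,i_k)$, because no functor satisfying (3) exists yet; you must invoke Proposition~\ref{prop:base-change-open} directly (as you also do) and only afterwards arrange the $\eta$'s so that (3) becomes true. (ii) For both independence of compactification and the uniqueness clause you need more coherence than the pasting property of $c_j$ recorded before Proposition~\ref{prop:base-change-open}: you need that the comparison isomorphism attached to a domination does not depend on the chosen dominating compactification, that it satisfies the cocycle identity over triples of compactifications, and that the resulting isomorphism $f^!\to f^?$ in the uniqueness argument is compatible with all the $\eta$'s, not merely those of one chosen factorization. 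This is precisely the content of Lipman's ``base-change setup'' machinery; without it, the sentence ``the cofiltered system of all compactifications yields a single well-defined $f^!$'' is an assertion, not an argument.
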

The great relevance of this result for the present work will become evident in the next section.
In order to use this theorem we have now to prescribe isomorphisms $\eta_{\cdot,\cdot}$ in a way that they restrict to the usual thing when the two morphisms are proper, they restrict to the natural isomorphism between pullbacks when the two morphisms are \'etale (open), and they are compatible with the third requirement of the previous theorem. First of all we recall how to compose compactifiable morphisms. Given  $X\xrightarrow{f} Y\xrightarrow{g} Z$ finite type morphisms of noetherian schemes  we can compose them according to the following diagram:
\begin{equation}\label{eq:compat-comp}
  \xymatrix{
    X \ar[r]^-{j}\ar[d]_-{f} & \overline{X} \ar[r]^-{k}\ar[dl]_-{\overline{f}} & \overline{W}\ar[dl]_-{\overline{h}} \\
    Y \ar[r]^-{i}\ar[d]_-{g} & \overline{Y} \ar[dl]_-{\overline{g}} & \\
    Z & & \\
}
\end{equation}
where $i,j,k$ are open immersions, $\overline{f},\overline{g},\overline{h}$ are proper and we can always assume the top right square (delimited by $(\overline{f},\overline{h},i,k)$) to be cartesian; to this purpose we can first apply Nagata's compactification to  the composition $i\circ f$ and choose $\overline{X}$ to be the fibered product $Y\times_{\overline{Y}} \overline{W}$ and $j$ is given by the universal property of the fibered product. The same choice is possible even if we compactify with $i,j,k$  \'etale instead of open; in this case being $k$ \'etale and $k\circ j$ \'etale  the morphism $j$ is also \'etale (\cite[I Cor 3.6]{Met}). We have now a natural way to define an isomorphism $\eta_{f,g}$ that is given by the following composition:
\begin{equation}\label{eq:comp-smooth-shriek}
  (g\circ f)^!=(k\circ j)^\ast(\overline{g}\circ\overline{h})^!\xrightarrow{\eta_{\overline{g},\overline{h}}} (k\circ j)^\ast \overline{h}^! \overline{g}^!\xrightarrow{c_i} j^\ast \overline{f}^! i^\ast \overline{g}^!= f^!g^!
\end{equation}
This provides  a \textit{base-change setup} as defined in \cite[Def 4.8.2]{lipman1960} where distinguished squares are all cartesian and the isomorphism $\beta_\cdot$, associated to a square, is the $c_\cdot$ defined in Theorem \ref{sec:duality-flat-base-change-duality}. The isomorphism $\eta_{f,g}$ clearly satisfies the first two requirements of Theorem \ref{thm:lipman-uniqueness}, to check that even the third requirement is satisfied we only have to apply equation (\ref{eq:comp-smooth-shriek}) to the following diagram:
\begin{displaymath}
  \xymatrix{
 \bullet \ar@{=}[r] \ar[d]_g & \bullet \ar[dl]_g \ar[r]^i & \bullet \ar[dl]^f \\
 \bullet \ar[r]^j \ar[d]^j & \bullet \ar@{=}[dl] & \\
 \bullet & & \\
 }
\end{displaymath}
where we have kept notations as in Theorem \ref{thm:lipman-uniqueness}.
\begin{rem}
  We would like to stress on the fact that the previous theorem implies also that the functor $(\cdot)^!$ is independent from the choice of compactification up to a unique isomorphism. Another proof of the independence from the compactification can be found in \cite[Appendix pag 414]{MR0222093}, and in the \'etale case in \cite[VI Thm 3.2.b]{Met}. However Theorem \ref{thm:lipman-uniqueness} is a stronger result than bare ``independence from compactification'' as we will have the opportunity to appreciate in the following section. 
\end{rem}

It is natural to ask how stacks and morphisms of stacks do fit inside this picture of duality for compactifiable morphisms.
We can imagine the following situation: let $\XX$ be a Deligne-Mumford stack and $f_0\colon X_0\to\XX$ an \'etale atlas. The morphism $X_0\to X$ is quasi finite and a fortiori compactifiable. We can choose a special compactification using Zariski main theorem and split the morphism as $X_0\xrightarrow{k}\overline{X}\xrightarrow{h} X$ where $h$ is finite and $k$ is an open immersion. We can also apply Zariski main theorem for stacks \cite[Thm 16.5]{LMBca} to the morphism $f_0$ and obtain a different compactification $X_0\xrightarrow{l} \overline{\XX} \xrightarrow{\rho} \XX \xrightarrow{\pi} X$ where $l$ is open, $\rho$ is finite and $\overline{\XX}$ is a Deligne-Mumford stack. Now we want to prove that these two different compactifications are equivalent from the point of view of duality.
\begin{prop}\label{prop:compatibility-uppershrieck}
  Consider the commutative square:
  \begin{displaymath}
    \xymatrix{
      & X_0\ar[dr]^-{l} \ar[dl]_-{k} & \\
 \overline{\XX}\ar[dr]^-{\pi\circ\rho} & & \overline{X}\ar[dl]_-{h} \\
     &   X & \\
}
  \end{displaymath}
Let $F\in D^+(X)$, there is a canonical isomorphism $ k^\ast(\pi\circ\rho)^!F\cong l^\ast h^! F$
\end{prop}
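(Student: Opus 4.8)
The plan is to dominate the two given compactifications by a single one and then push the computations of $(\pi\circ\rho)^!F$ and of $h^!F$ down to $X_0$ by base change for open immersions. First I would form $\overline{W}$, the scheme-theoretic closure of the image of $X_0$ under $(k,l)\colon X_0\to\overline{\XX}\times_X\overline{X}$. Since $l=\pr_2\circ(k,l)$ is an open immersion and the projection $\pr_2\colon\overline{\XX}\times_X\overline{X}\to\overline{X}$ is separated (it is the base change of the proper, hence separated, morphism $\pi\circ\rho\colon\overline{\XX}\to X$), the map $(k,l)$ is a locally closed immersion; thus $X_0$ sits in $\overline{W}$ as a dense open substack via some $m\colon X_0\hookrightarrow\overline{W}$. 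The two projections restrict to $a\colon\overline{W}\to\overline{\XX}$, which is finite (the base change of the finite $h$), and $b\colon\overline{W}\to\overline{X}$, which is proper (the base change of the proper $\pi\circ\rho$), and they satisfy $a\circ m=k$, $b\circ m=l$ and $h\circ b=\pi\circ\rho\circ a=:\overline{w}$, the structure morphism $\overline{W}\to X$, which is proper.

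The crucial geometric point is that $a$ restricts to an isomorphism over the open substack $k(X_0)\subset\overline{\XX}$ and $b$ to an isomorphism over $l(X_0)\subset\overline{X}$. Indeed, over $k(X_0)$ the map $(k,l)$ realizes $X_0$ as a section $x\mapsto(x,l(x))$ of the separated morphism $\pr_1\colon X_0\times_X\overline{X}\to X_0$, hence as a closed immersion; as its image already contains the dense copy of $X_0$, one gets $a^{-1}(k(X_0))=m(X_0)\cong X_0$ with $a$ inducing the identity there. The symmetric argument with $\pr_2$ over $l(X_0)$ handles $b$. This is exactly the input that will let the twisted inverse image degenerate to an ordinary pullback along these loci.

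With this in hand the rest is formal. The composition isomorphism \eqref{eq:iso-compos-upper-shriek} gives canonical identifications $\overline{w}^{\,!}\cong a^!(\pi\circ\rho)^!$ and $\overline{w}^{\,!}\cong b^! h^!$. Applying Proposition \ref{prop:base-change-open} to the cartesian square cut out by the open immersion $k$ and the proper morphism $a$ yields $c_k\colon m^\ast a^!\xrightarrow{\sim}(a')^!k^\ast$, where $a'$ is the restriction of $a$ over $k(X_0)$; since $a'$ is an isomorphism one has $(a')^!=(a')^\ast$, whence $m^\ast\overline{w}^{\,!}F\cong k^\ast(\pi\circ\rho)^!F$. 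The symmetric application of Proposition \ref{prop:base-change-open} to $b$ and $l$ gives $m^\ast\overline{w}^{\,!}F\cong l^\ast h^!F$, and composing the two isomorphisms produces the asserted canonical isomorphism $k^\ast(\pi\circ\rho)^!F\cong l^\ast h^!F$.

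The main obstacle is not the formal bookkeeping but the construction and analysis of $\overline{W}$: one must verify that $(k,l)$ is genuinely a locally closed immersion with dense image and, above all, that $a$ and $b$ become isomorphisms over the two open copies of $X_0$, which is the step that collapses $(a')^!$ and $(b')^!$ to pullbacks. A secondary issue demanding care is the coherence of the chosen isomorphisms: one wants the composition isomorphisms $\eta$ of \eqref{eq:iso-compos-upper-shriek} and the base-change isomorphisms $c_k,c_l$ of Proposition \ref{prop:base-change-open} to be compatible, so that the two identifications of $m^\ast\overline{w}^{\,!}F$ glue to a single canonical isomorphism. This is precisely the naturality of $\eta$ together with the cocycle condition for the $c$'s recorded before Proposition \ref{prop:base-change-open} and in the discussion surrounding \eqref{eq:comp-smooth-shriek}, and it is what makes the resulting isomorphism independent of the auxiliary choice of $\overline{W}$.
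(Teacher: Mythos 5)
Your overall strategy is exactly the one the paper has in mind: its proof simply says ``same proof as in Verdier's Corollary~1, but using Proposition~\ref{prop:base-change-open}'', and Verdier's Corollary~1 is precisely your argument --- dominate the two compactifications by the closure $\overline{W}$ of $X_0$ in $\overline{\XX}\times_X\overline{X}$, identify $m(X_0)$ with the preimages of $k(X_0)$ and of $l(X_0)$, and compare $k^\ast(\pi\circ\rho)^!$ and $l^\ast h^!$ with $m^\ast\overline{w}^{\,!}$ via the composition isomorphisms \eqref{eq:iso-compos-upper-shriek} and base change along open immersions (Proposition~\ref{prop:base-change-open}). The formal second half of your write-up is correct. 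The problem is in the geometric first half: both of the justifications you give there invoke facts that are true for schemes but \emph{false} for stacks, and handling the stacky side $\overline{\XX}$ is the only genuinely new content of this proposition compared to Verdier.

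Concretely: (i) you argue that $(k,l)$ is a locally closed immersion because $l=\pr_2\circ(k,l)$ is an open immersion and $\pr_2$ is separated. The lemma behind this (``$g\circ f$ an immersion and $g$ separated imply $f$ an immersion'') fails when $g$ is a separated non-representable morphism of stacks: take $f\colon\Spec{k}\to\mathcal{B}G$ the atlas of the classifying stack of a nontrivial finite group and $g\colon\mathcal{B}G\to\Spec{k}$ its (proper, hence separated) structure morphism; then $g\circ f=\Id$ is a closed immersion but $f$ is not even a monomorphism. The reason is that the usual proof factors $f$ through a graph, which is a pullback of the diagonal of $g$, and for stacks that diagonal is only finite and unramified, not a closed immersion. (ii) The same defect infects the sentence ``the symmetric argument with $\pr_2$ over $l(X_0)$ handles $b$'': the symmetric argument would need the section $\Gamma_k=(k,\Id)\colon X_0\to\overline{\XX}\times_X X_0$ of the separated non-representable projection $\overline{\XX}\times_X X_0\to X_0$ to be a closed immersion, and sections of such morphisms need not be (again $\Spec{k}\to\mathcal{B}G$). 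Both statements are nevertheless true here, because the situation is \emph{not} symmetric: one of the two compactifications consists of schemes, and moreover $k$ is an open immersion, not merely \'etale. For (i), factor $(k,l)=(k\times\Id_{\overline{X}})\circ\Gamma_l$, where $\Gamma_l\colon X_0\to X_0\times_X\overline{X}$ is the graph of $l$ over $X$; this is a section of the base change of the separated morphism of schemes $h$, hence a closed immersion, and $k\times\Id_{\overline{X}}$ is an open immersion, so $(k,l)$ is an immersion and $a^{-1}(k(X_0))=m(X_0)$ as you claim. For (ii), note that $\Gamma_k$ factors through the open substack $k(X_0)\times_X X_0\cong X_0\times_X X_0$ (a scheme), inside which it is the graph of an isomorphism and hence closed; and $\Gamma_k$ is in addition finite, being a pullback of the diagonal $\Delta_{\overline{\XX}/X}$, which is finite because $\overline{\XX}$ is a separated Deligne--Mumford stack over $X$. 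A finite (hence universally closed) immersion is a closed immersion, so the image of $\Gamma_k$ is closed in all of $\overline{\XX}\times_X X_0$, which is what the closure argument needs to give $b^{-1}(l(X_0))=m(X_0)$ with $b$ restricting to an isomorphism. With these two repairs your proof is complete and coincides with the paper's.
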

\begin{proof}
  Same proof as in \cite[Cor 1]{MR0274464} but using base change result in \ref{prop:base-change-open}. 
\end{proof}
We can now generalize to stacks the independence from the compactification:
\begin{cor}\label{cor:stacky-comp-etale-open}
  In the setup of the previous proposition, for every $F\in D^+(X)$  there is a canonical isomorphism $f_0^\ast\pi^!F \cong l^\ast h^!F$
\end{cor}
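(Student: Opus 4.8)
The plan is to deduce the statement directly from Proposition~\ref{prop:compatibility-uppershrieck}, which already supplies a canonical isomorphism $k^\ast(\pi\circ\rho)^!F\cong l^\ast h^!F$. Since the compactification furnished by Zariski's main theorem factors the \'etale atlas as $f_0=\rho\circ k$, with $k\colon X_0\to\overline{\XX}$ an open immersion and $\rho\colon\overline{\XX}\to\XX$ finite, it suffices to produce a canonical isomorphism
\[
  f_0^\ast\pi^!F\;\cong\;k^\ast(\pi\circ\rho)^!F .
\]
First I would rewrite the right-hand side by the compatibility of the dualizing functor with composition recorded in the remark after Theorem~\ref{thm:existence-adj-unbound}: as $\rho$ is finite and $\pi$ is proper (both having honest right adjoints, and $\pi\circ\rho$ proper), the isomorphism $\eta_{\rho,\pi}\colon\rho^!\pi^!\xrightarrow{\sim}(\pi\circ\rho)^!$ yields $k^\ast(\pi\circ\rho)^!F\cong k^\ast\rho^!\pi^!F$. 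Setting $G=\pi^!F\in D^+(\XX)$, the whole problem collapses to comparing $k^\ast\rho^!$ with $k^\ast\rho^\ast=f_0^\ast$, i.e. to exhibiting $k^\ast\rho^!G\cong f_0^\ast G$ naturally in $G$.

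The geometric input is that $\rho$ is \'etale over the open image $\mathcal{W}:=k(X_0)\subseteq\overline{\XX}$: identifying $X_0$ with $\mathcal{W}$ through $k$, the restriction $\rho|_{\mathcal{W}}$ is precisely $f_0$, which is \'etale by hypothesis. Since the twisted inverse image of an \'etale morphism is the ordinary pullback (the defining property used for schemes in Theorem~\ref{thm:lipman-uniqueness}(2), transported to stacks) and $k^!=k^\ast$ for an open immersion, the clean way to conclude is through the composition isomorphism $\eta_{k,\rho}\colon k^!\rho^!\xrightarrow{\sim}(\rho\circ k)^!=f_0^!$, which then reads $k^\ast\rho^!\cong f_0^!=f_0^\ast$. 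Concretely I would justify the underlying identification by the fact that $\rho^!$ for a finite morphism is computed by the local formula of finite duality (coherent $\HOM$ against $\rho_\ast\OO_{\overline{\XX}}$), a construction that is \'etale-local on source and target; restricting it to the locus $\mathcal{W}$ where $\rho$ is \'etale collapses this $\HOM$ to the identity and returns $\rho^\ast$, whence $k^\ast\rho^!G\cong k^\ast\rho^\ast G=f_0^\ast G$.

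The main obstacle is exactly this last comparison, and the delicate point is that it is \emph{not} an instance of the open-immersion base change of Proposition~\ref{prop:base-change-open}. That result requires a cartesian square, which here would force $\mathcal{W}=\rho^{-1}(\mathcal{V})$ for some open $\mathcal{V}\subseteq\XX$; but $\rho$ is only finite and $f_0$ is not a monomorphism, so a point of $\mathcal{W}$ may share its $\rho$-image with a point of the closed complement, and in general $\rho^{-1}\bigl(\rho(\mathcal{W})\bigr)$ is strictly larger than $\mathcal{W}$, so no such cartesian square exists. I would therefore bypass base change entirely and rely on the composition formalism for $(\cdot)^!$ applied to $k$ (open) followed by $\rho$ (finite), together with the \'etale-local nature of finite duality. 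The real work, hidden behind $\eta_{k,\rho}$, is to have the stacky analogue of the base-change setup of equation~(\ref{eq:comp-smooth-shriek}) and of Theorem~\ref{thm:lipman-uniqueness} available for a composition in which the first factor is a non-proper open immersion; once that machinery is in place the corollary follows formally by composing the displayed isomorphisms.
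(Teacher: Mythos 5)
Your reduction is the correct one, and it matches the paper's opening moves: Proposition~\ref{prop:compatibility-uppershrieck} plus the composition isomorphism $\eta_{\rho,\pi}\colon\rho^!\pi^!\xrightarrow{\sim}(\pi\circ\rho)^!$ (legitimate, since $\pi$ and $\rho$ are both proper and their push-forwards have honest adjoints) collapse everything to exhibiting a canonical isomorphism $k^\ast\rho^!G\cong f_0^\ast G$ for $G=\pi^!F$. Your observation that this is \emph{not} an instance of Proposition~\ref{prop:base-change-open} is also correct and well taken: since $f_0$ is surjective, $\rho(k(X_0))=\XX$, so $k(X_0)$ can be of the form $\rho^{-1}(\mathcal{V})$ only if $f_0$ was already finite. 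The gap is that you never actually prove this crux. Your ``clean way'' appeals to $\eta_{k,\rho}\colon k^!\rho^!\xrightarrow{\sim}f_0^!$ together with $f_0^!=f_0^\ast$; but for morphisms of \emph{stacks} the symbols $k^!$, $f_0^!$ and a composition isomorphism whose first factor is a non-proper open immersion only exist inside a compactly-supported duality formalism of the type of Theorem~\ref{thm:lipman-uniqueness}, which the paper possesses only for schemes. Constructing that formalism for Deligne--Mumford stacks is precisely what this corollary and the surrounding statements are steps toward, so invoking it here is circular --- as your own closing sentence concedes (``once that machinery is in place the corollary follows formally'').

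Your fallback sketch (finite duality is \'etale-local; over the \'etale locus the $\HOM$ collapses) is the germ of the paper's actual argument, but it omits the two steps that \emph{are} the argument. First, $\rho_\ast\OO_{\overline{\XX}}$ does not split off an ``\'etale factor'' globally, nor even Zariski-locally on $\XX$: the compactification $\overline{\XX}$ is typically connected, and $\rho$ need not be flat, so there is no global trace map to produce a canonical comparison morphism either; the collapse of $R\HOM_\XX(\rho_\ast\OO_{\overline{\XX}},G)$ to $\rho^\ast G$ near a point of $k(X_0)$ is valid only after an \'etale (henselizing) localization of $\XX$ that splits the finite algebra. Second, the resulting family of \'etale-local isomorphisms must be glued, i.e.\ shown compatible with descent data. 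The paper does both at once using the representability of $f_0$ (hence of $\rho$ and $k$): pull the whole factorization back along the atlas $f_0$ itself, obtaining a diagram of \emph{schemes}
\begin{equation*}
X_1=X_0\times_\XX X_0\;\hookrightarrow\;\overline{\XX}\times_\XX X_0\;\xrightarrow{\;\rho_0\;}\;X_0 ,
\end{equation*}
where the open immersion is the base change of $k$ and the composite is the \'etale projection. Proposition~\ref{prop:representable-proper-duality} (available at this point, unlike Theorem~\ref{thm:dual-finite-morph}, which only comes later) identifies $\rho^!G$ restricted to $\overline{\XX}\times_\XX X_0$ with the scheme-level $\rho_0^!(f_0^\ast G)$; the analogous result for schemes (Verdier: independence of compactification together with $(\text{\'etale})^!=(\text{\'etale})^\ast$, i.e.\ conditions (1)--(2) of Theorem~\ref{thm:lipman-uniqueness}) then gives the isomorphism with the pullback over $X_1$; and the canonicity (cocycle property) of these scheme-level isomorphisms is what allows them to descend from $X_1$ to $X_0$. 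Supplying this base-change-and-descent argument --- which your phrase ``\'etale-local on source and target'' gestures at but does not carry out --- is what turns your sketch into the paper's proof.
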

\begin{proof}
  We use the previous proposition, the representability of $f_0$ and the analogous result for schemes.
\end{proof}
 We can prove  Grothendieck duality in its sheaf version (on the \'etale site):
 \begin{cor}\label{cor:duality-sheaf-version}
   Let $f\colon \XX\to \YY$ be a proper morphism of Deligne-Mumford stacks and $\FF\in D_c^+(\XX)\text{,}\; G\in D^+(\YY)$. The natural morphism:
   \begin{displaymath}
     \xymatrix{
       Rf_\ast R \HOM_\XX(\FF,f^! G) \ar[r] &  R \HOM_Y(Rf_\ast \FF,Rf_\ast f^! G) \ar[r]^-{\tr_f} & R \HOM_Y(Rf_\ast \FF,G) \\
     }
   \end{displaymath}
is an isomorphism.
 \end{cor}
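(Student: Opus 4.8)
The plan is to prove the statement by reducing, via étale descent on the target, to the global adjunction already established in Theorem~\ref{thm:deligne-rep}. The displayed morphism lives in $D^+(\YY)$, so to prove it is an isomorphism it suffices to show that its cohomology sheaves are isomorphisms, and this may be checked étale-locally on $\YY$. Concretely, I would choose an étale atlas $j\colon Y_0\to\YY$ with $Y_0$ a scheme, form the $2$-cartesian square with $i\colon\XX_{Y_0}\to\XX$ and $g\colon\XX_{Y_0}\to Y_0$, and argue that both sides of the morphism, as well as the morphism itself, are compatible with the exact pullback $j^\ast$. Since $j^\ast$ commutes with the formation of cohomology sheaves and $Y_0\to\YY$ is a cover, it is then enough to treat $g$; and as the resulting question is local on the scheme $Y_0$, I may further restrict to the Zariski opens of $Y_0$ and assume $Y_0=\Spec A$ affine.

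The compatibility with base change rests on three ingredients, all available from the preceding results: flat base change for the pushforward, $j^\ast Rf_\ast\cong Rg_\ast i^\ast$ (the isomorphism $\sigma$ recalled before Proposition~\ref{prop:base-change-open}); base change for the twisted inverse image, $i^\ast f^!\cong g^! j^\ast$, which is exactly the étale-local duality corollary following Proposition~\ref{prop:base-change-open}; and the fact that $R\HOM$ commutes with the étale pullbacks $i^\ast$ and $j^\ast$, which holds because $\FF\in D_c^+(\XX)$ and $Rf_\ast\FF$ has coherent cohomology ($f$ being proper). The main obstacle is to verify that these base-change isomorphisms are compatible with the trace $\tr_f$, that is, that $j^\ast$ carries the candidate morphism for $f$ to the corresponding morphism for $g$. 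This is a coherence verification, using that $c_j$ is built from $\sigma$ and the units and counits of the two adjunctions (as in the two constructions preceding Proposition~\ref{prop:base-change-open}) and that the adjoint base-change transformation $\widetilde\sigma$ intertwines $\tr_f$ with $\tr_g$.

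Granting this compatibility, the problem is reduced to the morphism for $g\colon\XX_{Y_0}\to Y_0$ with $Y_0=\Spec A$ affine, where $g^!$ exists and is right adjoint to $Rg_\ast$ by Theorem~\ref{thm:deligne-rep}. Here both $Rg_\ast R\HOM_{\XX_{Y_0}}(\FF,g^!G)$ and $R\HOM_{Y_0}(Rg_\ast\FF,G)$ have quasicoherent cohomology, so over the affine base it suffices to check the morphism after applying $R\Gamma(Y_0,-)$. Using $R\Gamma(Y_0,-)\circ Rg_\ast=R\Gamma(\XX_{Y_0},-)$, the left-hand side becomes $R\Hom_{\XX_{Y_0}}(\FF,g^!G)$ and the right-hand side becomes $R\Hom_{Y_0}(Rg_\ast\FF,G)$, and by construction the induced map is the composite $R\Hom_{\XX_{Y_0}}(\FF,g^!G)\to R\Hom_{Y_0}(Rg_\ast\FF,Rg_\ast g^!G)\xrightarrow{\tr_g}R\Hom_{Y_0}(Rg_\ast\FF,G)$, which is precisely the adjunction isomorphism for $Rg_\ast\dashv g^!$. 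Hence it is an isomorphism on global sections, therefore an isomorphism of quasicoherent complexes over the affine $Y_0$, and descending back along the cover $j$ yields the claim over $\YY$.
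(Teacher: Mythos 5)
Your proof follows essentially the same route as the paper's: both arguments étale-localize on $\YY$, use flat base change for $Rf_\ast$ together with the étale locality of $f^!$ (the corollary to Proposition \ref{prop:base-change-open}) to identify the restricted morphism with the corresponding one over a scheme, and then conclude by the adjunction of Theorem \ref{thm:deligne-rep} (the ``non sheaf'' version of duality). The only difference is packaging: the paper computes the restriction of $f_\ast\HOM_\XX(I^\bullet,f^!J^\bullet)$ to each object of the étale site of $\YY$ explicitly with $K$-injective complexes and the descent exact sequence of an étale presentation of $\XX$, whereas you pull the whole morphism back along an atlas and reduce to the affine case, flagging (correctly, and it is a routine check with the unit--counit description of $c_j$) the trace-compatibility coherence that the paper leaves implicit.
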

 \begin{proof}
First of all we observe that according to \cite[15.6.iv]{LMBca} the functor $Rf_\ast$ maps $D^+_c(\XX)$ to $D^+_c(\YY)$ so that everything is well defined.
   Take $I^\bullet,J^\bullet$ $K$-injective complexes quasi isomorphic to $\FF,G$. Let $j\colon U\to \YY$ be an object in the \'etale site of $\YY$ and $X^\bullet$ an \'etale presentation of $\XX$. We can construct the following picture:
   \begin{displaymath}
     \xymatrix{
       U_1 \ar@<-1ex>[r]^-u\ar@<1ex>[r]^-v\ar[d]_-m & U_0 \ar[r]^-l\ar[d]_-k & \mathcal{U} \ar[r]^-g\ar[d]_-j & U \ar[d]_-i \\
       X_1 \ar@<-1ex>[r]^-s\ar@<1ex>[r]^-t  & X_0 \ar[r]^-h & \XX \ar[r]^-f & \YY \\
}
   \end{displaymath}
As usual we have the exact sequence:
\begin{displaymath}
  0\to f_\ast\HOM_{\XX}(I^\bullet,f^!J^\bullet)\to f_\ast h_\ast\HOM_{X_0}(h^\ast I^\bullet,h^\ast f^! J^\bullet)\to f_\ast h_\ast s_\ast \HOM_{X_1}(s^\ast h^\ast I^\bullet,s^\ast h^\ast f^! J^\bullet)
\end{displaymath}
where $f^!J^\bullet$ is a complex of injectives and $\HOM_{\XX}(I^\bullet,f^!J^\bullet)$ is flasque. We first use flat base change to obtain $i^\ast f_\ast h_\ast\HOM_{X_0}(h^\ast I^\bullet,h^\ast f^! J^\bullet)\cong g_\ast l_\ast k^\ast\HOM_{X_0}(h^\ast I^\bullet,h^\ast f^! J^\bullet)\cong $ \\ 
$\cong g_\ast l_\ast \HOM_{U_0}(k^\ast h^\ast I^\bullet,k^\ast h^\ast f^! J^\bullet) $. Using \'etale locality for $f^!$ we obtain $k^\ast h^\ast f^!= l^\ast g^! i^\ast$. Eventually we have:
\begin{displaymath}
  i^\ast f_\ast h_\ast\HOM_{X_0}(h^\ast I^\bullet,h^\ast f^! J^\bullet)\cong  g_\ast l_\ast \HOM_{U_0}( l^\ast j^\ast I^\bullet, l^\ast g^!i^\ast J^\bullet) 
\end{displaymath}
With the same argument we have also:
\begin{displaymath}
  i^\ast f_\ast h_\ast s_\ast \HOM_{X_1}(s^\ast h^\ast I^\bullet,s^\ast h^\ast f^! J^\bullet)\cong  g_\ast l_\ast u_\ast \HOM_{U_1}(u^\ast l^\ast j^\ast I^\bullet, u^\ast l^\ast g^!i^\ast J^\bullet) 
\end{displaymath}
and eventually:
\begin{displaymath}
  i^\ast f_\ast\HOM_{\XX}(I^\bullet,f^!J^\bullet)\cong g_\ast \HOM_{\mathcal{U}}(j^\ast I^\bullet, g^!i^\ast J^\bullet )
\end{displaymath}
Now we just take global sections of this and use the non sheaf version of Grothendieck duality to complete the proof.
 \end{proof}

We have now all the ingredients to prove the flat base change result:
\begin{thm}\label{sec:duality-flat-base-change-duality}
   Consider the following $2$-cartesian square of Deligne-Mumford stacks:
  \begin{displaymath}
    \xymatrix@1{
      \XX' \ar[r]^-{i}\ar[d]^-{g} & \XX\ar[d]^-{f} \\
      \YY' \ar[r]^-{j} & \YY \\
}
  \end{displaymath}
where morphisms $f,g$ are proper and $j$ is flat and representable. The canonical morphism $c_j \colon i^\ast f^!\to g^! j^\ast$ is an isomorphism in $D^+(\YY)$. 
\end{thm}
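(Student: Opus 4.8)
The plan is to reduce the statement, which is local on $\XX'$, to ordinary Verdier flat base change for schemes \cite[Thm 2]{MR0274464} by descent along étale presentations, exploiting that the canonical morphism $c_j$ has already been shown to commute with étale localization and to satisfy the cocycle condition for composite base changes.

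First I would reduce to the case in which $\YY$ and $\YY'$ are schemes. Since $c_j$ is compatible with composition of base changes (the cocycle property recalled just after its two constructions) and $f^!$ is étale local in the sense of the corollary following Proposition \ref{prop:base-change-open}, the formation of $c_j$ commutes with an étale base change $Y_0\to\YY$; as an étale cover induces a conservative pullback on $\XX'$, the morphism $c_j$ is an isomorphism if and only if its pullback along $Y_0\to\YY$ is. Choosing $Y_0$ a scheme and, because $j$ is representable, replacing $\YY'$ by a scheme atlas of $\YY'\times_\YY Y_0$ in the same fashion, I may assume $\YY=Y$ and $\YY'=Y'$ are schemes and $j\colon Y'\to Y$ is a flat morphism of schemes, while $\XX\to Y$ and $\XX'=\XX\times_Y Y'\to Y'$ remain proper Deligne--Mumford stacks.

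Next I would verify the isomorphism after restriction to an étale atlas of $\XX'$. Choose an étale scheme atlas $h_\bullet\colon X_\bullet\to\XX$ and pull it back along $i$ to obtain an étale atlas $X'_\bullet=X_\bullet\times_Y Y'\to\XX'$; each $X_n$ is a scheme and $f_n\colon X_n\to Y$ (the composite $X_n\to\XX\to Y$) is a separated finite-type, hence compactifiable, morphism of schemes, with base change $X'_n=X_n\times_Y Y'$ along the flat $j$. Using that $h_n$ is étale, so $h_n^\ast=h_n^!$ and $h_n^\ast f^!\cong f_n^!$ through the composition isomorphism $\eta_{h_n,f}$ (and likewise on the primed side), together with the two unit/counit descriptions of $c_j$, one identifies the pullback of $c_j$ to $X'_n$ with the Verdier base-change morphism for the cartesian square of schemes with vertical maps $f_n,g_n$ and horizontal flat maps $j,i_n$. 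By \cite[Thm 2]{MR0274464}, extended to the compactifiable, not necessarily proper, $f_n$ through the base-change setup underlying Theorem \ref{thm:lipman-uniqueness}, this morphism is an isomorphism for every $n$.

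Finally, because a morphism in $D^+(\XX')$ is an isomorphism exactly when its restriction to the atlas $X'_\bullet$ is, and the cocycle condition for $c_j$ guarantees that the levelwise isomorphisms are compatible with the descent (equivariant) data of $X'_\bullet\to\XX'$, these glue to show that $c_j$ itself is an isomorphism. The main obstacle is the middle step: one must check, tracking the unit/counit definitions, that the pullback of $c_j$ to the atlas genuinely coincides with the scheme-theoretic base-change morphism. This is precisely the bookkeeping behind ``the same proof as in \cite[Thm 2]{MR0274464}'', now carrying along the étale-local identification $h_n^\ast f^!\cong f_n^!$ and the compatibility of all the $c$'s and $\eta$'s with composition; the sheaf version of duality in Corollary \ref{cor:duality-sheaf-version} is what makes these identifications available already at the level of complexes of injectives rather than only in the derived category.
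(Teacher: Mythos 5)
Your outer reductions are sound as far as they go: passing to a scheme base $Y$ via the cocycle property of $c_j$ and the \'etale-locality corollary following Proposition \ref{prop:base-change-open} is legitimate (all vertical morphisms stay proper, so every $c$ involved still has its unit/counit description and the cocycle identity is formal), and at the end conservativity of pullback along an \'etale atlas of $\XX'$ is all you need (the remark about compatibility with descent data is superfluous). The genuine gap is your middle step, which you dismiss as bookkeeping but which is in fact where all the content sits. Once you pass to the atlas, the vertical morphisms $f_n=f\circ h_n\colon X_n\to Y$ are no longer proper, only separated of finite type; hence $R{f_n}_\ast$ has no right adjoint, the base-change morphism for the outer square has \emph{no} unit/counit description, and ``the Verdier base-change morphism'' for $f_n$ only exists after choosing a Nagata compactification and invoking the Lipman-style base-change setup. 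Identifying $({h'_n})^\ast c_j$ (built from the stack-level adjunctions for the proper $f,g$) with that compactly-supported comparison map, through the isomorphisms $h_n^\ast f^!\cong f_n^!$, is precisely a pentagram-type compatibility with a stack in the middle column. The paper establishes such a pentagram (Lemma \ref{lem:pentagram}) only for schemes, only in the following section, and only by appeal to Lipman; moreover the identification $h_n^\ast f^!\cong f_n^!$ itself is proved (Corollary \ref{cor:stacky-comp-etale-open}) only for the coarse moduli morphism, where Zariski's main theorem applies to the quasi-finite $X_n\to X$, so for a general proper $f\colon\XX\to Y$ you would first have to factor $f$ through the coarse space and splice in scheme-level composition isomorphisms as well. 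I believe this chase can be completed and there is no circularity, but it is a substantial argument your proposal does not supply; the paper's own admission that it cannot prove the analogous compatibility of Proposition \ref{prop:ancorauncambiobase} in general shows such verifications are not routine.

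For contrast, the paper's proof never touches an atlas of the source and never leaves the category of stacks: it repeats Verdier's proof of the flat base-change theorem (case 2) verbatim, with Corollary \ref{cor:duality-sheaf-version} replacing Verdier's sheaf-theoretic duality theorem. One applies $j^\ast$ to the isomorphism $Rf_\ast R\HOM_\XX(\FF,f^!G)\to R\HOM_\YY(Rf_\ast\FF,G)$ for coherent $\FF$, rewrites both sides using flat base change for the push-forward and for $R\HOM$, applies sheaf duality for $g$, checks that the resulting isomorphism is $Rg_\ast R\HOM_{\XX'}(i^\ast\FF,c_j)$, and concludes because objects of the form $i^\ast\FF$ suffice to test isomorphisms. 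So Corollary \ref{cor:duality-sheaf-version} is not, as your last sentence suggests, a device for identifying pullbacks of $c_j$ on atlases; it is the engine of the whole proof, and using it directly avoids exactly the non-proper, compactification-dependent comparisons on which your route founders.
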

\begin{proof}
  Same proof as in \cite[Thm 2, case 2]{MR0274464} but using the stacky Corollary \ref{cor:duality-sheaf-version}.
\end{proof}

\subsection{Duality for smooth morphisms}

In this section $\XX\to \Spec{k}$ is a proper Deligne-Mumford stack over an algebraically closed field if not differently specified. 

We start with two local results that don't rely on smoothness.
\begin{lem}
  Let $f\colon \mathcal{Y}\to\mathcal{Z}$ be a representable finite \'etale morphism of noetherian algebraic stacks, then the functor $f^!$ is the same as $f^\ast$.
\end{lem}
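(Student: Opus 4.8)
The plan is to reduce to the case of schemes using the representability of $f$ together with the étale-locality of $(\cdot)^!$ established in the corollary to Proposition \ref{prop:base-change-open}, and then to invoke classical duality for finite morphisms of schemes.

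First I would choose an étale atlas $p_0\colon Z_0\to\mathcal{Z}$ and form the $2$-cartesian square
\[
\xymatrix{
 Y_0 \ar[r]^-{q_0}\ar[d]_-{f_0} & \mathcal{Y}\ar[d]^-{f} \\
 Z_0 \ar[r]^-{p_0} & \mathcal{Z} \\
}
\]
Since $f$ is representable, $Y_0=Z_0\times_{\mathcal{Z}}\mathcal{Y}$ is a scheme and $f_0\colon Y_0\to Z_0$ is a finite étale morphism of noetherian schemes. Because $f$ is finite it is proper, and $p_0$ is representable, flat and étale, so the flat base-change isomorphism $c_{p_0}\colon q_0^\ast f^!\xrightarrow{\sim} f_0^! p_0^\ast$ of Theorem \ref{sec:duality-flat-base-change-duality} (equivalently the étale-locality corollary) applies.

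Next I would settle the scheme-level statement $f_0^!\cong f_0^\ast$. For a finite morphism, duality gives $f_0^! G\cong \HOM_{\OO_{Z_0}}({f_0}_\ast\OO_{Y_0},G)$ regarded as an $\OO_{Y_0}$-module; since $f_0$ is finite étale, ${f_0}_\ast\OO_{Y_0}$ is a locally free $\OO_{Z_0}$-algebra whose trace pairing is nondegenerate, so the relative dualizing sheaf $\omega_{Y_0/Z_0}=\HOM_{\OO_{Z_0}}({f_0}_\ast\OO_{Y_0},\OO_{Z_0})$ is canonically isomorphic to $\OO_{Y_0}$ via the trace form. Hence $f_0^! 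G\cong f_0^\ast G\otimes\omega_{Y_0/Z_0}\cong f_0^\ast G$, canonically in $G$.

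Combining the two, and using $f_0^\ast p_0^\ast=q_0^\ast f^\ast$ from commutativity of the square, I obtain a canonical isomorphism $q_0^\ast f^!\cong q_0^\ast f^\ast$. It remains to descend this to $\mathcal{Y}$. Writing $Z_1=Z_0\times_{\mathcal{Z}}Z_0$ and $Y_1=Z_1\times_{\mathcal{Z}}\mathcal{Y}$, I would check that the two pullbacks of the above isomorphism to $Y_1$ coincide, so that it descends along the étale presentation. This uses the cocycle condition satisfied by $c_{\cdot}$ (recorded after Proposition \ref{prop:base-change-open}) together with the compatibility of the scheme-level identification $f_0^!\cong f_0^\ast$ with flat base change among the $Z_i$. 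I expect this last compatibility — that the trace-form identification of $\omega_{Y_i/Z_i}$ with the structure sheaf is stable under the transition maps, so that the two descent data agree — to be the only genuinely delicate point; everything else is formal. Once the descent datum is verified, étale descent of bounded-below quasicoherent complexes yields the desired canonical isomorphism $f^!\cong f^\ast$ of functors $D^+(\mathcal{Z})\to D^+(\mathcal{Y})$.
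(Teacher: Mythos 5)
Your reduction to the scheme case along an atlas is the same basic strategy as the paper's, whose entire proof reads: the scheme case is classical, and the stack case follows from Proposition \ref{prop:representable-proper-duality}. But your last step contains a genuine gap. You propose to glue the locally constructed isomorphism $q_0^\ast f^! G\cong q_0^\ast f^\ast G$ along the presentation after verifying the cocycle condition on $Y_1$, appealing to ``\'etale descent of bounded-below quasicoherent complexes.'' Derived categories are not stacks: for fixed objects $A,B\in D^+(\mathcal{Y})$ the assignment $U\mapsto \Hom_{D^+(U)}(A\vert_U,B\vert_U)$ is not a sheaf, and a cocycle-compatible family of local isomorphisms need neither glue nor glue uniquely. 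The defect is measured by the negative-degree sheaves $\EXT^{<0}_{\mathcal{Y}}(A,B)$, and these are nonzero in general as soon as $G$ (hence $f^\ast G$) has cohomology in more than one degree --- exactly the situation for an arbitrary $G\in D^+(\mathcal{Z})$. This is precisely the difficulty that Proposition \ref{prop:representable-proper-duality} is built to circumvent: there the gluing is performed termwise on a complex of injective sheaves, where ordinary descent of quasicoherent sheaves applies, and the glued complex $f^{\natural}I^\bullet$ is identified with $f^!$ once and for all; the present lemma then only requires the termwise scheme identification $f_0^!I_0^p\cong f_0^\ast I_0^p$ to be compatible with the gluing data $\beta_p$ --- the trace/base-change compatibility you correctly single out as the delicate point.

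There are two ways to repair your argument. The quickest is simply to cite Proposition \ref{prop:representable-proper-duality}, as the paper does. Alternatively, construct the comparison map globally rather than locally: the trace $f_\ast\OO_{\mathcal{Y}}\to\OO_{\mathcal{Z}}$ of the finite locally free algebra $f_\ast\OO_{\mathcal{Y}}$ glues from the atlas (this is a morphism of sheaves, where descent is unproblematic, since traces commute with base change), it yields by the projection formula a natural map $Rf_\ast f^\ast G= f_\ast\OO_{\mathcal{Y}}\otimes G\to G$, and hence by adjunction a natural transformation $f^\ast\to f^!$ on all of $D^+(\mathcal{Z})$; one then checks this map is an isomorphism after the faithfully flat pullback $q_0^\ast$, using your flat base change step and the scheme-level trace isomorphism --- and \emph{verifying} that a given map is an isomorphism, unlike \emph{constructing} one, does descend, because cohomology sheaves commute with flat pullback. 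A secondary point: the lemma is stated for noetherian algebraic stacks, and the paper's route needs only a smooth presentation, whereas your \'etale atlas and your appeal to Theorem \ref{sec:duality-flat-base-change-duality} (proved in the paper only on the \'etale site of Deligne-Mumford stacks) confine your argument to the Deligne-Mumford case.
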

\begin{proof}
  If $\mathcal{Y}$ and $\mathcal{Z}$ are two schemes the result is true, then the result for stacks follows from Proposition \ref{prop:representable-proper-duality}.
\end{proof}

Let $\pi\colon\XX \to X$ be a Deligne-Mumford stack with moduli space $X$; using the previous lemma we can study the \'etale local structure of $\pi^!$. The morphism  $\pi$ \'etale locally is the same as  $\rho\colon [\Spec{B}/G]\to\Spec{A}$  where $G$ is a finite group and $\Spec{A}$ is the moduli scheme. Let $\Spec{B}\xrightarrow{p}[\Spec{B}/G]$ be the obvious \'etale (and finite) atlas and $s,t$ source and target in the presentation (both \'etale and finite). Let $I^\bullet$ be an injective complex of $A$-module. Consider the following composition of isomorphisms:
  \begin{displaymath}
    \xymatrix{
      s^\ast(\rho\circ p)^!I^p \ar[r]^{\eta_{s,\rho\circ p}}_-{\widetilde{}} & (\rho\circ p\circ s)^!I^p  & \ar[l]_{\eta_{t,\rho\circ p}}^-{\widetilde{}}  t^\ast(\rho\circ p)^!I^p   \\
    }
  \end{displaymath}
where we have replaced $s^!,t^!$ with $s^\ast,t^\ast$ using the lemma, and every isomorphism is given by equation (\ref{eq:iso-compos-upper-shriek}). Call this isomorphism of injective complexes $\gamma_p$.  The data of $\gamma_\bullet$ and $(\rho\circ p)^! I^\bullet$ define a double complex of injective sheaves on $[\Spec{B}/G]$. We will denote the double complex associated to it with $\rho^\nabla I^\bullet$.
Let $F\in D^+(\Spec{A})$ be  quasi isomorphic to a complex of injectives $I^\bullet$; the injective complex $\rho^\nabla I^\bullet$ on $[\Spec{B}/G]$ defines a functor $\rho^\nabla\colon D^+(\Spec{A})\to D^+([\Spec{B}/G])$
\begin{lem}\label{lem:local-case-nabla}
 The functor $\rho^\nabla$ above is actually $\rho^!$.
\end{lem}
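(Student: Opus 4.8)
The plan is to identify $\rho^\nabla I^\bullet$ with $\rho^! I^\bullet$ by recognizing the latter as a cartesian (equivalently $G$-equivariant) complex on $[\Spec B/G]$ whose restriction to the atlas $\Spec B$ is the schematic complex $(\rho\circ p)^! I^\bullet$ and whose descent datum is exactly $\gamma_\bullet$. First I would note that $\rho$ is finite, hence proper, so $\rho^!$ exists by Theorem \ref{thm:deligne-rep} as the right adjoint of $R\rho_\ast=\rho_\ast$, and the composition isomorphisms $\eta_{\cdot,\cdot}$ of (\ref{eq:iso-compos-upper-shriek}) are available. Since $p$ is finite \'etale, the preceding lemma gives $p^!=p^\ast$, and $\rho\circ p\colon\Spec B\to\Spec A$ is a finite morphism of schemes, so $(\rho\circ p)^!$ is the classical schematic twisted inverse image, for which the stacky and schematic constructions agree.

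The key input is compatibility of $(-)^!$ with composition: the isomorphism $\eta_{p,\rho}\colon p^\ast\rho^!=p^!\rho^!\xrightarrow{\sim}(\rho\circ p)^!$ identifies the pullback to $\Spec B$ of the stacky complex $\rho^! I^\bullet$ with $(\rho\circ p)^! I^\bullet$. Under the equivalence between $\QCoh([\Spec B/G])$ and $G$-equivariant (cartesian) modules on $\Spec B$, the complex $\rho^! I^\bullet$ is determined by $p^\ast\rho^! I^\bullet$ together with its canonical descent datum, namely the cartesian isomorphism comparing $s^\ast p^\ast\rho^! I^\bullet$ and $t^\ast p^\ast\rho^! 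I^\bullet$ over $X_1$ (these agree because $p\circ s=p\circ t$). Thus it suffices to transport this descent datum through $\eta_{p,\rho}$ and check that it becomes $\gamma_\bullet=\eta_{t,\rho\circ p}^{-1}\circ\eta_{s,\rho\circ p}$.

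This last verification is the heart of the argument and the step I expect to be the main obstacle: it is a coherence diagram for the composition isomorphisms along the triple composite $X_1\xrightarrow{s}\Spec B\xrightarrow{p}[\Spec B/G]\xrightarrow{\rho}\Spec A$ (and likewise with $t$). Concretely, I would invoke the associativity of the $\eta_{\cdot,\cdot}$ — that the two ways of building the comparison $s^\ast p^\ast\rho^!\to(\rho\circ p\circ s)^!$ agree — together with the fact that for the \'etale morphisms $s,t$ the isomorphism $\eta$ restricts to the canonical pullback isomorphism (requirement (2) in Theorem \ref{thm:lipman-uniqueness}), which identifies the cartesian datum $s^\ast p^\ast\cong(p\circ s)^\ast=(p\circ t)^\ast\cong t^\ast p^\ast$ with $\eta_{s,p}$ and $\eta_{t,p}$. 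Chasing the resulting diagram converts the transported descent datum into $\eta_{t,\rho\circ p}^{-1}\circ\eta_{s,\rho\circ p}$, i.e.\ precisely $\gamma_\bullet$. Since $\rho^! I^\bullet$ and $\rho^\nabla I^\bullet$ then share the same underlying complex on the atlas and the same descent datum, they define the same object of $D^+([\Spec B/G])$, naturally in $I^\bullet$, giving $\rho^\nabla\cong\rho^!$.

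As an alternative that sidesteps part of this bookkeeping, I could verify directly that $\rho^\nabla$ is right adjoint to $\rho_\ast$: using that $\rho_\ast$ is exact and writing $\HOM_{[\Spec B/G]}$ as the kernel of the usual map between the $\HOM$'s on $\Spec B$ and $X_1$ (exactly as in Proposition \ref{prop:representable-proper-duality} and Corollary \ref{cor:duality-sheaf-version}), one descends the schematic duality isomorphism for $\rho\circ p$ to a stacky isomorphism $\HOM_{\Spec A}(\rho_\ast J,I)\cong\rho_\ast\HOM_{[\Spec B/G]}(J,\rho^\nabla I)$ and concludes by uniqueness of adjoints. Either way, the essential content is checking the compatibility of the equivariant structure $\gamma_\bullet$ with duality on the presentation.
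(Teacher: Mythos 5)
Your proposal is essentially correct, but your primary route is not the paper's; interestingly, the ``alternative'' you sketch at the end \emph{is} the paper's actual proof. The paper proceeds exactly as in your fallback: it takes an injective $I$ and an equivariant module $(M,\alpha)$, writes $\rho_\ast M=M^G$ via the equalizer presentation coming from the exact sequence (\ref{eq:the-exact-sequence}), applies the exact functor $\Hom_A(-,I)$, and uses schematic finite duality for $\rho\circ p$ (namely $\Hom_A(N,I)\cong\Hom_B(N,\Hom_A(B,I))$) together with the explicit identification of $\eta_{s,\rho\circ p}$ with the canonical map $\delta\colon\Hom_A(B,I)\otimes_A\OO_G\to\Hom_A(B\otimes_A\OO_G,I)$, producing an isomorphism $\Hom_A(M^G,I)\cong\Hom_B^G(M,\Hom_A(B,I))$ in which the coaction on $\Hom_A(B,I)$ is precisely that of $\rho^\nabla I$; uniqueness of adjoints then finishes. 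Your main route --- transporting the canonical descent datum of $p^\ast\rho^!I^\bullet$ through $\eta_{p,\rho}$ and matching it against $\gamma_\bullet$ by coherence of the composition isomorphisms --- is genuinely different, and you correctly identify where its weight lies. What the paper's hands-on computation buys is that all coherence is verified at the level of explicit module maps, so it never needs to know that $\eta_{s,p}$ agrees with the canonical pullback isomorphism for the composite $X_1\to X_0\to[\Spec{B}/G]$; your route needs exactly such statements for morphisms whose target is a \emph{stack}, where Theorem \ref{thm:lipman-uniqueness} (your cited requirement (2)) does not apply, since Lipman's result is about finite type separated schemes --- and note that the paper itself can only establish the analogous compatibility, Proposition \ref{prop:ancorauncambiobase}, for finite morphisms of schemes. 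Because every morphism in your diagram is finite \'etale representable, the needed facts are within reach (e.g.\ via the trace adjunction $f_\ast\dashv f^\ast$ and formal coherence of adjunction composites, since all morphisms are proper), so I would not call this a gap; but it is real additional bookkeeping that the paper's direct adjunction computation deliberately avoids, which is presumably why the author chose it.
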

\begin{proof}
  We start observing that the twisted inverse image $(\rho\circ p)^!I$ is just the $B$-module $\Hom_A(B,I)$, the twisted inverse image $(\rho\circ p\circ s)^!I$ is $\Hom_A(B\otimes_A \OO_G,I)$. The natural isomorphism for the composition of twisted inverse images $s^\ast(\rho\circ p)^!I\cong (\rho\circ p\circ s)^!I$ is just the canonical isomorphism $\Hom_A(B,I)\otimes_A\OO_G\xrightarrow{\delta}\Hom_A(B\otimes_A \OO_G,I)$. Let $M$ be a $B$-module with $\alpha$ a coaction of $\OO_G$. From the exact sequence in (\ref{eq:the-exact-sequence}) and using duality  we obtain the following exact diagram:
  \begin{displaymath}
    \xymatrix{
\Hom_A(M\otimes_A\OO_G,I) \ar[r]^{\Hom(\alpha-\iota,\Id)}\ar[d]^{\wr} & \Hom_A(M,I)\ar[ddd]^-{\wr}  \\
\Hom_{B\otimes_A \OO_G}(M\otimes_A\OO_G,\Hom_A(B\otimes_A\OO_G,I))\ar[d]_{\Hom(\Id,\delta^{-1})}^{\wr} &  \\
\Hom_{B\otimes_A \OO_G}(M\otimes_A\OO_G,\Hom_A(B,I)\otimes\OO_G) \ar@{=}[d] &  \\
\Hom_B(M,\Hom_A(B,I))\otimes_A\OO_G \ar[r] & \Hom_B(M,\Hom_A(B,I))  \\
}
  \end{displaymath}
The cokernel of the first horizontal arrow is $\Hom_A(M^G,I)$ while the cokernel of the last horizontal arrow is just $\Hom_B^G(M,\Hom_A(B,I))$ where the coaction of $\OO_G$ on $M$ is $\alpha$ and the coaction on $\Hom_A(B,I)$ is the one of $\rho^\nabla I$. The diagram induces an isomorphism\footnotemark \footnotetext{Despite of the unhappy notation $\Hom_B^G$ is not a $B$-module but a $B^G=A$-module.}:
\begin{displaymath}
  \Hom_A(M^G,I) \to \Hom_B^G(M,\Hom_A(B,I))
\end{displaymath}
By uniqueness of the adjoint we conclude that $\rho^\nabla$ is $\rho^!$.
\end{proof}
\begin{rem}\label{rem:glueing-non-sm}
  Let $F$ be a quasicoherent sheaf on $\spec{A}$; if we know for some reason that $\rho^!F$ is a quasi coherent sheaf itself, then we can conclude that it glues as $\rho^!$ of an injective sheaf in the previous lemma. 
\end{rem}

We can now start implementing the smoothness hypothesis on $\XX$. We recall a result of Verdier in \cite[Thm 3]{MR0274464}:
\begin{thm}\label{thm:verdier-smooth}
Let $f\colon X\to Y$ be a proper morphism of Noetherian schemes and $j\colon U\to X$ an open immersion such that $f\circ j$ is smooth of relative dimension $n$. There exists a canonical isomorphism:
\begin{equation}
  \label{eq:iso-smooth}
  j^\ast f^!\xrightarrow{\lambda_{f\circ j}} \omega_{U/Y}[n]\otimes (f\circ j)^\ast
\end{equation}
where $\omega_{U/Y}$ is the canonical sheaf.
\end{thm}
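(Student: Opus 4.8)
The plan is to identify the left-hand side with the twisted inverse image of the smooth morphism $g=f\circ j$ and then to compute the latter. Since $j$ is an open immersion, in particular \'etale, one has $j^!=j^*$, so by the composition isomorphism $\eta_{j,f}$ of (\ref{eq:iso-compos-upper-shriek}) the left-hand side is $j^*f^!=j^!f^!\cong(f\circ j)^!=g^!$ (this is also exactly the definition of the twisted inverse image of the compactifiable morphism $g$). Thus the content of the theorem is the fundamental local isomorphism: for a smooth morphism $g$ of relative dimension $n$ there is a canonical isomorphism $g^!(-)\cong\omega_{U/Y}[n]\otimes g^*(-)$, where $\omega_{U/Y}=\det\Omega_{U/Y}$.

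First I would reduce to the single object $\OO_Y$. By the classical tensor-compatibility statement (the scheme case of the Proposition on the tensor product preceding the definition of the dualizing complex), which applies because a smooth morphism is of finite Tor-dimension, there is a natural isomorphism $g^!F\cong Lg^*F\overset{L}{\otimes}g^!\OO_Y$. Hence it suffices to produce a canonical isomorphism $g^!\OO_Y\cong\omega_{U/Y}[n]$ and to let $\lambda_{g}$ be the result of tensoring it with $Lg^*(-)$.

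For the computation of $g^!\OO_Y$ I would use the diagonal. The diagonal $\Delta\colon U\to U\times_Y U$ is a regular closed immersion of codimension $n$ whose normal bundle is $N_\Delta\cong T_{U/Y}$, so $\det N_\Delta\cong\omega_{U/Y}^{\vee}$; the two projections $p_1,p_2\colon U\times_Y U\to U$ are smooth of relative dimension $n$ and are base changes of $g$, and $p_1\circ\Delta=p_2\circ\Delta=\Id_U$. The fundamental local isomorphism for the regular immersion $\Delta$ reads $\Delta^!(-)\cong\det(N_\Delta)[-n]\overset{L}{\otimes}L\Delta^*(-)=\omega_{U/Y}^{\vee}[-n]\overset{L}{\otimes}L\Delta^*(-)$. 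Combining the composition isomorphism $\Delta^!p_2^!\cong(p_2\circ\Delta)^!=\Id$, the flat base-change isomorphism $p_2^!\OO_U\cong p_1^*g^!\OO_Y$ for the cartesian square defining $U\times_Y U$, and $L\Delta^*p_1^*=(p_1\circ\Delta)^*=\Id$, one obtains
\[
\OO_U\cong\Delta^!p_2^!\OO_U\cong\Delta^!\bigl(p_1^*g^!\OO_Y\bigr)\cong\omega_{U/Y}^{\vee}[-n]\overset{L}{\otimes}g^!\OO_Y,
\]
and tensoring through by $\omega_{U/Y}[n]$ gives $g^!\OO_Y\cong\omega_{U/Y}[n]$, as wanted. (An alternative, more computational route factors $g$ \'etale-locally as $U\to\aff^n_Y\to Y$, uses $e^!=e^*$ for the \'etale part, and computes $\pi^!\OO_Y$ for the projection from projective space via Serre duality, $\bar\pi^!\OO_Y\cong\OO(-n-1)[n]$, restricting along $\aff^n_Y\hookrightarrow\pp^n_Y$.)

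The main obstacle is twofold. The genuine technical input is the fundamental local isomorphism for the regular immersion $\Delta$, that is, the Koszul-complex computation identifying $\Delta^!\OO_{U\times_Y U}$ with $\det(N_\Delta)[-n]$; this is where the exterior power of the normal bundle enters. The second, more bookkeeping-heavy point is canonicity: one must check that the isomorphism produced is independent of the chosen compactification and is compatible with flat base change, so that it glues and deserves the name $\lambda_{f\circ j}$. This requires verifying the relevant cocycle compatibilities of the base-change maps $c_j$ of Proposition \ref{prop:base-change-open} and of the composition isomorphisms $\eta_{\cdot,\cdot}$. Since all the constructions used are canonical, this last part amounts to a diagram chase rather than a new idea.
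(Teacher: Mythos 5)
The paper does not actually prove this theorem: it is imported verbatim from Verdier, with the citation to \cite[Thm 3]{MR0274464} standing in both for the proof and for the definition of $\lambda_{f\circ j}$, and it is then used as a black box (e.g.\ in Proposition \ref{prop:local-smooth-serre} and in the smooth Serre duality theorem). So there is no internal proof to compare yours against; what you have written is essentially a reconstruction of Verdier's own argument --- identify $j^\ast f^!$ with $(f\circ j)^!$ by the definition of the compactified twisted inverse image, then reduce the smooth case to the fundamental local isomorphism for the regular immersion $\Delta\colon U\to U\times_Y U$ via the two projections and flat base change. As an outline this is correct, and it is the standard route.

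Two steps need more care than you give them. First, the isomorphism $p_2^!\OO_U\cong p_1^\ast g^!\OO_Y$ is flat base change for the twisted inverse image of the \emph{non-proper} morphism $g=f\circ j$; Proposition \ref{prop:base-change-open} and Theorem \ref{sec:duality-flat-base-change-duality} (like Verdier's Theorem 2) are stated for proper morphisms, so you must extend base change to compactified $(\cdot)^!$ by compactifying $g$, applying the proper case to the compactification, and using the trivial base change of the open part --- routine, but an extra compatibility to verify. Second, and more seriously, the reduction $g^!F\cong Lg^\ast F\overset{L}{\otimes}g^!\OO_Y$ cannot be justified by the paper's tensor-compatibility proposition as you propose: that proposition concerns the functor $f^!$ defined as the right adjoint of $Rf_\ast$, and for a non-proper morphism this adjoint is \emph{not} the compactified $g^!=j^\ast f^!$ you are computing (the paper is explicit about this distinction between duality and duality with compact support); moreover it only treats $F\in D^b(\text{Coh})$. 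You can either invoke the scheme-theoretic tensor statement for compactified $(\cdot)^!$ of finite-Tor-dimension maps (it is in Lipman's notes), or, better, dispense with the reduction altogether by running the diagonal argument on $F$ itself:
\begin{displaymath}
g^\ast F\;\cong\;\Delta^!p_1^!\,g^\ast F\;\cong\;\Delta^!\bigl(p_2^\ast g^!F\bigr)\;\cong\;\omega_{U/Y}^{\vee}[-n]\overset{L}{\otimes}g^!F,
\end{displaymath}
where the first isomorphism comes from $p_1\circ\Delta=\Id$ and the composition isomorphisms $\eta_{\cdot,\cdot}$, the second is the (non-proper) flat base change just discussed, and the third is the fundamental local isomorphism applied to $p_2^\ast g^!F$; this yields \eqref{eq:iso-smooth} for all $F\in D^+$ with one fewer input.
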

For the precise definition of $\lambda_{\bullet}$ we redirect the reader to \cite[Thm 3]{MR0274464}.
In order to use Lemma \ref{lem:local-case-nabla}  we need to explicitly know the isomorphism $f^!\circ g^!\xrightarrow{\eta_{f,g}} (g\circ f)^!$ when $f,g$ are  smooth morphisms of schemes and duality is duality with compact support in the sense of theorem \ref{thm:lipman-uniqueness}. 
Unfortunately the Deligne/Verdier approach to duality doesn't suggest anything about the isomorphism $\eta_{f,g}$, but people familiar with duality probably remember that in Grothendieck/Hartshorne's picture the isomorphism $\eta_{f,g}$ is perfectly known when the two morphisms are smooth. If $f,g$ were proper we could use Grothendieck/Hartshorne's result, being confident that the adjoint is unique in the sense explained at the beginning of the previous section; when $f,g$ are not proper we can still use the uniqueness in Theorem \ref{thm:lipman-uniqueness}. 
First of all we recall the classic result in Residues and Duality \cite[III Prop 2.2]{MR0222093} about smooth morphisms.
\begin{lem}[Hartshorne]\label{lem:hartsh-compo-smooth}
  Let $X\xrightarrow{f} Y\xrightarrow{g} Z$ be smooth morphisms of noetherian schemes of relative dimensions $n,m$ respectively. Denote with $f^\sharp$ the functor $\omega_{X/Y}[n]\otimes f^\ast$ (the traditional symbol for smooth duality). There is a an isomorphism of functors:
$\zeta_{f,g}:f^\sharp g^\sharp \to (g\circ f)^\sharp $. The isomorphism $\zeta_{f,g}$ can be chosen to be the isomorphism in \cite[III Def 1.5]{MR0222093}.
\end{lem}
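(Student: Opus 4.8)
The plan is to unwind the definition of $f^\sharp$ and reduce the claim to the multiplicativity of the relative dualizing sheaf under composition of smooth morphisms. Writing out $f^\sharp g^\sharp F = \omega_{X/Y}[n] \otimes f^*\bigl(\omega_{Y/Z}[m] \otimes g^* F\bigr)$, I would first pull the outer $f^*$ inside the tensor product (pullback is monoidal and exact on the locally free sheaves appearing here) and use $f^* g^* = (g\circ f)^*$ together with additivity of shifts, obtaining $\bigl(\omega_{X/Y} \otimes f^* \omega_{Y/Z}\bigr)[n+m] \otimes (g\circ f)^* F$. Since the relative dimension of $g\circ f$ is $n+m$, the whole statement then reduces to producing a natural isomorphism $\omega_{X/Y} \otimes_{\OO_X} f^*\omega_{Y/Z} \cong \omega_{X/Z}$, after which the right-hand side becomes $\omega_{X/Z}[n+m]\otimes (g\circ f)^* F = (g\circ f)^\sharp F$.

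For the multiplicativity of the dualizing sheaf I would invoke the second fundamental exact sequence of Kähler differentials for $X\xrightarrow{f}Y\xrightarrow{g}Z$,
\[
0 \to f^*\Omega_{Y/Z} \to \Omega_{X/Z} \to \Omega_{X/Y} \to 0,
\]
which is short exact and locally split precisely because $f$ is smooth. As $\omega_{X/Y} = \wedge^n\Omega_{X/Y}$, $\omega_{Y/Z}=\wedge^m\Omega_{Y/Z}$ and $\omega_{X/Z}=\wedge^{n+m}\Omega_{X/Z}$, taking top exterior powers (determinants) of a locally split short exact sequence of locally free sheaves yields $\det\Omega_{X/Z} \cong \det\Omega_{X/Y}\otimes f^*\det\Omega_{Y/Z}$, which is exactly the desired $\omega_{X/Z} \cong \omega_{X/Y}\otimes f^*\omega_{Y/Z}$.

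The remaining content of the lemma is the identification of the resulting $\zeta_{f,g}$ with the specific isomorphism of \cite[III Def 1.5]{MR0222093}, for which I would simply appeal to the construction there, where $f^\sharp$ is defined in this way and the compatibility is established as \cite[III Prop 2.2]{MR0222093}. The one point I expect to require genuine care — the main subtlety rather than a true obstacle — is the bookkeeping of Koszul signs when commuting the shift $[m]$ past the tensor factor $\omega_{X/Y}$ and when identifying $A[n]\otimes B[m]$ with $(A\otimes B)[n+m]$. Hartshorne's normalization fixes these signs exactly so that the cocycle (associativity) condition for a triple composite is satisfied, and it is this sign-consistent choice to which the phrase ``can be chosen to be'' in the statement refers. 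Since every object in sight is locally free and every functor involved is exact, no derived-category complications beyond these signs arise.
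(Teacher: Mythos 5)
Your proof is correct and amounts to the same thing the paper does: the paper's own justification of this lemma is nothing more than the citation of \cite[III Prop 2.2]{MR0222093} (with the sign conventions fixed by Conrad in \cite{MR1804902}), and the argument you give --- monoidality of $f^\ast$, additivity of shifts, and the determinant of the locally split cotangent sequence $0 \to f^\ast\Omega_{Y/Z} \to \Omega_{X/Z} \to \Omega_{X/Y} \to 0$ yielding $\omega_{X/Z}\cong\omega_{X/Y}\otimes f^\ast\omega_{Y/Z}$ --- is precisely the classical proof underlying that citation. The only remaining content, namely that $\zeta_{f,g}$ can be taken to be the specific sign-consistent isomorphism of \cite[III Def 1.5]{MR0222093}, is handled by appeal to Hartshorne/Conrad both in your write-up and in the paper.
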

The statement is the same as in Residues and Duality but we have applied the sign fixing proposed by Conrad in \cite{MR1804902}. At this level the isomorphism $\zeta_{f,g}$ appears to be completely arbitrary, nonetheless it becomes intrinsic with theorem \cite[VII 3.4 VAR3]{MR0222093}. We will denote with $(\cdot)^{?}$ the dualizing functor $(\cdot)^!$ of ``Residues and Duality'' \cite[VII 3.4]{MR0222093} and with $\mu_{f,g}\colon f^{?}g^{?}\xrightarrow{} (g\circ f)^{?}$ the isomorphism that controls compositions of functors. Within our assumptions we can always define the functor $(\cdot)^?$:  since every scheme involved is of finite type over a field, according to \cite[V 10]{MR0222093} there is always a dualizing complex.  Let  $e_{f}\colon f^{\sharp}\to f^{?}$ be the isomorphism defined in \cite[3.3.21]{MR1804902}, we can state \cite[VII 3.4 VAR3]{MR0222093} as follows:
\begin{thm}[Hartshorne]
   Let $X\xrightarrow{f} Y\xrightarrow{g} Z$ be smooth morphisms of noetherian schemes, then we have the following compatibility:
   \begin{displaymath}
     \xymatrix{
       f^\sharp g^\sharp \ar[r]^-{\zeta_{f,g}}\ar[d]_{e_f e_g} & (g\circ f)^\sharp\ar[d]_{e_{g\circ f}} \\
   f^? g^? \ar[r]^-{\mu_{f,g}} & (g\circ f)^? \\
}
   \end{displaymath}
\end{thm}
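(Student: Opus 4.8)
The statement is exactly \cite[VII 3.4, VAR3]{MR0222093}, rewritten with the sign normalisation of \cite{MR1804902}, so the plan is not to reprove Grothendieck duality but to unwind the three natural transformations $e_\bullet$, $\zeta_{f,g}$, $\mu_{f,g}$ down to the level of the underlying canonical isomorphisms of relative dualizing sheaves and then to check the square by hand. First I would recall the construction of the comparison $e_f\colon f^\sharp\to f^?$: for a smooth $f$ of relative dimension $n$ the functor $f^?$ is computed, after choosing a local factorisation of $f$ through a projective bundle, by the residue/trace isomorphism identifying $f^?\OO_Y$ with $\omega_{X/Y}[n]$, and $e_f$ is precisely the resulting isomorphism fixed in \cite[3.3.21]{MR1804902}. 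Likewise I would recall that $\zeta_{f,g}$ of Lemma \ref{lem:hartsh-compo-smooth} is induced from the exact sequence of relative differentials $0\to f^\ast\Omega_{Y/Z}\to\Omega_{X/Z}\to\Omega_{X/Y}\to 0$ by passing to top exterior powers, together with the shift identity $[n]\otimes[m]=[n+m]$.

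The decisive observation is that $\mu_{f,g}$, restricted to the subcategory of smooth morphisms, is built to encode the very same transitivity $\omega_{X/Z}\cong\omega_{X/Y}\otimes f^\ast\omega_{Y/Z}$: the pseudofunctor structure of $(\cdot)^?$ constructed in \cite[VII 3.4]{MR0222093} is assembled from the smooth base-change isomorphisms and finite-flat duality, and on smooth morphisms both of these degenerate to the determinant-of-differentials comparison. Thus the content of the theorem is that the dictionary $e_\bullet$ intertwines the two avatars $\zeta$ and $\mu$ of this single canonical datum. I would therefore set up the diagram and rewrite $\mu_{f,g}\circ(e_f e_g)$ and $e_{g\circ f}\circ\zeta_{f,g}$ as two morphisms $\omega_{X/Y}[n]\otimes f^\ast\omega_{Y/Z}[m]\otimes(g\circ f)^\ast F\to\omega_{X/Z}[n+m]\otimes(g\circ f)^\ast F$ and compare them directly.

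To actually verify the coincidence I would use the reduction running through all of VII 3.4: since $e_\bullet$, $\zeta$ and $\mu$ are all compatible with localisation and with the flat base-change isomorphisms $c_\bullet$, one may restrict to the factorisations used to define $(\cdot)^?$, where the comparison collapses to the fundamental local isomorphism relating the Koszul-complex description of $\flat$ to the differential-form description of $\sharp$ for a regular immersion inside a smooth morphism. Once this base case is settled, the general statement follows formally from the associativity (the pentagon coherence) already satisfied by $\mu$ and by $\zeta$. I expect the main obstacle to be precisely the sign bookkeeping at the base case: the naive composite of the two shift-and-determinant identifications differs by the Koszul sign $(-1)^{nm}$ arising when the shifts $[n]$ and $[m]$ are commuted past one another, and reconciling the normalisation of \cite{MR1804902} with that of \cite{MR0222093} --- rather than any conceptual difficulty --- is where the real work lies.
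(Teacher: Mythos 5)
Your reading of the situation is exactly right, and it coincides with the paper's own treatment: the paper offers no proof of this statement at all, but simply quotes it as \cite[VII 3.4 VAR3]{MR0222093} restated with the sign normalisation of \cite{MR1804902}, with $e_f$, $\zeta_{f,g}$, $\mu_{f,g}$ being precisely the cited isomorphisms. Your further sketch (unwinding $e_\bullet$, reducing to the fundamental local isomorphism at the factorisations defining $(\cdot)^?$, and isolating the Koszul-sign bookkeeping as the real content) is consistent with how the verification actually runs in the cited sources, so there is nothing in it that conflicts with the paper.
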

Beside the isomorphism $e_{\cdot}$ is unique in the sense of theorem \cite[VII 3.4]{MR0222093}.

To finally answer our question about the isomorphism $\eta_{f,g}$ we only have to verify that the functor $(\cdot)^?$ satisfies the hypothesis in theorem \ref{thm:lipman-uniqueness}. The first two hypothesis are completely straightforward but the third one does actually require some work. What we need to prove is the following statement:
\begin{prop}\label{prop:ancorauncambiobase}
  Consider the following diagram of schemes:
 \begin{displaymath}
      \xymatrix{
        \bullet \ar[r]^i \ar[d]^g & \bullet \ar[d]^f \\
        \bullet \ar[r]^j & \bullet \\
      }
    \end{displaymath}
where $f,g$ are proper and $i,j$ are open. The following diagram is commutative:
\begin{displaymath}
  \xymatrix{
    i^\ast f^? \ar[rr]^{c_j}\ar[d]_{e_i} & & g^? j^\ast\ar[d]_{e_j} \\
    i^? f^? \ar[r]^{\mu_{i,f}} & (f\circ i)^? & \ar[l]_{\mu_{g,j}} g^? j^? \\    
}
\end{displaymath}
\end{prop}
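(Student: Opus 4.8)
The plan is to read the square as the verification of condition~(3) of Theorem~\ref{thm:lipman-uniqueness} for the candidate functor $(\cdot)^?$, and to check it by unwinding both composite isomorphisms $i^\ast f^?\to (f\circ i)^?$ into the adjunction calculus attached to the proper morphisms $f,g$. Since the square is cartesian we have $f\circ i=j\circ g$, so both paths land in the same object $(f\circ i)^?=(j\circ g)^?$. Because $f$ and $g$ are proper, Grothendieck duality (\cite[VII]{MR0222093}) identifies $f^?,g^?$ with the right adjoints of $Rf_\ast,Rg_\ast$ carrying their traces $\tr_f,\tr_g$; in particular the morphism $c_j$ here is exactly the one of Proposition~\ref{prop:base-change-open}, and we may use either of its two equivalent descriptions from \cite[pg 401]{MR0274464}. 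I would fix the first, built from the localization unit $\phi_j\colon\Id\to Rj_\ast j^\ast$, the counit $\psi_i\colon i^\ast Ri_\ast\to\Id$, and the mate $\widetilde\sigma\colon Ri_\ast g^?\to f^? Rj_\ast$ of the flat base change isomorphism $\sigma\colon j^\ast Rf_\ast\to Rg_\ast i^\ast$.

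The key observation is that, for the open immersions $i$ and $j$, every arrow in the target square is assembled from localization data. The identifications $e_i\colon i^\ast=i^\sharp\to i^?$ and $e_j\colon j^\ast=j^\sharp\to j^?$ of \cite[3.3.21]{MR1804902} are, in relative dimension $0$, precisely the Residues and Duality localization isomorphisms, and by construction they are compatible with the (co)units $\phi_j,\psi_i$. Likewise the composition isomorphisms $\mu_{i,f},\mu_{g,j}$ of \cite[VII 3.4]{MR0222093} restrict, through $e_i,e_j$, to the statement that $(\cdot)^?$ commutes with restriction to an open. Thus the whole square expresses a single coherence: that Verdier's base change morphism $c_j$ agrees with the localization-commutation of $(\cdot)^?$. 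I would make this precise by substituting the chosen description of $c_j$, together with the trace-compatible description of $\mu$ furnished by the VAR3 compatibility recalled above in its sign-corrected form \cite{MR1804902}, into both composites.

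The main step is then a diagram chase in the category of natural transformations. Using the triangle identities for the three adjunctions $(j^\ast,Rj_\ast)$, $(Rf_\ast,f^?)$, $(Rg_\ast,g^?)$, together with the fact that $\widetilde\sigma$ is the mate of $\sigma$ and that $\sigma$ (flat base change for the pushforward) is compatible with both the traces and the composition isomorphism $\mu$, I expect both paths to collapse to one and the same transformation $i^\ast f^?\to(f\circ i)^?$. The delicate point, and the place where real work is needed, is reconciling the two a priori different constructions of base change that meet here: Verdier's $c_j$, defined purely by adjunction in \cite{MR0274464}, and the flat/smooth base change isomorphism implicit in Residues and Duality once one passes through $e$ and $\mu$. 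Showing that these coincide, with no residual sign, is exactly the content of the proposition; I would handle it by comparing both with the canonical flat base change morphism of the $(\cdot)^?$-theory and invoking the coherence of base change with smooth base change and with composition proved in \cite{MR1804902}. Once this identification is in place the commutativity of the square is formal, and condition~(3) of Theorem~\ref{thm:lipman-uniqueness} is verified.
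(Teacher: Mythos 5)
Your proposal claims to prove the proposition in full generality, but the heart of the argument is asserted rather than carried out, and the assertion is precisely the open point. After you substitute Verdier's adjunction-theoretic description of $c_j$ and the trace/VAR3 description of $\mu$, you say you ``expect both paths to collapse to one and the same transformation'' and that the delicate reconciliation of Verdier's $c_j$ with the base change structure implicit in the $(\cdot)^?$-theory can be handled ``by invoking the coherence of base change\ldots proved in \cite{MR1804902}.'' No such coherence statement exists in the form you need: the VAR compatibilities of \cite[VII 3.4]{MR0222093} cover the interaction of $\mu$ with $d$ (finite morphisms, VAR4), with $e$ (smooth morphisms, VAR3), and with base change along finite morphisms (VAR6), but there is no variant relating the adjunction-defined base change morphism for a \emph{proper, non-finite} morphism against an open immersion to the composition isomorphisms $\mu$; nor does Conrad's book supply it. Invoking it begs the question: that missing compatibility \emph{is} the proposition. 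Indeed, the paper states explicitly that it does not know how to prove this statement in general (``as a matter of fact we don't know how to prove this \emph{desired} proposition''), so a two-sentence appeal to known coherence results cannot close the gap.

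The paper's actual route is different and more modest: it proves the square commutes only when $f,g$ are \emph{finite}, by inserting finite duality $(\cdot)^\flat$ into the diagram via Conrad's isomorphism $d_\cdot\colon(\cdot)^\flat\to(\cdot)^?$ and the finite base change isomorphism $\rho$ of \cite[III Prop 6.3, Cor 6.4]{MR0222093}; the two resulting squares commute by VAR6 and VAR4 of \cite[VII 3.4]{MR0222093} respectively. It then observes that the finite case suffices for everything in the paper: Theorem \ref{thm:lipman-uniqueness} is only ever applied to compositions whose first morphism is quasi-finite (in practice \'etale, coming from a presentation of a Deligne--Mumford stack), and by Zariski's main theorem such a morphism splits as an open immersion followed by a finite morphism, reducing to the proven case. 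If you want to salvage your write-up, you must either genuinely carry out the diagram chase you sketch --- which would amount to a new result, namely the general proposition the authors could not prove --- or restrict the statement to finite $f,g$ and add the reduction argument above, which is what the paper does.
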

The statement is very simple when $f,g$ are finite. In this case we can transform the diagram in the following: 
\begin{displaymath}
  \xymatrix{
    i^\ast f^? \ar[rr]^{c_j}\ar@{}[drr] |*+[o][F-]{1} & & g^? j^\ast \\
    i^\ast f^\flat \ar[rr]^{\rho}\ar[d]_{e_i}\ar[u]^{d_f}\ar@{}[drr] |*+[o][F-]{2} & & g^\flat j^\ast\ar[d]_{e_j}\ar[u]^{d_g} \\
    i^? f^? \ar[r]^{\mu_{i,f}} & (f\circ i)^? & \ar[l]_{\mu_{g,j}} g^? j^? \\    
}
\end{displaymath}
where the isomorphism $\rho$ is the isomorphism in \cite[III Prop 6.3 Cor 6.4]{MR0222093}, the symbol $(\cdot)^\flat$ is finite duality and $d_{\cdot}:(\cdot)^\flat \to (\cdot)^?$ is the isomorphism defined in \cite[3.3.19]{MR1804902}. Commutativity of square $1$ is \cite[VII 3.4 VAR6]{MR0222093} and commutativity of square $2$ is \cite[VII 3.4 VAR4]{MR0222093}. Unfortunately it is not enough to use \cite[VII 3.4]{MR0222093} to prove the proposition in general, and as a matter of fact we don't know how to prove this \textit{desired} proposition, even if there are no reasons to believe that it doesn't hold. Luckily we can content ourselves with the finite case. Knowing Proposition \ref{prop:ancorauncambiobase}  for finite morphisms is enough as far as we only use Theorem \ref{thm:lipman-uniqueness}  with compositions of morphisms $\bullet\xrightarrow{f}\bullet\xrightarrow{g}\bullet$ where $g$ is arbitrary but $f$ is quasi-finite. Indeed, if we assume this, we can always split $f$ using Zariski main theorem and reduce to the finite case. This is adequate to our purposes since we will apply Theorem \ref{thm:lipman-uniqueness} only when $f$ is \'etale (the \'etale presentation of a Deligne-Mumford stack).  

We are now ready to solve the local situation for smooth stacks:
\begin{prop}\label{prop:local-smooth-serre}
  Let $\rho\colon \XX=[\Spec{B}/G]\to\Spec{A}$ be an $n$-dimensional smooth Deligne-Mumford stack over $\Spec{k}$ with structure map $\sigma\colon\Spec{A}\to\Spec{k}$ and $p\colon X_0\to\XX$ an \'etale atlas. The dualizing complex  $\rho^!\sigma^! k$ is canonically isomorphic to $\omega_\XX[n]$   
\end{prop}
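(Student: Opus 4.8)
The plan is to reduce the dualizing complex to a single twisted inverse image, compute it \'etale-locally on the atlas where Verdier's smooth duality applies, and then check that the descent datum produced by the abstract construction is exactly the canonical one defining $\omega_\XX$. First I would use the compatibility of $(\cdot)^!$ with composition, the isomorphism (\ref{eq:iso-compos-upper-shriek}), to replace $\rho^!\sigma^! k$ by $(\sigma\circ\rho)^! k$; since $\XX$ is smooth of dimension $n$ over $\Spec{k}$, the composite $\sigma\circ\rho$ is smooth of relative dimension $n$. Pulling back along the finite \'etale atlas $p\colon X_0=\Spec{B}\to\XX$, and using that $p^!=p^\ast$ together with (\ref{eq:iso-compos-upper-shriek}), I obtain $p^\ast(\sigma\circ\rho)^! k\cong(\sigma\circ\rho\circ p)^! k$. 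The morphism $\sigma\circ\rho\circ p\colon X_0\to\Spec{k}$ is a smooth morphism of schemes of relative dimension $n$, so Theorem \ref{thm:verdier-smooth} (combined with the identification $e_{\bullet}\colon(\cdot)^\sharp\to(\cdot)^?$ and the fact that $(\cdot)^?$ represents $(\cdot)^!$ in our setting) yields $(\sigma\circ\rho\circ p)^! k\cong\omega_{X_0/k}[n]$, the shifted relative canonical sheaf.

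Consequently $(\sigma\circ\rho)^! k$, after pullback to the atlas, is the shifted invertible sheaf $\omega_{X_0/k}[n]$; in particular $(\sigma\circ\rho)^! k$ is itself, up to the shift by $n$, a quasicoherent---indeed invertible---sheaf on $\XX$. By Remark \ref{rem:glueing-non-sm} I may therefore compute it by the $\nabla$-construction underlying Lemma \ref{lem:local-case-nabla}: it is obtained by descending $(\sigma\circ\rho\circ p)^! k\cong\omega_{X_0/k}[n]$ along $p$ via the gluing isomorphism $\gamma$ assembled from $\eta_{s,\sigma\circ\rho\circ p}$ and $\eta_{t,\sigma\circ\rho\circ p}$, where $s,t\colon X_1\to X_0$ are the (finite \'etale) source and target maps of the presentation and $s^!=s^\ast$, $t^!=t^\ast$.

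It remains---and this is the crux---to identify this descent datum $\gamma$ with the canonical one defining $\omega_\XX[n]$, namely the isomorphism $s^\ast\omega_{X_0/k}\cong\omega_{X_1/k}\cong t^\ast\omega_{X_0/k}$ arising from the triviality of $\omega_{X_1/X_0}$ for \'etale $s,t$. To see that $\gamma$ is exactly this, I would trace the composition isomorphism $\eta$ through Verdier's $\lambda_{\bullet}$ of (\ref{eq:iso-smooth}) and compare it with Hartshorne's smooth composition isomorphism $\zeta$ of Lemma \ref{lem:hartsh-compo-smooth}. This comparison is legitimate precisely because $(\cdot)^?$ satisfies the three hypotheses of Theorem \ref{thm:lipman-uniqueness}---the only nontrivial one being Proposition \ref{prop:ancorauncambiobase}, which is available for the finite morphisms $s,t$---so that the $\eta$ built from our $(\cdot)^!$ agrees, under $e_{\bullet}$, with $\mu$ and hence with $\zeta$. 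Since $\zeta_{s,\sigma\circ\rho\circ p}$ for \'etale $s$ is, under the standard trivialization of $\omega_{X_1/X_0}$, nothing but the canonical identification $s^\ast\omega_{X_0/k}\cong\omega_{X_1/k}$, the datum $\gamma$ coincides with the canonical descent datum, and the sheaf glued by the $\nabla$-construction is $\omega_\XX[n]$. I expect the main obstacle to be this final bookkeeping of identifications: one must keep straight the four isomorphisms $\lambda$, $e$, $\zeta$ (equivalently $\mu$), and the triviality of $\omega_{X_1/X_0}$, and verify their compatibility over the overlap $X_1$---which is exactly why the finite-morphism case of Proposition \ref{prop:ancorauncambiobase} was isolated beforehand.
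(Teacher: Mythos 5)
Your proposal is correct and follows the paper's own proof essentially step for step: Verdier's Theorem \ref{thm:verdier-smooth} applied on the atlas, sheafness of the dualizing complex by faithfully flat descent, the gluing description from Lemma \ref{lem:local-case-nabla} and Remark \ref{rem:glueing-non-sm}, and then the crucial identification of the abstract descent datum (built from $\eta_{\cdot,\cdot}$) with the canonical one for $\omega_\XX$ via Lemma \ref{lem:hartsh-compo-smooth}, Theorem \ref{thm:lipman-uniqueness}, and the finite-morphism case of Proposition \ref{prop:ancorauncambiobase} --- exactly the ingredients the paper invokes for the bottom squares of its three-row diagram. The only cosmetic difference is that you merge $\rho^!\sigma^!$ into $(\sigma\circ\rho)^!$ at the outset and glue via $\eta_{s,\sigma\circ\rho\circ p}$, whereas the paper keeps the two factors separate (gluing via $\eta_{s,\rho\circ p}$ as in Lemma \ref{lem:local-case-nabla}) and handles that same passage explicitly through the associativity of $\eta$, i.e.\ the commutativity of the top squares of its diagram.
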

\begin{proof}
  According to Theorem \ref{thm:verdier-smooth} we have that $(\sigma\circ\rho\circ p)^!k\cong\omega_B[n]$ (it's important to remember that duality along $\sigma$ is duality in the sense of \ref{thm:lipman-uniqueness}), this implies that the dualizing complex is a sheaf (since it is a sheaf after a faithfully flat base change) and according to Remark \ref{rem:glueing-non-sm} it must glue as specified in Lemma \ref{lem:local-case-nabla}. We can now reduce the problem to the commutativity of the following diagram:
  \begin{displaymath}
    \xymatrix{
      s^\ast\rho^!\sigma^!k \ar[r]^{\eta_{s,\rho}}\ar[d]_{s^\ast\eta_{\rho,\sigma}} & (\rho\circ s)^!\sigma^!k\ar[d]_{\eta_{\rho\circ s,\sigma}} & \ar[l]_{\eta_{t,\rho}} t^\ast\rho^!\sigma^!k\ar[d]_{t^\ast\eta_{\rho,\sigma}} \\
      s^\ast(\sigma\circ\rho)^!k \ar[r]^{\eta_{s,\sigma\circ\rho}}\ar[d]_{s^\ast\lambda_{\sigma\circ\rho}} & (\sigma\circ\rho\circ s)^!k\ar[d] & \ar[l]_{\eta_{t,\sigma\circ\rho}} t^\ast(\sigma\circ\rho)^!k\ar[d]_{t^\ast\lambda_{\sigma\circ\rho}} \\
 s^\ast\omega_B[n] \ar[r]^{\zeta_{s,\sigma\circ\rho}} & \omega_{B\times G}[n] & \ar[l]_{\zeta_{t,\sigma\circ\rho}} t^\ast\omega_B[n] \\ 
}
  \end{displaymath}
The line at the top provides the gluing data of the dualizing sheaf according to Lemma \ref{lem:local-case-nabla}, the line at the bottom provides the gluing data of the canonical bundle of $\XX$. The commutativity of the two top squares is granted by the associativity of $\eta_{\cdot,\cdot}$, the commutativity of the two bottom squares is \ref{lem:hartsh-compo-smooth} used together with \ref{thm:lipman-uniqueness} and \ref{prop:ancorauncambiobase}. The canonical isomorphism between the canonical bundle and the dualizing sheaf is given by $\lambda_{\sigma\circ\rho}\circ\eta_{\rho,\sigma}$.
\end{proof}
\begin{rem}
  Suppose now to change the atlas in the previous proposition to some scheme $W_0$ \'etale over $\XX$ but not necessarily finite. We have a new \'etale  presentation $ \xymatrix{ W_1 \ar@<-1ex>[r]^u \ar@<1ex>[r]^v &  W_0\ar[r]^{\tau} & \XX } $. 
Still we have  canonical isomorphisms $u^\ast(\sigma\circ\rho\circ\tau)^!k\cong (\sigma\circ\rho\circ\tau\circ u)^!k$ and the same with $v^\ast$. Recall now that for an \'etale morphism the twisted inverse image is the same as the pullback (whether it is finite or not); according to \ref{lem:hartsh-compo-smooth} and using the previous proposition the two isomorphisms are the two canonical isomorphisms  $u^\ast\omega_{W_0}\cong \omega_{W_1}$ and $v^\ast\omega_{W_0}\cong \omega_{W_1}$. 
\end{rem}
To deal with the global case  we need more informations about the two natural isomorphisms we have: $\eta_{f,g}$ for the composition of twisted inverse images $f^!,g^!$ and $c_j$ for the flat base change by a map $j$ of a twisted inverse image. They are compatible according to a pentagram relation.
\begin{lem} \label{lem:pentagram}
  Consider the following diagram of noetherian schemes where  vertical arrows are flat and the two squares cartesian:
  \begin{displaymath}
\xymatrix{
  X' \ar[r]^g\ar[d]_i & Y'\ar[d]_h \ar[r]^{\rho} & Z' \ar[d]_\sigma \\
   X \ar[r]_f & Y \ar[r]_{\pi} & Z \\
}
  \end{displaymath}
Let $F\in D^+(Z)$, the following pentagram relation holds:
\begin{equation}
 \label{eq:pentagram}
\xymatrix{
    && (\rho\circ g)^!\sigma^\ast F &&  \\
  i^\ast (\pi\circ f)^! F \ar[urr]^-{c_\sigma} & & & &  g^!\rho^!\sigma^\ast F \ar[ull]_-{\eta_{g,\rho}} \\
 & i^\ast f^!\pi^! F \ar[rr]_-{c_h}\ar[ul]^-{i^\ast\eta_{f,\pi}} & & g^!h^\ast\pi^! F\ar[ur]_-{g^!c_\sigma} &
}
\end{equation}
\end{lem}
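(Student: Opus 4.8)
The plan is to prove this pentagon as a formal consequence of the analogous---and already known---compatibility at the level of the derived push-forward, transported through the adjunction. Recall from the two equivalent constructions given after diagram (\ref{eq:unquadrato}) that $c_\sigma$ and $c_h$ are nothing but the mates (adjoint transposes) of the flat base-change isomorphism $\sigma\colon j^\ast Rf_\ast\to Rg_\ast i^\ast$ for the push-forward, built from the units $\cotr$ and counits $\tr$ of the adjunctions $(Rf_\ast,f^!)$ and $(Rg_\ast,g^!)$ together with their counterparts for $\pi$ and $\rho$; likewise $\eta_{f,\pi}$ and $\eta_{g,\rho}$ are the mates of the canonical identification $R(\pi\circ f)_\ast\cong R\pi_\ast Rf_\ast$. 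The whole statement therefore lives in the $2$-categorical \emph{mate} formalism, and my strategy is to verify the mirror identity upstairs and then dualize.

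First I would record the push-forward pentagon. The flat base-change isomorphism $\sigma$ (Verdier's theorem for schemes, or \cite[13.1.9]{LMBca} in the representable case) is pseudo-natural with respect to horizontal pasting of cartesian squares: for the outer rectangle obtained by composing the two squares, the base-change isomorphism attached to $\pi\circ f$ factors canonically as the pasted composite of the base-change isomorphisms attached to $f$ and to $\pi$. This is the standard coherence of base change, and it is purely formal once one uses the identification $R(\pi\circ f)_\ast\cong R\pi_\ast Rf_\ast$ together with the functoriality of $i^\ast$, $h^\ast$ and $\sigma^\ast$; no duality is involved at this stage.

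Then I would transport this identity by the mate construction. The pentagon (\ref{eq:pentagram}) is exactly the assertion that the base-change isomorphism of the outer rectangle (the top edge $c_\sigma$) coincides with the paste of the base-change isomorphisms of the two inner squares (the arrows $c_h$ and $g^!c_\sigma$ along the lower path), conjugated by the composition isomorphisms $\eta_{f,\pi}$ and $\eta_{g,\rho}$. The general principle that the mate of a pasted square is the paste of the mates---a triangle-identity bookkeeping, valid because every functor in sight is adjointable---turns the push-forward coherence of the previous paragraph into precisely this statement. Concretely one inserts the unit/counit strings defining each $c$ and each $\eta$ and collapses the adjacent $\tr$--$\cotr$ pairs via the triangle identities; the reversed arrow $i^\ast\eta_{f,\pi}$ is harmless precisely because it is invertible.

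The main obstacle will be the honest bookkeeping in this last step---keeping track of where each unit and counit sits, and checking that the mate of the outer base-change square reproduces the arrows in exactly the positions shown, with the flat map $\sigma$ contributing both the outer base change on top and the inner base change (shifted by $g^!$) along the bottom. A secondary difficulty is that the pentagon must also be argued when $f$ and $\pi$ are not proper, where $c$ and $\eta$ are defined only after compactification, as in (\ref{eq:comp-smooth-shriek}); there I would reduce to the proper case by Nagata compactification, invoking the uniqueness of Theorem \ref{thm:lipman-uniqueness} to see that the relation is independent of the chosen compactification, and in the proper case fall back on the trace-based definitions, where the pentagon reduces to the compatibility of $\tr$ with composition and with base change.
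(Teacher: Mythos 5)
Your proposal is correct and takes essentially the same route as the paper: the paper's proof is precisely a citation of Lipman's pasting result \cite[Prop 4.6.8]{lipman1960} — whose content is the mate-calculus transport of push-forward base-change coherence that you spell out — combined with the compactification formula (\ref{eq:comp-smooth-shriek}) to handle the non-proper horizontal maps. The only cosmetic difference is that you unfold the proof of the cited proposition and invoke Theorem \ref{thm:lipman-uniqueness} for independence of compactification, where the paper simply relies on the definition of $\eta_{\cdot,\cdot}$ through a compatible compactification.
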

\begin{proof}
  This is a variation on  \cite[Prop 4.6.8]{lipman1960} using equation (\ref{eq:comp-smooth-shriek}).
\end{proof}
 We are now ready for the global case:
 \begin{thm}[Smooth Serre duality]
   Let $\sigma\colon \XX\to\Spec{k}$ be a smooth proper Deligne-Mumford stack of dimension $n$. The complex $\sigma^!k$ is canonically isomorphic to the complex $\omega_\XX[n]$.
 \end{thm}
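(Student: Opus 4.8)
The plan is to obtain the global isomorphism by descent from the local computation of Proposition~\ref{prop:local-smooth-serre}. First I would fix an \'etale atlas $p_0\colon X_0\to\XX$ by a scheme, with source and target maps $s,t\colon X_1\to X_0$ of its nerve, and set $\sigma_0=\sigma\circ p_0\colon X_0\to\Spec k$. Since $\XX$ is smooth of dimension $n$ over $k$ and $p_0$ is \'etale, $\sigma_0$ is smooth of relative dimension $n$; compactifying $X_0$ and applying Verdier's Theorem~\ref{thm:verdier-smooth} yields a canonical isomorphism $\sigma_0^!k\cong\omega_{X_0}[n]$ (smooth duality for the scheme $X_0$, independent of the compactification by Theorem~\ref{thm:lipman-uniqueness}). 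By \'etale locality of the twisted inverse image, Corollary~\ref{cor:stacky-comp-etale-open}, this gives $p_0^\ast\sigma^!k\cong\sigma_0^!k\cong\omega_{X_0}[n]$; in particular $\sigma^!k$ is, up to the shift by $n$, a single cartesian sheaf on $\XX$.

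Both $\sigma^!k$ and $\omega_\XX[n]$ are cartesian complexes whose restriction along $p_0$ is $\omega_{X_0}[n]$, so by descent for cartesian quasicoherent complexes along the atlas it suffices to produce an isomorphism over $X_0$ compatible with the two descent data on $X_1$ and to check the cocycle condition on the triple fibre product $X_2$. The descent datum carried by $\omega_\XX[n]$ is the canonical one, $s^\ast\omega_{X_0}\cong\omega_{X_1}\cong t^\ast\omega_{X_0}$, coming from $s,t$ being \'etale; the descent datum carried by $\sigma^!k$ is the one assembled from the flat base-change isomorphisms $c_s,c_t$ of Theorem~\ref{sec:duality-flat-base-change-duality} together with the composition isomorphisms $\eta_{\cdot,\cdot}$ of (\ref{eq:iso-compos-upper-shriek}).

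The heart of the argument is that, under the identification $\lambda_{\sigma_0}$, these two descent data agree. This is exactly the commutativity of the stacked square diagram appearing in the proof of Proposition~\ref{prop:local-smooth-serre}, now read over the (not necessarily finite) \'etale presentation $X_\bullet$ as in the Remark following that proposition: the top row encodes the gluing of $\sigma^!k$ and the bottom row the canonical gluing of $\omega_\XX[n]$, with the vertical maps given by Verdier's $\lambda_{\sigma_0}$. Commutativity of the upper squares is the associativity of $\eta_{\cdot,\cdot}$ transported across the base changes by the pentagram relation of Lemma~\ref{lem:pentagram}, while commutativity of the lower squares is Hartshorne's compatibility for smooth compositions (Lemma~\ref{lem:hartsh-compo-smooth}) combined with Theorem~\ref{thm:lipman-uniqueness} and Proposition~\ref{prop:ancorauncambiobase}. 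Hence $\lambda_{\sigma_0}$ is horizontal for the descent data and glues to a canonical isomorphism $\sigma^!k\cong\omega_\XX[n]$.

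The main obstacle is precisely this overlap compatibility: showing that the base-change isomorphisms $c_s,c_t$ used to transport $\sigma^!k$ to $X_1$ become, after applying $\lambda$, the canonical comparison $s^\ast\omega_{X_0}\cong t^\ast\omega_{X_0}$. This is where the interaction between the composition isomorphisms $\eta$ and the base-change isomorphisms $c$ must be controlled, so Lemma~\ref{lem:pentagram} is the essential ingredient; one must additionally confirm the cocycle condition on $X_2$, which follows from the cocycle conditions satisfied separately by $\eta_{\cdot,\cdot}$ and by $c_\cdot$, together with the fact that all comparison isomorphisms in play are canonical.
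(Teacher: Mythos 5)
Your overall skeleton --- prove the statement on an \'etale atlas via Verdier's Theorem \ref{thm:verdier-smooth}, then compare the two descent data on the presentation using Lemma \ref{lem:hartsh-compo-smooth}, Theorem \ref{thm:lipman-uniqueness}, Proposition \ref{prop:ancorauncambiobase} and the pentagram Lemma \ref{lem:pentagram} --- is indeed the shape of the paper's proof, and your ingredient list is the right one. But the step you yourself call the heart of the argument is asserted rather than proved, and the justification you offer for it is circular. What you need is that the \emph{canonical} descent datum of $\sigma^!k$ --- the one it carries simply by being an object of $D^+(\XX)$ --- corresponds, under the chart-by-chart identification $p_0^\ast\sigma^!k\cong\sigma_0^!k$ of Corollary \ref{cor:stacky-comp-etale-open}, to the gluing assembled from the composition isomorphisms $\eta_{\cdot,\cdot}$. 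This is not automatic: Corollary \ref{cor:stacky-comp-etale-open} is proved one chart at a time via Zariski main theorem compactifications, and says nothing about compatibility with the descent isomorphism on $X_1$. The stacked diagram in the proof of Proposition \ref{prop:local-smooth-serre} cannot simply be ``read over $X_\bullet$'' to supply this, because its top row \emph{is} the statement in question: that the dualizing complex glues by the $\eta$'s. For the local model $[\Spec{B}/G]\to\Spec{A}$ that statement is Remark \ref{rem:glueing-non-sm} combined with Lemma \ref{lem:local-case-nabla}, and the proof of Lemma \ref{lem:local-case-nabla} is an explicit adjunction computation with $\Hom_A(B,I)$ and coactions which requires the atlas $\Spec{B}\to[\Spec{B}/G]$ and the composite $\Spec{B}\to\Spec{A}$ to be \emph{finite}. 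A general smooth proper $\XX$ has no finite atlas, so for your presentation $X_\bullet$ this input is precisely what is missing; it is the thing to be proved, not a lemma you may quote.

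This is why the paper's proof is forced through the coarse moduli space, which your sketch never mentions: one factors $\sigma$ as $\XX\xrightarrow{\pi}X\to\Spec{k}$, covers $X$ by \'etale charts $\Spec{A_i}$ so that $\XX\times_X\Spec{A_i}\cong[\Spec{B_i}/G_i]$ does have a finite atlas, and then uses the flat base-change Theorem \ref{sec:duality-flat-base-change-duality} --- which is genuinely a base change over $X$; over $\Spec{k}$ there is no nontrivial flat base change, so your appeal to $c_s,c_t$ from that theorem is misplaced --- together with two applications of the pentagram Lemma \ref{lem:pentagram} to transport the local identification of the descent datum to the presentation of $\XX$. Even then a subtlety remains that your proposal skips entirely: the comparison is first obtained only after pulling back to the finer presentation $Y_\bullet=X_\bullet\times_{\XX}\coprod_i[\Spec{B_i}/G_i]$, and one must still check, by refining the presentation and analyzing the relevant $2$-cartesian squares (the last third of the paper's proof), that the pulled-back isomorphisms remain the gluing data of $\sigma^!k$ and of $\omega_\XX$ respectively, so that the comparison descends to $\XX$. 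Without these two steps --- the local finite-atlas input reached through the moduli space, and the descent of the comparison from $Y_\bullet$ --- your argument does not close.
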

 \begin{proof}
   We start with a picture that reproduces the local setup and summarizes all the morphisms we are going to use:
   \begin{equation}\label{picture:presentazione}
     \xymatrix{
Y_1 \ar@<-1ex>[r]^-u \ar@<1ex>[r]^-v \ar@{->>}[d]^-{h_1} & Y_0 \ar[r]^-g\ar@{->>}[d]^-{h_0} & \coprod_{i} [\Spec{B_i}/G_i] \ar[r]^-{\rho}\ar@{->>}[d]^-h &   \coprod \Spec{A_i} \ar[dr]\ar@{->>}[d]^-\sigma & \\
X_1  \ar@<-1ex>[r]^-s \ar@<1ex>[r]^-t & X_0 \ar[r]^-f & \XX \ar[r]^-\pi & X \ar[r] & \Spec{k} \\ 
}
   \end{equation}
We denote with $k_X$ the dualizing complex of the scheme $X$. First we observe that $\pi^!k_X$ is a sheaf. Indeed we have an isomorphism $c_\sigma\colon \rho^!\sigma^\ast k_X\to h^\ast\pi^! k_X$ for Theorem \ref{sec:duality-flat-base-change-duality}; according to Proposition \ref{prop:local-smooth-serre} the complex $\rho^!\sigma^\ast k_X$ is a sheaf and since $h$ is faithfully flat $\pi^! k_X$ must be a sheaf itself. Denote with $\xi$ the isomorphism on the double intersection $X_1$ defining the sheaf $\pi^! k_X$. Using again Proposition \ref{prop:local-smooth-serre} and the remark that follows we have a commutative diagram expressing the isomorphism $\xi$ in relation with the base change isomorphism $c_\sigma$:
\begin{displaymath}
  \xymatrix{
    u^\ast h_0^\ast (\pi\circ f)^! k_X \ar[d]^{u^\ast h_0^\ast \eta_{f,\pi}} \ar[rr]^{u^\ast c_\sigma} &&  u^\ast (\rho\circ g)^! \sigma^\ast k_X \ar[ddr]^-{\eta_{u,\rho\circ g}} \ar[dddd] & \\
  u^\ast h_0^\ast f^\ast\pi^! k_X \ar[dd]^{h_1^\ast\xi}  &&&   \\
   & & & (\rho \circ g\circ u)^! \sigma^\ast k_X \\
  v^\ast h_0^\ast f^\ast \pi^! k_X & & &  \\
   v^\ast h_0^\ast (\pi\circ f)^! k_X \ar[u]_{v^\ast h_0^\ast \eta_{f,\pi}} \ar[rr]^{v^\ast c_\sigma} && v^\ast (\rho\circ g)^! \sigma^\ast k_X \ar[uur]_-{\eta_{v,\rho\circ g}} & \\
}
\end{displaymath}
Now we can use two times the pentagram relation for compactifiable morphisms in (\ref{eq:pentagram}) (remember that $u^!,v^!,t^!,s^!=u^\ast,v^\ast,t^\ast,s^\ast $) and obtain the following commutative diagram:
\begin{equation}\label{picture:double-pentagram}
  \xymatrix{
    u^\ast h_0^\ast (\pi\circ f)^!k_X \ar@{=}[d]\ar[rr]^{u^\ast c_\sigma} && u^\ast (\rho\circ g)^!\sigma^\ast k_X \ar[ddr]^-{\eta_{u,\rho\circ g}} & \\
    h_1^\ast s^\ast (\pi\circ f)^!k_X \ar[dr]^{h_1^\ast \eta_{s,\pi\circ f}} & && \\
    & h_1^\ast (\pi\circ f\circ s)^!k_X \ar[rr]^{c_\sigma} && (\rho \circ g\circ u)^!\sigma^\ast k_X \\
   h_1^\ast t^\ast (\pi\circ f)^!k_X \ar[ur]^{h_1^\ast\eta_{t,\pi\circ f}} &&& \\
  v^\ast h_0^\ast (\pi\circ f)^!k_X \ar[rr]^{v^\ast c_\sigma}\ar@{=}[u] && v^\ast (\rho\circ g)^!\sigma^\ast k_X \ar[uur]_-{\eta_{v,\rho\circ g}} \\
}
\end{equation}
Comparing the two commutative diagrams we have the following commutative square:
\begin{equation}\label{picture:quadrato di merda}
\xymatrix{
 h_1^\ast s^\ast f^\ast \pi^! k_X \ar[d]^{h_1^\ast\xi} \ar[rr]^{h_1^\ast s^\ast\eta_{f,\pi}} && h_1^\ast s^\ast (\pi\circ f)^!k_X\ar[d]^{h_1^\ast (\eta_{t,\pi\circ f}^{-1} \circ \eta_{s,\pi\circ f})} \\
 h_1^\ast t^\ast f^\ast \pi^! k_X \ar[rr]^{h_1^\ast t^\ast\eta_{f,\pi}} && h_1^\ast t^\ast (\pi\circ f)^!k_X \\
}
\end{equation}
First of all we observe that the sheaf $(\pi\circ f)^!k_X$ glued by $\eta_{t,\pi\circ f}^{-1} \circ \eta_{s,\pi\circ f}$ is exactly $\omega_\XX$ according to Lemma  \ref{lem:hartsh-compo-smooth}. 
The commutative square tells us that the isomorphism $\eta_{f,\pi}$ is an isomorphism between the dualizing sheaf and $\omega_\XX$ once it is restricted to $Y_0$; unfortunately $Y_0$ is finer then the atlas we are using ($X_0$) and we actually don't know if this isomorphism descends.  To make it descend we produce a finer presentation of $\XX$, that is we use $Y_0$ as an atlas and we complete the presentation to the groupoid $\xymatrix{\overline{X}_2 \ar@<-1ex>[r] \ar@<0ex>[r] \ar@<1ex>[r] & \overline{X}_1\ar@<-1ex>[r]^{u'} \ar@<1ex>[r]^{v'} & Y_0}$. This gives us an arrow $\lambda$ from $\overline{X}_1$ to $Y_1$ and an arrow from $\overline{X}_2$ to $Y_2$. We can take the square in \ref{picture:quadrato di merda} and pull it back with $\lambda^\ast$ to $\overline{X}_1$. Now the isomorphism $h_0^\ast\eta_{f,\pi}$ descends to an isomorphism of sheaves on $\XX$. The main problem now is that we don't know if $\lambda^\ast h_1^\ast\xi$ and $\lambda^\ast h_1^\ast (\eta_{t,\pi\circ f}^{-1} \circ \eta_{s,\pi\circ f})$ are still the gluing isomorphisms of respectively the dualizing sheaf and $\omega_\XX$. For what concerns the second  we have the following commutative square:
\begin{displaymath}
  \xymatrix{
\lambda^\ast h_1^\ast s^\ast (\pi\circ f)^!k_X \ar[d]_{\lambda^\ast h_1^\ast \eta_{s,\pi\circ f}}\ar[r]^{{u'}^\ast \eta_{h_0,\pi\circ f}} & {u'}^\ast(\pi\circ f\circ h_0)^!k_X \ar[d]^{\eta_{u',\pi\circ f\circ h_0}} \\
\lambda^\ast h_1^\ast (\pi\circ f\circ s)^!k_X \ar[r]^{{v'}^\ast \eta_{h_0,\pi\circ f}} & {v'}^\ast(\pi\circ f\circ h_0)^!k_X 
}
\end{displaymath}
and an analogous one for $t,v'$. This square implies that the sheaf $h_0^\ast(\pi\circ f)^! k_X$ with the gluing isomorphism $\lambda^\ast h_1^\ast \eta_{t,\pi\circ f}^{-1} \circ \eta_{s,\pi\circ f}$ is canonically isomorphic via $h_0^\ast\eta_{h_0,\pi\circ f}$ to the sheaf given by $(\pi\circ f\circ h_0)^!k_X$ and the gluing isomorphism $\eta_{v',\pi\circ f\circ h_0}^{-1}\circ\eta_{u',\pi\circ f\circ h_0}$ which is actually $\omega_\XX$.
For what concerns the dualizing sheaf we start considering the following picture:
\begin{equation}\label{picture:grosso-quadrato-di-merda}
  \xymatrix{
    \overline{X}_1\ar[d]_{\lambda} \ar[r]^{\lambda} & Y_1\ar[d]_{h_1} \ar[r]^v & Y_0\ar[d]^{h_0} \\
    Y_1 \ar[r]^{h_1}\ar[d]_u & X_1 \ar[r]^t\ar[d]_s & X_0 \ar[d]^f \\
    Y_0 \ar[r]^{h_0} & X_0 \ar[r]^f & \XX \\
}
\end{equation}
where every single square is $2$-cartesian. Only the square at the bottom right has a non trivial canonical two-arrow, let's call it $\gamma$. If we think of $\pi^!k_X$ as a sheaf on the \'etale site of $\XX$, the gluing isomorphism $\xi$ on the presentation $\xymatrix{X_1\ar@<-1ex>[r] \ar@<1ex>[r] & X_0}$ is induced by the two-arrow $\gamma$. If we change the presentation to $\xymatrix{\overline{X}_1\ar@<-1ex>[r] \ar@<1ex>[r] & Y_0}$ the gluing isomorphism is induced by $\gamma\ast \Id_{h_1\circ\lambda}$ according to the picture \ref{picture:grosso-quadrato-di-merda}; this implies that the induced isomorphism is exactly $\lambda^\ast h_1^\ast\xi$. We can conclude that the sheaf $h_0^\ast f^\ast\pi^! k_X$ with gluing data $\lambda^\ast h_1^\ast\xi$ is again the dualizing sheaf.
 \end{proof}
Keeping all the notations of the previous theorem we have the following non-smooth result:
 \begin{thm}\label{thm:non-sm-serre}
    Let $\sigma\colon \XX\to\Spec{k}$ be a proper Deligne-Mumford stack. Let $F$ be a quasicoherent sheaf on the moduli space $X$, assume that $\pi^!F$ is a quasicoherent sheaf on $\XX$ then its equivariant structure is given by the isomorphism $\eta_{t,\pi\circ f}^{-1} \circ \eta_{s,\pi\circ f}$.
 \end{thm}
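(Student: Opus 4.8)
The plan is to run the argument of the Smooth Serre duality theorem essentially verbatim, with two modifications. First, Verdier's isomorphism (Theorem \ref{thm:verdier-smooth}) is no longer available, so the local input must come from the general computation of Lemma \ref{lem:local-case-nabla} rather than from its smooth refinement Proposition \ref{prop:local-smooth-serre}; the point is that Proposition \ref{prop:local-smooth-serre} used smoothness \emph{only} to rewrite the local answer as $\omega[n]$, whereas the bare statement about the gluing data is exactly Lemma \ref{lem:local-case-nabla} and is smoothness-free. Second, since we are not claiming that $\pi^!F$ is any particular sheaf, we stop as soon as the equivariant structure has been pinned down and skip the descent of an isomorphism onto $\omega_\XX$. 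Throughout I keep the presentation (\ref{picture:presentazione}), writing $h\colon\coprod_i[\Spec{B_i}/G_i]\to\XX$ and $\sigma\colon\coprod_i\Spec{A_i}\to X$ for the étale (hence flat, representable, and faithfully flat) refinements, and $\xi$ for the descent datum on $X_1\rightrightarrows X_0$ provided by the hypothesis that $\pi^!F$ is a quasicoherent sheaf.

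First I would apply the flat base change of Theorem \ref{sec:duality-flat-base-change-duality} to the square with sides $h,\sigma$, producing the isomorphism $c_\sigma\colon h^\ast\pi^!F\to\rho^!\sigma^\ast F$. Since $\pi^!F$ is assumed to be a sheaf and $h$ is faithfully flat, $\rho^!\sigma^\ast F=h^\ast\pi^!F$ is a sheaf as well, so Remark \ref{rem:glueing-non-sm} applies. Lemma \ref{lem:local-case-nabla}, together with the functoriality of $\eta_{\cdot,\cdot}$ exactly as in the remark following Proposition \ref{prop:local-smooth-serre} (which lets one pass from the finite quotient presentation to the possibly non-finite étale atlas $g\colon Y_0\to\coprod_i[\Spec{B_i}/G_i]$), then identifies the equivariant structure of $\rho^!\sigma^\ast F$ on $Y_1\rightrightarrows Y_0$ with the composition $\eta_{v,\rho\circ g}^{-1}\circ\eta_{u,\rho\circ g}$. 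No smoothness enters here.

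Next I would transport this local information back across $c_\sigma$. The compatibility of the base change isomorphism with the composition isomorphisms is governed by the pentagram relation of Lemma \ref{lem:pentagram}; applying it twice as in diagram (\ref{picture:double-pentagram}) and comparing with the descent datum of $\rho^!\sigma^\ast F$ yields the commutative square (\ref{picture:quadrato di merda}), now read with $F$ in place of $k_X$. That square says precisely that, after pulling back along $h_1$ and conjugating by the composition isomorphism $\eta_{f,\pi}\colon f^\ast\pi^!\to(\pi\circ f)^!$, the datum $\xi$ agrees with $\eta_{t,\pi\circ f}^{-1}\circ\eta_{s,\pi\circ f}$. Both of these are morphisms already defined on $X_1$, and $h_1\colon Y_1\to X_1$ is faithfully flat, hence $h_1^\ast$ is faithful and the equality descends from $Y_1$ to $X_1$, which is the assertion.

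The main obstacle, and the only content beyond bookkeeping, is the middle step: one must check that $c_\sigma$ genuinely matches the two descent data, i.e. that the pentagram (\ref{eq:pentagram}) glues correctly with the canonical $2$-arrow $\gamma$ of the bottom-right square of (\ref{picture:grosso-quadrato-di-merda}) that induces $\xi$. Here the non-smooth case is in fact easier than the smooth one: in the Smooth Serre duality theorem the elaborate passage to the finer atlas $\overline{X}_1$ was needed only to descend an isomorphism between two \emph{distinct} sheaves ($\pi^!k_X$ and $\omega_\XX$), whereas we only need to descend an equality of two gluing isomorphisms already living on $X_1$, which faithful flatness of $h_1$ settles at once. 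I therefore expect the proof to read: the same argument as the previous theorem, using Lemma \ref{lem:local-case-nabla} in place of Proposition \ref{prop:local-smooth-serre} and stopping at the square (\ref{picture:quadrato di merda}).
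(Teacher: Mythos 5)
Your proposal is correct, and its core coincides with the paper's own proof, which consists precisely of the sentence ``repeat the smooth Serre duality argument keeping in mind Remark \ref{rem:glueing-non-sm}'' --- that is your first modification. The genuine difference is your second modification, the ending. Read literally, the paper's recipe would transplant the whole smooth proof, including the passage to the finer presentation $\overline{X}_1\rightrightarrows Y_0$ and the tracking of the $2$-arrow $\gamma$ through picture (\ref{picture:grosso-quadrato-di-merda}); you observe that this last descent step is only needed when one wants to descend the comparison map $\eta_{f,\pi}$ to an isomorphism of sheaves on $\XX$ (onto $\omega_\XX$ in the smooth case), whereas the present assertion is an equality of two morphisms $s^\ast f^\ast\pi^!F\to t^\ast(\pi\circ f)^!F$ already defined on $X_1$, namely $\bigl(\eta_{t,\pi\circ f}^{-1}\circ\eta_{s,\pi\circ f}\bigr)\circ s^\ast\eta_{f,\pi}$ against $t^\ast\eta_{f,\pi}\circ\xi$. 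Since every arrow of square (\ref{picture:quadrato di merda}) is an $h_1^\ast$-pullback of an arrow on $X_1$ (granting, as the paper also does implicitly in its first diagram, the identification of the descent datum of $h^\ast\pi^!F$ on $Y_1\rightrightarrows Y_0$ with $h_1^\ast\xi$), faithfulness of $h_1^\ast$ for the faithfully flat $h_1\colon Y_1\to X_1$ descends the commutativity from $Y_1$ to $X_1$, which is the theorem. This buys a shorter proof; the paper's longer route buys a descended isomorphism of sheaves on the stack, which is not claimed here.

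One imprecision deserves flagging, though it is a citation slip rather than a gap. To identify the gluing of $\rho^!\sigma^\ast F$ on $Y_1\rightrightarrows Y_0$ with $\eta_{v,\rho\circ g}^{-1}\circ\eta_{u,\rho\circ g}$ you cite the remark following Proposition \ref{prop:local-smooth-serre} and assert that no smoothness enters; but that remark, as proved, does use smoothness (it invokes Lemma \ref{lem:hartsh-compo-smooth} and Proposition \ref{prop:local-smooth-serre} to identify the $\eta$-maps with the canonical maps of canonical bundles). In the non-smooth case, Lemma \ref{lem:local-case-nabla} and Remark \ref{rem:glueing-non-sm} give the $\eta$-type gluing only on the finite presentation $\Spec{B}\times G\rightrightarrows\Spec{B}$, and the passage to the non-finite atlas $Y_0$ must instead be carried out by the associativity of $\eta_{\cdot,\cdot}$ and the $2$-arrow compatibilities on a common refinement of the two presentations --- exactly the kind of bookkeeping the smooth proof performs in its final part, and exactly the step the paper's own one-line proof leaves implicit. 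So your argument is complete modulo the same coherence verifications the paper itself suppresses.
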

 \begin{proof}
   First of all we observe that $\pi^!F$ being a sheaf can be checked \'etale locally as in the previous theorem, to be more specific it is enough to know that $\rho^!\sigma^\ast F$ is a sheaf. We achieve the result of the theorem repeating the same proof as in the smooth case and keeping in mind Remark \ref{rem:glueing-non-sm}.
 \end{proof}

\subsection{Duality for finite morphisms}

We are going to prove that given $f\colon\XX\to\YY$ a representable finite morphism of Deligne-Mumford stacks the functor $f^!$ is perfectly analogous to the already familiar one in the case of schemes. 

To start with the proof we first need to state a couple of results, well known in the scheme-theoretic set up (\cite[Prop 1.3.1]{MR0163909} and \cite[Prop 1.4.1]{MR0163909}), in the stack-theoretic set-up. 
The first one is taken from \cite[Prop 14.2.4]{LMBca}.
\begin{lem}
  Let $\XX$ be an algebraic stack over a scheme $S$. There is an equivalence of categories between the category of algebraic stacks $\YY$ together with a finite schematically representable $S$-morphism $f\colon\XX\to\YY$ and quasicoherent $\OO_\XX$-algebras. 
This equivalence associates to the stack $\YY$ and the morphism $f$ the sheaf of algebras $f_\ast\OO_\YY$; to a sheaf of algebras $\mathcal{A}$ the affine morphism $f_\mathcal{A}\colon\Spec{\mathcal{A}}\to\XX$. 
\end{lem}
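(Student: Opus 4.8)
The plan is to reduce the statement to the classical scheme-theoretic equivalence between quasicoherent algebras and affine (resp.\ finite) morphisms, and then to propagate it to stacks by faithfully flat descent along a smooth atlas, exactly in the spirit of \cite[Prop 14.2.4]{LMBca}. Here $\XX$ plays the role of the base: the relative spectrum $\Spec\mathcal{A}$ maps to $\XX$, and for a finite schematically representable morphism to $\XX$ the sheaf $f_\ast\OO$ is the associated $\OO_\XX$-algebra. First I would fix a smooth affine atlas $p\colon X_0\to\XX$ with its groupoid presentation $s,t\colon X_1\to X_0$ (where $X_1=X_0\times_\XX X_0$ and $X_2=X_0\times_\XX X_0\times_\XX X_0$ records the cocycle condition). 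The key translation is that, by the description of quasicoherent sheaves as cartesian sheaves on the nerve, a quasicoherent $\OO_\XX$-algebra is precisely the datum of a quasicoherent $\OO_{X_0}$-algebra $\mathcal{A}_0$ together with an isomorphism $s^\ast\mathcal{A}_0\cong t^\ast\mathcal{A}_0$ of $\OO_{X_1}$-algebras satisfying the cocycle condition on $X_2$.

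Next I would set up the two functors and check they are well defined. Going from stacks to algebras, for a finite schematically representable $f\colon\YY\to\XX$ I claim $\mathcal{A}:=f_\ast\OO_\YY$ is a quasicoherent $\OO_\XX$-algebra: the algebra structure comes from the multiplication on $\OO_\YY$, and quasicoherence is checked after the flat base change $p$, where $f$ pulls back to the finite morphism of schemes $Y_0:=\YY\times_\XX X_0\to X_0$ and $p^\ast f_\ast\OO_\YY\cong (\text{pushforward of }\OO_{Y_0})$ by flat base change, reducing to the scheme case; Lemma \ref{lem:commute-with-filt} guarantees the relevant pushforwards behave well. Going from algebras to stacks, given $\mathcal{A}$ I would pull back to $\mathcal{A}_0=p^\ast\mathcal{A}$, form the finite $X_0$-scheme $Y_0=\Spec_{X_0}\mathcal{A}_0$, and transport the descent isomorphism of $\mathcal{A}$ through the scheme-level equivalence $\Spec_{X_\bullet}(-)$ to obtain a descent datum on $Y_0\to X_0$. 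Effectivity of fppf descent for affine morphisms then yields an algebraic stack $\YY=\Spec\mathcal{A}$ with a morphism to $\XX$ that is finite and schematically representable, these two properties being stable under base change and fppf-local on the target.

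Finally I would verify that the two functors are quasi-inverse and an equivalence. On objects this is immediate over the atlas, since $\Spec_{X_0}(-)$ and pushforward are mutually inverse for schemes, and the respective descent data are carried into one another by construction; reassembling via descent shows $\Spec(f_\ast\OO_\YY)\cong\YY$ and $(\Spec\mathcal{A})_\ast\OO\cong\mathcal{A}$ canonically. For full faithfulness I would note that an $\OO_\XX$-algebra morphism is the same as an $\OO_{X_0}$-algebra morphism compatible with the gluing isomorphisms, and likewise a morphism of finite representable $\XX$-stacks can be tested on the atlas with its descent datum, so both $\Hom$-sets are computed as the equalizer of the corresponding scheme-level $\Hom$-sets, where the bijection is already known. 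I expect the main obstacle to be exactly the effectivity step: producing the genuine algebraic stack $\YY$ from the descent datum on $Y_0$, and confirming that finiteness and schematic representability survive, which is where one must invoke faithfully flat (fppf) descent for affine morphisms rather than any softer sheaf-theoretic argument; everything else is the routine bookkeeping of matching cartesian structures across the nerve.
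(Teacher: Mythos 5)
Your argument cannot be compared with ``the paper's own proof'' in the usual sense, because the paper does not prove this lemma at all: it is quoted verbatim as \cite[Prop 14.2.4]{LMBca}, and the author immediately moves on. What you have done is reconstruct the standard proof of that cited result, and your reconstruction is essentially correct: identify quasicoherent $\OO_\XX$-algebras with descent data of quasicoherent algebras on a presentation $X_1\rightrightarrows X_0\to\XX$ (cocycle condition on $X_2$), apply the scheme-level equivalence between quasicoherent algebras and affine morphisms via $\Spec_{X_0}(-)$, and then invoke effectivity of fppf descent for affine morphisms to assemble $\Spec\mathcal{A}\to\XX$ and to verify that schematic representability and finiteness are fppf-local on the target; full faithfulness reduces, as you say, to an equalizer of scheme-level $\Hom$-sets. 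Two small repairs are worth making. First, a general quasicoherent $\OO_\XX$-algebra produces only an \emph{affine} morphism; your phrase ``form the finite $X_0$-scheme $Y_0=\Spec_{X_0}\mathcal{A}_0$'' is only valid when $\mathcal{A}_0$ is finite as an $\OO_{X_0}$-module, so the equivalence with \emph{finite} morphisms requires restricting to module-finite algebras on the other side (the paper's own statement is loose on exactly this point, and also reverses the arrow $f\colon\XX\to\YY$, which should read $f\colon\YY\to\XX$). Second, your appeal to Lemma \ref{lem:commute-with-filt} is misplaced: what you actually use to see that $p^\ast f_\ast\OO_\YY$ is the pushforward of $\OO_{Y_0}$, and hence that $f_\ast\OO_\YY$ is quasicoherent, is flat base change for (affine) pushforwards, not commutation with filtered colimits. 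With those adjustments your proof is a sound, self-contained replacement for the citation.
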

From this lemma we can deduce the following result on quasicoherent sheaves:
\begin{lem}
 Let $\XX$ be as above and $\mathcal{A}$ an $\OO_\XX$-algebra. There is an equivalence of categories between the category of quasicoherent $\mathcal{A}$-modules and the category of quasicoherent sheaves on $\Spec{\mathcal{A}}$. Denoted with $f$ the affine morphism $\Spec{A}\to\XX$, and given $\FF$ a quasicoherent sheaf on $\Spec{A}$, the equivalence associates to $\FF$ the sheaf $\overline{f}_\ast\FF$ that is the sheaf $f_\ast\FF$ with its natural structure of $\mathcal{A}$-algebra. The inverted equivalence is the left-adjoint of $\overline{f}_\ast$, we will denote it with $\overline{f}^\ast$ and it maps the category $\text{QCoh}_{f_\ast\OO_\YY}$ to $\text{QCoh}_{\OO_\YY}$. 
\end{lem}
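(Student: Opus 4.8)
The plan is to deduce the statement from its scheme-theoretic counterpart \cite[Prop 1.4.1]{MR0163909} by descent along a presentation. First I would pick a smooth atlas $p_0\colon X_0\to\XX$ and form the associated nerve $X_\bullet$ with the smooth (hence flat) source and target maps $s,t\colon X_1\rightrightarrows X_0$. Writing $a_i\colon X_i\to\XX$ for the structural maps of the nerve and $\mathcal{A}_i\coloneqq a_i^\ast\mathcal{A}$ for the pulled-back algebra, relative $\Spec$ commutes with base change, so the fibre product of $f\colon\Spec{\mathcal{A}}\to\XX$ along $a_i$ is the affine morphism of schemes $f_i\colon\Spec{\mathcal{A}_i}\to X_i$. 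On each level the scheme-theoretic result \cite[Prop 1.4.1]{MR0163909} furnishes a quasi-inverse pair $(\overline{f}_{i\ast},\overline{f}_i^\ast)$ between $\QCoh(\Spec{\mathcal{A}_i})$ and the category of quasicoherent $\mathcal{A}_i$-modules on $X_i$.

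Next I would use that a quasicoherent sheaf on $\XX$ is the same datum as a cartesian sheaf on the nerve $X_\bullet$, and, identically, that a quasicoherent $\mathcal{A}$-module on $\XX$ is a cartesian $\mathcal{A}_\bullet$-module; the source $\Spec{\mathcal{A}}$ inherits the presentation obtained by base change, so its quasicoherent sheaves are cartesian sheaves on $\Spec{\mathcal{A}_\bullet}$. The functor $\overline{f}_\ast$ of the statement is then the one glued from the level-wise functors $\overline{f}_{i\ast}$, and to show it is an equivalence it suffices to check that these level-wise equivalences send cartesian objects to cartesian objects and carry descent data to descent data.

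The step I expect to be the main obstacle is precisely this compatibility with flat base change: I must verify that the natural base-change comparison morphisms for $\overline{f}_\ast$ along the flat maps $s$ and $t$ are isomorphisms. Here affineness is the crucial input—pushforward along an affine morphism is exact and commutes with flat base change—so the comparison is an isomorphism and the cocycle identity is preserved, which lets the level-wise equivalences assemble into the desired equivalence $\overline{f}_\ast$. Finally, once $\overline{f}_\ast$ is an equivalence it admits a two-sided adjoint; to match the statement I would check, again using the same flat base-change compatibility, that this quasi-inverse is computed level-wise by the left adjoints $\overline{f}_i^\ast$, and call the glued functor $\overline{f}^\ast$, thereby identifying the inverse equivalence with the left adjoint of $\overline{f}_\ast$ and landing quasicoherent $\mathcal{A}$-modules in $\QCoh(\Spec{\mathcal{A}})$.
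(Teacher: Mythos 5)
Your proof is correct and is essentially the argument the paper has in mind: the paper gives no written proof of this lemma, merely asserting that it follows from the preceding lemma (the correspondence between quasicoherent algebras and schematically representable affine morphisms, \cite[Prop 14.2.4]{LMBca}) and the scheme-theoretic statement \cite[Prop 1.4.1]{MR0163909}, with a pointer to \cite[III.6]{MR0222093} for details. Your descent argument --- applying the scheme case levelwise along the nerve of a presentation, with flat base change for pushforward along affine morphisms as the key compatibility ensuring cartesian objects and descent data are preserved --- is exactly the standard way to carry out that deduction, and the base-change isomorphism you isolate is the same one the paper records immediately afterwards in equation (\ref{eq:base-change-sbirulo}).
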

We need also a couple of properties of the functor $\overline{f}^\ast$:
\begin{lem}
\begin{enumerate}
\item  The functor $\overline{f}^\ast$ is exact.
\item  Let $f\colon\XX\to\YY$ be an affine morphism of algebraic stacks and consider a  base change $p$:
  \begin{displaymath}
    \xymatrix{
      \XX_0 \ar[r]^-q\ar[d]_-{f_0} & \XX\ar[d]^-f \\
      \YY_0 \ar[r]^-p & \YY \\
}
  \end{displaymath}
The following base change rule holds:
\begin{equation}
  \label{eq:base-change-sbirulo}
  \overline{f}^\ast p^\ast\cong q^\ast\overline{f}_0^\ast
\end{equation}
and the isomorphism is canonical.
\end{enumerate}
\end{lem}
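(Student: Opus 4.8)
The plan is to treat the two parts separately, with part (1) being essentially formal and part (2) reducing to the scheme-theoretic statement over a smooth atlas.

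For part (1), I would observe that $\overline{f}^\ast$ is, by construction, a quasi-inverse of $\overline{f}_\ast$: the preceding lemma exhibits $\overline{f}_\ast$ as an equivalence between the abelian categories $\QCoh(\XX)$ and $\QCoh_{\mathcal{A}}(\YY)$, where $\mathcal{A}=f_\ast\OO_\XX$, and $\overline{f}^\ast$ is defined as its left adjoint, hence its quasi-inverse. An additive equivalence of abelian categories has both adjoints and therefore preserves kernels and cokernels, so it is exact; the same then holds for $\overline{f}^\ast$. One only has to remark that exactness of a sequence of $\mathcal{A}$-modules is tested on the underlying $\OO_\YY$-modules, because the forgetful functor $\QCoh_{\mathcal{A}}(\YY)\to\QCoh(\YY)$ is exact and conservative, so that ``$\overline{f}^\ast$ is exact'' means what one expects. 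For a reader who prefers a hands-on check, exactness may alternatively be verified on a scheme atlas $V\to\YY$, where $f$ becomes an affine morphism of schemes and the assertion is \cite[Prop 1.3.1]{MR0163909}.

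For part (2), I would first record the structural fact that makes the statement work: since $f$ is affine, $\XX=\Spec_{\YY}\mathcal{A}$, and because formation of the relative spectrum of an algebra commutes with \emph{arbitrary} base change, the $2$-cartesian square identifies $\XX_0\cong\Spec_{\YY_0}(p^\ast\mathcal{A})$ with $f_{0\ast}\OO_{\XX_0}\cong p^\ast\mathcal{A}$. It is precisely the affineness of $f$ that lets me dispense with any flatness hypothesis on $p$. The content of \eqref{eq:base-change-sbirulo} is then the comparison of the two natural functors $\QCoh_{\mathcal{A}}(\YY)\to\QCoh(\XX_0)$, namely $q^\ast\overline{f}^\ast$ and $\overline{f}_0^\ast p^\ast$, and there is an evident canonical morphism between them built from the units and counits of the adjunctions involved. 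Applying the equivalence $\overline{f}_{0\ast}$ and using $\overline{f}_{0\ast}\overline{f}_0^\ast\cong\Id$, I would reduce the claim to the \emph{affine base change} isomorphism $p^\ast f_\ast\FF\cong f_{0\ast}q^\ast\FF$, compatibly with the $p^\ast\mathcal{A}$-module structures.

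To prove this last isomorphism I would descend to atlases. Choose a smooth presentation $V\to\YY$ with $V$ a scheme, and set $V_0=\YY_0\times_\YY V$, $W=\XX\times_\YY V$ and $W_0=\XX_0\times_{\YY_0}V_0$; here $W$ and $W_0$ are schemes because $f$ and $f_0$ are affine, and these give compatible atlases of $\YY_0$, $\XX$ and $\XX_0$. Over these atlases the comparison map becomes the scheme-level affine base change morphism, which is an isomorphism for an arbitrary (not necessarily flat) base change by \cite[Prop 1.4.1]{MR0163909}. I would then check that the stacky comparison morphism restricts to this one on $W_0$ and respects the cartesian descent data on the lisse-\'etale site, so that, being an isomorphism on the cover $W_0\to\XX_0$, it is an isomorphism on $\XX_0$; translating back through the equivalences $\overline{f}_\ast$ and $\overline{f}_{0\ast}$ yields \eqref{eq:base-change-sbirulo}, with canonicity inherited from that of the scheme-level isomorphism. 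The step I expect to be the main obstacle is not the isomorphism itself, which is immediate algebra locally, but the bookkeeping that makes it \emph{canonical}: one must verify that the morphism defined by the adjunctions genuinely restricts to the scheme base-change map on $W_0$ and is compatible with the smooth descent data, so that it glues rather than merely existing locally. The affine-diagonal standing assumption, together with the earlier lemma that a morphism from an affine scheme to such a stack is affine, is exactly what keeps $W$ and $W_0$ schematic and confines the whole reduction to the relative-$\Spec$ picture.
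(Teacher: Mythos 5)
Your proposal is correct in substance and supplies considerably more than the paper does: the paper's entire proof of this lemma is the sentence ``See \cite[III.6]{MR0222093} for some more detail,'' deferring both assertions to the scheme-theoretic treatment in \emph{Residues and Duality} and to \cite[Prop 1.3.1, Prop 1.4.1]{MR0163909}, with the passage to stacks left implicit. Your part (1) --- exactness as a formal consequence of $\overline{f}^\ast$ being quasi-inverse to the equivalence $\overline{f}_\ast$ of the preceding lemma --- is the intended argument, and your part (2) --- identify $f_{0\ast}\OO_{\XX_0}$ with $p^\ast\mathcal{A}$ via base change of the relative $\Spec$, transport the claim through the equivalences to the affine base-change isomorphism $p^\ast f_\ast\FF\cong f_{0\ast}q^\ast\FF$, then check this on atlases using the scheme case and faithfully flat descent --- is the natural way to turn that citation into an actual proof. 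Note also that the reading you adopt, comparing $q^\ast\overline{f}^\ast$ with $\overline{f}_0^\ast p^\ast$ as functors $\QCoh_{\mathcal{A}}(\YY)\to\QCoh(\XX_0)$, is the one that typechecks and the one actually used in the proof of Theorem \ref{thm:dual-finite-morph}; formula (\ref{eq:base-change-sbirulo}) as displayed has the two decorations interchanged.

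One step fails as written, though the repair is cheap. You assert that ``$W$ and $W_0$ are schemes because $f$ and $f_0$ are affine.'' That is true for $W=\XX\times_\YY V$, which is affine over the scheme $V$, but not for $W_0$: the lemma allows $p\colon\YY_0\to\YY$ to be an arbitrary, in particular non-representable, morphism of algebraic stacks, and then $V_0=\YY_0\times_\YY V$ is in general an algebraic stack rather than a scheme (take $\YY=V=\Spec{k}$ and $\YY_0$ a classifying stack, so that $V_0=\YY_0$); hence $W_0$, being affine over $V_0$, is a stack as well, and your ``compatible atlases'' are not atlases by schemes. To repair it, choose a smooth atlas $V_0'\to V_0$ with $V_0'$ a scheme; then $V_0'\to\YY_0$ is representable, smooth and surjective, and $W_0'=\XX_0\times_{\YY_0}V_0'$ is a scheme affine over $V_0'$. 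Since $W_0'\cong W\times_V V_0'$, the schemes $W_0',W,V_0',V$ form a cartesian square with affine vertical arrows, to which \cite[Prop 1.4.1]{MR0163909} applies with no flatness assumption on $V_0'\to V$; the identification of the restricted stacky comparison map with the scheme-level one is flat base change along the representable smooth morphisms $V\to\YY$ and $V_0'\to\YY_0$ (\cite[13.1.9]{LMBca}, which the paper invokes elsewhere), and faithfully flat descent along the smooth surjection $W_0'\to\XX_0$ then yields the isomorphism on $\XX_0$, exactly as you describe.
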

\begin{proof}
  See \cite[III.6]{MR0222093} for some more detail.
\end{proof}

Given a finite representable morphism of algebraic stacks $f\colon\YY\to\XX$ we are now able to define the following functor:
\begin{equation}
  \label{eq:f-bemolle}
  f^{\flat}\FF= \overline{f}^\ast R\HOM_\XX(f_\ast\OO_\YY,\FF)= R(\overline{f}^\ast\HOM_{\XX}(f_\ast\OO_\YY,\FF));\quad \FF\in D^+(\XX)
\end{equation}
where the complex $R\HOM_{\XX}(f_\ast\OO_\YY,\FF)$ must be considered as a complex of $f_\ast\OO_{\YY}$-modules.
\begin{thm}[finite duality]\label{thm:dual-finite-morph}
  Let $f\colon\YY\to\XX$ be a finite representable morphism of algebraic stacks. The twisted inverse image $f^!$ is the functor $f^\flat$. 
\end{thm}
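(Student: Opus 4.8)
The plan is to realise $f^\flat$ as a right adjoint of $Rf_\ast$ and then conclude by uniqueness of adjoints, exactly as in the scheme-theoretic case \cite[III Prop 6.3]{MR0222093}. Since $f$ is finite and representable it is both proper and affine; hence $Rf_\ast=f_\ast$ is exact on quasicoherent sheaves and Theorem \ref{thm:deligne-rep} furnishes the honest right adjoint $f^!$ of $Rf_\ast$ on $D^+$. It therefore suffices to exhibit, naturally in $G\in D^+(\YY)$ and $\FF\in D^+(\XX)$, an isomorphism
\[
R\Hom_\XX(f_\ast G,\FF)\cong R\Hom_\YY(G,f^\flat\FF),
\]
since a right adjoint of $Rf_\ast$ is unique up to a unique isomorphism.

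First I would establish the underived adjunction. The functor $f_\ast$ factors as the exact equivalence $\overline{f}_\ast\colon\QCoh(\YY)\to\QCoh_{f_\ast\OO_\YY}(\XX)$ of the preceding lemmas, followed by the restriction of scalars along the algebra map $\OO_\XX\to f_\ast\OO_\YY$. Restriction of scalars is exact and admits the coinduction functor $\HOM_\XX(f_\ast\OO_\YY,-)$ as right adjoint, by the usual tensor--hom adjunction for sheaves of modules. Composing this right adjoint with the inverse equivalence $\overline{f}^\ast$ (whose existence and exactness the preceding lemmas provide) shows that
\[
f^\flat(-)=\overline{f}^\ast\HOM_\XX(f_\ast\OO_\YY,-)
\]
is right adjoint to $f_\ast$ at the level of quasicoherent sheaves.

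To pass to derived categories I would resolve $\FF$ by a complex of injectives $I^\bullet$. Exactness of $\overline{f}^\ast$ guarantees that $f^\flat\FF$ is computed by $\overline{f}^\ast\HOM_\XX(f_\ast\OO_\YY,I^\bullet)$. Moreover each $f^\flat I^q$ is injective on $\YY$: coinduction preserves injectives because its left adjoint (restriction of scalars) is exact, and the exact equivalence $\overline{f}^\ast$ carries injective $f_\ast\OO_\YY$-modules to injective $\OO_\YY$-modules. Consequently $f^\flat I^\bullet$ is a complex of injectives, and applying the underived adjunction entrywise to the double $\Hom$-complex yields the required isomorphism $R\Hom_\XX(f_\ast G,\FF)\cong R\Hom_\YY(G,f^\flat\FF)$. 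Uniqueness of the right adjoint then gives $f^!\cong f^\flat$.

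The step I expect to be the main obstacle is establishing the coinduction adjunction and the injectivity transfer genuinely for quasicoherent sheaves on the lisse-\'etale site, rather than merely for ordinary modules over a ring. This involves the same cartesian-sheaf subtleties encountered in Lemma \ref{lem:inject-qcoh}, and I would handle it by descending along an affine atlas $X_0\to\XX$, where $f$ base-changes to a finite morphism of affine schemes $\Spec B\to\Spec A$ and the adjunction reduces to the elementary identity $\Hom_A(M,I)\cong\Hom_B(M,\Hom_A(B,I))$ for a $B$-module $M$ viewed over $A$. This is precisely the local computation already carried out in the proof of Lemma \ref{lem:local-case-nabla}, so the descent only has to check that these local adjunction isomorphisms are compatible with the equivariant (descent) data, which follows from their naturality.
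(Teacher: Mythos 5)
Your proof is correct, but it takes a genuinely different route from the paper's. You argue intrinsically on the stack: factor $f_\ast$ as the equivalence $\overline{f}_\ast\colon\QCoh(\YY)\to\QCoh_{f_\ast\OO_\YY}(\XX)$ followed by restriction of scalars, note that restriction of scalars has the coinduction functor $\HOM_\XX(f_\ast\OO_\YY,-)$ as right adjoint, derive the resulting adjunction using that $f^\flat$ preserves injectives (it is right adjoint to the exact functor $f_\ast$), and conclude by uniqueness of right adjoints; this is the classical affine-duality argument of \cite[III \S 6]{MR0222093} transplanted to the stack itself, and the existence of $f^!$ from Theorem \ref{thm:deligne-rep} makes the final identification legitimate. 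The paper instead argues by descent along a presentation: it invokes Proposition \ref{prop:representable-proper-duality}, which realizes $f^!$ for representable proper morphisms as the functor $f^\natural$ obtained by gluing the scheme-level twisted inverse images $f_0^!I_0^p$ via Verdier's base-change isomorphisms $c_s,c_t$, and then checks that $f^\flat I$ is glued on the same presentation by the natural isomorphisms $b_s,b_t$ twisted by $\alpha$, which under the identification $\overline{f}_1^\ast s^\ast\cong u^\ast\overline{f}_0^\ast$ of equation (\ref{eq:base-change-sbirulo}) coincide with $c_s,c_t$; so the two gluing data agree. Your route is more self-contained: it needs no presentation, no scheme-level finite duality, and no compatibility between $c_\cdot$ and $\HOM$ base change, and it works wherever the Spec-equivalence lemmas and enough injectives in $\QCoh$ are available. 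The paper's route buys explicit control of descent data: the identification $f^!\cong f^\flat$ comes packaged with its \'etale-local description, which is the form actually exploited in the later computations (Theorem \ref{thm:non-sm-serre}, nodal curves, local complete intersections) where gluing isomorphisms are tracked by hand. One small caveat: the local identity $\Hom_A(M,I)\cong\Hom_B(M,\Hom_A(B,I))$ you attribute to Lemma \ref{lem:local-case-nabla} occurs there for the non-representable morphism $[\Spec{B}/G]\to\Spec{A}$ with its equivariant bookkeeping; for your descent check the relevant statement is simply the compatibility of the tensor-hom adjunction with flat base change to the atlas, which holds by naturality together with the finite presentation of $f_\ast\OO_\YY$.
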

\begin{proof}
  Let $I$ be an injective quasicoherent sheaf on $\XX$ defined by the couple $(I_0,\alpha)$ on a presentation of $\XX$ (We keep notations in Proposition \ref{prop:representable-proper-duality}). We start observing that the sheaf $\HOM_\XX(f_\ast\OO_\YY,I)$ is determined, on the same presentation by the following isomorphism:
  \begin{displaymath}
\xymatrix{
    s^\ast\HOM_{X_0}({f_0}_\ast\OO_{Y_0},I_0)\ar[r]^-{b_s}_-{\widetilde{}} & \HOM_{X_1}({f_1}_\ast\OO_{Y_1},s^\ast I_0) \ar[r]^-{\widetilde{\alpha}}_-{\widetilde{}} & \HOM_{X_1}({f_1}_\ast\OO_{Y_1},t^\ast I_0)  \ldots \\ 
\ldots &   t^\ast\HOM_{X_0}({f_0}_\ast\OO_{Y_0},I_0)\ar[l]_-{b_t}^-{\widetilde{}} \\
} 
 \end{displaymath}
where $b_s,b_t$ are the two natural isomorphisms and $\widetilde{\alpha}$ is induced by $\alpha$. Applying $\overline{f}_1^\ast$ to this isomorphism we obtain the one defining $f^{\flat}I$. According to equation (\ref{eq:base-change-sbirulo}) we have $\overline{f}_1^\ast s^\ast\cong u^\ast\overline{f}_0^\ast$ and  $\overline{f}_1^\ast t^\ast\cong v^\ast\overline{f}_0^\ast$. The composed isomorphism:
\begin{displaymath}
  u^\ast\overline{f}_0^\ast\HOM_{X_0}({f_0}_\ast\OO_{Y_0},I_0)\xrightarrow{\text{can}}\overline{f}_1^\ast s^\ast\HOM_{X_0}({f_0}_\ast\OO_{Y_0},I_0) \xrightarrow{\overline{f}_1^\ast b_s} \overline{f}_1^\ast \HOM_{X_1}({f_1}_\ast\OO_{Y_1},s^\ast I_0)
\end{displaymath}
is the same as $c_s$ and we can repeat the argument for $t$. Comparing with the isomorphism called $\beta$ in Proposition \ref{prop:representable-proper-duality} we prove the claim.
\end{proof}

Assuming that $\XX$ is projective we can also reproduce some vanishing result for $R\HOM_{\XX}$ like in Hartshorne \cite[Lem 7.3]{Hag}. First a technical lemma:
\begin{lem}\label{lem:local-global-ext-polariz}
  Let $\XX$ be a projective Deligne-Mumford stack, $\OO_X(1)$ a very-ample line bundle on the moduli scheme $X$ and $\EE$ a generating sheaf. Let $\FF,\GG$ be coherent sheaves on $\XX$. For every integer $i$ There is an integer $q_0>0$ such that for every $q\geq q_0$:
  \begin{displaymath}
    \Ext^i_\XX(\FF,\GG\otimes\EE^\vee\otimes\pi^\ast\OO_X(q))\cong\Gamma(\XX,\EXT_\XX^i(\FF,\GG\otimes\EE^\vee\otimes\pi^\ast\OO_X(q)))
  \end{displaymath}
\end{lem}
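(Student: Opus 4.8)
The plan is to transcribe Hartshorne's argument for \cite[Lem 7.3]{Hag} to the stacky setting, the two engines being the local-to-global $\Ext$ spectral sequence and a Serre-type vanishing theorem on $\XX$. First I would make the harmless reductions. Since $\EE$ is a (locally free) generating sheaf, $\GG':=\GG\otimes\EE^\vee$ is again coherent, so it suffices to treat $\Ext^i_\XX(\FF,\GG'\otimes\pi^\ast\OO_X(q))$. Because $\pi^\ast\OO_X(q)$ is an invertible sheaf, a computation on an \'etale atlas (where sheaf $\EXT$ is the ordinary one and the twist is locally trivial) gives a natural isomorphism $\EXT^a_\XX(\FF,\GG'\otimes\pi^\ast\OO_X(q))\cong\EXT^a_\XX(\FF,\GG')\otimes\pi^\ast\OO_X(q)$, and the same local check shows each $\EXT^a_\XX(\FF,\GG')$ is coherent.

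Next I would set up the spectral sequence. The category of $\OO_\XX$-modules has enough injectives (established earlier in this section), so the Grothendieck spectral sequence attached to $\Gamma\circ\HOM_\XX(\FF,-)$ reads
\[
E_2^{a,b}=H^a\bigl(\XX,\EXT^b_\XX(\FF,\GG'\otimes\pi^\ast\OO_X(q))\bigr)\Longrightarrow \Ext^{a+b}_\XX(\FF,\GG'\otimes\pi^\ast\OO_X(q)).
\]
For the fixed integer $i$ the only rows that can interfere with total degree $i$ are those with $a\ge 1$ and $a+b\in\{i,i+1\}$; by the reduction above these involve only the finitely many coherent sheaves $\HH_b:=\EXT^b_\XX(\FF,\GG')$ with $0\le b\le i+1$, each twisted by $\pi^\ast\OO_X(q)$.

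The heart of the matter, and the step I expect to be the main obstacle, is Serre vanishing on the stack: for a coherent sheaf $\HH$ on $\XX$ I must show $H^a(\XX,\HH\otimes\pi^\ast\OO_X(q))=0$ for all $a\ge 1$ once $q$ is large. Here I would invoke the projection formula $R\pi_\ast(\HH\otimes\pi^\ast\OO_X(q))\cong R\pi_\ast\HH\otimes\OO_X(q)$ together with the fact that $\XX$ is tame, so $\pi_\ast$ is exact and $R^{>0}\pi_\ast=0$; thus $R\pi_\ast(\HH\otimes\pi^\ast\OO_X(q))=\pi_\ast\HH\otimes\OO_X(q)$. The Leray spectral sequence for $\pi$ then degenerates and yields $H^a(\XX,\HH\otimes\pi^\ast\OO_X(q))\cong H^a(X,\pi_\ast\HH\otimes\OO_X(q))$, with $\pi_\ast\HH$ coherent since the coarse map is proper. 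Ordinary Serre vanishing on the projective scheme $X$ for the very ample $\OO_X(1)$ now produces a single $q_0>0$ killing these groups for $a\ge 1$ and $q\ge q_0$, applied simultaneously to the finitely many sheaves $\pi_\ast\HH_b$.

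Finally I would read off the conclusion from the degenerated spectral sequence. With $E_2^{a,b}=0$ for $a\ge 1$ in the degrees $i$ and $i+1$, every differential entering or leaving $E_r^{0,i}$ has vanishing source or target, so $E_\infty^{0,i}=E_2^{0,i}$ and $E_\infty^{a,i-a}=0$ for $a\ge 1$; hence the edge homomorphism
\[
\Ext^i_\XX(\FF,\GG'\otimes\pi^\ast\OO_X(q))\longrightarrow E_2^{0,i}=\Gamma\bigl(\XX,\EXT^i_\XX(\FF,\GG'\otimes\pi^\ast\OO_X(q))\bigr)
\]
is an isomorphism for all $q\ge q_0$. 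Unwinding $\GG'=\GG\otimes\EE^\vee$ gives the asserted isomorphism. The only genuinely stacky input is the vanishing step, and it rests entirely on the exactness of $\pi_\ast$ for a tame stack and the projection formula, which reduce everything to Serre vanishing on the moduli scheme.
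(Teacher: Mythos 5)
Your proof is correct, but it is not the argument the paper has in mind: the paper's proof is a one-line citation of Hartshorne \cite[Prop 6.9]{Hag}, whose argument is dimension shifting, not a spectral sequence. In that route one uses the generating sheaf to write $\FF$ as a quotient of a locally free sheaf of the form $\EE^{\oplus t}\otimes\pi^\ast\OO_X(-n)$, observes that for such locally free sheaves both the global $\Ext^i$ (by Serre vanishing, $i>0$, $q\gg 0$) and the sheaf $\EXT^i$ (identically, $i>0$) vanish, and then inducts on $i$ via the kernel of the surjection, comparing the two long exact sequences; the ``obvious modifications'' are exactly the replacement of $\OO_X(-n)$-twists by $\EE^{\oplus t}\otimes\pi^\ast\OO_X(-n)$-surjections and of Serre vanishing on a projective scheme by its stacky avatar. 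You instead run the local-to-global $\Ext$ spectral sequence and kill the rows $a\geq 1$ by the same stacky Serre vanishing. Both arguments share the one genuinely stacky input, namely $H^a(\XX,\HH\otimes\pi^\ast\OO_X(q))\cong H^a(X,\pi_\ast\HH\otimes\OO_X(q))$ via tameness (recall that in this paper ``projective stack'' includes tame by definition, so your appeal to exactness of $\pi_\ast$ is legitimate) and the projection formula. What the paper's route buys is elementarity and a direct role for the generating sheaf, which is why the hypothesis is natural there; what your route buys is economy and slightly greater generality, since the spectral sequence argument never uses the existence of a generating sheaf at all --- only coherence of $\EXT^b_\XX(\FF,\GG')$ and a projective coarse moduli space --- the $\EE^\vee$ twist being just a fixed coherent sheaf along for the ride. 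The one point you should make explicit is the hypothesis behind your Grothendieck spectral sequence: you need $\HOM_\XX(\FF,I)$ to be $\Gamma$-acyclic for $I$ injective, i.e.\ flasque on the \'etale site, which is the same fact the paper invokes in the proof of Corollary \ref{cor:duality-sheaf-version}; with that remark added, your proof is complete.
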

\begin{proof}
  Same proof as in  \cite[Prop 6.9]{Hag} with obvious modifications.
\end{proof}
\begin{lem}\label{lem:ext-vanishing-closed}
  Let $\YY$ be a codimension $r$ closed substack in an $n$-dimensional smooth projective stack $\XX$. Then $\EXT_{\XX}^i(\OO_\YY,\omega_\XX)=0$ for all $i< r$.
\end{lem}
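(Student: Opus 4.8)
The plan is to reduce the vanishing to the corresponding statement for smooth schemes, namely \cite[Lem 7.3]{Hag}, by passing to an \'etale atlas. Since $\XX$ is a noetherian Deligne-Mumford stack, $\EXT_\XX^i(\OO_\YY,\omega_\XX)$ is a coherent sheaf, and a coherent sheaf on $\XX$ vanishes if and only if its pullback along a faithfully flat \'etale atlas $p_0\colon X_0\to\XX$ vanishes. First I would choose such an atlas with $X_0$ affine; because $p_0$ is \'etale and $\XX$ is smooth, $X_0$ is a smooth scheme and $p_0^\ast\omega_\XX\cong\omega_{X_0}$ is its canonical (invertible) sheaf, the relative cotangent complex of the \'etale map $p_0$ being trivial. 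Writing $Y_0=\YY\times_\XX X_0$, this is a closed subscheme of $X_0$ of codimension $r$, since codimension is preserved under the flat (indeed \'etale, hence relative dimension zero) base change $p_0$.

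Next I would invoke flat base change for the $\EXT$ sheaves. Because $p_0$ is flat and $\OO_\YY$ is coherent, the formation of $\EXT_\XX^i(\OO_\YY,\omega_\XX)$ commutes with pullback, giving a canonical isomorphism $p_0^\ast\EXT_\XX^i(\OO_\YY,\omega_\XX)\cong\EXT_{X_0}^i(\OO_{Y_0},\omega_{X_0})$. So it suffices to prove $\EXT_{X_0}^i(\OO_{Y_0},\omega_{X_0})=0$ for $i<r$, which is a purely local question on the smooth scheme $X_0$. At a point with regular local ring $A$ and height-$r$ ideal $I$ defining $Y_0$, and with $\omega_{X_0}$ locally free of rank one, one has $\Ext^i_A(A/I,\omega_{X_0,x})\cong\Ext^i_A(A/I,A)\otimes_A\omega_{X_0,x}$. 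Since $A$ is regular, hence Cohen-Macaulay, the grade of $I$ on $A$ equals its height, so $\operatorname{grade}(I,A)=r$; and by the characterization of grade one has $\Ext^i_A(A/I,A)=0$ for all $i<\operatorname{grade}(I,A)=r$. Thus all stalks vanish for $i<r$, and the global $\EXT$ sheaf on $X_0$ vanishes in that range, whence so does the one on $\XX$.

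The step I expect to be the main point of care is the base-change compatibility of the $\EXT$ sheaves: higher $\EXT$ do \emph{not} commute with arbitrary base change, so it is essential that the atlas $p_0$ is flat. Granting flatness, the isomorphism $p_0^\ast\EXT^i\cong\EXT^i(p_0^\ast\,\cdot\,,p_0^\ast\,\cdot\,)$ follows because, locally on the noetherian $\XX$, the coherent sheaf $\OO_\YY$ admits a resolution by finite free modules whose flat pullback is again a resolution computing both sides. The remaining verifications---that $p_0^\ast\omega_\XX$ is the canonical sheaf of $X_0$ and that codimension is preserved---are routine consequences of $p_0$ being \'etale, and the local algebra input (grade equals height in a regular local ring, with the standard vanishing of $\Ext$ below the grade) is exactly the content imported from the scheme-theoretic lemma.
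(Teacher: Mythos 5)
Your proof is correct, but it takes a genuinely different route from the paper's. The paper argues globally, in the style of \cite[Lem 7.3]{Hag}: writing $F^i=\EXT^i_\XX(\OO_\YY,\omega_\XX)$, it uses the generating-sheaf functor $F_\EE(\cdot)=\pi_\ast\HOM_\XX(\EE,\cdot)$ and the support identity $\pi\supp F^i=\supp F_\EE(F^i)$ from \cite{stack-stab} to reduce the vanishing of $F^i$ to the vanishing of $\Gamma(X,F_\EE(F^i)(q))$ for $q\gg 0$; this group is identified with $\Ext^i_\XX(\OO_\YY,\omega_\XX\otimes\EE^\vee\otimes\pi^\ast\OO_X(q))$ via Lemma \ref{lem:local-global-ext-polariz}, and then smooth Serre duality on $\XX$ converts it into $H^{n-i}(X,F_\EE(\OO_\YY)(-q))$, which vanishes for $i<r$ because the support of $F_\EE(\OO_\YY)$ has dimension $n-r$. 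Your argument is instead purely local: faithfully flat descent along an \'etale atlas, flat base change for $\EXT$ of a coherent sheaf over a noetherian base, and the commutative-algebra input that in a regular (hence Cohen--Macaulay) local ring the grade of an ideal equals its height, with $\Ext^i_A(A/I,A)=0$ below the grade by Rees's characterization. Each step you flag as delicate (flatness for the base change, preservation of codimension and of $\omega$ under \'etale maps) is indeed the right thing to check and holds as you say. What your route buys: it avoids the two heaviest inputs of the paper's proof --- the smooth Serre duality theorem for stacks and the generating-sheaf machinery (hence also the external references) --- and it never uses projectivity, so it proves the statement for any smooth noetherian Deligne--Mumford stack; it is also close in spirit to the stalk-wise computation the paper itself performs later in Corollary \ref{cor:serre-duality}, but cleaner, since passing to the atlas discards the equivariant structure, which is irrelevant for a vanishing statement. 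What the paper's route buys: it stays entirely within the global polarized framework it has just built, exercising the machinery ($F_\EE$, Serre duality) that the surrounding section is designed to showcase, and it parallels Hartshorne's original argument line by line.
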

\begin{proof}
  The proof goes more or less like in \cite[Lem 7.3]{Hag}. Denote with  $F^i$ the coherent sheaf $\EXT^i_\XX(\OO_\YY,\omega_\XX)$. For $q$ large enough the coherent sheaf $F_{\EE}(F^i)(q)$ is generated by the global sections; if we can prove that $\Gamma(X,F_{\EE}(F^i)(q))=0$ for $q>>0$ we have also that $F_{\EE}(F^i)=0$. In Lemma \cite[Lem 3.4]{stack-stab} we have proven that $\pi\supp{F^i}=\supp{F_{\EE}(F^i)}$, using this result we conclude that if $F_{\EE}(F^i)(q)$ has no global sections for $q$ big enough the sheaf $F^i$ is the zero sheaf. 
We can now study the vanishing of   $\Gamma(X,F_{\EE}(F^i)(q))$:
\begin{displaymath}
  \Gamma(X,F_{\EE}(F^i)(q))=\Gamma(\XX,\EXT_\XX^i(\OO_\YY,\omega_{\XX}\otimes\EE^\vee\otimes\pi^\ast\OO_X(q)))=\Ext^i_{\XX}(\OO_\YY,\omega_{\XX}\otimes\EE^\vee\otimes\pi^\ast\OO_X(q))
\end{displaymath}
The last equality holds for a possibly bigger $q$ according to Lemma \ref{lem:local-global-ext-polariz}. Applying smooth Serre duality we have the following isomorphism:
\begin{displaymath}
  \Ext^i_{\XX}(\OO_\YY,\omega_{\XX}\otimes\EE^\vee\otimes\pi^\ast\OO_X(q))\cong H^{n-i}(\XX,\OO_\YY\otimes \EE\otimes\pi^\ast\OO_X(-q))
\end{displaymath}
This last cohomology group is the same as $H^{n-i}(X,F_{\EE}(\OO_\YY)(-q))$; using again \cite[Lem 3.4]{stack-stab}  we have that the dimension of $F_{\EE}(\OO_\YY)(-q)$ is $n-r$ so that the cohomology group vanishes for $i<r$. 
\end{proof}

\begin{cor}[Serre Duality]\label{cor:serre-duality}
  Let $\XX$ be a proper Deligne-Mumford  stack of pure dimension $n$ and $i\colon\XX\to\mathcal{P}$ a codimension $r$ closed embedding in a smooth proper Deligne-Mumford stack $f\colon\mathcal{P}\to\Spec{k}$. The complex  $\EXT_{\mathcal{P}}^\bullet(\OO_\XX,\omega_{\mathcal{P}})[n]$ is the dualizing complex of $\XX$:
  \begin{displaymath}
    i^!f^!k=\EXT_{\mathcal{P}}^\bullet(\OO_\XX,\omega_{\mathcal{P}})[n]
  \end{displaymath}
if $\XX$ is Cohen-Macaulay the dualizing complex is just the dualizing sheaf $\EXT_{\mathcal{P}}^r(\OO_\XX,\omega_{\mathcal{P}})$.
\end{cor}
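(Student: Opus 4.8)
The plan is to read off the dualizing complex of $\XX$ from its structure morphism $f\circ i\colon\XX\to\Spec k$, which is proper, and to factor the computation through the two duality results already at our disposal: Smooth Serre Duality for the ambient $\mathcal{P}$, and finite duality (Theorem \ref{thm:dual-finite-morph}) for the closed embedding $i$. By definition the dualizing complex is $(f\circ i)^! k$, and the composition isomorphism $\eta_{i,f}$ of \eqref{eq:iso-compos-upper-shriek} identifies it canonically with $i^! f^! k$ (both $i$ and $f$ are proper, so the relevant adjoints and the isomorphism $\eta_{i,f}$ exist). Thus the whole statement reduces to computing $i^!$ applied to $f^! k$.

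First I would compute $f^! k$. Since $\mathcal{P}$ is smooth and proper over $k$ and $\XX\subset\mathcal{P}$ has pure codimension $r$, the stack $\mathcal{P}$ has dimension $n+r$ along $\XX$, and the Smooth Serre Duality theorem gives a canonical isomorphism $f^! k\cong\omega_{\mathcal{P}}[n+r]$. Next, a closed embedding is finite and representable, so finite duality (Theorem \ref{thm:dual-finite-morph}) applies to $i$ and gives $i^!=i^\flat$. Feeding $\omega_{\mathcal{P}}[n+r]$ into the formula \eqref{eq:f-bemolle} and pulling the shift out of $R\HOM$, I obtain
\[
  i^! f^! k \;=\; \overline{i}^\ast\, R\HOM_{\mathcal{P}}(i_\ast\OO_\XX,\omega_{\mathcal{P}})\,[n+r].
\]
Because $\overline{i}^\ast$ is the exact equivalence between quasicoherent $i_\ast\OO_\XX$-modules and quasicoherent sheaves on $\XX$, it commutes with cohomology, so the cohomology sheaves of the right-hand side are exactly the sheaf-Ext's $\EXT^\bullet_{\mathcal{P}}(\OO_\XX,\omega_{\mathcal{P}})$, viewed on $\XX$. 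The lowest nonzero term sits in codimension $r$, and reindexing so that $\EXT^r$ lands in degree $0$ turns the shift $[n+r]$ into $[n]$; this is the asserted dualizing complex $\EXT^\bullet_{\mathcal{P}}(\OO_\XX,\omega_{\mathcal{P}})[n]$.

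For the Cohen--Macaulay refinement I would pin down the degrees in which the complex lives. Lemma \ref{lem:ext-vanishing-closed} already gives $\EXT^j_{\mathcal{P}}(\OO_\XX,\omega_{\mathcal{P}})=0$ for $j<r$, so the complex is concentrated in degrees $\ge -n$. The remaining point is the upper vanishing $\EXT^j_{\mathcal{P}}(\OO_\XX,\omega_{\mathcal{P}})=0$ for $j>r$ when $\XX$ is Cohen--Macaulay. Both the vanishing of a coherent sheaf and the Cohen--Macaulay property are \'etale-local on $\mathcal{P}$, so I would verify this on an \'etale presentation, where $\mathcal{P}$ becomes a smooth scheme and $\XX$ a closed subscheme; there it is the classical fact that for a Cohen--Macaulay subscheme of codimension $r$ in a regular scheme the projective dimension of the structure sheaf equals its grade $r$ (Auslander--Buchsbaum), whence $\EXT^j(\OO_\XX,\omega_{\mathcal{P}})$ vanishes off degree $r$. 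Consequently $R\HOM_{\mathcal{P}}(\OO_\XX,\omega_{\mathcal{P}})$ is concentrated in a single degree and equals the sheaf $\EXT^r_{\mathcal{P}}(\OO_\XX,\omega_{\mathcal{P}})$, which after the shift is the dualizing sheaf placed in degree $-n$, as claimed.

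The assembly is essentially routine once the two heavy inputs, Smooth Serre Duality and finite duality, are granted, so I expect no serious difficulty in the main chain of isomorphisms. The two places that need genuine care are the bookkeeping of the shift/reindexing conventions (reconciling the $[n+r]$ coming out of $\omega_{\mathcal{P}}[\dim\mathcal{P}]$ with the $[n]$ of the statement through the lowest nonvanishing $\EXT$ sitting in codimension $r$), and the upper $\EXT$-vanishing in the Cohen--Macaulay case, which is where the \'etale-local reduction to the scheme-theoretic commutative-algebra statement does the real work.
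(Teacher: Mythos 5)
Your proof is correct, and your assembly of the main isomorphism (smooth Serre duality for $\mathcal{P}$ giving $f^!k\cong\omega_{\mathcal{P}}[n+r]$, finite duality $i^!=i^\flat$ from Theorem \ref{thm:dual-finite-morph}, and the shift bookkeeping that turns $[n+r]$ into $[n]$) is exactly what the paper leaves implicit; its proof consists entirely of the Cohen--Macaulay vanishing $\EXT_{\mathcal{P}}^j(\OO_\XX,\omega_{\mathcal{P}})=0$ for $j\neq r$, and it is there that you take a genuinely different route. The paper argues stalk by stalk: \'etale locally it writes $i$ as $[\spec{B}/G]\to[\spec{C}/G]$ with $C$ regular and $B$ local Cohen--Macaulay, chooses an equivariant injective resolution of $\omega_C$, and runs a double-complex spectral sequence $E_1^{p,q}=\Ext_C^p(B,\omega_C)\otimes G^{\otimes q}$ abutting to the equivariant $R\Hom_C^G(B,\omega_C)$, which it identifies with the stalk of the curly $\EXT$; the vanishing then drops out of $\Ext_C^p(B,\omega_C)=0$ for $p\neq r$. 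You instead test the vanishing of the coherent sheaves $\EXT^j$ after pullback to an \'etale atlas, using that formation of $\EXT$ commutes with flat base change, that $\omega_{\mathcal{P}}$ restricts to the canonical sheaf of the atlas, and that Cohen--Macaulayness is \'etale-local, thereby reducing directly to the classical scheme-theoretic statement via grade and Auslander--Buchsbaum. Both arguments rest on the same commutative-algebra fact; yours is shorter and bypasses equivariant resolutions and the spectral sequence entirely, while the paper's equivariant stalk computation is the template it reuses for its explicit descriptions elsewhere (compare Lemma \ref{lem:local-case-nabla} and the local complete intersection computation in Theorem \ref{thm:loc-complete-intersections}). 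One small repair: Lemma \ref{lem:ext-vanishing-closed}, which you invoke for the vanishing in degrees $j<r$, is proved only for \emph{projective} stacks (its proof needs a generating sheaf and a polarization), whereas $\mathcal{P}$ here is merely proper; this costs you nothing, since your own \'etale-local argument already gives the $j<r$ vanishing --- the grade of the ideal of a pure codimension $r$ substack is $r$ regardless of any Cohen--Macaulay hypothesis on $\XX$ --- so you should simply drop that citation and let your atlas argument carry both bounds.
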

\begin{proof}
  We have to prove that $\EXT_{\mathcal{P}}^j(\OO_\XX,\omega_{\mathcal{P}})=0$ for $j\neq r$. We prove that for every point of $\XX$ the stalk of $\EXT_{\mathcal{P}}^q(\OO_\XX,\omega_{\mathcal{P}})$ vanishes for $q\neq r$. We use that for $x$ a point in $\XX$ we have $\EXT_{\mathcal{P}}^j(\OO_\XX,\omega_{\mathcal{P}})_x=\Ext_{\OO_{\mathcal{P},i(x)}}^j (\OO_{\XX,x},\omega_{\mathcal{P},i(x)})$. The stack $\mathcal{P}$ \'etale locally is $[\spec{C}/G]$ where $C$ is regular and $G$ a finite group, since a closed embedding is given by a sheaf of ideals we can assume that $i\colon\XX\to\mathcal{P}$ is given locally  by:
  \begin{displaymath}
    \xymatrix{
      [\spec{B}/G] \ar[r]^i & [\spec{C}/G] \\
    }
  \end{displaymath}
where $B$ is local and Cohen-Macaulay. We denote with $\omega_C$ the canonical sheaf of $[\spec{C}/G]$. According to \cite[Lem 5.3]{MR2312554} we can choose an injective (equivariant) resolution $I^\bullet$ of $\omega_C$ such that the complex:
\begin{displaymath}
  \xymatrix@R=0pt{
    0 \ar[r] & \Hom_C^G(B,I^p) \ar[r] & \Hom_C(B,I^p) \ar[r] & \Hom_C(B,I^p)\otimes G \ar[r] & \\ 
 \ar[r] & \Hom_C(B,I^p)\otimes G\otimes G \ar[r] & \ldots & & \\
}
\end{displaymath}
is exact for every $p$. The arrows are induced by the coaction of $\omega_C$ and of the structure sheaf of $[\spec{B}/G]$. We obtain a double complex spectral sequence $E_1^{p,q}=\Ext_{C}^p(B,\omega_C)\otimes G^{\otimes q}$ abouting to the equivariant $R\Hom_C^G(B,\omega_C)$ (the sheaves $\Hom_C^G(B,I^\bullet)$ are considered as $B^G$-modules) that is the stalk of the curly $\Ext$. Since $B$ is Cohen-Macaulay of the same dimension as $\XX$ we have $\Ext_{C}^p(B,\omega_C)=0$ for $p\neq r$ and the desired result follows. 
\end{proof}

It is important to stress that the previous Corollary \ref{cor:serre-duality} holds for every projective stack in characteristic zero; indeed according to \cite{geomDM} such a stack can be embedded in a smooth proper Deligne-Mumford stack. 
To conclude we prove that $\pi_\ast$ maps the dualizing sheaf of a projective Deligne-Mumford  stack to the dualizing sheaf of its moduli scheme.
\begin{prop}
  Let $\XX$ be an $n$-dimensional tame proper Cohen-Macaulay Deligne-Mumford stack with moduli scheme $\pi\colon\XX\to X$. Denote with $\omega_\XX[n]$ the dualizing sheaf of $\XX$. The quasi coherent sheaf $\pi_\ast\omega_{\XX}[n]=\colon \omega_X[n]$ is the dualizing sheaf of $X$ in $D^b(X)$.
\end{prop}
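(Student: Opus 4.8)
The plan is to realize the claim as a direct application of the sheaf version of Grothendieck duality (Corollary \ref{cor:duality-sheaf-version}) to the structure morphism $\pi\colon\XX\to X$, with the structure sheaf plugged in. Write $a\colon X\to\Spec k$ for the structure morphism of the moduli scheme, so that the structure morphism of $\XX$ factors as $a\circ\pi$, and set $\omega_X^\bullet\coloneqq a^!k$ for the dualizing complex of the proper scheme $X$. Because $\XX$ is a tame Deligne-Mumford stack, $\pi$ exhibits $X$ as a good moduli space (\cite[Def 4.1]{alper-2008}); in particular $\pi$ is cohomologically affine, so $\pi_\ast$ is exact on $\QCoh(\XX)$ (whence $R\pi_\ast=\pi_\ast$), and moreover $\pi_\ast\OO_\XX\cong\OO_X$. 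These two facts are the only special inputs needed beyond the formal machinery already developed.

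First I would record the two identifications of the dualizing objects. Since $\XX$ is proper Cohen-Macaulay of dimension $n$, its dualizing complex is a single coherent sheaf placed in degree $-n$, namely $(a\circ\pi)^!k=\omega_\XX[n]$ (the Cohen-Macaulay hypothesis, \cf Corollary \ref{cor:serre-duality}). Using compatibility of the twisted inverse image with composition, \ie the isomorphism $\eta_{\pi,a}$ of (\ref{eq:iso-compos-upper-shriek}), I rewrite this as $\pi^!\omega_X^\bullet=\pi^!a^!k\cong(a\circ\pi)^!k=\omega_\XX[n]$. Thus $\pi^!$, applied to the dualizing complex of $X$, returns the (shifted) dualizing sheaf of $\XX$.

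Then I apply Corollary \ref{cor:duality-sheaf-version} to the proper morphism $\pi$ with $\FF=\OO_\XX\in D^b_c(\XX)$ and $G=\omega_X^\bullet\in D^+(X)$, obtaining a canonical isomorphism
\[
 R\pi_\ast R\HOM_\XX(\OO_\XX,\pi^!\omega_X^\bullet)\;\xrightarrow{\ \sim\ }\;R\HOM_X(R\pi_\ast\OO_\XX,\omega_X^\bullet).
\]
The left-hand side simplifies: $R\HOM_\XX(\OO_\XX,-)$ is the identity, $\pi^!\omega_X^\bullet\cong\omega_\XX[n]$ by the previous step, and $R\pi_\ast=\pi_\ast$ is exact, so it becomes $(\pi_\ast\omega_\XX)[n]=\omega_X[n]$, which is precisely the sheaf introduced in the statement. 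The right-hand side simplifies via $R\pi_\ast\OO_\XX=\pi_\ast\OO_\XX\cong\OO_X$ to $R\HOM_X(\OO_X,\omega_X^\bullet)=\omega_X^\bullet=a^!k$. Combining the two, $\omega_X[n]\cong a^!k$, the dualizing complex of $X$, which is the assertion.

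I do not expect a serious obstacle: the argument is essentially formal once the good-moduli-space properties of $\pi$ are in hand, and the only care needed is in checking the hypotheses of Corollary \ref{cor:duality-sheaf-version} (that $\OO_\XX$ is coherent, that $\pi$ is proper since $\XX$ is proper over $k$ and $X$ is its moduli scheme, and that $\omega_X^\bullet$ lies in $D^+(X)$). It is worth remarking as a byproduct that, since the computation exhibits $a^!k$ as a single coherent sheaf shifted by $n$, the scheme $X$ is automatically Cohen-Macaulay; this reflects the classical fact that the ring of invariants of a Cohen-Macaulay ring under a tame (linearly reductive) action is again Cohen-Macaulay, and it is what justifies calling $\omega_X$ the dualizing \emph{sheaf} rather than merely the dualizing complex.
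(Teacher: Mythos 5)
Your proof is correct, but it takes a genuinely different route from the paper's. The paper argues by representability: for every $F\in D^b(X)$ it chains the adjunction $(L\pi^\ast,R\pi_\ast)$ with the duality adjunction $(R\pi_\ast,\pi^!)$ to get
$R\Hom_X(F,\pi_\ast\pi^!\omega_X)\cong R\Hom_\XX(L\pi^\ast F,\pi^!\omega_X)\cong R\Hom_X(\pi_\ast L\pi^\ast F,\omega_X)$,
then uses $\pi_\ast L\pi^\ast=\Id$ (which rests on $\pi_\ast\OO_\XX\cong\OO_X$ and exactness of $\pi_\ast$) and uniqueness of the representing object to conclude $\pi_\ast\pi^!\omega_X[n]\cong\omega_X[n]$; the appeal to $L\pi^\ast$ is precisely what forces the restriction to $D^b(X)$ in the statement, as the remark following the proposition makes explicit. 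You instead apply the sheaf version of duality, Corollary \ref{cor:duality-sheaf-version}, with $\FF=\OO_\XX$ and $G=a^!k$, combined with $R\pi_\ast\OO_\XX\cong\OO_X$ and the composition isomorphism (\ref{eq:iso-compos-upper-shriek}), obtaining the object-level isomorphism $\pi_\ast\omega_\XX[n]\cong a^!k$ outright. What your route buys: $L\pi^\ast$ never appears, so the boundedness issue the paper flags simply does not arise; and Cohen--Macaulayness of $X$ (hence the fact that its dualizing complex is a single sheaf) comes out as a byproduct, whereas the paper assumes it as input (``$X$ is projective and Cohen--Macaulay''), implicitly invoking the invariant-theoretic fact that tame quotients of Cohen--Macaulay rings are Cohen--Macaulay. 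What the paper's route buys: it uses only the bare adjunction, while you lean on the heavier Corollary \ref{cor:duality-sheaf-version}, whose proof needed \'etale locality and flat base change --- but since that corollary is already established earlier in the paper, this costs nothing here. Both arguments consume the same tameness/good-moduli-space inputs ($\pi_\ast$ exact, $\pi_\ast\OO_\XX\cong\OO_X$, $\pi^!a^!k\cong(a\circ\pi)^!k=\omega_\XX[n]$), just packaged differently.
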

\begin{proof}
  First of all we observe that the moduli scheme $X$ is projective and Cohen-Macaulay so that we already know that its dualizing complex is actually a sheaf $\omega_X[n]$. We also know that $\pi^!\omega_X[n]=\omega_{\XX}[n]$. We have just to prove that $\pi_\ast\pi^!\omega_X[n]=\omega_X[n]$. Let $F$ be an object in $D^b(X)$, by duality we know that $R\Hom_{X}( F, \pi_\ast \pi^!\omega_X)=R\Hom_{\XX}(L\pi^\ast F, \pi^!\omega_X)= R\Hom_{X}(\pi_\ast L\pi^\ast F, \omega_X)$. We already know that $\pi_\ast \pi^\ast=\Id $ on quasicoherent sheaves and since we are working in the derived category of $\text{QCoh}(X)$ we have also  $\pi_\ast L\pi^\ast = \Id$. Using duality on $X$ and uniqueness of the dualizing sheaf we obtain $\pi_\ast\pi^!\omega_X[n]=\omega_X[n]$. 
\end{proof}
\begin{rem}
  At the moment we cannot extend the result to $D^+(X)$; even if $D_{qc}(X)$ is equivalent to $D(X)$ and the category of $\OO_X$-modules has enough $K$-projectives, we don't know how to define $L\pi^\ast$ from $D_{qc}(X)$ to $D(\XX)$.
\end{rem}
\begin{cor}
  Let $X\to \spec{k}$ be a proper variety with finite quotient singularities. Denote with $\pi\colon \XX^{\text{can}}\to X$ the canonical stack associated to $X$ as in \cite[Rem 4.9]{FMN07} and with $\omega_{\text{can}}$ its canonical bundle. The coherent sheaf $\pi_\ast\omega_{\text{can}}$ is the dualizing sheaf of $X$.  
\end{cor}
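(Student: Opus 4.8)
The plan is to obtain this as an immediate application of the preceding proposition to the canonical stack $\XX^{\text{can}}$, so the real content is just checking that its hypotheses are met. Write $n=\dim X$. First I would recall the relevant properties of the canonical stack from \cite[Rem 4.9]{FMN07}: because $X$ is a proper variety with finite quotient singularities, the associated stack $\pi\colon\XX^{\text{can}}\to X$ is a \emph{smooth} proper Deligne-Mumford stack of dimension $n$ whose coarse moduli space is $X$, with $\pi$ an isomorphism over the smooth locus of $X$. In characteristic zero the stabilizers are automatically tame; in positive characteristic one must in addition assume the quotient singularities are tame (local stabilizer orders prime to $\mathrm{char}\,k$), so that $\XX^{\text{can}}$ is a tame stack and $\pi_\ast$ is exact on quasicoherent sheaves. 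Being smooth, $\XX^{\text{can}}$ is in particular Cohen--Macaulay, hence the hypotheses of the previous proposition hold with $\XX=\XX^{\text{can}}$.

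The second step is to identify the dualizing complex of $\XX^{\text{can}}$. By the Smooth Serre duality theorem proved above, the dualizing complex of the $n$-dimensional smooth proper Deligne-Mumford stack $\XX^{\text{can}}$ is $\omega_{\text{can}}[n]$, where $\omega_{\text{can}}$ is its canonical bundle; thus, in the notation of the previous proposition, the dualizing sheaf of $\XX^{\text{can}}$ is precisely $\omega_{\text{can}}[n]$.

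Finally I would invoke the previous proposition for $\pi\colon\XX^{\text{can}}\to X$: it gives that $\pi_\ast(\omega_{\text{can}}[n])$ is the dualizing complex of $X$ in $D^b(X)$. Since $\XX^{\text{can}}$ is tame, $\pi_\ast$ is exact on quasicoherent sheaves and commutes with the translation functor, so $\pi_\ast(\omega_{\text{can}}[n])=(\pi_\ast\omega_{\text{can}})[n]$. As $X$ has only quotient singularities it is Cohen--Macaulay, so its dualizing complex is concentrated in a single degree, namely the shift $\omega_X[n]$ of a single sheaf $\omega_X$; comparing, $\pi_\ast\omega_{\text{can}}=\omega_X$ is the dualizing sheaf of $X$, as claimed.

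The argument is essentially formal, so I do not expect a genuine obstacle. The points that require attention are the verification that the canonical stack really satisfies the tameness and Cohen--Macaulay (via smoothness) hypotheses of the proposition --- transparent in characteristic zero but needing the tame-quotient-singularity hypothesis otherwise --- together with the bookkeeping of the shift $[n]$ and the observation that $X$, having only quotient singularities, is Cohen--Macaulay, so that its dualizing complex reduces to a single sheaf.
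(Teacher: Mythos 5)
Your proof is correct and follows exactly the paper's route: the paper likewise observes that $\XX^{\text{can}}$ is smooth, identifies its dualizing sheaf with the canonical bundle via Smooth Serre duality, and applies the preceding proposition on pushing forward the dualizing sheaf to the moduli scheme. Your additional care about tameness in positive characteristic and the bookkeeping of the shift $[n]$ merely makes explicit what the paper's one-line proof leaves implicit.
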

\begin{proof}
 We just observe that $\XX^{\text{can}}$ is smooth so that its dualizing sheaf is the canonical bundle and apply the previous corollary.
\end{proof}

To conclude the section we want to discuss what fails when the stack $\XX$ is tame but not Deligne-Mumford. Still we have a theorem that gives us the \'etale local structure of a tame Artin stack as a quotient of affine schemes by linearly reductive group schemes. From this point of view we are in a good situation to reproduce all the arguments so far. However the construction fails when we need arguments related to duality with compact support. For Deligne-Mumford stacks we have first proven that duality is Zariski local. Using Zariski main theorem we have seen that Zariski local implies \'etale local. \'Etale local implies the sheaf version of duality (on the \'etale site) and we can use it to prove that duality is compatible with flat base change.  If we work on the lisse-\'etale site of $\XX$ we miss an argument to prove that duality is smooth local, moreover we have to deal with compositions of morphisms of schemes where the first morphism is smooth but not \'etale and we definitely need to prove Proposition \ref{prop:ancorauncambiobase}.  

\section{Applications and computations}

\subsection{Duality for nodal curves}

Despite being probably already known, it is a good exercise to compute  the dualizing sheaf for a nodal curve using the machinery developed so far. First of all we specify that by nodal curve we mean a non necessarily balanced tame nodal curve over an algebraically closed field $k$ such that each stacky node is mapped to a node in the moduli scheme. This kind of curve is Deligne-Mumford because of the assumption on the characteristic. We can assume from the beginning that the curve has generically trivial stabilizer. If it is not the case, we can always rigidify the curve and treat the gerbe separately. We assume also that if the node is reducible none of the two components has a non trivial generic stabilizer. With this assumption a stacky node, \'etale locally on the moduli scheme, looks like $\bigl[ \spec{\frac{k[x,y]}{(xy)}} / \mu_{a,k}\bigr] $ (or more precisely the localization at the node of this one). The action of $\mu_{a,k}$ is given by:
\begin{displaymath}
\xymatrix@R=0pt{
  \frac{k[k,y]}{(xy)} \ar[r] &  \frac{k[x,y]}{(xy)}\otimes\mu_{a,k} \\
      x,y \ar@{|->}[r] & \lambda^i x, \lambda^j y \\
}
\end{displaymath}
where $(i,a)=1$ and $(j,a)=1$.
The result of this section is the following theorem:
\begin{thm}\label{thm:nodal-duality}
Let $\mathcal{C}$ be a proper tame  nodal curve as specified above. Let $\pi\colon\mathcal{C}\to C$ be its moduli space. Let $D$ be the effective Cartier divisor of $C$ marking the orbifold points, and denote with $\mathcal{D}=\pi^{-1}(D)_{\text{red}}$. Denote also with $\omega_C$ the dualizing sheaf of $C$ and with $\omega_{\mathcal{C}}=\pi^{!}\omega_{C}$ the dualizing sheaf of $\mathcal{C}$. The following relation holds:
\begin{displaymath}
  \omega_{\mathcal{C}}(\mathcal{D})=\pi^\ast\omega_C(D)
\end{displaymath}
\end{thm}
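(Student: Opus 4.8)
The plan is to reduce everything to a purely local computation at the stacky nodes, using that the statement is étale-local on the moduli scheme $C$ and that both sides of the asserted isomorphism are invertible sheaves. First I would observe that away from the orbifold locus $D$, the map $\pi$ is an isomorphism, so the divisor $\mathcal{D}$ is empty there and the relation $\omega_{\mathcal C}(\mathcal D)=\pi^\ast\omega_C(D)$ reduces to the tautology $\omega_{\mathcal C}=\pi^\ast\omega_C$, which holds because $\pi^!=\pi^\ast$ for an isomorphism. Hence the content is entirely concentrated at the finitely many stacky nodes, and by the étale-local nature of $\pi^!$ (the Corollary following Proposition~\ref{prop:base-change-open}, together with Lemma~\ref{lem:local-case-nabla} and Theorem~\ref{thm:non-sm-serre}) it suffices to verify the relation on the étale-local model $\bigl[\spec{k[x,y]/(xy)}/\mu_{a,k}\bigr]\to\spec{(k[x,y]/(xy))^{\mu_a}}$ described above. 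Since both $\omega_{\mathcal C}$ and $\pi^\ast\omega_C$ are invertible (the curve is Gorenstein, being nodal, so by Corollary~\ref{cor:serre-duality} the dualizing complex is an invertible sheaf), it is enough to match the two line bundles together with their $\mu_a$-equivariant structures.

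Next I would carry out the local dualizing-sheaf computation. On the moduli side, the local ring is the node $\spec{k[u,v]/(uv)}$ where $u=x^a,\,v=y^a$ are the $\mu_a$-invariants, and its dualizing sheaf $\omega_C$ is the standard one for an ordinary node, generated by the meromorphic differential $\eta=\frac{du}{u}=-\frac{dv}{v}$ with at worst a simple pole along each branch (this is the classical description $\omega_C=\Omega^1_C(\text{nodes})$ for nodal curves). Then $\pi^\ast\omega_C$ is the pullback of this generator to the stack. On the stacky side, I would use the finite-duality description $\pi^!=\pi^\flat$ from Theorem~\ref{thm:dual-finite-morph}, applied to the normalization or directly to the branches: on each smooth branch $\pi$ looks like $[\spec{k[x]}/\mu_a]\to\spec{k[x^a]}$, and there the dualizing sheaf is generated by $dx$ with its evident $\mu_a$-weight. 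The key bookkeeping is to compare $\frac{dx}{x}$ (natural on the stack) with the pullback of $\frac{du}{u}=a\frac{dx}{x}$ from the base: these differ precisely by the vanishing/pole order that accounts for the divisor $\mathcal D$, and the equivariant weights of the two generators differ by exactly the character by which $\mu_a$ acts, which is what the twist by $\mathcal{D}=\pi^{-1}(D)_{\mathrm{red}}$ records.

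The main obstacle I anticipate is \emph{bookkeeping the equivariant structures correctly rather than just the underlying line bundles}. By Theorem~\ref{thm:non-sm-serre} the $\mu_a$-structure on $\pi^!F$ is forced to be the cocycle $\eta_{t,\pi\circ f}^{-1}\circ\eta_{s,\pi\circ f}$, so I must check that the natural generator of $\omega_{\mathcal C}(\mathcal D)$ and the pulled-back generator of $\pi^\ast\omega_C(D)$ carry the \emph{same} $\mu_a$-linearization, not merely isomorphic bundles. Concretely, $dx$ transforms with weight $i$ (the weight of $x$) under $\lambda\mapsto\diag(\lambda^i,\lambda^j)$, whereas $du=ax^{a-1}dx$ is invariant; the discrepancy of weights is absorbed by the reduced preimage divisor, and verifying that the weight shift matches the coefficient of $\mathcal D$ (a single reduced copy of each branch-point, independent of $a$) is the delicate point. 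Once the weights are matched on each branch at the node and seen to glue across the two branches via the same gerbe-like cocycle, étale descent and the Gorenstein invertibility upgrade the local equivariant isomorphism to the global isomorphism $\omega_{\mathcal C}(\mathcal D)=\pi^\ast\omega_C(D)$, completing the proof. I would also note, as a consistency check, that the case of a \emph{smooth} orbifold point is exactly the root-construction computation alluded to in the overview, giving confidence that the local weight calculation is the right one.
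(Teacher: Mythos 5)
Your overall frame --- \'etale-localize over the moduli scheme, use invertibility of both sides (the curve is Gorenstein), and control the equivariant structure of $\pi^!$ via Theorem \ref{thm:non-sm-serre} and Lemma \ref{lem:local-case-nabla} --- matches the paper's, but your local analysis at the node is wrong, and the error is exactly at the point where the theorem has content. The paper's Lemma \ref{lem:loc-duality-node} computes, via an explicit (infinite) $A$-projective resolution of $B=k[x,y]/(xy)$ over $A=k[u,v]/(uv)$ and the recipe of Lemma \ref{lem:local-case-nabla}, that $\rho^!\OO_Y$ is free of rank one on the \emph{invariant} generator $e_0^\vee$: a stacky node lying over a node contributes \emph{no} twist and \emph{no} character, i.e.\ $\omega_{\mathcal{C}}=\pi^\ast\omega_C$ near the node. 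Your bookkeeping contradicts this: you compare $dx/x$ with $\pi^\ast(du/u)=a\,dx/x$ and assert they differ by a pole order ``accounted for by $\mathcal{D}$'' and by the acting character. Neither is true: they differ by the unit $a$ (tameness), and both have weight zero, since $dx/x$ is invariant under $x\mapsto\lambda^i x$. What you have actually written down is the computation at a \emph{smooth} orbifold point, where the generator is $dx$ (weight $i$) against $\pi^\ast du=ax^{a-1}dx$, producing the twist $(a-1)\mathcal{D}$ of Proposition \ref{prop:duality-radice}; transporting that branch-by-branch to the node is illegitimate because the dualizing sheaf of a node is generated by $dx/x=-dy/y$, not by $dx$ on each branch, and the pole exactly cancels the discrepancy. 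Note also that $D$ cannot contain the nodes at all: the theorem requires $D$ to be an effective Cartier divisor, and the maximal ideal at a node of $C$ is not principal.

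Consequently your proposal inverts the architecture of the proof. The smooth orbifold points, which you relegate to a ``consistency check,'' are the \emph{only} source of $D$ and $\mathcal{D}$ (handled by the root construction, Proposition \ref{prop:duality-radice}), while the nodes, where you locate the content, must be shown to contribute nothing --- and that this is a genuine computation, not a formality, is shown by the paper's Example \ref{ex:tac-node} (a tac-node over a node, where $\pi^!\OO$ carries the character $\lambda^{-1}$) and by the final example of a stacky node over a \emph{smooth} point of the moduli scheme, which violates the formula of Theorem \ref{thm:nodal-duality} outright: the trivial contribution of the node depends on the hypothesis that it maps to a node downstairs, and on the explicit resolution in Lemma \ref{lem:loc-duality-node}. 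Two lesser issues: Theorem \ref{thm:dual-finite-morph} does not apply to $\pi$, which is finite but not representable (the correct local tool is Lemma \ref{lem:local-case-nabla}, which you also cite); and the descent step you dismiss as ``\'etale descent upgrades the local isomorphism'' is where the paper works hardest, checking on the overlap piece of an explicit groupoid presentation that the gluing datum of $\pi^!\omega_C$ is the identity, again using Lemma \ref{lem:loc-duality-node} together with Theorem \ref{thm:non-sm-serre}.
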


We start proving this theorem with the following local computation:
\begin{lem}\label{lem:loc-duality-node}
  Consider the orbifold node $\mathcal{Y}:= \bigl[ \spec{\frac{k[x,y]}{(xy)}/\mu_{a,k}}\bigr] $ described above. Let $\rho\colon \mathcal{Y}\to Y:=\spec{\frac{k[u,v]}{(uv)}}$ be the moduli scheme, then we have $\omega_{\mathcal{Y}}=\rho^!\OO_{Y}=\OO_{\mathcal{Y}}$.
\end{lem}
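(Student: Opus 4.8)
The plan is to reduce the computation of $\rho^!$ to finite duality on an atlas and then carry out an explicit equivariant computation of a $\Hom$-module. Write $B=k[x,y]/(xy)$ and $A=k[u,v]/(uv)$ with $u=x^a,\ v=y^a$; since $(i,a)=(j,a)=1$ one checks at once that $A=B^{\mu_{a,k}}$, so that $Y=\Spec A$ is indeed the moduli scheme, and the composite of the (finite \'etale, as the curve is tame) atlas $p\colon\Spec B\to\mathcal{Y}$ with $\rho$ is the finite morphism $\Spec B\to\Spec A$ induced by $A\hookrightarrow B$. I would first record that $B$, being a one-dimensional Cohen--Macaulay ring finite over the one-dimensional Gorenstein ring $A$, is a maximal Cohen--Macaulay $A$-module, so $\Ext^p_A(B,A)=0$ for $p>0$. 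By finite duality (Theorem \ref{thm:dual-finite-morph}, in its classical scheme form applied to $\rho\circ p$) this gives $(\rho\circ p)^!\OO_Y=\Hom_A(B,A)$ concentrated in degree zero; in particular $\rho^!\OO_Y$ is a sheaf, and by Lemma \ref{lem:local-case-nabla} together with Remark \ref{rem:glueing-non-sm} it is the quasicoherent sheaf on $\mathcal{Y}$ obtained by descending the $B$-module $\Hom_A(B,A)$ with its canonical $\mu_{a,k}$-equivariant structure.

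The heart of the argument is the claim that $\Hom_A(B,A)$ is free of rank one over $B$ on a $\mu_{a,k}$-invariant generator. To see this I would decompose $B$ as an $A$-module, using $v\,x^r=u\,y^r=0$ in $B$, as
\begin{displaymath}
 B\;\cong\;A\;\oplus\;\bigoplus_{r=1}^{a-1}\bigl(A/(v)\bigr)\,x^r\;\oplus\;\bigoplus_{r=1}^{a-1}\bigl(A/(u)\bigr)\,y^r,
\end{displaymath}
and then compute $\Hom_A(A/(v),A)\cong\Ann_A(v)=(u)$ and $\Hom_A(A/(u),A)\cong\Ann_A(u)=(v)$. Let $p_0\colon B\to A$ be the projection onto the first summand. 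A direct check shows that $x^{a-r}p_0$ is the functional dual to $x^r$ (with values in $(u)$) and $y^{a-r}p_0$ the functional dual to $y^r$ (with values in $(v)$), so $p_0$ generates $\Hom_A(B,A)$ over $B$; injectivity of $b\mapsto b\,p_0$ follows from the same bookkeeping. Thus $\psi\colon B\to\Hom_A(B,A)$, $b\mapsto b\,p_0$, is an isomorphism of $B$-modules.

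It remains to verify that $\psi$ is $\mu_{a,k}$-equivariant, and this is exactly the point that forces the descended sheaf to be $\OO_{\mathcal{Y}}$ rather than a nontrivial character twist. Since $p_0$ is dual to the invariant element $1$ and takes values in the invariant ring $A$, it is $\mu_{a,k}$-invariant, i.e.\ $\zeta\cdot p_0=p_0$; using the compatibility $\zeta\cdot(b\cdot\varphi)=(\zeta\cdot b)\cdot(\zeta\cdot\varphi)$ of the action with the $B$-module structure on $\Hom_A(B,A)$ one gets
\begin{displaymath}
 \psi(\zeta\cdot b)=(\zeta\cdot b)\cdot p_0=(\zeta\cdot b)\cdot(\zeta\cdot p_0)=\zeta\cdot(b\cdot p_0)=\zeta\cdot\psi(b).
\end{displaymath}
Hence $\psi$ descends to an isomorphism of sheaves $\OO_{\mathcal{Y}}\xrightarrow{\sim}\rho^!\OO_Y$ on $\mathcal{Y}$, which is the assertion. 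The main obstacle is precisely this equivariance (weight) bookkeeping: the abstract isomorphism $\Hom_A(B,A)\cong B$ reflects that both rings are Gorenstein with trivial canonical, but one must exhibit the generator as an \emph{invariant} functional of weight zero in order to conclude that no twist survives at the node after descent.
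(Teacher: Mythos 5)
Your proof is correct, and its skeleton is the same as the paper's: both arguments reduce, via Lemma \ref{lem:local-case-nabla} and Remark \ref{rem:glueing-non-sm}, to computing the $B$-module $\Hom_A(B,A)$ together with its $\mu_{a,k}$-equivariant structure; both find that it is free of rank one on the functional dual to $1$ (your $p_0$, the paper's $e_0^\vee$); and both conclude by observing that this generator is invariant, so that no character twist survives descent. Where you genuinely differ is in the execution of the module computation. The paper writes down the explicit infinite periodic $A$-free resolution of $B$, dualizes it, checks acyclicity by hand, and reads off the coaction on all dual generators ($e_0^\vee\mapsto e_0^\vee$, $e_l^\vee\mapsto\lambda^{-il}e_l^\vee$, $f_m^\vee\mapsto\lambda^{-jm}f_m^\vee$); you avoid resolutions altogether by decomposing $B\cong A\oplus\bigoplus_r (A/(v))x^r\oplus\bigoplus_r (A/(u))y^r$ as an equivariant $A$-module, getting acyclicity from the standard vanishing $\Ext^{>0}_A(M,A)=0$ for maximal Cohen--Macaulay modules over the Gorenstein ring $A$, and exhibiting the projection $p_0$ directly as a free generator. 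Your route is arguably the sounder of the two at one point: the paper's intermediate functionals $g_l^\vee$ with $g_l^\vee(x^l)=e$ are not literally well defined as $A$-linear maps $B\to A$ (such a map would send $0=v\,x^l$ to $v\,e\neq 0$; duals of $x^l$ can only take values in $(u)=\Ann_A(v)$), although the paper's final identification $\Hom_A(B,A)=\langle e_0^\vee\rangle=B\,e_0^\vee$ is exactly the statement you prove. Conversely, the paper's computation buys some generality that yours gives up: as the remark following the lemma notes, the paper never uses $(i,a)=(j,a)=1$ (it works with the minimal $\alpha,\beta$ satisfying $i\alpha\equiv j\beta\equiv 0\bmod a$), whereas your very first step --- $u=x^a$, $v=y^a$, $A=B^{\mu_{a,k}}$ --- does rely on coprimality. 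For the lemma as stated, with those hypotheses in force, this is harmless.
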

\begin{proof}
  We will denote the ring $\frac{k[x,y]}{(xy)}$ with $B$ and $\frac{k[u,v]}{(uv)}$ with $A$. We also choose $\spec{B}$ as atlas for the stack. Let $\alpha,\beta$ be the smallest positive integers such that $i\alpha=0 \mod a,\; j\beta=0\mod a$, then the morphism from the atlas to the moduli scheme is the following:
  \begin{displaymath}
    \xymatrix@R=0pt{
      A \ar[r]^{p_0} & B \\
      u,v \ar@{|->}[r] & x^\alpha,y^\beta \\ 
}
  \end{displaymath}
The dualizing sheaf for $\spec{A}$ is isomorphic to the structure sheaf, so it's enough to compute duality for the structure sheaf denoted  as the free $A$-module $\langle e\rangle$. According to \ref{lem:local-case-nabla} we first need to compute the $B$-module $R\HOM_{A}(B,\langle e\rangle)$. We take the infinite  projective resolution of $B$ as an $A$-module:
\begin{displaymath}
  \xymatrix@R=0pt{
\ldots \ar[r] & A^{\oplus(\alpha+\beta-2)} \ar[r] & A^{\oplus(\alpha+\beta-2)} \ar[r] & A^{\oplus(\alpha+\beta-1)} \ar[r] & B \ar[r] & 0 \\
   & & &   \;\, e_0 \ar@{|->}[r] & 1 & \\
  \ldots & e_l \ar@{|->}[r] & u e_l,\, e_l \ar@{|->}[r] & v e_l,\, e_l\ar@{|->}[r] & x^l & \\
 \ldots  & f_m \ar@{|->}[r] & v f_m,\, f_m \ar@{|->}[r] & u f_m,\, f_m\ar@{|->}[r] & y^m & \\
}
\end{displaymath}
where $1\leq l \leq \alpha-1,\, 1\leq m \leq \beta-1$. We apply the functor $\Hom_A(\cdot,\langle e \rangle)$ and compute cohomology. The complex is obviously acyclic as expected, and $h^0$ is the $A$-module $\bigoplus_{l} (u) e_l^\vee\oplus \bigoplus_{m} (v) f_m^\vee \oplus e_0^\vee$. The $A$-module $h^0$ is naturally a sub-module of $\Hom_A(B,\langle e\rangle)$ and its $B$-module structure is induced by the natural $B$-module structure of this last one. Let $g_l^\vee\in\Hom_A(B,\langle e \rangle)$ the morphism such that $g_l^\vee(x^l)=e$ and zero otherwise, $h_m^\vee$ the morphism such that $h_m^\vee(y^m)=e$ and zero otherwise, and $g_0^\vee$ such that $g_0^\vee(1)=e$. The $B$-module $\Hom_A(B,\langle e\rangle)$ can be written as $\frac{\langle g_{\alpha-1}^\vee,h_{\beta-1}^\vee \rangle}{x^{\alpha-1}g_{\alpha-1}^\vee - y^{\beta-1}h_{\beta-1}^\vee}$. The $B$ module structure of $h^0$ is then given  by:
\begin{displaymath}
  \xymatrix@R=0pt{
  \langle e_0^\vee\rangle \ar@{>->}[r] & \frac{\langle g_{\alpha-1}^\vee,h_{\beta-1}^\vee \rangle}{x^{\alpha-1}g_{\alpha-1}^\vee - y^{\beta-1}h_{\beta-1}^\vee} \\
 e_0^\vee \ar@{|->}[r] & x^{\alpha-1}g_{\alpha-1}^\vee \\
}
\end{displaymath}
Eventually we have $\overline{p}_0^\ast R\Hom_A(B,\langle e\rangle)=\langle e_0^\vee \rangle$. To compute the equivariant structure of $\langle e_0^\vee \rangle$ we follow the recipe in Lemma \ref{lem:local-case-nabla}. We find out that the coaction on $e_0^\vee,e_l^\vee,f_m^\vee$ is as follows:
\begin{displaymath}
  \xymatrix@R=0pt{
     e_0^\vee \ar[r] & e_0^\vee \\
     e_l^\vee \ar[r] & \lambda^{-il} e_l^\vee \\
     f_m^\vee \ar[r] & \lambda^{-jm} e_m^\vee \\
}
\end{displaymath}
so that the equivariant structure of $\langle e_0^\vee \rangle$ is the trivial one and $\rho^!\omega_Y$ is then canonically isomorphic to the structure sheaf.
\end{proof}
\begin{rem}
We can notice that the assumptions on the two integers $i,j$ have never been used in the previous proof, however they are going to be necessary in what follows.   
\end{rem}
With the following proposition we take care of smooth orbifold points.
\begin{prop}\label{prop:duality-radice}
  Let $\XX\to\spec{k}$ be a proper Deligne-Mumford stack that is generically a scheme, let $D=\sum_{i=1}^d D_i$ be a simple normal crossing divisor whose support does not contain any orbifold structure. Let $\mathbf{a}=(a_1,\ldots,a_d)$ positive integers. Denote with $\XX_{\mathbf{a},D}=\sqrt[\mathbf{a}]{D/\XX}\xrightarrow{\tau}\XX$ and with $\mathcal{D}_i=(\tau^{-1}D_i)_{\text{red}}$. For every $\FF$ quasicoherent sheaf on $\XX$ the object $\tau^!\FF\in D(\XX_{\mathbf{a},D})$ is the quasicoherent sheaf $\tau^\ast\FF(\sum_{i=1}^d(a_i-1)\mathcal{D}_i)$.
\end{prop}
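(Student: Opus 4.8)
The plan is to reduce the global assertion to an explicit local computation in the spirit of Lemma \ref{lem:loc-duality-node}, using that the twisted inverse image is compatible with flat (in particular \'etale) base change. Since $\tau$ is proper and quasi-compact, $\tau^!$ exists by Theorem \ref{thm:deligne-rep}. The statement is \'etale local on $\XX$: for a representable \'etale $j\colon\mathcal U\to\XX$, the flat base change Theorem \ref{sec:duality-flat-base-change-duality} gives a canonical isomorphism between the restriction of $\tau^!\FF$ and $\tau_{\mathcal U}^!(j^\ast\FF)$, where $\tau_{\mathcal U}$ is the root construction over $\mathcal U$; simultaneously $\tau^\ast\FF(\sum_i(a_i-1)\mathcal D_i)$ restricts to $\tau_{\mathcal U}^\ast(j^\ast\FF)(\sum_i(a_i-1)\mathcal D_{i,\mathcal U})$, because $\tau^\ast$ and the divisors $\mathcal D_i$ pull back. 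Hence it suffices to produce the isomorphism \'etale-locally and compatibly with these base changes; the cocycle condition on $c_j$ then glues the local isomorphisms.

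Over $\XX\setminus D$ the map $\tau$ is an isomorphism and the twist vanishes, so there is nothing to check. Near a point of $D$, the hypothesis that $D$ avoids the orbifold locus lets me choose an \'etale chart $\mathcal U=\Spec R$ on which $\XX$ is a scheme, and the simple normal crossing assumption lets me write $D_i=\{s_i=0\}$ for a regular sequence $s_1,\dots,s_d$. There the root stack acquires the presentation $[\Spec B/G]$ with $B=R[t_1,\dots,t_d]/(t_i^{a_i}-s_i)$ and $G=\mu_{a_1}\times\cdots\times\mu_{a_d}$ acting by $\deg t_i=e_i$; since the invariants $B^G$ equal $R$, the scheme $\Spec R$ is the moduli scheme and $\tau_{\mathcal U}$ is exactly a morphism of the type treated in Lemma \ref{lem:local-case-nabla}.

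I then run the computation of Lemma \ref{lem:local-case-nabla}. As $B$ is free over $R$ of rank $\prod_i a_i$ there are no higher $\Ext$'s, so $\tau_{\mathcal U}^!\FF$ is the sheaf attached to the $G$-equivariant (equivalently $\prod_i\Z/a_i$-graded) $B$-module $\Hom_R(B,\FF)$. Tracking the grading, the functional dual to $\prod_i t_i^{a_i-1}$ generates $\Hom_R(B,\FF)$ as a $B$-module, identifying it with $B\otimes_R\FF$, and it sits in multiweight $-(a_1-1,\dots,a_d-1)$, whereas $\tau^\ast\FF=B\otimes_R\FF$ is generated in weight $0$. On the other hand $\mathcal D_i=\{t_i=0\}$ gives $\OO(-\mathcal D_i)=(t_i)$ with generator $t_i$ of weight $e_i$, so $\OO(\sum_i(a_i-1)\mathcal D_i)$ has a generator of multiweight $-(a_1-1,\dots,a_d-1)$. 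The two graded $B$-modules therefore coincide, which is the asserted isomorphism; its canonicity, and hence the gluing, follows from the uniqueness of the equivariant structure once $\tau^!\FF$ is known to be a sheaf, i.e.\ from Remark \ref{rem:glueing-non-sm} and Theorem \ref{thm:non-sm-serre}.

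The main obstacle is not any single deep step but the careful bookkeeping of the equivariant grading, so as to match the line-bundle twist $\sum_i(a_i-1)\mathcal D_i$ on the nose rather than up to an ambiguous shift, together with the verification that the locally constructed isomorphism is the one induced by flat base change and therefore descends. Everything else---existence of $\tau^!$, reduction to a chart, and the vanishing of higher $\Ext$---is formal given the cited results.
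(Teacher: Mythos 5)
Your proposal is correct and takes essentially the same route as the paper's own (very terse) proof: the paper likewise notes that flatness of $\tau$ forces $\tau^!\FF$ to be a quasicoherent sheaf, relies on the explicit local computation on the root stack $[\Spec B/G]$ with $B=R[t_1,\dots,t_d]/(t_i^{a_i}-s_i)$ (citing \cite[Thm 7.2.1]{AGVgwdms} for what your $\Hom_R(B,M)$ weight-bookkeeping reproduces), and fixes the equivariant structure and gluing via Theorem \ref{thm:non-sm-serre}, exactly as in your appeal to Lemma \ref{lem:local-case-nabla}, Remark \ref{rem:glueing-non-sm} and Theorem \ref{thm:non-sm-serre}. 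The only difference is that you supply the details the paper explicitly leaves to the reader.
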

\begin{proof}
  Since $\tau$ is a flat morphism we already know that $\tau^!$ maps quasicoherent sheaves to quasicoherent sheaves. The precise statement can be retrieved using some of the computations in \cite[Thm 7.2.1]{AGVgwdms} and the machinery in Theorem \ref{thm:non-sm-serre} (details left to the reader).
\end{proof}
\begin{proof}[proof of Theorem \ref{thm:nodal-duality}]
For the moment we can assume that the curve has no other orbifold points then the nodes and without loss of generality we can assume that there is only one node. First we prove that $\pi^!\omega_C=\pi^\ast\omega_C$, then we can add  smooth orbifold points in a second time applying the root construction; the formula claimed in the theorem follows then from Proposition \ref{prop:duality-radice}. 
First of all we take an \'etale cover $\mathcal{Y}$ of $\mathcal{C}$ in this way:  we choose an \'etale chart of the node that is an orbifold node like the one in Lemma \ref{lem:loc-duality-node} and we complete the cover with a chart that is the curve $\mathcal{C}$ minus the node, denoted with $C_0$. The setup is summarized by the following cartesian square:
\begin{displaymath}
  \xymatrix{
  C_0\coprod [\spec{B}/\mu_a] \ar[r]^-{i\coprod \overline{\sigma}}\ar[d]^-{\Id\coprod p_0} & \mathcal{C}\ar[d]^-{\pi} \\
   C_0\coprod \spec{A} \ar[r]^-{i\coprod \sigma} & C \\
}
\end{displaymath}
where $B=\frac{k[x,y]}{(xy)},\; A=\frac{k[u,v]}{(uv)}$, the map $p_0$ sends $u,v$ to $x^\alpha,y^\beta$, the map $i$ is the inclusion of $C_0$ in $C$, and $\sigma$ is \'etale. We use as an atlas $C_0\coprod \spec{B}$ with the obvious map to $\mathcal{C}$. Completing the presentation we obtain the following groupoid:
\begin{displaymath}
  \xymatrix{
 C_0\coprod \spec{B}\times\mu_a \coprod \spec{B}\times_{\sigma\circ p_0,C} C_0 \ar@<1ex>[r]^-{s} \ar@<-1ex>[r]^-{t} & C_0\coprod \spec{B} \\
}
\end{displaymath}
We can divide it in three pieces: one is the trivial groupoid over $C_0$, the second is $\xymatrix{\spec{B}\times\mu_a\ar@<1ex>[r] \ar@<-1ex>[r] &  \spec{B}}$ where the arrows are action and projection; the last one is $\xymatrix{\spec{B}\times_{\sigma\circ p_0,C} C_0 \ar@<1ex>[r]^-{p_0\circ q_1} \ar@<-1ex>[r]^-{q_2 \overline{p}_0} & C_0\coprod \spec{B}}$, where $q_1$ is projection to $\spec{B}$, $q_2$ is projection to $C_0$ and $\overline{p}_0$ fits inside the following cartesian square:
\begin{equation}\label{picture:quadrato-schifoso}
  \xymatrix{
    \spec{B}\times_{\sigma\circ p_0,C}C_0 \ar[r]^{\overline{p}_0}\ar[d]_{q_1} &  \spec{A}\times_{\sigma,C}C_0 \ar[d]^{q_1} \ar[r]^-{q_2} & C_0\ar[d]^-{i}\\
    \spec{B} \ar[r]^{p_0} & \spec{A} \ar[r]^-{\sigma} & C \\
}
\end{equation}
Now we check if the dualizing sheaf glues like $\pi^\ast\omega_C$ on this presentation and we achieve this using Theorem \ref{thm:non-sm-serre}. The result is trivially true for the first piece of the presentation. For the second piece of the presentation it is implied immediately by Lemma \ref{lem:loc-duality-node}. For what concerns the last piece of presentation we have that $\pi^!\omega_C$ glues with the canonical isomorphism $\overline{p}_0^!q_2^\ast i^\ast\omega_C\cong q_1^\ast p_0^!\sigma^\ast\omega_C$ where the canonical isomorphism comes from the cartesian square in picture (\ref{picture:quadrato-schifoso}). However we have $q_1^\ast p_0^!\sigma^\ast\omega_C= q_1^\ast p_0^!\OO_A\otimes q_1^\ast p_0^\ast\sigma^\ast\omega_C$ and $\overline{p}_0^!\OO_{\spec{A}\times_{\sigma,C}C_0}\otimes \overline{p}_0^\ast q_2^\ast i^\ast\omega_C p_0^!\sigma^\ast\omega_C$ where the canonical isomorphism comes from the cartesian square in picture (\ref{picture:quadrato-schifoso}). However we have $q_1^\ast p_0^!\sigma^\ast\omega_C= q_1^\ast p_0^!\OO_A\otimes q_1^\ast p_0^\ast\sigma^\ast\omega_C$ and $\overline{p}_0^!q_2^\ast i^\ast\omega_C=\overline{p}_0^!\OO_{\spec{A}\times_{\sigma,C}C_0}\otimes \overline{p}_0^\ast q_2^\ast i^\ast\omega_C$. According to Lemma \ref{lem:loc-duality-node} the sheaves $p_0^!\OO_A$ and $\overline{p}_0^!\OO_{\spec{A}\times_{\sigma,C}C_0}$ are respectively equal to $\OO_B$ and $\OO_{\spec{B}\times_{\sigma\circ p_0,C} C_0}$. Eventually the gluing isomorphism is just the identity and we can conclude that the dualizing sheaf is $\pi^\ast\omega_C$.
\end{proof}

\subsection{Other examples of  singular curves}
In the previous section we have seen that nodal curves, balanced or not, have a dualizing sheaf that is an invertible sheaf and carry a trivial representation on the fiber on the node. It is not difficult to find examples of singular curves where the representation on the fiber of the singularity is non trivial. 
What follows is a collection of computations of duality with compact support, performed with the same technique used in Lemma \ref{lem:loc-duality-node}. These examples are mere applications of Lemma \ref{lem:local-case-nabla} and we will be able to retrieve these results with a better technique in the next section.
\begin{ex}{\textbf{(Cusp over a line)}}
  Let $B$ be the cusp $k[x,y]/(y^2-x^3)$ with an action of $\mu_{2,k}$ given by $y\mapsto \lambda y$ and $x\mapsto x$. The moduli scheme of the quotient stack is the affine line $k[x]=:A$ with the morphism:
  \begin{displaymath}
    \xymatrix@R=0pt{
      A \ar[r]^f & B \\
      x \ar@{|->}[r] & x \\
}
  \end{displaymath}
The morphism $f$ is flat and the dualizing sheaf restricted to the atlas is the $B$-module $\Hom_A(B,A)$. As a $B$-module this is just $\langle e_1^\vee\rangle$ where $e_1^\vee(y)=1$ and zero otherwise. The coaction is given by $e_1^\vee\mapsto \lambda^{-1}e_1^\vee$. 
\end{ex}
\begin{ex}{\textbf{(Tac-node over a node)}}\label{ex:tac-node}
  Let $B$ be the tac-node $k[x,y]/(y^2-x^4)$ with an action of $\mu_{2,k}$ given by $y\mapsto y$ and $x\mapsto\lambda x$. The moduli scheme of the quotient stack is the node $k[u,y]/(y^2-u^2)=:A$ with the morphism:
  \begin{displaymath}
    \xymatrix@R=0pt{
      A \ar[r]^f & B \\
      u,y \ar@{|->}[r] & x^2,y \\
}
  \end{displaymath}
The stack is reducible. The morphism $f$ is flat again and the dualizing sheaf is the $B$-module  $\langle e_1^\vee\rangle $ ($e_1^\vee(x)=1$ and otherwise zero) with the coaction $e_1^\vee\mapsto \lambda^{-1}e_1^\vee$.
\end{ex}
These two examples look pretty similar but they are actually of a quite different nature. With a simple computation we obtain that the tac-node is actually a root construction over the node $\sqrt[2]{\OO_A,0/\spec{A}}$. For a root construction we expected that kind of dualizing sheaf from, the already studied, smooth case (Lemma \ref{prop:duality-radice}). The cusp is not a root construction, however it is flat on the moduli scheme anyway, and the dualizing sheaf is the same we have for the root construction.

With the following example we see that the dualizing sheaf can be the structure sheaf for nodes other than $xy=0$.
\begin{ex}{\textbf{(Tac-node over a cusp (an irreducible node))}}
    Let $B$ be the tac-node $k[x,y]/(y^2-x^4)$ with an action of $\mu_{2,k}$ given by $y\mapsto \lambda y$ and $x\mapsto\lambda x$. The moduli scheme of the quotient stack is the cusp $k[u,t]/(t^2-u^3)=:A$ with the morphism:
  \begin{displaymath}
    \xymatrix@R=0pt{
      A \ar[r]^f & B \\
      u,t \ar@{|->}[r] & x^2,xy \\
}
  \end{displaymath}
This one stack is irreducible, none of $y-x^2$ and $y+x^2$ can be a closed substack.
With a computation very similar to the one in Lemma \ref{lem:loc-duality-node} we obtain that the dualizing sheaf is $\langle e_0^\vee \rangle$ ($e_0^\vee(1)=1$ and otherwise zero) with the trivial coaction.
\end{ex}

\subsection{Local complete intersections}
According to Corollary \ref{cor:serre-duality} every Cohen-Macaulay proper Deligne-Mumford stack admits a dualizing quasicoherent sheaf; using Corollary \ref{cor:stacky-comp-etale-open} it's immediate to prove that if the stack also have a Gorenstein atlas then the dualizing sheaf is an invertible sheaf. The aim of this section is to reconstruct the classic duality result for local complete intersections as in \cite[III.7.11]{Hag}.
\begin{thm}\label{thm:loc-complete-intersections}
  Let $\XX$ be a proper Deligne-Mumford stack that has a regular codimension $r$ closed embedding in a smooth proper Deligne-Mumford stack $\mathcal{P}$. Denote with $\mathcal{I}$ the ideal sheaf defining the closed-embedding and with $\omega_\mathcal{P}$ the canonical sheaf of $\mathcal{P}$. The dualizing sheaf of $\XX$ is  $\omega_{\mathcal{P}}\otimes \wedge^r {(\mathcal{I}/\mathcal{I}^2)}^\vee $.
\end{thm}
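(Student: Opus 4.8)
The plan is to reduce the statement, via the Serre-duality description of the dualizing sheaf already established, to the classical Koszul computation of Grothendieck's fundamental local isomorphism, and then to descend that computation from an \'etale atlas to $\XX$. First I would record that a regular closed embedding of codimension $r$ into the smooth stack $\mathcal{P}$ forces $\XX$ to be Cohen-Macaulay: a local complete intersection inside a regular ambient is Cohen-Macaulay, and this may be checked on a smooth atlas of $\mathcal{P}$. Consequently Corollary \ref{cor:serre-duality} applies and collapses the dualizing complex to the single coherent sheaf $\EXT_{\mathcal{P}}^r(\OO_\XX,\omega_\mathcal{P})$, with all other $\EXT$ sheaves vanishing. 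The theorem is therefore equivalent to the canonical identification
\[
  \EXT_{\mathcal{P}}^r(\OO_\XX,\omega_\mathcal{P}) \cong \omega_\mathcal{P}\otimes\wedge^r(\mathcal{I}/\mathcal{I}^2)^\vee .
\]

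Next I would compute this sheaf \'etale locally. Pick a smooth (hence regular) \'etale atlas $P_0\to\mathcal{P}$ and let $X_0=\XX\times_{\mathcal{P}}P_0\to\XX$ be the induced atlas; then $X_0\hookrightarrow P_0$ is a codimension $r$ regular embedding of schemes. Since the formation of $\EXT$ sheaves commutes with the flat pullback along $P_0\to\mathcal{P}$, it suffices to produce the isomorphism on $P_0$. There the ideal of $X_0$ is generated locally by a regular sequence $f_1,\dots,f_r$, whose Koszul complex is a finite locally free resolution of $\OO_{X_0}$. Applying $\HOM_{P_0}(-,\omega_{P_0})$ and invoking the self-duality of the Koszul complex, the only surviving cohomology sits in degree $r$ and is exactly $\omega_{P_0}|_{X_0}\otimes\wedge^r(I_0/I_0^2)^\vee$; this is the content of \cite[III.7.11]{Hag} (the fundamental local isomorphism). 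The Cohen-Macaulay vanishing noted above is precisely what makes the lower cohomologies die, matching the degeneration already exploited in Corollary \ref{cor:serre-duality}.

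Finally I would descend. The decisive feature is that the local isomorphism is canonical: it depends only on the conormal sheaf $\mathcal{I}/\mathcal{I}^2$ and on $\omega_\mathcal{P}$, not on the chosen regular sequence, because $\wedge^r(\mathcal{I}/\mathcal{I}^2)^\vee$ already absorbs the determinantal ambiguity. Hence it is compatible with the cartesian descent data relating the two projections of the atlas, and glues to a global isomorphism on $\XX$ by the same gluing mechanism used in Theorem \ref{thm:non-sm-serre} and Corollary \ref{cor:stacky-comp-etale-open}. Both sides being honest quasicoherent sheaves on $\XX$ equipped with their natural equivariant structure, the canonical local identification respects those structures and therefore descends.

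I expect the main obstacle to be exactly this descent step: verifying that the classical fundamental local isomorphism is functorial enough to commute with the groupoid action, \emph{i.e.} that no non-canonical choice (an ordering of $f_1,\dots,f_r$, which contributes a sign through the determinant) obstructs gluing. I would handle the bookkeeping of these signs using Conrad's conventions \cite{MR1804902}, as was already necessary in Lemma \ref{lem:hartsh-compo-smooth}; the conceptual point is that passing to $\wedge^r$ renders the ambiguity canonical, so the equivariant compatibility is automatic once the scheme-theoretic isomorphism is known to be natural in the conormal bundle.
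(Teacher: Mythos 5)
Your proposal is correct and follows the same overall strategy as the paper: use Corollary \ref{cor:serre-duality} (plus the observation that a regular embedding into a smooth ambient stack makes $\XX$ Cohen--Macaulay) to reduce the theorem to the identification $\EXT^r_{\mathcal{P}}(\OO_\XX,\omega_{\mathcal{P}})\cong\omega_{\mathcal{P}}\otimes\wedge^r(\mathcal{I}/\mathcal{I}^2)^\vee$, compute \'etale-locally with a Koszul resolution, and glue by canonicity. Where you genuinely diverge is in how the stacky (equivariant) content is handled. You pass to a scheme atlas $P_0\to\mathcal{P}$, invoke the classical fundamental local isomorphism of \cite[III.7.11]{Hag} there, and let \'etale descent do the equivariant work, on the grounds that the isomorphism is natural in the conormal sheaf and compatible with flat pullback; this is indeed sufficient, since canonicity together with the fact that Koszul complexes pull back to Koszul complexes forces the two pullbacks of your isomorphism to $P_1=P_0\times_{\mathcal{P}}P_0$ to coincide. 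The paper instead works on the \'etale-local quotient presentation $[\spec{C}/G]$ of $\mathcal{P}$ and redoes the Koszul computation $G$-equivariantly: it fixes a regular sequence $(f_1,\ldots,f_r)$, records the coaction $\beta_f$ it inherits, shows that the top cohomology of $\Hom_C(K^\bullet,\omega_C)$ carries the representation $\rho_C\otimes\det\beta_f^\vee$ while $\wedge^r(\mathcal{I}/\mathcal{I}^2)^\vee$ carries $\det\beta_f^\vee$, and then checks by an explicit change-of-basis computation (the relation $(a^\ast\delta)\circ\beta_g=\beta_f\circ\delta$) that the identification is basis-independent, hence canonical, hence glues. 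So the point you flag as the main obstacle --- compatibility of the scheme-theoretic isomorphism with the groupoid action --- is exactly what the paper verifies by hand rather than by appeal to naturality; your route is shorter and cleaner as a proof of the bare statement, while the paper's explicitly equivariant computation has the side benefit of producing the concrete coaction $\det\beta_f^\vee$ on the dualizing sheaf, which is what it actually uses in the worked examples following the theorem (the weighted projective and non-Gorenstein computations). Your argument establishes the isomorphism but would leave that fiberwise representation to be extracted separately.
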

\begin{proof}
  Our task is to compute $\EXT^r_{\mathcal{P}}(\OO_\XX,\omega_{\mathcal{P}})$. We can produce an \'etale cover of $\mathcal{P}$ so that it is locally $[\spec{C}/G]$ where $C$ is a regular ring and $G$ a finite group. We can assume that the regular closed embedding is locally:
  \begin{displaymath}
    [\spec{B}/G] \xrightarrow{i} [\spec{C}/G]
  \end{displaymath}
where $B$ is defined by $(f_1,\ldots,f_r)$ a regular sequence in $C$. Once we have fixed a basis for the ideal sheaf $\mathcal{I}$, the coaction of $G$ is also determined on that basis. We denote with $\beta_f$ the coaction on the basis $(f_1,\ldots,f_r)$, and it is an $r$-dimensional representation. We denote with $\omega_C$ the canonical sheaf on $\spec{C}$, and it also comes with a coaction that is a one-dimensional representation $\rho_C$ (we are assuming that $\omega_C$ is free). We now take the Koszul resolution $K^\bullet$ of $B$. The coactions of $G$ on $\omega_C$ and $\mathcal{I}$ induce a coaction of $G$ on $K^\bullet$ and a coaction on the complex $\Hom_C(K^\bullet,\omega_C)$. In particular the coaction on $\Hom_C(\wedge^r C^{\oplus r},\omega_C)$ is the representation $\rho_C\otimes \det{\beta_f^\vee}$. Both the induced coaction (denoted with $\gamma^\bullet$) and the trivial coaction are morphisms of complexes:
\begin{displaymath}
\xymatrix{
  0 \ar[r] & \Hom_C^G(K^\bullet,\omega_C) \ar[r] & \Hom_C(K^\bullet,\omega_C) \ar@<-1ex>[r] \ar@<1ex>[r]^-{\gamma^\bullet} & \Hom_C(K^\bullet,\omega_C)\otimes \OO_G \\ 
}
\end{displaymath}
and the equalizer is the $C^G$-module of equivariant morphisms. The cohomology of this first complex computes the global sections of $\EXT_{\mathcal{P}}^\bullet(\OO_\XX, \omega_C)$ restricted to $[\spec{B}/G]$. We can also compute cohomology of the second and third complex and we have arrows between cohomologies induced by both the coaction $\gamma^\bullet$ and the trivial coaction:
\begin{displaymath}
  \xymatrix{
   h^\bullet(\Hom_C(K^\bullet,\omega_C)) \ar@<-1ex>[r] \ar@<1ex>[r]^-{h^\bullet(\gamma^\bullet)} & h^\bullet(\Hom_C(K^\bullet,\omega_C)) \otimes \OO_G \\ 
}
\end{displaymath}
This gives the sheaves $h^\bullet(\Hom_C(K^\bullet,\omega_C))$ an equivariant structure; eventually these sheaves with the equivariant structure are $\EXT_{\mathcal{P}}^\bullet(\OO_\XX, \omega_C)$ restricted to $[\spec{B}/G]$. However, we already know that $h^r(\Hom_C(K^\bullet,\omega_C))=\frac{\omega_C}{(f_1,\ldots,f_r)\omega_C}$ and the others are zero. It is easy to check that the equivariant structure of the non vanishing one is the representation $\rho_C\otimes \det{\beta_f^\vee}$. 
To summarize, the dualizing sheaf restricted to $[\spec{B}/G]$ is isomorphic to $\omega_C\otimes_C B$ where $B$ has the non necessarily trivial coaction $\det{\beta_f^\vee}$.
As in the case of schemes this isomorphism is not canonical. If we change the basis $(f_1,\ldots,f_r)$ to a new one $(g_1,\ldots,g_r)$ where $f_i=\delta_{i,j} g_j$ we produce an automorphism on the Koszul complex $K^\bullet$; in particular the last term of the complex $\wedge^r C^{\oplus r}$ carries an automorphism given by $\det{\delta}$. In the equivariant setup, also the representation $\beta_f$ is affected by a change of basis. In particular the new basis carries a representation $\beta_g$ such that $(a^\ast\delta)\circ \beta_g= \beta_f \circ \delta$, where $a$ denotes the action of $G$ on $\spec{C}$. 
 
As in the case of schemes the sheaf $\wedge^r(\mathcal{I}/\mathcal{I}^2)^\vee$ is trivial on $[\spec{B}/G]$ and the change of basis $(f_1,\ldots,f_r)\mapsto (g_1,\ldots,g_r)$ induces an automorphism on the sheaf that is multiplication by $\det{\delta}^{-1}$. Moreover it is straightforward to check that the equivariant structure of the sheaf is given by the representation $\det{\beta_f^\vee}$.  It is also obvious that the representation changes, after a change of basis, according to the formula  $(a^\ast\delta)\circ \beta'= \beta \circ \delta$. Eventually we can conclude that there is an isomorphism between  $\EXT_{\mathcal{P}}^r(\OO_\XX, \omega_C)\vert_{[\spec{B}/G]}$ and $\omega_{\mathcal{P}}\otimes \wedge^r {(\mathcal{I}/\mathcal{I}^2)}^\vee\vert_{[\spec{B}/G]}$ that doesn't depend on the choice of the basis of $\mathcal{I}$, so to speak a canonical isomorphism. This implies that we can glue these local isomorphisms to obtain a global one.
\end{proof}

In the proof of this theorem we have seen how to compute the sheaf $\EXT^r_{\mathcal{P}}(\OO_\XX,\omega_{\mathcal{P}})$ locally on a stack that is $[\spec{B}/G]$ using the Koszul resolution. Even if the stack is not locally complete intersection but Cohen-Macaulay we can use the same technique to compute $\EXT^r$, replacing the Koszul complex with some other equivariant resolution. This approach is obviously a much faster and reliable technique than the one used in section $2.2$.

\begin{ex}{\textbf{(A non Gorenstein example)}}
Let $B$ be the triple point $k[u,v,t]/(uv-t^2,ut-v^2,vt-u^2)$ with an action of $\mu_{a,k}$ given by $u,v,t\mapsto \lambda u, \lambda v, \lambda t$ and we study duality of the quotient stack. This is non Gorenstein since the  reducible ideal $(uvt)$ is a system of parameters and $B$ is Cohen-Macaulay (we apply the Ubiquity Theorem in \cite{MR0153708}).
We can take the closure  in the weighted projective space $\XX=\mathbb{P}(1,1,1,a)$ with coordinates $u,v,t,z$ and $z$ has degree $a$. We denote the closure with $\mathcal{C}$. It follows from the Euler exact sequence \cite[Lem 3.21]{MR2357682} that the canonical sheaf of $\XX$ is  $\omega_{\XX}=\OO_\XX(-3-a)$ . We can produce the following equivariant resolution of $\OO_\mathcal{C}$:
\begin{displaymath}
  0 \to \OO_\XX(-3)^{\oplus 2}\to \OO_\XX(-2)^{\oplus 3} \to \OO_\XX \to \OO_\mathcal{C}\to 0
\end{displaymath}
From this resolution we can compute $\EXT^2_\XX(\OO_\mathcal{C},\OO_\XX)$ that is the equivariant sheaf:
\begin{displaymath}
\EXT_\XX^2(\OO_\mathcal{C},\OO_\XX) =  \frac{\langle e_1, e_2\rangle}{(t e_1+u e_2, ve_1+te_2,ue_1+ve_2)}
\end{displaymath}
with the  coaction $e_i\mapsto \lambda^{3} e_i$ for $i=1,2$ that is induced by $\OO_\XX(3)^{\oplus 2}$. To complete the computation we take the tensor product with $\OO_\XX(-3-a)$ that changes the coaction to $e_i\mapsto \lambda^{-a} e_i$. This is again a coherent sheaf with trivial action on the fiber on the singularity.
\end{ex}
\begin{ex}{\textbf{(Reducible nodes as local complete intersections)}}
Using theorem \ref{thm:loc-complete-intersections} we can reconsider the problem of duality for nodal curves and provide a very fast solution for a reducible node made of two projective lines. Consider the affine scheme $k[x,y]/(xy)$ with the action of $\mu_{a,k}$ given by $x,y\mapsto \lambda^ix,\lambda^jy$ and we only assume $0\leq i,j < a$ (we are not excluding that the two lines could be gerbes). We can always compactify this stack taking the closure $\mathcal{C}$ in the weighted projective stack $\XX=\mathbb{P}(i,j,a)$  where $x,y,z$ have degree respectively $i,j,a$. The dualizing sheaf of $\XX$ with is $\omega_{\XX}=\OO_\XX(-i-j-a)$. The ideal sheaf defining $\mathcal{C}$ as a closed substack is $\OO_\XX(-i-j)$ and applying theorem \ref{thm:loc-complete-intersections} we obtain that $\omega_{\mathcal{C}}$ is the invertible sheaf $\OO_{\mathcal{C}}(-a)$.  It is immediate to verify that this is compatible with the local description in \ref{lem:loc-duality-node} but it is more tricky to reduce this result to Theorem \ref{thm:nodal-duality}. To compare with the theorem we put the same restrictions on $i,j,a$, that is $(i,a)=1$ and $(j,a)=1$. First of all we observe that the moduli space of $\mathcal{C}$ is the reducible node made of two copies of $\mathbb{P}^1$, we denote it with $C$. If we close it in $\mathbb{P}^2$ we obtain that $\omega_C=\OO_C(-1)$. Compactifying the stacky curve we have possibly added two orbifold smooth points $\mathbb{P}(i),\mathbb{P}(j)$. For this reason the divisor $\OO_{\mathcal{C}}(\mathcal{D})$ in Theorem \ref{thm:nodal-duality} is just $\OO_{\mathcal{C}}(a)$, while the corresponding $\OO_C(D)$ is $\OO_C(1)$ (this is correct if $i,j>1$, if not there is an obvious modification to apply that is left to the reader). Putting together these data and applying the theorem we retrieve $\omega_\mathcal{C}=\OO_\mathcal{C}(-a)$.  
\end{ex}
\begin{ex}{\textbf{(An irreducible balanced node)}}
An irreducible node satisfying conditions in theorem \ref{thm:nodal-duality} can be produced computing a colimit as in \cite[Cor A.0.3]{AGVgwdms}. For instance we can take a curve with two orbifold points: the scheme $A=k[t,x]/(t^3-(x^2-1))$ with an action of $\mu_{3,k}$ given by $t,x\to \lambda t,x$ for $\lambda\in k[\lambda]/(\lambda^3-1)$. The trivial gerbe $\mathcal{B}\mu_{3,k}$ has a closed embedding into each of the  orbifold points. If we denote with $i_1$ the  inclusion in one of the two orbifold points and with $i_2$ the inclusion in the other point,  we can glue the two orbifold points together computing the colimit $\xymatrix{\mathcal{B}\mu_{3,k} \ar@<-1ex>[r]^-{i_1}\ar@<+1ex>[r]^-{i_2} &[\spec{A}/\mu_{3,k}]\ar[r]^-{\pi} & \mathcal{C}}$. The curve $\mathcal{C}$ is a balanced irreducible node\footnotemark \footnotetext{The not balanced node can be obtained composing the inclusion of one of the two points with some arbitrary change of the band of $\mathbb{B}\mu_{3,k}$; the resulting colimit is not a global quotient in this case and we are allowed to suspect that this stack has no generating sheaf.} whose presentation is $[\spec{B}/\mu_{3,k}]$ where $B=k[t,u]/(t^5-u^2+t^2)$ and the action is $t,u\mapsto\lambda t,\lambda u$. It is known in general that a tame balanced nodal curve is a global quotient, this is a consequence of \cite[Thm 3.2.3]{MR2445829} since the theorem states the existence of an ample line bundle. 
We can compactify $\mathcal{C}$ taking the closure inside $\mathbb{P}(1,1,3)$ that is given by the homogeneous polynomial $t^5-z(u^2+t^2)$ where $z$ is the degree $3$ variable. The moduli scheme of the compactification is a nodal cubic in $\mathbb{P}^2$. According to theorem \ref{thm:loc-complete-intersections} the dualizing sheaf is the structure sheaf in perfect agreement with theorem \ref{thm:nodal-duality}. 
\end{ex}
\begin{ex}{\textbf{(Nodes that do not respect Theorem \ref{thm:nodal-duality})}}
Just like the tac-node in  example \ref{ex:tac-node} an ordinary stacky node need not to respect Theorem  \ref{thm:nodal-duality}. This can occur for stacky curves that are nodes \'etale locally on the stack but something different \'etale locally on the moduli scheme. For instance let $\mathcal{C}$ be the curve in $\mathbb{P}(1,4,6)$ defined by the equation $zx^2-y^2$, where $x,y,z$ have respectively degree $1,4,6$. This curve has only one singular point in $(x,y)=(0,0)$ which is an ordinary node and has stabilizer $\mu_{6,k}$. It has no other orbifold points and it is irreducible. This is not included in theorem \ref{thm:nodal-duality} since its moduli scheme is $\mathbb{P}^1$. With the technique used in previous examples we can state that the dualizing sheaf is $\OO_{\mathcal{C}}(-3)$ and it carries a non trivial representation on the fiber on the node. 
\end{ex}
\pagestyle{plain}
\bibliographystyle{amsalpha}
\bibliography{bibliostack}

\end{document}